\definecolor{green}{rgb}{0,0.8,0} 
\newcommand{\nrm}[1]{\Vert#1\Vert}
\newcommand{\set}[1]{\{#1\}}
\renewcommand{\Im}{\mathrm{Im}}
\newcommand{\aeq}{\sim}
\newcommand{\aleq}{\lesssim}
\newcommand{\ud}{\mathrm{d}}
\newcommand{\rd}{\partial}
\newcommand{\bb}{\Big}
\newcommand{\bt}{\beta}
\newcommand{\eps}{\epsilon}
\newcommand{\lmb}{\lambda}
\newcommand{\Lmb}{\Lambda}
\newcommand{\sgm}{\sigma}
\newcommand{\zt}{\zeta}
\newcommand{\bbC}{\mathbb C}
\newcommand{\bbD}{\mathbb D}
\newcommand{\bbH}{\mathbb H}
\newcommand{\bbR}{\mathbb R}
\newcommand{\bbS}{\mathbb S}
\newcommand{\calD}{\mathcal D}
\newcommand{\calL}{\mathcal L}
\newcommand{\arctanh}{\mathrm{arctanh} \, }
\newcommand{\vartht}{\vartheta}
\newcommand{\ztinf}{\zeta_{\infty}}
\newcommand{\ztz}{\zeta_{0}}
\newcommand{\gapE}{\mu^{2}_{\lmb}}
\newcommand{\rnL}{\widetilde{\calL}}
\definecolor{light-gray1}{gray}{0.90}
\definecolor{light-gray2}{gray}{0.80}
\definecolor{deepgreen}{cmyk}{1,0,1,0.5}
\newcommand{\E}{\mathcal{E}}
\newcommand{\F}{\mathcal{F}}
\newcommand{\G}{\mathcal{G}}
\newcommand{\HH}{\mathcal{H}}
\newcommand{\LL}{\mathcal{L}}
\newcommand{\NN}{\mathcal{N}}
\newcommand{\KK}{\mathcal{K}}
\newcommand{\EE}{\mathscr{E}}
\newcommand{\C}{\mathbb{C}}
\newcommand{\Hp}{\mathbb{H}}
\newcommand{\R}{\mathbb{R}}
\newcommand{\Sp}{\mathbb{S}}
\newcommand{\Z}{\mathbb{Z}}
\newcommand{\D}{\mathbb{D}}
\newcommand{\g}{\mathbf{g}}
\newcommand{\h}{\mathbf{h}}
\newcommand{\al}{\alpha}
\newcommand{\be}{\beta}
\newcommand{\ga}{\gamma}
\newcommand{\de}{\delta}
\newcommand{\e}{\varepsilon}
\newcommand{\fy}{\varphi}
\newcommand{\om}{\omega}
\newcommand{\la}{\lambda}
\newcommand{\te}{\theta}
\newcommand{\s}{\sigma}
\newcommand{\z}{\zeta}
\newcommand{\La}{\Lambda}
\newcommand{\p}{\partial}
\newcommand{\re}{\mathop{\mathrm{Re}}}
\newcommand{\Rmnum}[1]{\expandafter\@slowromancap\romannumeral #1@}
\newcommand{\I}{\infty}
\newcommand{\ti}{\widetilde}
\newcommand{\ba}{\overline}
\newcommand{\ang}[1]{\left\langle{#1}\right\rangle}
\newcommand{\abs}[1]{\left\lvert{#1}\right\rvert}
\newcommand{\ant}[1]{\begin{align*}\begin{split} #1 \end{split}\end{align*}}
\newcommand{\EQ}[1]{\begin{equation}\begin{split} #1 \end{split}\end{equation}}
\newcommand{\pmat}[1]{\begin{pmatrix} #1 \end{pmatrix}}
\newcommand{\Del}[1]{}
\def\ti{\tilde}
\numberwithin{equation}{section}
\newtheorem{thm}{Theorem}[section]
\newtheorem{lem}[thm]{Lemma}
\newtheorem{prop}[thm]{Proposition}
\newtheorem{claim}[thm]{Claim}
\theoremstyle{remark}
\newtheorem{rem}{Remark}
\newtheorem{defn}{Definition}
\newcommand{\mand}{{\ \ \text{and} \ \  }}
\newcommand{\mif}{{\ \ \text{if} \ \ }}
\newcommand{\mfor}{{\ \ \text{for} \ \ }}
\newcommand{\mas}{{\ \ \text{as} \ \ }}
\newcommand{\euc}{\textrm{euc}}
\renewcommand\Im{\mathrm{Im}\,}
\begin{document}

\title[Wave maps from the hyperbolic plane]{Stability of stationary equivariant wave maps from the hyperbolic plane}

\author{Andrew Lawrie}
\author{Sung-Jin Oh}
\author{Sohrab Shahshahani}

\begin{abstract} In this paper we initiate the study of  equivariant wave maps from $2d$ hyperbolic space, $\Hp^2$, into rotationally symmetric surfaces. This problem exhibits markedly different phenomena than its Euclidean counterpart due to the exponential volume growth of concentric geodesic spheres on the domain.

In particular, when the target is $\Sp^2$, we find a family of equivariant harmonic maps $\Hp^2 \to \Sp^2$, indexed by a parameter that measures how far the image of each harmonic map wraps around the sphere. These maps have energies taking all values between zero and the energy of the unique co-rotational Euclidean harmonic map,  $Q_{\euc}$, from $\R^2$ to $\Sp^2$, given by stereographic projection. We prove that the harmonic maps are asymptotically stable for values of the parameter smaller than a threshold that is large enough to allow for maps that wrap more than halfway around the sphere. Indeed, we prove Strichartz estimates for the operator obtained by linearizing around such a harmonic map. However, for  harmonic maps with energies approaching the Euclidean energy of $Q_{\euc}$, asymptotic stability via a perturbative argument based on Strichartz estimates is precluded by the existence of gap eigenvalues in the spectrum of the linearized operator. 

When the target is $\Hp^2$, we find a continuous family of asymptotically stable equivariant harmonic maps $ \Hp^2 \to \Hp^2$ with arbitrarily small and arbitrarily large energies. This stands in sharp contrast to the corresponding problem on Euclidean space, where all finite energy solutions scatter to zero as time tends to infinity.

\end{abstract}

\thanks{Support of the National Science Foundation from grants DMS-1302782 and NSF 1045119 for the first and third authors, respectively, is gratefully acknowledged. The second author is a Miller Research Fellow, and acknowledges support from the Miller Institute.
The authors thank Joachim Krieger, Wilhelm Schlag and Daniel Tataru for many helpful discussions, and also Ji Hyun Bak for help with numerical experiments at the initial stage of investigation.}
\maketitle

\section{Introduction}\label{intro}

In recent years there has been increased interest in the study of dispersive equations on curved spaces. Here we begin the investigation of a simple model problem,  namely, equivariant wave maps from $2d$ hyperbolic space, $\Hp^2$, into rotationally symmetric surfaces $M$. In local coordinates the equation is given by
\EQ{ \label{eq wm}
\psi_{tt} - \psi_{rr} - \coth r \, \psi_r + \frac{ g(\psi)g'(\psi)}{\sinh^2 r} = 0,
}
where  $(\psi, \theta)$ are geodesic polar coordinates on the target surface $M$, and $g$ determines the metric, $ds^2 = d \psi^2 + g^2(\psi) d \theta^2$.

An intriguing feature of this problem is that there is an abundance of finite energy stationary solutions.
This fact stems from two well-known geometric facts: conformal invariance of $2d$ harmonic maps (which are time independent solutions to the problem) and conformal equivalence between $\bbH^{2}$ and the unit disk $\bbD^{2}$.
Another notable feature of this problem is the lack of scaling symmetry. This feature, which is in stark contrast to the Euclidean case and akin to the exterior wave map problem considered by Kenig, Schlag and the first author \cite{KLS, LS}, rules out an {\it a priori} obstruction to asymptotic stability of stationary solutions. However, while the curved background eliminates any natural scaling invariance for the problem, the model still exhibits features of an energy critical equation. Indeed, solutions with highly localized initial data do not see the global geometry of the domain and thus can be well approximated by solutions to the corresponding scale invariant energy critical Euclidean equation $\R^{1+2} \to M$. 
We believe that such properties make \eqref{eq wm} an interesting model for investigating stability of stationary solutions, and more ambitiously, asymptotic resolution for  large general finite energy solutions into solitons (\emph{soliton resolution}) in the case of negatively curved targets such as $\Hp^2$, and characterization of blow-up solutions in the case of positively curved targets such as $\Sp^2$.  

In this paper we establish asymptotic stability of various time independent solutions (i.e., harmonic maps) to the equivariant wave map equation \eqref{eq wm} on $\bbR \times \bbH^{2}$. More specifically, we consider two targets, namely the two sphere $\bbS^{2}$ and the hyperbolic plane $\bbH^{2}$.
In each case we classify all finite energy equivariant harmonic maps, which exist in abundance in contrast to the Euclidean case.
We then study the stability of each harmonic map by analyzing spectral properties of the linearized operator.

We begin with our results when the target is $\bbH^{2}$, as these are easier to describe. In this case, we show that the spectrum of the linearized operator about each harmonic map consists purely of the absolutely continuous part with no eigenvalue or resonance at the edge. This spectral information allows us to prove Strichartz estimates for the linearized operator, from which asymptotic stability of the harmonic map follows by a Picard iteration argument.

The picture changes drastically in the case of the $\bbS^{2}$ target.
Let $Q_{\lmb}$ be the family of finite energy equivariant harmonic maps from $\bbH^{2}$ to $\bbS^{2}$. These maps can be parametrized by $\lmb \in [0, \infty)$ in such a way that as $\lmb \to 0+$ the image of $Q_{\lmb}$ contracts to the north pole, and as $\lmb \to \infty$ the image of $Q_{\lmb}$ covers the whole of $\bbS^{2}$ except for the south pole. For $Q_{\lmb}$ with small $\lmb$, we prove that the spectrum of the linearized operator is absolutely continuous as in the case of $\bbH^{2}$. This allows us to prove Strichartz estimates in this case and therefore asymptotic stability holds by a perturbative argument. This scenario applies, in particular, to the harmonic map covering the northern hemisphere of $\bbS^{2}$. On the other hand, for $Q_{\lmb}$ with large $\lmb$, we show that there exists a unique simple \emph{gap eigenvalue} $\gapE$ in $(0, \frac{1}{4})$ ($\frac{1}{4}$ is the edge of the a.c. spectrum). Moreover, we demonstrate that $\gapE$ migrates toward zero as $\lmb \to \infty$.
While this phenomenon precludes the possibility of scattering to $Q_{\lmb}$ by a linear mechanism, it nevertheless suggests an interesting picture concerning nonlinear stability of $Q_{\lmb}$ and the rate of scattering; we refer the reader to Remark~\ref{rem:FGR}.

\subsection{Additional context for the problem} Although the authors are not aware of any previous investigations into the model at hand, there has been substantial activity of late regarding dispersive equations on $\R \times \Hp^d$, and it is partially in this context in which this problem can be viewed. Perhaps the most relevant recent works are the proofs of Strichartz estimates for the free wave equation on $\R \times \Hp^d$ together with global small data theory for semi-linear equations with power-type nonlinearities in~\cite{MT11, MTay12, AP}, see also the many references therein. There has also been substantial activity in this direction for the Schr\"odinger equation on $\R \times \Hp^d$ and we refer the reader to~\cite{Ban07, BCD, BCS, IS, IPS, AP09} as well as the references therein for more details. For treatments of the semi-linear elliptic problem, see~\cite{CM, ManSan}. As we are investigating the asymptotic stability of certain stationary solutions to~\eqref{eq wm}, we are forced to confront the linearized operator, which amounts to a radial free evolution operator on hyperbolic space plus a potential term. The dispersive estimates for the free evolution from \cite{AP} thus make up an essential ingredient in the proof.

 \subsection{Setup}
To explain the main results in more detail, we now give a more precise account of our setup.
Consider polar coordinates on the hyperboloid model of $\Hp^2$:
\ant{
[0, \infty) \times S^1 \ni (r, \om) \mapsto (\sinh r \sin \om, \sinh r\cos \om, \cosh r) \in \R^{2+1}.
}
Denote this map by $\Psi: [0, \infty) \times S^1 \to (\R^{2+1}, \mathbf{m})$, where $\mathbf{m}$ is the Minkowski metric on $\R^{2+1}$. The hyperbolic metric $\h$ in these coordinates is given by the pull-back of the Minkowski metric on $\R^{2+1}$  by the map $\Psi$, i.e.,  $\h = \Psi^*\mathbf{m}$. We have
\ant{
(\h_{jk}) = \pmat{ 1  &0 \\ 0 & \sinh^2 r}.
}
The volume element is $\sqrt{\abs{\h(r, \om)}} = \sinh r$, and hence for $f : \Hp^2 \to \R$ we have
\ant{
\int_{\Hp^2} f(x) \, d \textrm{Vol}_{\h} = \int_0^{2\pi} \int_0^{\infty} f(\Psi(r ,\om)) \sinh r \, dr \, d \om.
}
For radial functions, $f: \Hp^2 \to \R$ we abuse notation and write $f(x) = f(r)$ and
\ant{
\int_{\Hp^2} f(x) \, d \textrm{Vol}_{\h} = 2\pi \int_0^{\infty} f(r) \sinh r \, dr .
}
We will focus attention on two rotationally symmetric target manifolds, namely, $M= \Sp^2$ and $M = \Hp^2$. We begin with the positively curved case, $\Sp^2$.

\subsection{Equivariant wave maps: $\R \times \Hp^2 \to \Sp^2$} In this section we consider wave maps $U: \R \times \Hp^2 \to \Sp^2$. As both the domain and the target are rotationally symmetric we can consider a restricted class of maps, $U,$ satisfying the equivariance $U \circ \rho = \rho \circ U$, for all rotations $\rho \in SO(2).$ In fact we consider the special subclass of such maps known as $1$-equivariant, or co-rotational, which corresponds to equivariant maps which in local coordinates take the form
\ant{
U(t, r, \om) = (\psi(t, r), \om) \hookrightarrow ( \sin \psi \sin \om, \sin \psi \cos  \om, \cos \psi),
}
where $\psi$ is the azimuth angle measured from the north pole of the sphere and the metric on $\Sp^2$ is given by $ds^2 = d \psi^2+ \sin^2 \psi\,  d \om^2$ (for a more general class of equivariant maps one can consider an ansatz of the form $U(t, r, \om) = (\psi(t, r), \om+\chi(t,r))$). In this formulation, $1$-equivariant wave maps are formal critical points of the Lagrangian
\ant{
\mathcal{L}(U)= \frac{1}{2} \int_{\R} \int_0^{\infty} \left( - \psi_t^2(t, r) + \psi_r^2(t, r) + \frac{ \sin^2 \psi(t, r)}{\sinh^2 r} \right) \, \sinh r \, dr \, dt.
}
The Euler-Lagrange equations reduce to an equation for the azimuth angle $\psi$ and we are led to the Cauchy problem:
\EQ{\label{wm}
&\psi_{tt} - \psi_{rr} - \coth r \, \psi_r + \frac{ \sin(2 \psi)}{2 \sinh^2 r} = 0,\\
& \vec \psi(0)= ( \psi_0, \psi_1).
}
We will often use the notation $\vec \psi(t)$ to denote the pair $\vec \psi(t, r):= ( \psi(t, r), \psi_t(t, r))$. The conserved energy is given by
\EQ{\label{con energy}
\E( \vec \psi)(t) = \frac{1}{2} \int_0^{\infty}  \left( \psi_t^2 + \psi^2_r + \frac{\sin^2 \psi}{\sinh^2 r} \right) \sinh r \, dr = \textrm{constant}.
}
Note that for initial data $\vec \psi(0) = (\psi_0, \psi_1)$ to have finite energy we need $\psi_0(0) = k \pi$ for some $k \in \Z$. For the solution to depend continuously on the initial data this integer $k$ must be preserved by the evolution. Here we consider the case $k =0,$ corresponding to maps that send $r=0$ (the vertex of the hyperboloid) to the north pole of $\Sp^2$, as the other cases are similar.

The behavior of finite energy data at $r=\infty$ is more flexible. One can check that $\psi_0(r)$ has a well defined limit as $r\rightarrow\infty,$ but that this limit can be any finite number, i.e., $\E(\psi_0,\psi_1)<\infty$ implies there exists $\alpha\in\R$ so that $\lim_{r\rightarrow\infty}\psi_0(r)=\alpha$.  This stands in sharp contrast to the corresponding problem for wave maps $\R^{1+2} \to \Sp^2$ where the endpoint can only be an integer multiple of $\pi$ giving such maps a fixed topological degree. Here, the fact that any finite endpoint is allowed can be attributed  to the rapid decay of $\sinh^{-1} r$ as $ r \to \infty$ in the last term in the integrand of~\eqref{con energy}, and is ultimately responsible for the existence of the family of harmonic maps mentioned in the abstract.

In this paper we will consider initial data with endpoints $\psi_0( \infty) = \al$ for $\al \in [0, \pi)$, which means that we will only consider those $\psi_0$ which do not reach the south pole. This leads us to define the energy classes
\EQ{
\E_{\la} := \{ ( \psi_0, \psi_1) \mid \E(\psi_0,\psi_1)< \infty, \, \,  \psi_0(0) =0, \, \, \psi_0( \infty) = 2 \arctan( \la)\}.
}
for $\la \in [0, \infty)$. The reason for this restriction to  $\al \in [0, \pi)$ is given by the presence of a $1-$parameter family, $Q_{\la}$, of finite energy harmonic maps with such endpoints, i.e., solutions to
\EQ{\label{hm}
&Q_{rr}+ \coth r\, Q_r = \frac{\sin 2Q}{2\sinh^2 r},\\
&Q(0) = 0, \quad \lim_{r \to \infty} Q(r) = 2\arctan(\la).
}
For every $\la \in [0, \infty)$ there is a unique finite energy solution $Q_{\la}$ to~\eqref{hm}, given by
\EQ{
Q_{\la}(r) = 2\arctan(\la \tanh(r/2)).
}
Moreover, $(Q_{\la}, 0)$ has energy
\ant{
\E( Q_{\la}, 0) = 1- \cos(Q_{\la}( \infty)) = 2 \frac{\la^2}{ \la^2 +1}
}
 which is minimal in $\E_{\la}$ -- in other words for each angle $\al \in [0, \pi)$, there exists a map connecting $0$ to $\al$ which uses the minimum possible amount of energy and this map is, in fact, the \emph{harmonic map} $Q_{\la}$ with $\la = \tan( \al/2)$.  For endpoints $\al \ge \pi$ there are \emph{no} finite energy harmonic maps. We provide a more detailed description of the $Q_{\la}$ with proofs of the preceding statements in Section~\ref{S2 hm}.

 The existence of the $Q_{\la}$ stands in stark contrast to the corresponding Euclidean problem,  equivariant wave maps $\R^{1+2} \to \Sp^2$, which reduce to the following equation for the azimuth angle $\psi$:
  \EQ{\label{euc wm}
  \psi_{tt} - \psi_{rr} - \frac{1}{r} \psi_r + \frac{ \sin 2 \psi}{2 r^2} = 0.
  }
  In fact, the unique (up to scaling) Euclidean equivariant harmonic map is given by $Q_{\euc}(r) = 2 \arctan(r)$ which connects the north pole to the south pole of the sphere. $Q_{\euc}$ is the unique, nontrivial, finite energy solution to
 \EQ{\label{euc hm}
 Q_{rr} + \frac{1}{r} Q_{r}  = \frac{\sin 2Q}{2 r^2}, \, \, \, Q(0) = 0.
 }
 We remark that $Q_{\euc}$ minimizes the Euclidean energy
 \EQ{
 \E_{\euc}(\psi_0, \psi_1) =  \frac{1}{2} \int_0^{\infty}\left[ (\p_r \psi_0)^2 + \psi_1^2 + \frac{\sin^2 \psi_0}{r^2} \right]\, r \, dr
 }
amongst all degree one maps, i.e., those which satisfy $\psi_0(0) = 0, \psi_0(\infty) = \pi$ and by direct computation one sees that $\E_{\euc}(Q_{\euc}, 0) = 2$. We note that for the hyperbolic harmonic maps $Q_{\la}$ we have \ant{
&\E(Q_{\la}, 0) \to \E_{\euc}(Q_{\euc}, 0) \mas \la \to \infty, \\
&\E(Q_{\la}, 0) \to 0 \mas \la \to 0.
}
\subsubsection{Asymptotic stability of $Q_{\la}$}
It is well known that  $Q_{\euc}$ is \emph{unstable} with respect to the Euclidean equivariant wave map flow and in fact, leads to finite time blow-up, see~\cite{Cote, KST}.  

A natural question to ask is whether $(Q_{\la}, 0)$ is asymptotically stable for fixed $\la \in [0, \infty)$ under the wave map evolution, \eqref{wm}, in $\E_{\la}$. It is this question that we address here. The natural space in which to consider solutions to~\eqref{wm} is the energy space
\EQ{\label{H0 def}
\| (\psi_0, \psi_1)\|_{\HH_0}^2:=  \int_0^{\infty}   \left[(\p_r\psi_0)^2(r) + \psi_1^2(r) + \frac{\psi_0^2(r)}{\sinh^2 r} \right] \, \sinh r \, dr.
}
Indeed, we endow $\E_{\la}$ with the  ``norm"
\EQ{
\| (\psi_0, \psi_1)\|_{\E_{\la}} := \| (\psi_0, \psi_1) -(Q_{\la}, 0)\|_{\HH_0}.
}
The first result is an affirmative answer to the above question for a range of $\la \in [0, \la_0)$ for some $\la_0 \ge\sqrt{15/8}$.
\begin{thm}\label{s2 stab} There exists $\la_0 \ge\sqrt{15/8}$ so that for every $0 \le \la <\la_0$, the harmonic map $Q_{\la}$ is asymptotically stable in the space $\E_{\la}$. In particular, there exists a $\de_0>0$ such that for every $(\psi_0, \psi_1) \in \E_{\la}$ with
\ant{
 \|(\psi_0, \psi_1)- (Q_{\la}, 0)\|_{\HH_0}< \de_0
}
there exists a unique global solution $\vec \psi(t) \in \E_{\la}$ to~\eqref{wm}. Moreover, $\vec \psi(t)$ scatters to $(Q_{\la}, 0)$ as $t \to \pm \infty$.
\end{thm}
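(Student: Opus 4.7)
The plan is a standard linearization-plus-Strichartz-plus-contraction scheme, with the bulk of the work in the spectral analysis of the linearized operator.

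First, I would substitute the ansatz $\psi(t,r) = Q_\la(r) + \varphi(t,r)$ into \eqref{wm}. Using \eqref{hm} to cancel the time-independent part, the perturbation $\varphi$ satisfies a linear wave equation with a potential term and a quadratic-or-better nonlinear remainder:
\ant{
\varphi_{tt} - \varphi_{rr} - \coth r \, \varphi_r + \frac{\cos(2Q_\la)}{\sinh^2 r} \, \varphi = N_\la(\varphi),
\qquad
N_\la(\varphi) = -\frac{\sin(2Q_\la)(1-\cos(2\varphi)) + \cos(2Q_\la)(\sin(2\varphi)-2\varphi)}{2\sinh^2 r}.
}
Introducing $u = (\sinh r)^{1/2}\varphi$ removes the first-order term and converts the problem to a 1D Klein–Gordon-type equation
\ant{
u_{tt} + \LL_\la u = \widetilde N_\la(u), \qquad
\LL_\la := -\partial_r^2 + V_\la(r), \qquad
V_\la(r) = \frac{\cos(2Q_\la)}{\sinh^2 r} + \frac{1}{4\sinh^2 r} + \frac{1}{4},
}
so that the essential spectrum of $\LL_\la$ on $L^2([0,\infty),dr)$ is $[\tfrac14,\infty)$ (the Agmon–Kato–Kuroda framework applied to the exponentially decaying part of $V_\la-\tfrac14$).

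The core step is the spectral analysis of $\LL_\la$ for $\la \in [0,\la_0)$. I would establish: (i) no eigenvalues in $(-\infty,\tfrac14)$, i.e.\ $\LL_\la \geq \tfrac14$; (ii) no resonance at the threshold $\tfrac14$; and (iii) no embedded eigenvalues in $(\tfrac14,\infty)$. The natural tool is a supersymmetric factorization $\LL_\la - \tfrac14 = A_\la^* A_\la$, where $A_\la = \partial_r + W_\la(r)$ and the superpotential $W_\la$ is determined by the $\la$-derivative of $Q_\la$ (which, by differentiating \eqref{hm} in $\la$, furnishes a formal zero mode of $\LL_\la-\tfrac14$). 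Explicitness of $Q_\la(r) = 2\arctan(\la\tanh(r/2))$ lets one compute $\cos(2Q_\la) = 1 - \tfrac{2\la^2\tanh^2(r/2)}{(1+\la^2\tanh^2(r/2))^2}\cdot\text{(etc.)}$ and identify the zero-mode precisely; the quantitative bound $\la_0 \ge \sqrt{15/8}$ would then emerge from checking whether this zero mode fails to be $L^2$ near $r=\infty$, since exponential decay at infinity is precisely what distinguishes a genuine gap eigenvalue from a non-$L^2$ distorted plane wave. Items (ii)–(iii) follow from standard Jost/Weyl–Titchmarsh analysis once (i) is in hand.

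With the spectral picture settled, I would next prove Strichartz estimates for $\cos(t\sqrt{\LL_\la})$ and $\sin(t\sqrt{\LL_\la})/\sqrt{\LL_\la}$. The strategy is to view $\LL_\la$ as a short-range perturbation of the free radial wave Laplacian on $\Hp^2$ (conjugated by $(\sinh r)^{1/2}$), transfer the Anker–Pierfelice dispersive/Strichartz estimates cited in the introduction to the perturbed flow via a distorted Fourier transform / resolvent expansion, and use (i)–(iii) to rule out the obstructions (bound states, threshold resonances, embedded eigenvalues) that would spoil the limiting absorption principle. This mirrors the exterior wave map arguments of Kenig–Lawrie–Schlag \cite{KLS,LS}. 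Finally, the nonlinear problem closes by a contraction mapping: $\widetilde N_\la(u)$ is quadratic in $u$ with smooth $r$-dependent coefficients bounded uniformly in $r$, so it maps the Strichartz space to its dual with a quadratic estimate, and a Picard iteration yields a unique global solution $\vec\psi(t)\in\E_\la$ with $\vec\psi(t)-(Q_\la,0)\to 0$ in a dispersive norm as $t\to\pm\infty$, which translates to scattering in $\HH_0$.

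The main obstacle will be item (i) of the spectral analysis together with the sharp threshold value of $\la_0$. Obtaining $\LL_\la \ge \tfrac14$ is equivalent to preventing the supersymmetric zero mode from becoming square-integrable, and tracking this transition as $\la$ grows requires a delicate ODE analysis of the explicit zero mode; a robust Birman–Schwinger or Sturm comparison argument should give some $\la_0 > 1$, while the explicit value $\sqrt{15/8}$ should fall out of a concrete computation of the large-$r$ asymptotics of $W_\la$. The threshold resonance/eigenvalue analysis at $r=\infty$ (where the potential is exponentially close to its asymptotic constant, hence borderline short-range) is the most delicate analytic point.
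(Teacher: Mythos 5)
Your overall architecture (linearize, spectral analysis of a half-line operator with a.c. spectrum $[\tfrac14,\infty)$, perturbed Strichartz via distorted Fourier transform, contraction) matches the paper, but the step you call the core of the argument contains a genuine error. Differentiating \eqref{hm} in $\la$ shows that $\p_\la Q_\la(r)=\frac{2\tanh(r/2)}{1+\la^2\tanh^2(r/2)}$ lies in the kernel of the linearized operator itself; after conjugation by $\sinh^{1/2}r$ it becomes the positive solution $\zt_0^{(\la)}$ of $\LL_{V_\la}\zt_0^{(\la)}=0$, i.e.\ a zero mode at spectral parameter $0$, \emph{not} at the continuum threshold $\tfrac14$. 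Consequently the supersymmetric factorization built from this mode gives only $\LL_\la\ge 0$ (no negative spectrum and no eigenvalue at $0$ -- this is the paper's Proposition~\ref{no neg spec}, valid for \emph{all} $\la$), and it cannot yield item (i), $\LL_\la\ge\tfrac14$. Nor can any argument uniform in $\la$: by Theorem~\ref{e val} a gap eigenvalue $\gapE\in(0,\tfrac14)$ really appears for large $\la$, so the whole difficulty is ruling out eigenvalues in the open gap $(0,\tfrac14)$ for moderate $\la$. Your proposed mechanism for locating $\la_0$ -- the zero mode "becoming $L^2$ at infinity" -- also cannot produce $\sqrt{15/8}$, because $\zt_0^{(\la)}\sim c_\la e^{r/2}$ grows exponentially for every $\la$ and never crosses into $L^2$. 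The paper's value $\sqrt{15/8}$ comes from a different comparison: the explicit function $\tanh^{3/2}r$ satisfies $\bigl(-\p_{rr}+\tfrac{3}{4\sinh^2 r}\bigr)f=\tfrac{15}{4\cosh^2 r}f$, and a Wronskian/Sturm argument excludes both gap eigenvalues and a threshold resonance as long as $\tfrac{15}{4\cosh^2 r}+V_\la(r)>0$ pointwise, which (since $V_\la(0)=-2\la^2$) holds exactly for $\la^2<15/8$.

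A secondary problem is your reduction and the claim that the nonlinearity has coefficients bounded in $r$. With $u=(\sinh r)^{1/2}\varphi$ the quadratic term of $\widetilde N_\la$ carries the coefficient $\sin(2Q_\la)\,\sinh^{-5/2}r\sim \la\, r^{-3/2}$ as $r\to 0$, which is not bounded, and closing a contraction for a half-line equation with an inverse-square-type singularity at $r=0$ in Strichartz norms is not routine. The paper instead sets $\psi-Q_\la=\sinh r\,u$ (a full power of $\sinh r$), which turns \eqref{wm} into the radial equation \eqref{u eq} on $\R\times\Hp^4$; there the nonlinearity obeys $|\F_{\Sp^2}|\lesssim\ang{\sinh r}^{-1}|u|^2$, $|\G_{\Sp^2}|\lesssim|u|^3$ (Lemma~\ref{NN}), the energy spaces match via Lemma~\ref{2d to 4d}, and the Anker--Pierfelice Strichartz machinery on $\Hp^4$ applies directly. (Minor point: in your conjugated operator the free part should read $\tfrac14-\tfrac{1}{4\sinh^2 r}$, not $+\tfrac{1}{4\sinh^2 r}$; with that correction your half-line operator coincides with the paper's $\LL_{V_\la}$.) Your outline of the perturbed Strichartz estimates and the final Picard iteration is consistent with Sections~\ref{strich section} and~\ref{proofs}, but as written the proposal does not establish the decisive spectral input for any $\la>0$, let alone up to $\sqrt{15/8}$.
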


\begin{rem} The phrase \emph{$ \vec \psi(t)$ scatters to $(Q_{\la}, 0)$  as $t \to \pm \infty$} means that there exist  solutions $\vec \fy_{L}^{\pm}(t)$ to the linearized equation
\EQ{\label{lin eq}
\fy_{tt} - \fy_{rr} - \coth r\,  \fy_r + \frac{1}{\sinh^2 r} \fy = 0,
}
so that
\EQ{
\| \vec \psi(t)- (Q_{\la}, 0)- \vec \fy_L^{\pm}(t) \|_{\HH_0} \to 0 \mas  t \to \pm \infty.
}
\end{rem}

\begin{rem}
We note that the number $\sqrt{15/8}$ appears for a technical reason that will be further explained in Section~\ref{spectra}. In short, it is the largest value for $\la$ for which we have a simple proof that the linearized operator about $Q_{\la}$ has no discrete spectrum. The number $2 \arctan{\sqrt{15/8}}$ is slightly less than $3\pi/5$ which means that our stability result holds for maps which wrap more that halfway around the sphere.
\end{rem}
The proof of Theorem~\ref{s2 stab} reduces to Strichartz estimates for the linearized operator after first passing to a radial wave equation on $\R \times \Hp^4$. The reason we can pass to waves on $\Hp^4$ comes from the fact that the nonlinearity in~\eqref{wm} contains a repulsive potential term:
\ant{
\frac{\sin 2 \psi}{ 2\sinh^2 r} = \frac{1}{ \sinh^2 r} \psi + \frac{ \sin 2 \psi - 2 \psi}{2 \sinh^2 r}.
}
This indicates that the linear part of~\eqref{wm} has more dispersion than a free wave on $\R \times \Hp^2$. In fact, after linearizing about $(Q_{\la}, 0)$ we prove that for $\la$ as in Theorem~\ref{s2 stab}, the linear part has the same dispersion as a free wave on $\R \times \Hp^4$. This can be seen by making the following change of variables. For a solution $\vec \psi(t) \in \E_{\la}$ define $ u(t)$ by
\EQ{\label{u S2 def}
 \sinh r \,  u(t, r) :=  \psi(t, r) - Q_{\la}(r).
}
We obtain the following equation for $\vec u(t)$,
\EQ{\label{u eq}
&u_{tt}- u_{rr} - 3 \coth r \, u_r - 2 u + V_{\la}(r) u = \NN_{\Sp^2}(r, u)\\
&\vec u(0)= (u_0, u_1)
}
where the \emph{attractive} potential $V_{\la}$ and the nonlinearity $\NN_{\Sp^2}$ are given by
\begin{align}
& V_{\la}(r) := \frac{ \cos 2Q_{\la} -1}{ \sinh^2 r} \le 0 \label{Vla}\\
&\NN_{\Sp^2}(r, u):= \frac{\sin 2Q_{\la}}{ \sinh^3 r}\sin^2( 2 \sinh r\, u) + \cos2Q_\la \frac{2 \sinh r\, u -  \sin (2 \sinh r \, u)}{2 \sinh^3 r} \label{N S}
\end{align}
The underlying linear equation under consideration is then given by
\EQ{\label{v eq}
v_{tt} - \Delta_{\Hp^4} v -2 v+ V_{\la}v =0
}
for radially symmetric functions $v$. In Section~\ref{strich section} we prove Strichartz estimates in Proposition~\ref{strich} for~\eqref{v eq} with $\la \in [0, \la_0)$ using the spectral transformation, or the \emph{distorted Fourier transform}, for the self-adjoint Schr\"odinger operators
\EQ{\label{H Vla}
&H_0:=-\p_{rr}- 3 \coth r \, \p_r -2,\\
&H_{V_{\la}} := -\p_{rr}- 3 \coth r \, \p_r -2 +V_{\la},
}
following roughly the argument in \cite{LS}, which was based on techniques from~\cite{RodS}, see also \cite{SSS1, SSS2}.
The spectrum $\s(H_{V_{\la}})$ plays a central role in determining the dispersive properties of the wave equation~\eqref{v eq}. It is well known that the spectrum of the Laplacian on $\Hp^4$ is given by $\s( \Delta_{\Hp^4}) = \left[ 9/4, \infty\right)$ where here $9/4 = \left(\frac{d-1}{2}\right)^2$ for $d=4$, and thus we have $\s(H_0) = [1/4, \infty)$ for the shifted operator $H_0 = -\Delta_{\Hp^4}-2$.  The key to our analysis is the existence of $\la_0 \in (0, \infty)$ (in fact we can prove that $\la_0\ge\sqrt{15/8}$) so that for all $0\le \la< \la_0$, the perturbed operator $H_{V_{\la}}$ has purely absolutely continuous spectrum equal to $[1/4, \infty)$. In particular,  $H_{V_{\la}}$ has no negative spectrum, no eigenvalues in the gap $[0, 1/4)$, and the threshold $1/4$ is neither an eigenvalue nor a resonance.

However, as $\la$ becomes large, which means that the harmonic map $Q_{\la}$ wraps further around the sphere, we observe a change in the spectrum of $H_{V_{\la}}$ which results in  a breakdown in the dispersive behavior of solutions to the linear equation~\eqref{v eq}. In particular, as $\la \to \infty$ we establish the existence of a simple gap eigenvalue $\gapE \in (0, 1/4)$.  Moreover we show that as $\la \to \infty$ the eigenvalue $\gapE$ migrates to $0$.  In particular, we prove the following result.

\begin{thm}\label{e val}There exists $\Lambda_0>0$ so that for all $\la > \La_0$, the Schr\"odinger operator $H_{V_{\la}}$ has a unique, simple eigenvalue, $\gapE$, in the spectral gap $(0, 1/4)$. That is, there exists  a solution $\fy_\la \in L^2(\Hp^4)$ to
\EQ{\label{e vec}
H_{V_{\la}} \fy_{\la} =  \gapE \fy_{\la}.
}
where $\gapE \in (0, 1/4)$. Moreover, we have
\EQ{ \label{mu to 0}
\gapE \to 0 \mas \la \to \infty.
}
Finally, if we define
\EQ{
&\lambda_{\sup}:=\sup\{ \la \mid H_{V_{\ti \lambda}} \, \, \textrm{has no e-vals and no threshold resonance} \, \, \forall \, \ti \la < \la\}\\
&\Lambda_{\inf}:=\inf\{ \la \mid H_{V_{\ti \lambda}} \, \, \textrm{has a gap e-val}\, \,  \mu^2_{\ti{\la}} \in (0, 1/4) \, \, \forall \,  \ti \la > \la\}
}
Then both $H_{\la_{\sup}}$ and $H_{\Lambda_{\inf}}$ have threshold resonances.
\end{thm}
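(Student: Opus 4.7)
The argument revolves around the explicit sub-threshold zero mode
\[
\phi^*_\lambda(r) := \p_\lambda Q_\lambda(r) = \frac{2\tanh(r/2)}{1 + \lambda^2 \tanh^2(r/2)}
\]
of the unitary conjugate operator $\ti H_{V_\lambda} := -\p_{rr} - \coth r \, \p_r + \cos(2 Q_\lambda)/\sinh^2 r$ on radial $L^2(\Hp^2)$, related to $H_{V_\lambda}$ via $u = \phi/\sinh r$: differentiating \eqref{hm} in $\lambda$ yields $\ti H_{V_\lambda} \phi^*_\lambda = 0$, while $\phi^*_\lambda(r) \to 2/(1+\lambda^2) \ne 0$ as $r \to \infty$ shows $\phi^*_\lambda \notin L^2(\Hp^2)$, so $0$ is a sub-threshold resonance rather than an eigenvalue. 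The further substitution $\psi = \sqrt{\sinh r}\,\phi$ recasts the problem as a 1D Schr\"odinger operator $-\p_{rr} + 1/4 + (\cos 2Q_\lambda - 1/4)/\sinh^2 r$ on $L^2((0,\infty), dr)$, making Sturm oscillation theory available.

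For existence of $\gapE \in (0, 1/4)$ at large $\lambda$, I would apply the min--max principle with a carefully matched trial function. The natural rescaling $r = 2s/\lambda$ concentrates $Q_\lambda$ onto the scale $1/\lambda$ and formally identifies $\lambda^{-2}\ti H_{V_\lambda}$ with the Euclidean linearized operator around $Q_{\euc}(s) = 2\arctan s$, whose zero mode is the scaling mode $\Lambda Q_{\euc}(s) = 2s/(1+s^2)$. The trial function is constructed by taking $\phi^*_\lambda$ on an inner region $r \lesssim 1$ and gluing it to a Jost-type exponentially decaying tail $\sim e^{-\alpha(\mu)r}$ at infinity (with decay rate $\alpha(\mu) = (1 - \sqrt{1-4\mu^2})/2$ at a target spectral value $\mu^2$ just below $1/4$) at a matching scale chosen to balance the exponential tail against the non-decaying asymptote $2/(1+\lambda^2)$ of $\phi^*_\lambda$. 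Using $\ti H_{V_\lambda}\phi^*_\lambda = 0$ and integration by parts, the Rayleigh quotient reduces to boundary-type terms concentrated at the matching scale, and for large $\lambda$ the negative contribution from the deep attractive well of $\cos(2Q_\lambda)/\sinh^2 r$ near $Q_\lambda = \pi/2$ (which occurs at $r \sim 2/\lambda$ with depth of order $\lambda^2$) dominates, forcing the Rayleigh quotient strictly below $1/4$. Uniqueness and simplicity of the gap eigenvalue then follow from Sturm oscillation theory applied in the 1D reduction: simplicity is automatic in 1D, while uniqueness follows from a careful node-count of the regular-at-$0$ 1D solutions as $\mu^2$ traverses $(0, 1/4)$, using that $\psi^*_\lambda := \sqrt{\sinh r}\,\phi^*_\lambda$ is positive (nodeless).

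The migration \eqref{mu to 0} is extracted by refining the same variational argument: in rescaled coordinates the optimized trial function converges to the Euclidean scaling mode $\Lambda Q_{\euc}$, which is an exact zero-energy solution of the limiting Euclidean operator, so the optimized Rayleigh quotient decays to $0$ as $\lambda \to \infty$, yielding the upper bound $\gapE = o(1)$.

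For the threshold resonance statement at $\lambda_{\sup}$ and $\Lambda_{\inf}$, I would introduce the Jost-type Wronskian $\mathcal W_\lambda(\mu) := W[\phi^{\mathrm{reg}}_\lambda(\mu,\cdot),\,\phi^{\mathrm{Jost}}_\lambda(\mu,\cdot)]$ between the regular-at-$0$ solution and the fast-decay Jost solution of $\ti H_{V_\lambda}\phi = \mu^2 \phi$, extended continuously to $\mu = 1/2$ via the threshold $e^{-r/2}$ Jost solution. Gap eigenvalues correspond to zeros of $\mathcal W_\lambda$ on $\mu \in (0, 1/2)$ and threshold resonances to zeros at $\mu = 1/2$; joint continuity in $(\lambda, \mu)$ combined with the defining properties of $\lambda_{\sup}$ and $\Lambda_{\inf}$ forces a zero of $\mathcal W_{\lambda_{\sup}}$ (resp.\ $\mathcal W_{\Lambda_{\inf}}$) at $\mu = 1/2$, i.e., a threshold resonance. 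I expect the main obstacle to be the trial function construction in the existence part: naive cutoffs of $\phi^*_\lambda$ yield Rayleigh quotients no smaller than $1/4$ due to the non-decaying tail of $\phi^*_\lambda$, so the matching to Jost asymptotics must be executed carefully enough to expose the gain from the concentrated attractive well on scale $r \sim 1/\lambda$.
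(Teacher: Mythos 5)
Your overall variational/Sturm strategy could in principle be made to work, but the existence step as you describe it contains a genuine error of mechanism. The claim that the Rayleigh quotient is pushed below $1/4$ because of ``the deep attractive well of depth $\sim \lambda^2$ at $r \sim 2/\lambda$'' cannot be right: after the rescaling $\rho = \lambda r$ that well is exactly the Euclidean well, and the limiting operator $\calL_{\euc}$ is \emph{nonnegative}, with only a zero-energy resonance $\varphi_0(\rho) = \rho^{3/2}/(1+(\rho/2)^2)$ and no bound state. So to leading order the well is critically scaled and produces a quotient exactly at threshold, not below it; whether the hyperbolic operator dips below $1/4$ is decided by the $O(\lambda^{-2})$ discrepancies between the hyperbolic and Euclidean problems (the difference $\frac{3}{4\sinh^2 r} - \frac{3}{4}r^{-2}$ together with the spectral shift), accumulated against $\varphi_0^2 \sim \rho^{-1}$ over the long intermediate range $1 \lesssim \rho \lesssim \lambda$, i.e.\ $1/\lambda \lesssim r \lesssim 1$. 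The resulting margin is only $\sim \lambda^{-2}\log\lambda$ against competing errors of size $\lambda^{-2}$, and your sketch never identifies this logarithmic gain. Your statement that ``the Rayleigh quotient reduces to boundary-type terms concentrated at the matching scale'' is in fact backwards: since the zero mode grows like $e^{r/2}$ (in the half-line picture), the matching/boundary terms are \emph{positive} and are the terms to be beaten; the decisive negative contribution is a bulk term, essentially $-\frac14\int$ of the square of the zero mode over the intermediate region, whose logarithmic divergence is the whole point. The same omission undercuts your migration argument: the Euclidean scaling mode is not in $L^2$ (its truncated norm diverges logarithmically), so ``the optimized quotient decays to $0$'' is exactly the quantitative statement that needs the $\log\lambda$ bookkeeping, not a consequence of formal convergence to $\Lambda Q_{\euc}$. (A minor symptom of the same fuzziness: your tail rate $\alpha(\mu) = (1-\sqrt{1-4\mu^2})/2$ is the non-$L^2$ branch; the admissible decay is $(1+\sqrt{1-4\mu^2})/2$ in the $\sinh r\,dr$-weighted picture, equivalently $\sqrt{1/4-\mu^2}$ after the half-line reduction.)

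For comparison, the paper never uses trial functions: existence is proved by showing that the regular solution of $\calL_{V_\lambda}\phi_0 = \frac14 \phi_0$ changes sign (then Sturm oscillation gives an eigenvalue, and the explicit positive zero mode $\zeta_0^{(\lambda)} = \rd_\lambda Q_\lambda \cdot \sinh^{1/2}r$ rules out spectrum in $(-\infty,0]$). The sign change is extracted by dividing by $\varphi_0$, a Volterra argument keeping the ratio $g$ near $1$ on a bounded renormalized interval, the pointwise bound $\lambda^2 W_{\lambda,1/2} \le -b$ on $[\rho_1,\lambda]$ which produces precisely the $\log\lambda/\lambda^2$ gain in $g'(\lambda)$, and a Wronskian comparison with the solution $\psi_\infty$ normalized at infinity. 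Uniqueness and simplicity are likewise not a soft node count: one must show that any gap eigenfunction (and any threshold resonance) is nodeless for large $\lambda$, which the paper does by a two-region argument (positivity of $\frac{3}{4\sinh^2 r} + V_\lambda$ for $r \ge C/\lambda$ plus comparison with $e^{-mr}$, and a renormalization argument giving $g \approx 1$ for $r \le C/\lambda$); positivity of the zero mode alone does not bound the number of zeros of solutions at energies in $(0,1/4]$. Migration is proved by using the explicit \emph{decaying} zero-energy solution $\zeta_\infty^{(\lambda)}$ to obtain a lower bound on $\phi_0$ over a renormalized interval of length $\sim \lambda$, again yielding a $\log\lambda$ contradiction. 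Your Jost--Wronskian continuity argument for threshold resonances at $\lambda_{\sup}$ and $\Lambda_{\inf}$ is sound and is essentially the paper's openness argument for the threshold solution $\phi_0^\lambda$.
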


\begin{rem} \label{rem:FGR} One immediate consequence of the presence of the gap eigenvalues for large $\la$ is that we can no longer prove a stability  result as in  Theorem~\ref{s2 stab}, by  a perturbative argument based on the dispersive properties of the underlying linear equation, i.e., Strichartz estimates.  On the other hand, a Struwe-type bubbling argument \cite{Struwe} suggests that  any solution~$\vec \psi(t)$ to~\eqref{wm} that blows up in finite time must bubble off a Euclidean harmonic map $Q_{\euc}$ and thus must have enough energy to wrap completely around the sphere. This gives some evidence towards a conjecture that in fact every $Q_{\la}$ is  stable -- as small perturbations of $Q_{\la}$ will not have enough energy to bubble off a $Q_{\euc}$ --  but for large $\la$, the stability manifests via a completely nonlinear mechanism, possibly as in the work of Soffer, Weinstein~\cite{SW99}.
\end{rem}

\begin{rem} At this point we do not know the precise location  of $\la_0 = \lambda_{\sup}$ in Theorem~\ref{s2 stab}, or of $\Lambda_0= \Lambda_{\inf}$ in Theorem~\ref{e val} or whether these two values are equal. Indeed, the existence of gap eigenvalues for large $\la$ is demonstrated by a contradiction argument and thus does not reveal a precise geometric reason for the breakdown in \emph{linear stability} described in Remark~\ref{rem:FGR}. On the other hand, this asymptotic-in-$\la$ failure of linear stability is natural in view of the bubbling mentioned in Remark~\ref{rem:FGR} and the explicit blow-up constructions  for the corresponding Euclidean problem from~\cite{KST, RS, RR}. Indeed the Euclidean blow-up constructions rely on energy concentration schemes which see only the local geometry of space, which suggests similar behavior is possible for the hyperbolic problem at hand, as long as the solution has enough energy to bubble off a $Q_{\euc}$.
\end{rem}

\begin{rem}
The existence of gap eigenvalues is a rather surprising feature of this model, as this contrasts greatly with the corresponding Euclidean wave maps problem. Key to the proof of Theorem~\ref{e val} is the fact that after a renormalization, the Schr\"odinger operator $H_{V_\la}$ formally approaches (as $\la \to \infty$) the operator $H_{V_{\euc}}$ obtained by linearizing~\eqref{euc wm} about $Q_{\euc}$. Assuming, for contradiction, the nonexistence of a gap eigenvalue, this formal approximation can be made precise on a region that increases in size as $\la$ increases. This fact allows us to treat the hyperbolic spectral picture as a  perturbation of its Euclidean counterpart. We can then  pair the  existence of a threshold resonance for the Euclidean problem together with the existence of the spectral gap in the hyperbolic problem to force a contradiction. We refer the reader to Sections \ref{subsec:gapEoverview}--\ref{subsec:gapEmig} for details.
\end{rem}
\subsection{Equivariant wave maps: $\R \times \Hp^2 \to \Hp^2$} We next consider wave maps $U: \R \times \Hp^2 \to \Hp^2,$ again restricting attention to co-rotational maps, meaning maps $U$, which in coordinates take the form
\ant{
U(t, r, \om) = (\psi(t, r), \om) \hookrightarrow ( \sinh \psi \sin \om, \sinh \psi \cos  \om, \cosh \psi)\in \R^{2+1},
}
where the metric on the target $\Hp^2$ is given by $ds^2 = d \psi^2+ \sinh^2 \psi\,  d \om^2$. In this formulation, $1$-equivariant wave maps are formal critical points of the Largrangian
\ant{
\mathcal{L}(U)= \frac{1}{2} \int_{\R} \int_0^{\infty} \left( - \psi_t^2(t, r) + \psi_r^2(t, r) + \frac{ \sinh^2 \psi(t, r)}{\sinh^2 r} \right) \, \sinh r \, dr \, dt.
}
The Euler-Lagrange equations reduce to an equation for  $\psi$ and we are led to the Cauchy problem:
\EQ{\label{wm h}
&\psi_{tt} - \psi_{rr} - \coth r \, \psi_r + \frac{ \sinh(2 \psi)}{2 \sinh^2 r} = 0,\\
& \vec \psi(0)= ( \psi_0, \psi_1).
}
The conserved energy is given by
\EQ{
\EE( \vec \psi)(t) = \frac{1}{2} \int_0^{\infty}  \left( \psi_t^2 + \psi^2_r + \frac{\sinh^2 \psi}{\sinh^2 r} \right) \sinh r \, dr = \textrm{constant}.
}
Note that for initial data $\vec \psi(0) = (\psi_0, \psi_1)$ to have finite energy we need $\psi_0(0) = 0$, which  means that a finite energy map must  fix the vertex of the hyperboloid. The behavior of $\psi$ at $r = \infty$ is again more flexible than the corresponding Euclidean equation for wave maps $\R^{1+2} \to \Hp^2$ due to the rapid decay of $\sinh^{-1} r$ as $r \to \infty$. We note that for any finite energy data $( \psi_0, \psi_1)$ the limit $\lim_{r \to \infty} \psi_0(r) = \al$ exists but can take any value $\al \in [0, \infty)$. We thus again define the energy classes
\EQ{
\EE_{\la}:= \{ (\psi_0, \psi_1)  \mid \EE( \vec \psi) < \infty, \, \, \psi_0(0) = 0, \,  \, \psi_0( \infty) = 2 \arctanh( \la)\}
}
with $\lambda\in[0,1).$   
 We will demonstrate the presence of a family of nontrivial harmonic maps taking all  energies ranging from $0$ to infinity. This is a surprising and distinctive feature of this model in light of the fact that no such maps exist in the corresponding Euclidean problem. In this context a harmonic map is a solution $P$ to the equation
 \EQ{\label{hm h}
&P_{rr}+ \coth r\, P_r = \frac{\sinh 2P}{2\sinh^2 r},\\
&P(0) = 0, \quad \lim_{r \to \infty} P(r) = 2\arctanh(\la).
}
For every $\la \in [0, 1)$ there is a unique finite energy solution $P_{\la}$ to~\eqref{hm h} given by
\EQ{\label{P la def}
P_{\la}(r):= 2 \arctanh(\la\tanh(r/2))
}
In Section~\ref{H2 harm} we show that $P_{\la}$ has energy
\ant{
\EE( P_{\la}, 0) = 2 \frac{\la^2}{1- \la^2}
}
which minimizes the energy in $\EE_\la$. Note that  $\EE( P_{\la},0) \to \infty$ as $\la \to 1^-$ and $\EE( P_{\la}, 0)  \to 0$ as $ \la \to 0$.

We recall the well known fact that for the Euclidean case of wave maps from $\R^{1+2} \to \Hp^2$ there are \emph{no finite energy nontrivial harmonic maps} due to the negative curvature of the target $\Hp^2$.

\subsubsection{Asymptotic stability of $P_{\la}$} We now turn to the question of the asymptotic stability of  $P_{\la}$ in $\EE_\la$ for fixed $\la \in [0, 1)$.  We prove the following result.
\begin{thm}\label{h2 stab} For every $\la \in [0, 1)$ the harmonic map $P_{\la}$ is asymptotically stable in $\EE_{\la}$. In particular, for each $\la \in [0, 1)$ there exists a $\de_0>0$ so that for every $(\psi_0, \psi_1) \in \EE_{\la}$ with
\ant{
 \| (\psi_0, \psi_1) - (P_{\la}, 0)\|_{\HH_0} < \de_0
}
there exists a unique, global solution $\vec \psi(t) \in \EE_{\la}$ to~\eqref{wm h}. Moreover, $\vec \psi(t)$  scatters to $(P_{\la}, 0)$  as $t \to \pm \infty$.
\end{thm}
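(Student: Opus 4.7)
The plan is to follow the scheme used for Theorem~\ref{s2 stab}: reduce the problem to a radial wave equation on $\R \times \Hp^4$ with a time-independent potential, prove Strichartz estimates for the linearized operator, and close the argument via a standard contraction-mapping scheme. The key qualitative difference from the $\Sp^2$ case is that the potential produced by linearizing around $P_\la$ is \emph{non-negative} for every $\la \in [0,1)$, so the spectral picture is uniformly favorable and no restriction on $\la$ arises.

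First I would introduce $u(t,r) := \sinh^{-1} r\,(\psi(t,r) - P_\la(r))$ in analogy with~\eqref{u S2 def}. A direct computation, using the harmonic map equation~\eqref{hm h} for $P_\la$ together with the addition formula $\sinh(2P_\la + 2\sinh r\cdot u) = \sinh 2P_\la \cosh(2\sinh r\cdot u) + \cosh 2P_\la \sinh(2\sinh r\cdot u)$, shows that~\eqref{wm h} becomes a perturbed radial wave equation on $\R \times \Hp^4$, schematically
\[
u_{tt} - \Delta_{\Hp^4} u - 2u + V_\la u = \NN_{\Hp^2}(r,u),
\]
where
\ant{
V_\la(r) := \frac{\cosh 2P_\la(r) - 1}{\sinh^2 r} \ge 0
}
and $\NN_{\Hp^2}$ is the natural analogue of~\eqref{N S} with the trigonometric functions replaced by their hyperbolic counterparts; the expansion shows $\NN_{\Hp^2}$ is at least quadratic in $u$ with coefficients bounded on every compact set of $r$. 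Since $P_\la$ is bounded for $\la<1$, the potential $V_\la$ is smooth, strictly positive, and decays exponentially at infinity, so it is a short-range perturbation of $H_0 := -\Delta_{\Hp^4} - 2$.

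Next I would analyze the self-adjoint operator $H_{V_\la} := H_0 + V_\la$ on radial $L^2(\Hp^4)$. Because $V_\la \ge 0$ pointwise, the comparison $H_{V_\la} \ge H_0 \ge 1/4$ immediately rules out negative spectrum and any eigenvalue in the spectral gap $[0,1/4)$ -- in sharp contrast with the $\Sp^2$ case, where gap eigenvalues appear for large $\la$ (cf.\ Theorem~\ref{e val}). The remaining task is to show that $z = 1/4$ is neither an eigenvalue nor a resonance, i.e.\ that the solution of $H_{V_\la}\phi = \tfrac14\phi$ regular at $r = 0$ does not have the borderline decay $e^{-3r/2}$ at infinity (the critical rate for $L^2(\sinh^3 r\,dr)$). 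Using the explicit formula $P_\la(r) = 2\arctanh(\la\tanh(r/2))$ to perform a direct ODE analysis, and combining it with a Wronskian / Sturm-type comparison argument that exploits the sign condition $V_\la \ge 0$, I expect this to go through uniformly in $\la \in [0,1)$.

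With this spectral picture established, the distorted Fourier transform construction carried out in Section~\ref{strich section} for the $\Sp^2$ target applies \emph{mutatis mutandis} and in fact simplifies, because no gap eigenvalue or threshold resonance needs to be handled. This yields Strichartz estimates for $v_{tt} + H_{V_\la} v = 0$ in the sense of Proposition~\ref{strich}. A standard Picard iteration in the corresponding Strichartz-type function spaces then produces, for each $\la \in [0,1)$, a global solution $u$ for initial data sufficiently small in $\HH_0$, which through the change of variables gives the desired global solution $\vec\psi(t) \in \EE_\la$ to~\eqref{wm h} near $(P_\la,0)$; scattering of $\vec\psi(t) - (P_\la,0)$ to a solution of the linearized equation is then read off from the Duhamel formula. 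The main obstacle I anticipate is the threshold analysis at $z = 1/4$: while the favorable sign of $V_\la$ is encouraging, the absence of a resonance is a delicate asymptotic condition on the Jost function that must be verified for the entire family $\{P_\la\}_{\la \in [0,1)}$, and it is the only step in the argument that cannot be imported essentially unchanged from the sphere case.
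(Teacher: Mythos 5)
Your route coincides with the paper's: the substitution $\sinh r\, u = \psi - P_{\la}$, the reduction to the shifted radial wave equation \eqref{u eq h} on $\R \times \Hp^4$ with the repulsive potential $U_{\la} = (\cosh 2P_{\la}-1)/\sinh^2 r \ge 0$ (which you call $V_\la$), the spectral statement that $\sgm(H_{U_\la}) = [1/4,\infty)$ with no negative spectrum, no gap eigenvalue and no threshold eigenvalue/resonance, the distorted Fourier transform and Strichartz estimates of Proposition~\ref{strich}, and the Picard iteration. One remark: the step you single out as delicate — absence of a resonance at $1/4$ — is in fact immediate from the sign of the potential, and this is precisely why the paper imposes no restriction on $\la$ here: for the half-line operator $\LL_{U_\la} = -\p_{rr} + \tfrac14 + \tfrac{3}{4\sinh^2 r} + U_\la$, the regular solution at threshold energy satisfies $\phi'' = \bigl(\tfrac{3}{4\sinh^2 r} + U_\la\bigr)\phi \ge 0$ wherever $\phi > 0$, so it is positive, increasing and convex, hence grows at least linearly and can be neither $L^2$ nor bounded; no further ODE or Jost-function analysis is required, in contrast with the $\Sp^2$ case.

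The one genuine gap is your treatment of the nonlinearity. For the $\Hp^2$ target the nonlinearity \eqref{N H} contains $\sinh^2(2\sinh r\, u)$ and $\sinh(2\sinh r\, u)$, which, unlike their trigonometric counterparts in \eqref{N S}, are unbounded in $\sinh r\, u$; the bound you state, ``at least quadratic in $u$ with coefficients bounded on every compact set of $r$,'' gives no control in the global-in-space Strichartz norms, so the contraction scheme does not close as written. What is needed — and what the paper supplies in Lemma~\ref{NN1} — is the a priori pointwise bound $\sup_{r}\abs{\psi(r) - P_{\la}(r)} \le \|(\psi,0)-(P_{\la},0)\|_{\HH_0}$, obtained from $\psi(0)=P_\la(0)$ by the fundamental theorem of calculus and Cauchy--Schwarz; together with $\sinh 2P_\la \lesssim_{\la} \tanh(r/2)$ and $\cosh 2P_\la \le \cosh(4\arctanh \la)$ (finite since $\la<1$), this yields the global estimates $\abs{\F_{\Hp^2}(r,u)} \lesssim_{\la,A} \ang{\sinh r}^{-1}\abs{u}^2$ and $\abs{\G_{\Hp^2}(r,u)} \lesssim_{\la,A} \abs{u}^3$, which are exactly the hypotheses under which the small-data/scattering argument of Proposition~\ref{small data} runs. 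Without this uniform-in-$r$ control your final iteration step fails; with it, your proof is the paper's proof.
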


The proof of Theorem~\ref{h2 stab}  follows the same outline as in the previous subsection. In particular we establish Strichartz estimates for the operator obtained by linearizing about $P_{\la}$ and then passing to an equation on $\R \times \Hp^4$. For a solution $\vec \psi(t) \in \EE_{\la}$ to~\eqref{wm h} we define $\vec u(t )$ by
\EQ{\label{u H2 def}
 \sinh r \, u(t, r):=  \psi(t, r) - P_{\la}(r).
}
Then $\vec u(t)$ solves
\EQ{\label{u eq h}
&u_{tt}- u_{rr} - 3 \coth r \, u_r - 2 u + U_{\la}(r) u = \NN_{\Hp^2}(r, u)\\
&\vec u(0)= (u_0, u_1)
}
where the \emph{repulsive} potential $U_{\la}$ and the nonlinearity $\NN_{\Hp^2}$ are given by
\begin{align}
& U_{\la}(r) := \frac{ \cosh 2P_{\la} -1}{ \sinh^2 r} \ge 0\label{Ula}\\
&\NN_{\Hp^2}(r, u):=- \frac{\sinh 2P_{\la}}{ \sinh^3 r}\sinh^2( 2 \sinh r\, u) + \cosh2P_{\la} \frac{2 \sinh r\, u -  \sinh (2 \sinh r \, u)}{2 \sinh^3 r} \label{N H}
\end{align}
The underlying linear equation under consideration is then given by
\EQ{\label{v eq h}
v_{tt} - \Delta_{\Hp^4} v -2 v+ U_{\la}v =0
}
for radially symmetric functions $v$. In Section~\ref{strich section} we prove Strichartz estimates in Proposition~\ref{strich}  for~\eqref{v eq h} using the spectral transformation, or the \emph{distorted Fourier transform}, for the self-adjoint Schr\"odinger operators
\EQ{
&H_0:=-\p_{rr}- 3 \coth r \, \p_r -2,\\
&H_{U_{\la}} := -\p_{rr}- 3 \coth r \, \p_r -2 +U_{\la},
}
again following roughly the argument in \cite{LS, RodS, SSS1, SSS2}. The key point here is that the repulsive potential $U_{\la}$ rules out the possibility of discrete spectrum for $\s( H_{U_{\la}})$, and thus $H_{U_\la}$ has essential spectrum  $[1/4, \infty)$, with no negative spectrum, no gap eigenvalues, and no eigenvalue or resonance at the threshold $1/4$. 

\subsection{Brief outline of the paper}
In Section~\ref{prelim} we establish the various facts about the harmonic maps $Q_{\la}$ and $P_{\la}$ defined above.  We also give more details concerning the passage to equations on $\R \times \Hp^4$ outlined above. In particular, we show that the small data Cauchy problems, the $2d$ linearized problem in $\HH_{0}$ and the $4d$ problem in $H^1 \times L^2( \Hp^4)$ are equivalent.

In Section~\ref{spectra} we study the spectrum of the linearized operator $H_{V_{\la}}$,  which corresponds to $\Sp^2$ valued maps. We begin by showing that $\s(H_{V_{\la}})$ has no discrete spectrum for $\la < \sqrt{15/8}$. 
Beginning from Section~\ref{subsec:gapEoverview}, we then present the proof of Theorem~\ref{e val}.

In Section~\ref{strich section} we prove Strichartz estimates -- Proposition~\ref{strich}-- for the linearized equations~\eqref{v eq} and~\eqref{v eq h}. In the former case, we need to restrict to values of $\la$ as in Theorem~\ref{s2 stab}. In the latter case, the Strichartz estimates hold for all $\la \in [0, 1)$ since the potential $U_{\la}$ is repulsive.

Finally, in Section~\ref{proofs} we prove Theorem~\ref{s2 stab} and Theorem~\ref{h2 stab} by the usual contraction mapping argument based on the Strichartz estimates proved in Proposition~\ref{strich}.


\section{Preliminaries}\label{prelim} In this section we establish the existence and uniqueness of the harmonic maps $Q_{\la}$ and $P_{\la}$ described in the introduction. We give simple geometric descriptions of these maps and prove several properties that we will need in the ensuing arguments. We also prove some additional preliminary facts including an equivalence between the $2d$ and $4d$ Cauchy problems described in the introduction.

We begin with the case of harmonic maps into $\Sp^2$.

\subsection{Harmonic maps into $\Sp^2$}\label{S2 hm} Here we prove various facts about the harmonic maps $Q_{\la}$. For convenience we collect these facts into a proposition.
\begin{prop}\label{hm prop}
For every $0 \le \al< \pi$ there exists a unique, finite energy stationary solution to~\eqref{wm}, i.e.,  a harmonic map, $(Q_{\la}, 0) \in \E_{\la}$ which solves~\eqref{hm}, where
\EQ{\label{Q def}
&Q_{\la}(r) = 2\arctan(\la \tanh(r/2))\\
&\la \in [0, \infty), \, \, \al = \al(\la) = 2 \arctan(\la) = \lim_{r \to \infty} Q_{\la}(r).
}
Moreover, $(Q_{\la}, 0) \in \E_{\la}$ has energy
\EQ{\label{hm en}
\E( Q_{\la}, 0) = 2 \frac{\la^2}{1+\la^2},
}
which is minimal in $\E_{\la}$. Finally, the $Q_{\la}$ with $\la\in[0, \infty)$ are the only finite energy stationary solutions to~\eqref{wm}.
\end{prop}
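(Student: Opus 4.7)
The plan is to reduce the harmonic map ODE to a pendulum equation via the conformal identification of $\bbH^{2}$ with the Euclidean unit disk, classify all finite energy solutions by a phase portrait analysis, and then deduce the energy formula and minimality from a Bogomolnyi-type identity.

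First I would perform the substitution $\rho = \tanh(r/2)$, under which the hyperbolic metric on $\bbH^{2}$ is conformally equivalent to the Euclidean metric on the open unit disk $\bbD$. Since the harmonic map equation and the Dirichlet energy are conformally invariant in two dimensions, $Q(r)$ is a finite energy equivariant harmonic map into $\bbS^{2}$ if and only if $\hat Q(\rho) := Q(r(\rho))$ is a finite energy equivariant harmonic map from $\bbD$ (with Euclidean metric) into $\bbS^{2}$, solving
\begin{equation*}
\hat Q_{\rho\rho} + \tfrac{1}{\rho}\hat Q_{\rho} = \tfrac{\sin 2\hat Q}{2\rho^{2}}
\end{equation*}
on $(0,1)$. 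Passing further to $t = \log\rho \in (-\infty, 0)$ turns this into the autonomous pendulum equation $\hat Q_{tt} = \tfrac{1}{2} \sin(2 \hat Q)$, with conserved energy $E = \tfrac{1}{2}\hat Q_{t}^{2} + \tfrac{1}{4}\cos(2\hat Q)$. A direct change-of-variables computation shows that the finite energy condition translates to $\int_{-\infty}^{0}(\hat Q_{t}^{2} + \sin^{2}\hat Q)\,dt < \infty$.

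Next I would classify all finite energy solutions with $Q(0)=0$. Integrability of $\sin^{2}\hat Q$ near $t = -\infty$ forces $\hat Q \to 0$ there, and then conservation of $E$ together with the $L^{2}$ integrability of $\hat Q_{t}$ forces $E = \tfrac{1}{4}$, which is precisely the saddle value at the unstable equilibrium $(0,0)$. Consequently $\hat Q$ lies on the separatrix through the origin, i.e.\ $\hat Q_{t}^{2} = \sin^{2}\hat Q$. Choosing the sign $\hat Q_{t} = \sin\hat Q$ consistent with $\hat Q \ge 0$ (the opposite branch giving the reflection $\hat Q \mapsto -\hat Q$), separation of variables yields $\hat Q(t) = 2\arctan(\lambda e^{t})$ for a unique $\lambda \in [0, \infty)$. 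Translating back gives the formula $Q_{\lambda}(r) = 2\arctan(\lambda \tanh(r/2))$, with limit $\alpha = 2\arctan\lambda \in [0, \pi)$; this realizes every endpoint $\alpha \in [0,\pi)$ uniquely and simultaneously proves that no finite energy harmonic map exists for $\alpha \ge \pi$. A short calculation confirms that $Q_{\lambda}$ solves \eqref{hm}.

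Finally, for the energy formula and minimality I would invoke the Bogomolnyi identity
\begin{equation*}
Q_{r}^{2}\sinh r + \tfrac{\sin^{2}Q}{\sinh r} = \Bigl(Q_{r}\sqrt{\sinh r} - \tfrac{\sin Q}{\sqrt{\sinh r}}\Bigr)^{2} + 2 Q_{r} \sin Q,
\end{equation*}
which, after integration, gives for any $(\psi_{0}, \psi_{1}) \in \E_{\lambda}$ the bound
\begin{equation*}
\E(\psi_{0}, \psi_{1}) \ge \int_{0}^{\infty} \psi_{0,r}\sin\psi_{0}\,dr = 1 - \cos\alpha = \tfrac{2\lambda^{2}}{1+\lambda^{2}},
\end{equation*}
with equality if and only if $\psi_{1}\equiv 0$ and $\psi_{0,r} = \sin\psi_{0}/\sinh r$. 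Since $Q_{\lambda}$ satisfies precisely this first-order equation (which is what the separatrix analysis produced), one obtains $\E(Q_{\lambda}, 0) = \tfrac{2\lambda^{2}}{1+\lambda^{2}}$ and its minimality in $\E_{\lambda}$.

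The bulk of the work is organizational: the conformal reduction to a pendulum, a careful handling of the two limits $t \to -\infty$ and $t \to 0^{-}$, and correct use of the finite energy hypothesis to rule out non-separatrix orbits. No step involves a serious analytic difficulty; the main obstacle, if any, is being careful about regularity near $r = 0$ and justifying that the only solutions of the pendulum equation with $\hat Q(-\infty) = 0$ are the explicit separatrix orbits $2\arctan(\lambda e^{t})$.
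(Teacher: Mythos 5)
Your proposal is correct and follows essentially the same route as the paper: the change of variables $s=\log\tanh(r/2)$ (your $\rho=\tanh(r/2)$, $t=\log\rho$) reducing \eqref{hm bd} to the pendulum equation, the identification of finite-energy solutions emanating from the origin with the separatrix orbits $2\arctan(\lambda e^{s})$, and the Bogomol'nyi factorization for the energy formula and minimality. The only cosmetic difference is that you pin down the separatrix via the conserved pendulum energy and the $L^{2}$ finiteness of $\hat Q_{t}$, whereas the paper phrases the same step through its energy identity and the phase portrait; the content is identical.
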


\begin{proof}
We are seeking to classify all stationary finite energy solutions to~\eqref{wm}. Recall from the introduction that  any finite energy harmonic map $Q$ must have $Q(0) = 0$ and $Q(\infty) = \al \in [0 , \infty)$. Thus we would like to find all solutions $Q$ to
\EQ{\label{hm bd}
&Q_{rr} + \coth r \, Q_r = \frac{\sin2 Q}{2\sinh^2 r},\\
&Q(0) = 0, \, \, \lim_{r \to \infty}Q(r) = \al \in [0, \infty).
}
One can check directly that $Q_{\la}$, as defined in~\eqref{Q def}, satisfies~\eqref{hm bd} with~$\al = 2 \arctan(\la)$. One can also directly compute the energy to verify~\eqref{hm en}.

To prove the remaining statements in Proposition~\ref{hm prop} we begin by giving a simple geometric interpretation of $Q_{\la}$. Recall that stereographic projection of $\Hp^2$ onto the Poincar\'e disc, $\D$, viewed as a subset of $\R^2$,  is given by the map
\ant{
( \sinh r \, \cos \om, \sinh r\, \sin \om, \cosh r ) \mapsto ( \tanh(r/2) \cos \om,  \tanh(r/2) \, \sin \om) .
}
Next, we rescale the disc by $\la \in [0, \infty)$ via
\ant{
( \tanh(r/2) \cos \om,  \tanh(r/2) \, \sin \om) \mapsto ( \la\tanh(r/2) \cos \om,  \la \tanh(r/2) \, \sin \om).
}
Finally, recall that the inverse of stereographic projection, $\R^2 \to \Sp^2-\{\textrm{south pole}\}$ is given by
\ant{
(\rho \cos \om, \rho \sin \om) \mapsto ( \sin(2\arctan \rho) \, \cos \om, \sin(2\arctan\rho) \,  \sin \om, \cos(2 \arctan\rho)).
}
Then as solutions to~\eqref{wm} or~\eqref{hm} are expressed in terms of the azimuth angle on $\Sp^2$,  we see that  $Q_{\la}$ is simply the composition of the above three maps. 

This geometric interpretation motivates the following change of variables in~\eqref{hm bd}. Setting
\EQ{
s:= \log( \tanh(r/2)), \, \, \fy(s):= Q(r),
}
we see that~\eqref{hm bd} reduces to the following equation for $\fy$:
\EQ{\label{ode}
 &\fy'' =  \frac{1}{2} \sin 2 \fy,\\
 & \fy(- \infty) = 0, \, \, \fy(0) = \al.
 }
which is an autonomous ode and none other than the equation for the pendulum. Multiplying the first line in~\eqref{ode} by $\fy'$ and integrating from $s_1$ to $s_2$ yields the energy identity
\EQ{\label{ode en id}
\fy_s^2(s_2) - \fy_s^2(s_1)= \sin^2(\fy(s_2))- \sin^2(\fy(s_1))
}
A standard analysis of the phase portrait in $( \fy, \fy')$ coordinates together with~\eqref{ode en id} mandates the condition that any nontrivial solution satisfies $0 <\fy(0) < \pi$. In particular, we note that a solution with $\fy(- \infty) = 0$ corresponds to the unstable manifold at $(0, 0)$, (which connects to the stable manifold at $(\pi, 0)$ as $s \to +\infty$). Using~\eqref{ode en  id} one sees that if there existed a nontrivial trajectory emanating from $(0, 0)$ at $s = - \infty$ and such that $\fy(s_0) = \pi$ for some $s_0 \in \R$, then we would have $\fy_s(s_0) = 0$. But then $(\fy, \fy_s)(s_0) = (\pi, 0)$ and therefore $\fy$ must  be a trivial solution, which contradicts our assumption.

One can also see this by noting that the unique positive solution (up to translation in $s$) is given by $\fy(s) = 2 \arctan(e^s)$. In particular, we have shown that for each $\al \in [0, \pi)$ there is a unique solution to~\eqref{hm bd}. For $\al \ge \pi$ there are no solutions.

 It remains to show that $(Q_{\la}, 0)$ minimizes the energy in $\E_{\la}$. This follows as a direct consequence of the following ``Bogomol'nyi factorization": Let $\vec \psi(t) = ( \psi(t), \psi_t(t)) \in \E_{\la}$. Then we have
 \ant{
 &\E( \vec \psi) = \frac{1}{2} \int_0^{\infty} \psi_t^2 \, \sinh r \, dr + \frac{1}{2}\int_{0}^{\infty} \left( \psi_r - \frac{\sin \psi}{ \sinh r} \right)^2 \, \sinh r\, dr + \int_{0}^{\infty}  \sin \psi \psi_r \, dr\\
 & = \frac{1}{2} \int_0^{\infty} \psi_t^2 \, \sinh r \, dr + \frac{1}{2}\int_{0}^{\infty} \left( \psi_r - \frac{\sin \psi}{ \sinh r} \right)^2 \, \sinh r\, dr + \cos \psi(t, 0) - \cos \psi(t,  \infty)\\
 & = \frac{1}{2} \int_0^{\infty} \psi_t^2 \, \sinh r \, dr + \frac{1}{2}\int_{0}^{\infty} \left( \psi_r - \frac{\sin \psi}{ \sinh r} \right)^2 \, \sinh r\, dr + 1- \cos(2 \arctan( \la))
 }
 For the solution $\vec \psi(t) = (Q_{\la}, 0)$ the first two integrals--which we note are always  non-negative--vanish identically, which proves that $(Q_{\la}, 0)$ uniquely minimizes the energy in $\E_{\la}$. Finally, a simple  calculation yields
 \ant{
 \E(Q_{\la}, 0) = 1- \cos(2 \arctan( \la)) = 2 \frac{\la^2}{1+ \la^2}
 }
 and this completes the proof.
\end{proof}

\subsection{Harmonic maps into $\Hp^2$}\label{H2 harm}
Here we prove the analogous result for $P_{\la}$ while providing a simple geometric interpretation.
\begin{prop}\label{hm prop h}
For every $ \be \in[0, \infty)$ there exists a unique, finite energy stationary solution to~\eqref{wm h}, i.e.,  a harmonic map, $(P_{\la}, 0) \in \EE_{\la}$ which solves~\eqref{hm h}, where
\EQ{\label{P def}
&P_{\la}(r) = 2\, \arctanh(\la \tanh(r/2))\\
&\la \in [0, 1), \, \, \be= \be(\la) = 2 \arctanh(\la) = \lim_{r \to \infty} P_{\la}(r).
}
Moreover, $(P_{\la}, 0) \in \EE_{\la}$ has energy
\EQ{\label{hm en h}
\EE( P_{\la}, 0) = 2 \frac{\la^2}{1-\la^2},
}
which is minimal in $\EE_{\la}$. Finally, the $P_{\la}$ with $\la\in[0, 1)$ are the only finite energy stationary solutions to~\eqref{wm h}.
\end{prop}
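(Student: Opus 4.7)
The plan is to follow the template of Proposition~\ref{hm prop}, with $\sin$ replaced by $\sinh$ wherever it appears in the target geometry. First, a direct computation verifies that $P_{\la}(r) = 2\,\arctanh(\la \tanh(r/2))$ with $\la \in [0, 1)$ solves \eqref{hm h} together with $P_{\la}(0) = 0$ and $\lim_{r \to \infty} P_{\la}(r) = 2\,\arctanh \la$, so existence is settled by inspection. The map $\la \mapsto 2\,\arctanh\la$ is a bijection from $[0,1)$ onto $[0,\infty)$, which already suggests every $\be \in [0,\infty)$ is realized.

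For uniqueness and the classification of all finite energy stationary solutions, I would apply the Poincar\'e-disc substitution $s = \log \tanh(r/2)$, $\fy(s) = P(r)$, which sends $r \in (0,\infty)$ diffeomorphically onto $s \in (-\infty, 0)$ and converts \eqref{hm h} into the autonomous ODE
\[
\fy''(s) = \tfrac{1}{2}\sinh 2\fy, \qquad \fy(-\infty) = 0, \quad \fy(0) = \be.
\]
This is a reverse-pendulum equation with conserved quantity $\fy_s^{2} - \sinh^2 \fy$. Using $dr = \sinh r \, ds$, one checks that $\EE(P,0) = \tfrac{1}{2}\int_{-\infty}^0 (\fy_s^{2} + \sinh^2\fy)\,ds$, so finite energy forces $\fy_s \to 0$ and $\fy \to 0$ as $s \to -\infty$. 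Hence any nontrivial orbit lies on the unstable manifold of the saddle $(0,0)$, characterized by $\fy_s = \sinh\fy$ (the positive branch; the negative one is obtained by reflection). Separating variables in $d\fy/\sinh\fy = ds$ and integrating produces exactly $\fy(s) = 2\,\arctanh(\la e^s)$ for some $\la \in (0,1)$, which in the original variable $r$ is $P_\la$. This shows $P_\la$ is the unique stationary solution with endpoint $\be = 2\,\arctanh\la$.

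For the energy formula and the minimization property, I would use the Bogomol'nyi-type identity analogous to the one in the proof of Proposition~\ref{hm prop}:
\[
\EE(\vec\psi) = \tfrac{1}{2}\int_0^\infty \psi_t^2 \sinh r\, dr + \tfrac{1}{2}\int_0^\infty \Bigl(\psi_r - \tfrac{\sinh\psi}{\sinh r}\Bigr)^2 \sinh r\, dr + \cosh\psi(t,\infty) - \cosh\psi(t,0).
\]
The first two integrals are non-negative and vanish simultaneously precisely at $\vec\psi = (P_\la, 0)$, while the boundary term evaluates to $\cosh(2\,\arctanh\la) - 1 = 2\la^2/(1-\la^2)$, confirming both the explicit energy \eqref{hm en h} and the minimization in $\EE_\la$.

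The main obstacle I anticipate is mild: one must ensure that the orbit $\fy(s) = 2\,\arctanh(\la e^s)$ stays inside the domain of $\arctanh$ on the entire interval $s \in (-\infty, 0]$, which requires $\la e^s < 1$ for all $s \le 0$, hence $\la < 1$. This is precisely the geometric constraint dictating the parameter range in \eqref{P def}. Unlike the spherical case --- where the sinusoidal pendulum phase portrait forces $\be < \pi$ --- the hyperbolic pendulum imposes no upper bound on the endpoint other than its finiteness, reflecting the non-compactness of $\Hp^2$ and explaining the existence of the harmonic maps of arbitrarily large energy highlighted in the introduction.
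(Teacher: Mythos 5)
Your proposal is correct and follows essentially the same route as the paper: direct verification of $P_{\la}$, the substitution $s=\log\tanh(r/2)$ reducing \eqref{hm h} to the autonomous equation $\fy''=\tfrac12\sinh 2\fy$, and the Bogomol'nyi factorization for the energy identity and minimality. The only difference is that you spell out the phase-plane/separation-of-variables argument (conserved quantity $\fy_s^2-\sinh^2\fy$, unstable manifold $\fy_s=\sinh\fy$) that the paper leaves implicit, while the paper additionally records a geometric interpretation of $P_{\la}$ via stereographic projection, which is not needed for the proof.
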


\begin{proof} We would like to classify solutions to
\EQ{\label{hm h bd}
&P_{rr}+ \coth r\, P_r = \frac{\sinh 2P}{2\sinh^2 r},\\
&P(0) = 0, \quad \lim_{r \to \infty} P(r) =\be.
}
for $\be \in [0, \infty)$. One can check directly that $P_{\la}$ as defined in~\eqref{P def} solves~\eqref{hm h bd} with $\be = 2 \arctanh( \la)$, and the energy of $(P_{\la}, 0)$ satisfies~\eqref{hm en h}.

For a geometric interpretation of the $P_{\la}$ we recall again the stereographic of $\Hp^2$ onto the Poincar\'e disc, $\D$, which is a conformal isomorphism and is given, in coordinates by
 \ant{
( \sinh r \, \cos \om, \sinh r\, \sin \om, \cosh r ) \mapsto ( \tanh(r/2) \cos \om,  \tanh(r/2) \, \sin \om) .
}
Next, we perform the map $z \mapsto  \la z$, $z \in \D$, which is \emph{finite energy harmonic map} from $\D \to \D$  for $\la \in [0, 1)$. In coordinates this is given by
\ant{
( \tanh(r/2) \cos \om,  \tanh(r/2) \, \sin \om) \mapsto ( \la\tanh(r/2) \cos \om,  \la \tanh(r/2) \, \sin \om).
}
Finally, recall that the inverse of stereographic projection, $\D \to \Hp^2$  given by
\ant{
(\rho \cos \om, \rho \sin \om) \mapsto ( \sinh(2\arctanh \rho) \, \cos \om, \sinh(2\arctanh\rho) \,  \sin \om, \cosh(2 \arctanh\rho)).
}
It is clear that~$P_{\la}$ is a composition of these three maps.

As is the case for maps to the sphere, we can also view~\eqref{hm h bd} as an autonomous equation, with the change of variables
\ant{
s= \log( \tanh(r/2)) , \,  \, \phi(s) = P(r).
}
Then~\eqref{hm h bd} reduces to
\ant{
&\phi'' = \frac{1}{2} \sinh 2\phi\\
&\phi(-\infty) = 0, \, \, \phi(0) = \be
}
from which the existence and uniqueness of the $P_{\la}$ is also apparent.

Finally, to show that $P_{\la}$ minimizes the energy in $\EE_{\la}$ we again preform the ``Bogomol'nyi factorization": Let $\vec \psi(t) = ( \psi(t), \psi_t(t)) \in \EE_{\la}$. Then we have
 \ant{
 &\E( \vec \psi) = \frac{1}{2} \int_0^{\infty} \psi_t^2 \, \sinh r \, dr + \frac{1}{2}\int_{0}^{\infty} \left( \psi_r - \frac{\sinh \psi}{ \sinh r} \right)^2 \, \sinh r\, dr + \int_{0}^{\infty}  \sinh \psi \psi_r \, dr\\
 & = \frac{1}{2} \int_0^{\infty} \psi_t^2 \, \sinh r \, dr + \frac{1}{2}\int_{0}^{\infty} \left( \psi_r - \frac{\sinh \psi}{ \sinh r} \right)^2 \, \sinh r\, dr + \cosh(2 \arctanh( \la))-1
 }
For the solution $\vec \psi(t) = (P_{\la}, 0)$ the first two integrals--which are always non-negative--vanish identically, which proves that $(P_{\la}, 0)$ uniquely minimizes the energy in $\EE_{\la}$. Finally, a simple  calculation yields
 \ant{
 \EE(P_{\la}, 0) =  \cosh(2 \arctan( \la))-1 = 2 \frac{\la^2}{1- \la^2}
 }
 and this completes the proof.
\end{proof}

\subsection{Reduction to equations on $\R \times \Hp^4$}\label{4d}
Next, we provide more details related to the $4d$ reductions to the Cauchy problems~\eqref{u eq} and~\eqref{u eq h}  outlined in the introduction.

First, we prove an estimate that gives an $L^{\infty}$ bound on solutions to~\eqref{wm} and to~\eqref{wm h} in terms of their energy. As the proof is the same in both cases we shorten the exposition by considering solutions to~\eqref{eq wm}, namely
\EQ{ \label{wm g}
&\psi_{tt} - \psi_{rr} - \coth r \, \psi_r + \frac{ g(\psi)g'(\psi)}{\sinh^2 r} = 0,\\
&E( \vec \psi) :=  \frac{1}{2}\int_0^\infty  \left( \psi_t^2 + \psi^2_r + \frac{g^2( \psi)}{\sinh^2 r} \right) \sinh r \, dr.
}
where in the cases under consideration we have $g(\psi) = \sin \psi$, $E= \E$ for maps into $\Sp^2$, and $g( \psi) = \sinh \psi$, $E= \EE$ for maps into $\Hp^2$.
\begin{lem} Let $\vec \psi(t)$ be a finite energy solution to~\eqref{wm g} defined on the interval $t \in I$ with $\psi(t, 0) = 0$ for every $t \in I$. Then there exists a function $C$ with $C( \rho) \to 0$ as $\rho \to 0$ so that
\EQ{\label{l inf bound}
\sup_{t \in I} \| \psi(t) \|_{L^{\infty}} \le C( E( \vec \psi)).
}
\end{lem}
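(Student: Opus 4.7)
The plan is to exploit the antiderivative of the target nonlinearity $g$ together with a Cauchy--Schwarz estimate controlled by the conserved energy. The bound will be uniform in $t$ because $E(\vec \psi)$ is conserved, so it suffices to produce a pointwise-in-$r$ bound on a fixed time slice.

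First I would introduce the \emph{monotone} antiderivative
\begin{equation*}
\mathcal{G}(\psi) := \int_0^{|\psi|} |g(s)|\,ds.
\end{equation*}
In the two cases at hand, $g(s) = \sin s$ (with $\int_0^{\pi}|\sin s|\,ds = 2$) and $g(s) = \sinh s$, the function $\mathcal{G}$ is continuous, strictly increasing on $[0,\infty)$, vanishes at $0$, and tends to $\infty$ as $|\psi| \to \infty$. Consequently $\mathcal{G}^{-1}:[0,\infty)\to[0,\infty)$ is continuous with $\mathcal{G}^{-1}(0)=0$.

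Next, using $\psi(t,0)=0$ and the chain rule,
\begin{equation*}
\mathcal{G}(|\psi(t,r)|) = \int_0^r |g(\psi(t,r'))|\,\sgn(\psi(t,r'))\,\psi_{r'}(t,r')\,dr'.
\end{equation*}
Rewriting the integrand as $(|g(\psi)|/\sqrt{\sinh r'})\cdot\sqrt{\sinh r'}\,|\psi_{r'}|$ and applying Cauchy--Schwarz against the two energy-controlled pieces gives
\begin{equation*}
\mathcal{G}(|\psi(t,r)|) \leq \left(\int_0^\infty \frac{g^2(\psi)}{\sinh^2 r'}\,\sinh r'\,dr'\right)^{1/2}\left(\int_0^\infty \psi_{r'}^2\,\sinh r'\,dr'\right)^{1/2} \leq 2\,E(\vec \psi).
\end{equation*}
Applying $\mathcal{G}^{-1}$ and taking the supremum over $r$ and $t \in I$ yields \eqref{l inf bound} with $C(\rho) := \mathcal{G}^{-1}(2\rho)$, which is continuous on $[0,\infty)$ and vanishes at $\rho=0$.

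I do not anticipate any substantive obstacle. The one point worth flagging is the choice of the monotone antiderivative $\mathcal{G}$ in place of the natural potential $G(\psi) = \int_0^{\psi} g(s)\,ds$: in the $\Sp^2$ case the latter equals $1-\cos\psi$ and is $2\pi$-periodic, so controlling $G(\psi)$ does not constrain $\psi$ itself---passing to $\mathcal{G}$ restores the monotonicity required for inversion. Continuity of $\psi(t,\cdot)$ in $r$, needed to give pointwise meaning to the identity above, is standard for the finite-energy solutions under consideration (one has $\psi(t,\cdot) - Q_{\la}$ or $\psi(t,\cdot) - P_{\la}$ in $\HH_0$, and the harmonic map profiles are smooth).
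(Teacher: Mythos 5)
Your proof is correct and follows essentially the same argument as the paper: both define the monotone antiderivative of $|g|$, bound it along a time slice via the fundamental theorem of calculus and Cauchy--Schwarz against the two energy terms, and invert to get the $L^\infty$ bound with a constant vanishing as the energy tends to zero. The only differences are cosmetic (your constant $2E$ versus the paper's $E$ from the arithmetic-geometric mean step, and your use of $|\psi|$ inside the antiderivative rather than taking absolute values afterward).
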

\begin{proof} Following, e.g.,~\cite[Chapter $8$]{SSbook} we define the function
\ant{
G(\psi)  = \int_{0}^{\psi} \abs{g(\rho)} \, d \rho,
}
and we note that $G(0) = 0$, $G$ is increasing, and $G(\psi) \to \infty$ as $\psi \to \infty$. 
For any fixed $t \in I$ we have
\EQ{\label{G bound}
\abs{G( \psi(t, r)) }&= \abs{G( \psi(t,r)) - G( \psi(t, 0))} = \abs{\int_{\psi(t, 0)}^{\psi(t, r)} \abs{g (\rho)} \, d \rho}\\
& = \int_0^{r} \abs{ g( \psi(t, r))} \abs{ \psi_r(t, r)} \, dr  \le E( \vec \psi)\\
}
Then~\eqref{l inf bound} follows from \eqref{G bound} and the fact that $G$ is increasing. 
\end{proof}

Next, we establish  an equivalence of the Cauchy problems~\eqref{wm} with~\eqref{u eq} as well as~\eqref{wm h} with~\eqref{u eq h}  by proving an isomorphism between the spaces $\HH_0$ and $H^1 \times L^2( \Hp^4)$, where $\HH_0$ is defined as in~\eqref{H0 def} and where for radially symmetric $u, v : \Hp^4 \to \R$ we set
\ant{
\|(u, v)\|_{ H^1 \times L^2( \Hp^4)}^2:= \int_0^{\infty} \left(u_r^2(r)+ v^2(r) \right) \sinh^3 r \, dr.
}
We use the notation $H^1$ for the above as opposed to $\dot{H}^1$ due to the embedding~$\dot{H}^1( \Hp^d) \hookrightarrow L^2( \Hp^d)$ for $d \ge 2$. 
We prove the following simple lemma.
\begin{lem}\label{2d to 4d}Let $(\psi, \phi) \in \HH_0(\Hp^2)$ with $\psi(0)=0$, $\psi( \infty) = 0$. Then if we define $(u, v)$ by
\ant{
 (\psi(r), \phi(r)) = ( \sinh r u(r), \sinh r v(r))
 }
  we have
  \EQ{\label{H=H}
  \| (\psi, \phi) \|_{\HH_0}^2 \le  \|(u, v) \|_{H^1 \times L^2(\Hp^4)}^2 \le 9 \|(\psi, \phi) \|_{\HH_0}^2.
}
\end{lem}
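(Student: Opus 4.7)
The plan is to establish (\ref{H=H}) by deriving an exact pointwise-algebraic identity relating $u_{r}$ and $\psi_{r}$, from which the lower bound is immediate and the upper bound follows after a single application of the spectral-gap Poincar\'e inequality on $\Hp^{4}$.

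First, observe that the $v,\phi$ pieces of the two norms are pointwise identical: $\phi = \sinh r \cdot v$ gives $\phi^{2} \sinh r = v^{2} \sinh^{3} r$. The entire claim therefore reduces to comparing $\int_{0}^{\infty} u_{r}^{2} \sinh^{3} r \, dr$ with the gradient piece $I(\psi) := \int_{0}^{\infty} \bigl(\psi_{r}^{2} + \psi^{2}/\sinh^{2} r\bigr) \sinh r \, dr$ of $\|(\psi, \phi)\|_{\HH_{0}}^{2}$. Differentiating $u = \psi/\sinh r$, squaring, and multiplying by $\sinh^{3} r$ produces the pointwise identity
\[
\sinh^{3} r \cdot u_{r}^{2} = \sinh r \cdot \psi_{r}^{2} - (\psi^{2})_{r} \cosh r + \psi^{2} \frac{\cosh^{2} r}{\sinh r}.
\]
Integrating from $0$ to $\infty$, integrating by parts on the middle term, and using both $\psi(0) = 0$ and $\psi(\infty) = 0$ to annihilate the boundary term $[\cosh r \cdot \psi^{2}]_{0}^{\infty}$, together with the trigonometric identity $\cosh^{2} r/\sinh r = \sinh r + 1/\sinh r$, yields the \emph{exact identity}
\[
\int_{0}^{\infty} u_{r}^{2} \sinh^{3} r \, dr = I(\psi) + 2 \int_{0}^{\infty} \sinh r \cdot \psi^{2} \, dr.
\]

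The lower bound in (\ref{H=H}) follows immediately because the second term is nonnegative. For the upper bound, I would rewrite the excess using $\sinh r \cdot \psi^{2} = u^{2} \sinh^{3} r$ and invoke the radial Poincar\'e inequality on $\Hp^{4}$. The bottom of the $L^{2}$-spectrum of $-\Delta_{\Hp^{d}}$ equals $(d-1)^{2}/4$, which is $9/4$ for $d=4$; restricted to radial functions this gives
\[
\int_{0}^{\infty} u^{2} \sinh^{3} r \, dr \leq \frac{4}{9} \int_{0}^{\infty} u_{r}^{2} \sinh^{3} r \, dr.
\]
Substituting into the identity yields $\int u_{r}^{2} \sinh^{3} r \, dr \leq I(\psi) + \tfrac{8}{9} \int u_{r}^{2} \sinh^{3} r \, dr$, and absorbing the right-hand term produces precisely the factor $9$ in (\ref{H=H}).

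The main (and relatively minor) technical point is the rigorous justification of the integration by parts for a general $(\psi, \phi) \in \HH_{0}$ subject only to the stated boundary conditions. I would handle this by a standard density argument: approximate $\psi$ in the $\HH_{0}$ norm by smooth functions supported away from $r = 0$ and $r = \infty$, for which all manipulations above are legitimate, and then pass to the limit, using that each integral appearing in the identity is controlled by $\|(\psi, \phi)\|_{\HH_{0}}^{2}$ via the bound already established on the dense subclass. The conceptual content of the proof is thus a one-line algebraic identity plus the spectral gap of $-\Delta_{\Hp^{4}}$.
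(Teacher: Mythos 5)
Your proof is correct, and it coincides with the paper's argument in its two main ingredients: the exact identity
\begin{equation*}
\int_0^\infty u_r^2\,\sinh^3 r\,dr \;=\; \int_0^\infty\Bigl(\psi_r^2+\frac{\psi^2}{\sinh^2 r}\Bigr)\sinh r\,dr \;+\; 2\int_0^\infty u^2\,\sinh^3 r\,dr,
\end{equation*}
(obtained in the paper by one integration by parts, in your write-up by the equivalent pointwise expansion plus integration by parts on the cross term), and the observation that the lower bound is then immediate. Where you genuinely diverge is the upper bound: the paper proves the needed Poincar\'e-type inequality by hand, entirely in the $2d$ variables, via Cauchy--Schwarz ($\int \psi^2\sinh r\,dr \le \int\psi^2\cosh r\,dr = -2\int\psi\psi_r\sinh r\,dr \le 4\int\psi_r^2\sinh r\,dr$ after absorbing), and then simply inserts this into the identity; you instead quote the spectral gap $9/4$ of $-\Delta_{\Hp^4}$ restricted to radial functions to get $\int u^2\sinh^3 r\,dr\le\frac49\int u_r^2\sinh^3 r\,dr$ and absorb the resulting $\frac89\int u_r^2\sinh^3 r\,dr$ term. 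Both routes yield exactly the constant $9$, and in fact they are two faces of the same phenomenon (the paper's Cauchy--Schwarz step is precisely the $\Hp^2$ spectral gap $1/4$, while yours is the $\Hp^4$ gap). The paper's version is self-contained and avoids any a priori integrability of $u$; yours imports a known spectral fact (one the paper cites anyway) at the cost of having to justify the absorption and the Poincar\'e inequality on a dense subclass of nice functions and then pass to the limit --- a technical point you flag and handle, and which is actually treated more carefully in your write-up than in the paper, where the integrations by parts are performed formally.
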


\begin{rem} We note that Lemma~\ref{2d to 4d} implies that in order to prove Theorem~\ref{s2 stab} and Theorem~\ref{h2 stab} it suffices to consider the corresponding results for the Cauchy problems~\eqref{u eq}, respectively~\eqref{u eq h}, with initial data $\vec u(0) = (u_0, u_1) \in H^1 \times L^2 ( \Hp^4)$.
\end{rem}

\begin{proof} Since $\|\phi\|_{L^2(\Hp^2)}^2= \|v \|_{L^2( \Hp^4)}^2$ it suffices to just consider $u$ and $\psi = \sinh r u$. Integration by parts yields the following identity:
\EQ{\label{2d 4d}
\int_0^{\infty} \left(\psi_r^2 + \frac{\psi^2}{\sinh^2 r} \right) \, \sinh r \, dr =  \int_0^{\infty} u_r^2 \sinh^3 r \, dr - 2 \int_0^{\infty} u^2 \, \sinh^3 r \, dr,
}
which implies that
\ant{
\int_0^{\infty} \left(\psi_r^2 + \frac{\psi^2}{\sinh^2 r} \right) \, \sinh r \, dr  \le  \int_0^{\infty} u_r^2 \sinh^3 r \, dr,
}
giving the left-hand inequality in~\eqref{H=H}. On the other hand,
\ant{
\int_0^{\infty} \psi^2 \, \sinh r \, dr &\le \int_0^{\infty} \psi^2 \cosh r \, dr  = -2 \int_0^{\infty} \psi \psi_r \, \sinh r \, dr \\&\le2 \left( \int_0^{\infty} \psi_r^2 \sinh r \, dr\right)^{\frac{1}{2}}\left( \int_0^{\infty} \psi^2 \sinh r \, dr\right)^{\frac{1}{2}},
 }
 which means that
 \ant{
 \int_0^{\infty} u^2 \, \sinh^3 r \, dr &=\int_0^{\infty} \psi^2 \, \sinh r \, dr \le 4 \int_0^{\infty} \psi_r^2 \sinh r \, dr.
 }
 Combining the above with~\eqref{2d 4d} yields the right-hand-side of~\eqref{H=H}.
\end{proof}


\section{The linearized operator $H_{V_{\la}}$: Analysis of the Spectrum}\label{spectra}
This section gives a detailed analysis of the spectrum of the~Schr\"odinger operator $H_{V_{\la}}$ defined in~\eqref{H Vla}, which is  self-adjoint  
on the domain
$\calD: = H^2( \Hp^4)$, restricted to radial functions. In Section \ref{subsec:smallLmb}, we establish a positive result, Proposition~\ref{good spec}, for a range of $\la$, namely $0 \le \la< \sqrt{15/8}$. We prove that for $\la$ in this range the spectrum of $H_{V_{\la}}$ coincides with that of the unperturbed operator $H_{0} := - \Delta_{\Hp^4} - 2$. Next, we show that this breaks down for large $\la$. In particular, in the rest of this section, we prove that for $\la$ large there is a unique simple eigenvalue $\gapE$ in the spectral gap $(0, \frac{1}{4})$ and $\gapE \to 0$ as $\la \to \infty$. This is the content of Theorem~\ref{e val}.

First we pass to the half-line by conjugating  by $\sinh^{\frac{3}{2}} r$.  Indeed,  the map
\EQ{ \label{4 to 1}
L^2(\Hp^4) \ni \fy\mapsto \sinh^{\frac{3}{2}} r\, \fy =: \phi \in L^2(0, \infty)
}
is an isomorphism of $L^2(\Hp^4)$, restricted to radial functions, onto $L^2([0, \infty))$. If we define $\LL_0, \LL_{V_{\la}}$ by
\EQ{
&\LL_0 := - \p_{rr} + \frac{1}{4} + \frac{3}{4 \sinh^2 r},\\
&\LL_{V_{\la}} := - \p_{rr} + \frac{1}{4} + \frac{3}{4 \sinh^2 r} + V_{\la}(r),
}
we have
\EQ{ \label{conj}
&(H_0 \fy)(r) = \sinh^{-\frac{3}{2}}r (\LL_0  \phi)(r),\\
&(H_{V_{\la}} \fy)(r) = \sinh^{-\frac{3}{2}} r(\LL_{V_{\la}} \phi)(r).
}
Hence it suffices to work with $\LL_0$ and with $\LL_{V_{\la}}$ on the half-line. We recall that $V_{\la}$ is an \emph{attractive} potential and is given by
\EQ{
V_{\la}(r) = \frac{\cos 2Q_{\la} - 1}{\sinh^2 r} \le 0.
}
Some elementary computations using the definition of $Q_{\la}$ give us the explicit representation
\EQ{\label{Vla def}
V_{\la}(r) = \frac{-8 \la^2}{[(1+\lmb^{2}) \cosh r + (1-\lmb^{2})]^{2} }.
}
Below, we collect a few useful facts about $V_{\lmb}$.
\begin{lem} \label{Vla lem}
The following statements hold for $V_{\lmb}$.
\begin{enumerate}
\item We have
\begin{equation*}
V'_{\lmb} = \frac{16 \lmb^{2} (1+\lmb^{2}) \sinh r}{[(1+\lmb)^{2} \cosh r + (1-\lmb^{2})]^{3}}.
\end{equation*}

\item The potential $V_{\lmb}$ is \emph{attractive}. More precisely, $V_{\lmb}$ is always non-decreasing on $[0, \infty)$ and
\begin{equation*}
	V_{\lmb}(0) = - 2 \lmb^{2}, \quad
	\lim_{r \to \infty} V_{\lmb}(r) = 0.
\end{equation*}

\item For $\lmb = 0$, $V_{0} = 0$. For $\lmb = 1$,
\begin{equation*}
	V_{1} = - \frac{2}{\cosh^{2} r}.
\end{equation*}

\item For $0 \leq \lmb \leq 1$, we have
\begin{equation*}
	V_{\lmb} \geq V_{1}.
\end{equation*}
\end{enumerate}
\end{lem}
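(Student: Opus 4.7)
The plan is to prove all four items by direct computation from the closed-form expression \eqref{Vla def}, since $V_{\lmb}$ is a rational function of $\cosh r$ and every assertion reduces to elementary algebra. It will be convenient to set $D_{\lmb}(r) := (1+\lmb^{2}) \cosh r + (1-\lmb^{2})$, so that $V_{\lmb} = - 8 \lmb^{2} D_{\lmb}^{-2}$.

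For item (1), I would simply differentiate: since $D_{\lmb}'(r) = (1+\lmb^{2}) \sinh r$, the chain rule gives $V_{\lmb}' = 16 \lmb^{2} (1+\lmb^{2}) \sinh r \, D_{\lmb}^{-3}$, which is the stated formula. Item (2) then follows almost immediately. First, $D_{\lmb}(r) > 0$ for all $r \geq 0$ because $(1+\lmb^{2}) \cosh r \geq 1+\lmb^{2} \geq |1-\lmb^{2}|$, so the formula from (1) shows $V_{\lmb}'(r) \geq 0$; hence $V_{\lmb}$ is non-decreasing. The limiting values come from $D_{\lmb}(0) = 2$, giving $V_{\lmb}(0) = -8\lmb^{2}/4 = -2\lmb^{2}$, and from $D_{\lmb}(r) \to \infty$ as $r \to \infty$, giving $V_{\lmb}(r) \to 0$.

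Item (3) is pure substitution: $\lmb = 0$ makes the numerator vanish, and $\lmb = 1$ gives $D_{1}(r) = 2 \cosh r$, so $V_{1}(r) = -8/(4 \cosh^{2} r) = -2/\cosh^{2} r$.

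The only item requiring a small manipulation is (4). The inequality $V_{\lmb}(r) \geq V_{1}(r)$ is equivalent, after clearing denominators and square-rooting the resulting positive quantities, to
\begin{equation*}
2 \lmb \cosh r \leq (1+\lmb^{2}) \cosh r + (1-\lmb^{2}).
\end{equation*}
Rearranging, this becomes
\begin{equation*}
0 \leq (1-\lmb)^{2} \cosh r + (1-\lmb^{2}) = (1-\lmb)\bigl[(1-\lmb) \cosh r + (1+\lmb)\bigr],
\end{equation*}
a product of two manifestly non-negative factors when $0 \leq \lmb \leq 1$. No step here presents a real obstacle; the main point of care is simply to verify that the denominator $D_{\lmb}$ is strictly positive (so that clearing it preserves the inequality direction) and to keep track of signs when comparing the two negative potentials.
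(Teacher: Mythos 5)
Your proof is correct and follows essentially the same route as the paper: items (1)--(3) are direct computations from \eqref{Vla def} (which the paper dismisses as trivial), and your treatment of (4) -- clearing denominators and factoring $(1-\lmb)\bigl[(1-\lmb)\cosh r + (1+\lmb)\bigr] \geq 0$ -- is just a minor rearrangement of the paper's two-step bound $-V_{\lmb} \leq 8\lmb^{2}/((1+\lmb^{2})^{2}\cosh^{2} r) \leq -V_{1}$, which uses $1-\lmb^{2}\geq 0$ and $4\lmb^{2} \leq (1+\lmb^{2})^{2}$. Note only that the factor $(1+\lmb)^{2}$ in the displayed formula of item (1) is a typo in the statement; your computation correctly yields $(1+\lmb^{2})\cosh r + (1-\lmb^{2})$ in the denominator, consistent with \eqref{Vla def}.
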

\begin{proof}
Statements (1)--(3) are trivial. To see why (4) holds, note that for $0 \leq \lmb \leq 1$,
\begin{align*}
	- V_{\lmb}
	\leq \frac{8\lmb^{2}}{(1+\lmb^{2})^{2} \cosh^{2} r}
	\leq - V_{1}.
\end{align*}
where we used $4 \lmb^{2} \leq (1+\lmb^{2})^{2}$.
\end{proof}

\subsection{Spectrum of $H_{V_{\lmb}}$ for small $\lmb$} \label{subsec:smallLmb}

We note that the spectrum for the self-adjoint operator $\LL_0$ is purely absolutely continuous and is given by $\s(\LL_0) = [1/4, \infty)$, and in particular there is no negative spectrum, no eigenvalue in the gap $[0, 1/4)$, and the threshold $1/4$ is neither an eigenvalue nor a resonance. The following result shows that in the case $0 < \la < \sqrt{15/8}$, the same can be said of the spectrum $ \s(\LL_{V_{\la}})$.

\begin{prop}\label{good spec} Let $0 \le \la< \sqrt{15/8}$. Then the spectrum for the self-adjoint operator $\LL_{V_{\la}}$  is purely absolutely continuous and given by
\EQ{
\s( \LL_{V_{\la}}) = [ 1/4, \infty).
}
In particular, there is no negative spectrum, there are no eigenvalues in the gap $[0, 1/4)$, and the threshold $\frac{1}{4}$ is neither an eigenvalue nor a resonance.
\end{prop}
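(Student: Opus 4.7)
The plan has three components: fixing the essential spectrum, excluding eigenvalues in $(-\infty, 1/4)$, and ruling out a threshold eigenvalue or resonance at $1/4$.

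For the essential spectrum, Lemma~\ref{Vla lem} gives that $V_\la$ is bounded on $(0,\infty)$ and decays exponentially as $r \to \infty$, so multiplication by $V_\la$ is relatively compact with respect to $\LL_0$. Weyl's theorem then yields $\sigma_{\mathrm{ess}}(\LL_{V_\la}) = \sigma_{\mathrm{ess}}(\LL_0) = [1/4, \infty)$. Absence of singular continuous spectrum and of embedded eigenvalues in $(1/4,\infty)$ is a byproduct of the distorted Fourier / limiting absorption analysis set up in Section~\ref{strich section}, so I focus on the remaining task of ruling out point spectrum in $(-\infty, 1/4]$ and a threshold resonance.

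The key input is an explicit zero mode of $\LL_{V_\la}$ coming from the family of harmonic maps. Differentiating the harmonic map equation~\eqref{hm} in $\la$ shows that $\zeta_\la(r) := \partial_\la Q_\la(r) = 2\tanh(r/2)/(1 + \la^2\tanh^2(r/2))$ satisfies the $2$d linearized operator $L = -\partial_{rr} - \coth r\,\partial_r + \cos(2Q_\la)/\sinh^2 r$. After the conjugation $\phi = \sinh^{1/2}r\cdot \zeta_\la$ dictated by Section~\ref{4d}, the function
\[
 \omega_\la(r) = \frac{2\sinh^{3/2} r}{(1+\la^2)\cosh r + (1-\la^2)}
\]
solves $\LL_{V_\la}\omega_\la = 0$ and is strictly positive on $(0,\infty)$ for every $\la \geq 0$ (the denominator is at least $2$). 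It satisfies $\omega_\la(r) \sim r^{3/2}$ as $r\to 0$ and $\omega_\la(r)\sim \tfrac{\sqrt 2}{1+\la^2}e^{r/2}$ as $r\to\infty$. The ground state representation
\[
 \langle \LL_{V_\la}\phi,\phi\rangle = \int_0^\infty \omega_\la^2\,\bigl|(\phi/\omega_\la)'\bigr|^2\,dr \geq 0
\]
then rules out negative eigenvalues.

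To promote this to $\LL_{V_\la} \geq 1/4$, which precludes eigenvalues in $[0, 1/4)$ as well as a threshold eigenvalue, by the Allegretto--Piepenbrink principle it suffices to exhibit a positive solution $\widetilde\omega_\la$ of the shifted equation
\[
 -\widetilde\omega_\la'' + \bigl(\tfrac{3}{4\sinh^2 r} + V_\la(r)\bigr)\widetilde\omega_\la = 0 \quad\text{on } (0,\infty).
\]
For $\la = 0$ the regular-at-$0$ solution is $\widetilde\omega_0 = \sinh^{3/2}(r)\cdot A(r)$, where $A$ is the Harish--Chandra spherical function of $\Hp^4$ at spectral parameter $0$ (i.e., $-\Delta_{\Hp^4}A = \tfrac{9}{4}A$), and is strictly positive on $[0,\infty)$. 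For $\la > 0$ I would track the regular-at-$0$ solution using the Riccati variable $v_\la = \widetilde\omega_\la'/\widetilde\omega_\la$, which satisfies $v_\la' + v_\la^2 = \tfrac{3}{4\sinh^2 r} + V_\la$, and whose global existence on $(0,\infty)$ is equivalent to $\widetilde\omega_\la > 0$ on $(0,\infty)$. A direct computation using the explicit formula for $V_\la$ should reduce this to a polynomial inequality in $\la^2$ and $\cosh r$ that holds uniformly precisely for $\la < \sqrt{15/8}$. The absence of a threshold resonance is then automatic: a resonance at $1/4$ would decay like $e^{-r/2}$, whereas $\widetilde\omega_\la$ tends to a positive constant as $r\to\infty$, so the two asymptotic behaviors are linearly independent. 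The main obstacle I anticipate is the explicit extraction of the sharp numerical threshold $\sqrt{15/8}$ from the Riccati analysis; the remaining ingredients — Weyl's theorem, the ground state representation, and the Allegretto--Piepenbrink correspondence — are standard spectral-theoretic tools.
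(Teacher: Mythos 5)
Your overall skeleton is sound, and the first half matches what the paper does: the zero mode you write down is exactly the paper's $\ztz^{(\lmb)}$ (your $\omega_\la$ equals $\sinh^{1/2}r\,\partial_\la Q_\la$, which is the function used in Proposition~\ref{no neg spec}), and a ground-state/Wronskian argument with it does rule out spectrum in $(-\infty,0]$. The genuine gap is in the step that actually carries the content of the proposition: you never produce the positive (super)solution at the threshold energy $1/4$, and you defer it to a Riccati computation that you only anticipate "should" reduce to a polynomial inequality with threshold $\sqrt{15/8}$. That expectation is also misdirected: $\sqrt{15/8}$ is not the sharp value at which positivity of the regular threshold solution fails (the paper only claims $\la_{\sup}\ge\sqrt{15/8}$ and explicitly leaves its exact location open), so no analysis of the regular-at-$0$ solution will yield it "precisely." The number comes from a specific explicit comparison function: the paper takes $f(r)=\tanh^{3/2}r$, which satisfies $\KK f=\frac{15}{4\cosh^2 r}f$ with $\KK=-\p_{rr}+\frac{3}{4\sinh^2 r}$, so that $f$ is a positive strict supersolution of $\LL_{V_\la}f\ge \frac14 f$ exactly when $\frac{15}{4\cosh^2 r}+V_\la(r)>0$, and the worst case $r=0$ gives $\frac{15}{4}-2\la^2>0$, i.e.\ $\la<\sqrt{15/8}$. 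A Wronskian identity between $f$ and a putative eigenfunction or resonance $\phi_\mu$ on $[0,R]$, whose boundary term $\phi_\mu'(R)\tanh^{3/2}R$ tends to $0$ as $R\to\infty$, then yields the contradiction. This explicit supersolution is precisely the ingredient your proposal is missing.

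A second, smaller defect is your treatment of the threshold resonance. In the half-line variable a resonance at $1/4$ does \emph{not} decay like $e^{-r/2}$: since $\LL_{V_\la}-\frac14\approx-\p_{rr}$ near infinity, the solutions behave like $a+br$, and a resonance is by definition the bounded one, tending to a nonzero constant (Definition~\ref{res def}). Likewise your claim that the regular-at-$0$ threshold solution tends to a positive constant is false already at $\la=0$: $\sinh^{3/2}r\,\Phi_0(r)\asymp 1+r$ grows linearly, and indeed a bounded regular solution would \emph{be} a resonance, so your linear-independence argument is circular as stated. Note that operator positivity $\LL_{V_\la}\ge\frac14$ by itself does not exclude a resonance (which is not in $L^2$); the paper's comparison argument handles eigenvalues in $[0,1/4]$ and the resonance simultaneously because the boundary term in \eqref{R equal} vanishes at infinity in both cases.
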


Before we prove Proposition~\ref{good spec} we observe a few preliminary facts concerning solutions to
\EQ{\label{LL ev}
\LL_{V_{\la}} \phi =  \mu^2 \phi, \, \, \mfor \mu^2 \in \R, \, \,  \mu \in \C.
}
\begin{lem} \label{LL ev 0} Let $\mu \in \C, \, \, \mu^2 \in \R$ and suppose that $\phi_{\mu}$ is a solution to~\eqref{LL ev} such that $\phi_{\mu} \in L^2([0, c))$ for some $c>0$. Then, there exists $a \in \R$ so that
\EQ{
\phi_{\mu}(r) = a\,  r^{\frac{3}{2}} + o(r^{\frac{3}{2}})  \mas r \to 0.
}
\end{lem}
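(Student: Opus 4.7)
The plan is to treat $r = 0$ as a regular singular point of the ODE $\LL_{V_{\lmb}} \phi = \mu^{2} \phi$ and carry out a Frobenius-type analysis. Writing the equation as
\[
\phi'' = \frac{3}{4 r^{2}} \phi + W(r) \phi, \qquad W(r) := \frac{3}{4 \sinh^{2} r} - \frac{3}{4 r^{2}} + V_{\lmb}(r) - \mu^{2} + \frac{1}{4},
\]
I would first note that $W(r)$ extends to a smooth function on $[0, c)$: the Taylor expansion $\sinh^{2} r = r^{2} + \tfrac{1}{3} r^{4} + O(r^{6})$ makes the difference $\tfrac{3}{4 \sinh^{2} r} - \tfrac{3}{4 r^{2}}$ regular at $0$, and $V_{\lmb}$ is smooth by its explicit form \eqref{Vla def}. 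In particular $W$ is bounded near $r = 0$.

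The associated Euler equation $\phi'' = \tfrac{3}{4 r^{2}} \phi$ has indicial roots $s(s-1) = 3/4$, i.e.\ $s = 3/2$ and $s = -1/2$, differing by the integer $2$. The next step is to construct two linearly independent solutions of the full ODE by a standard Volterra/fixed-point argument: one solution
\[
\phi_{+}(r) = r^{3/2} \bigl( 1 + O(r^{2}) \bigr) \quad \text{as } r \to 0,
\]
and a second solution
\[
\phi_{-}(r) = r^{-1/2} \bigl( 1 + O(r) \bigr) + c\, (\log r)\, \phi_{+}(r) \quad \text{as } r \to 0,
\]
where $c \in \bbR$ is a constant (possibly $0$) whose appearance is allowed because the indicial roots differ by a positive integer. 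The construction of $\phi_{+}$ proceeds by plugging the ansatz $\phi_{+} = r^{3/2} f$ into the equation, reducing to an integral equation for $f$ with $f(0) = 1$ that is easily solved by iteration on a small interval; $\phi_{-}$ is then obtained either by the reduction-of-order formula $\phi_{-} = \phi_{+} \int \phi_{+}^{-2}$ (which automatically produces the logarithmic term) or by a parallel Frobenius construction.

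With $\phi_{\pm}$ in hand, the conclusion is immediate. Any solution $\phi_{\mu}$ of $\LL_{V_{\lmb}} \phi = \mu^{2} \phi$ is a linear combination $\phi_{\mu} = A \phi_{+} + B \phi_{-}$. Since $r^{-1/2} \notin L^{2}([0, c))$ while $r^{3/2} \in L^{2}([0, c))$, the function $\phi_{-}$ fails to be in $L^{2}$ near $0$, whereas $\phi_{+}$ does. The hypothesis $\phi_{\mu} \in L^{2}([0, c))$ therefore forces $B = 0$, giving $\phi_{\mu}(r) = A\, r^{3/2} + o(r^{3/2})$ as $r \to 0$ with $a := A$. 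The only mildly delicate point is ensuring that the possible $\log r$ factor in $\phi_{-}$ does not spoil the $L^{2}$ failure at $0$; but $r^{-1/2}$ dominates $r^{3/2} \log r$ near $0$, so $\phi_{-}$ is still not in $L^{2}$. No result beyond classical ODE theory is needed.
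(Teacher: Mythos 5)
Your proof is correct and follows essentially the same route as the paper: near $r=0$ the equation is treated as a perturbation of the Euler operator $-\p_{rr}+\frac{3}{4r^2}$ with fundamental system $\{r^{3/2},\, r^{-1/2}\}$, solved by a Volterra/variation-of-parameters argument, after which the $L^2([0,c))$ hypothesis eliminates the $r^{-1/2}$ branch. Your extra care with the possible $\log r$ correction in the second Frobenius solution is a fine addition but does not change the argument.
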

\begin{proof} This follows from the fact that the operator $\LL_0- 1/4$ is well approximated near $r=0$ by the singular operator
\ant{
L_0 := - \p_{rr} + \frac{3}{4 r^2}.
}
$L_0$ is in the limit point case at $r=0$ and a fundamental system for $L_0f = 0$ is given by $\{r^{\frac{3}{2}}, r^{-\frac{1}{2}}\}$. It follows that a solution $\phi_{\mu}$ as in Lemma~\ref{LL ev} can be written in terms of these  two solutions via the variation of parameters formula which converges for small $r$. The $L^2([0, c))$ requirement then guarantees that the coefficient in front of $r^{-\frac{1}{2}}$ must be $0$ and the leading order behavior is given by $r^{\frac{3}{2}}$.
\end{proof}
\begin{lem} \label{LL ev 1}Suppose $\phi_{0}$ is a solution to~\eqref{LL ev} with $\mu^2 = \frac{1}{4}$. Then there exist constants $a, b \in \R$ so that
\EQ{
\phi_0(r) = a + b \, r + O(re^{-2r}) \mas r \to \infty.
}
\end{lem}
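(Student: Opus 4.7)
\textbf{Proof proposal for Lemma \ref{LL ev 1}.}

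The plan is to reduce the eigenvalue equation $\LL_{V_{\lmb}} \phi_{0} = \frac{1}{4} \phi_{0}$ to the perturbed free equation $\phi_0'' = W(r) \phi_0$, where $W(r) = \frac{3}{4 \sinh^{2} r} + V_{\lmb}(r)$, and exploit that $W$ decays like $e^{-2r}$ at infinity. Indeed, from the explicit formula \eqref{Vla def} for $V_\lmb$ one sees $|V_{\lmb}(r)| \lec e^{-2r}$, and clearly $\frac{3}{4 \sinh^{2} r} \lec e^{-2r}$ as $r\to\infty$. Since the unperturbed equation $\phi'' = 0$ has the fundamental system $\{1, r\}$, I expect two solutions of the full equation with the asymptotics $1 + o(1)$ and $r + o(r)$, respectively, and the claim $\phi_0 = a + br + O(re^{-2r})$ to follow by expressing $\phi_0$ in this fundamental system.

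Concretely, I would fix $r_0$ large and construct the two solutions as fixed points of the Volterra integral operator
\begin{equation*}
	(T f)(r) = \int_r^\infty (s - r) W(s) f(s) \, ds,
\end{equation*}
which reproduces $\phi'' = W\phi$ upon differentiating twice. For the first solution, I would run the contraction mapping argument for $\phi_1 = 1 + T \phi_1$ in the Banach space $X_1 := \{ f \in C([r_0, \infty)) : \sup_r |f(r)| < \infty\}$, using $\int_r^\infty (s-r) e^{-2s} \, ds = \frac{1}{4} e^{-2r}$ to bound $\|T\|_{X_1 \to X_1} \lec e^{-2 r_0}$. This yields $\phi_1(r) = 1 + O(e^{-2r})$. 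For the second solution I would solve $\phi_2 = r + T \phi_2$ in the weighted space $X_2 := \{f \in C([r_0, \infty)) : \sup_r |f(r)|/(1+r) < \infty\}$. A direct computation gives $\int_r^\infty (s-r)(1+s) e^{-2s}\, ds \lec (1+r) e^{-2r}$, so again $T$ is a contraction for $r_0$ large, and the resulting fixed point satisfies $\phi_2(r) = r + O(r e^{-2r})$.

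Since $\phi_1$ and $\phi_2$ have distinct asymptotic leading orders, they are linearly independent on $[r_0,\infty)$, hence form a fundamental system for the second-order linear ODE there. Any solution $\phi_0$ of $\LL_{V_{\lmb}}\phi_0 = \frac{1}{4} \phi_0$ can therefore be written on $[r_0, \infty)$ as $\phi_0 = a \phi_1 + b \phi_2$ for some constants $a, b \in \bbR$, and substituting the asymptotics of $\phi_1, \phi_2$ gives exactly $\phi_0(r) = a + b r + O(r e^{-2r})$ as $r \to \infty$.

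The main (mild) technical point is verifying the decay of $W$ in the precise form $|W(r)| \lec e^{-2r}$ and obtaining sharp enough bounds on $\int_r^\infty (s-r) W(s) (1+s) \, ds$ so that the $O(re^{-2r})$ — rather than merely $o(1)$ — remainder emerges naturally from the fixed-point construction; the rest of the argument is routine linear ODE theory.
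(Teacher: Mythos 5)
Your proposal is correct and follows essentially the same route as the paper: both reduce the equation at $\mu^2=\tfrac14$ to $\phi''=\bigl(\tfrac{3}{4\sinh^2 r}+V_{\lmb}\bigr)\phi$ with an exponentially decaying coefficient, take $\{1,r\}$ as the fundamental system of the limiting operator $-\rd_{rr}$, and conclude via a variation-of-parameters (Volterra) argument; you merely spell out the contraction-mapping details that the paper leaves implicit.
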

\begin{proof} This follows from the fact that we can find constants $C_{\la}, C>0$ so that for $r$ large we have $V_{\la}(r) \le C_{\la} e^{-2r}$ and $\frac{3}{4 \sinh^2 r} \le C e^{-2r}$. Thus the operator
\ant{
L_{\infty} := -\p_{rr}
}
is a good approximation of  $\LL_{V_{\la}} - 1/4$ near $r= \infty$. A fundamental system for $L_{\infty} f = 0$ is given by $\{1, r\}$. The variation of parameters formula then yields the conclusions of Lemma~\ref{LL ev 1}.
\end{proof}

\begin{defn}\label{res def} Given the conclusions of Lemma~\ref{LL ev} and Lemma~\ref{LL ev 1} we can give a precise definition of  what we mean by threshold resonance. We say that that $\phi_0$ is a \emph{threshold resonance} for $\LL_{V_{\la}}$ if $\phi_0$ is not in $L^2(0,\infty)$ and it is a bounded solution to
\ant{
\LL_{V_{\la}} \phi_0 = \frac{1}{4} \phi_0.
}
In particular we can find non-zero $a, b \in \R$ so that
\EQ{
&\phi_0(r) = a r^{\frac{3}{2}} + o(r^{\frac{3}{2}}) \mas r \to 0,\\
&\phi_0(r) = b + O(r e^{-2r}) \mas r \to \infty.
}
\end{defn}

We can now prove Proposition~\ref{good spec}.

\begin{proof}[Proof of Proposition~\ref{good spec}]
Let $\mu \in \C$ with $\mu^2 \le \frac{1}{4}$. Suppose that $\phi_\mu$ is a solution to
\EQ{\label{phi m}
\LL_{V_{\la}} \phi_\mu = \mu^2 \phi_{\mu}
}
If  $\mu^2 \le 1/4$ is an eigenvalue, we can assume that it is the smallest eigenvalue, and by a variational principle, we can further assume that corresponding eigenfunction $\phi_{\mu} \in L^2$ is unique, (i.e., $\mu^2$ is simple) and strictly positive. If $\mu^2 = \frac{1}{4}$ and is not an eigenvalue, we  assume that $\phi_{\mu}$ is a
threshold resonance. In either case, we know by Lemma~\ref{e val} that $\phi_{\mu}(r) = O(r^{\frac{3}{2}})$ as $r \to 0$. If $\phi_{\mu}$ is an eigenvalue, then $\phi_{\mu}(r) \to 0$ as $r \to \infty$. If $\phi_{\mu}(r)$ is a threshold resonance, we know by Definition~\ref{res def} that $\phi_\mu(r) \to b >0$ as $r \to \infty$.

Now, define the operator
\EQ{
\KK := -\p_{rr} + \frac{3}{4 \sinh^2 r}. 
}
Observe that the function
\EQ{
f(r)= \tanh^{\frac{3}{2}}r,
}
solves
\EQ{\label{Kf=}
\KK f = \frac{15}{4 \cosh^2 r} f.
}
Then, for any $R>0$ we can integrate by parts, using~\eqref{phi m} and~\eqref{Kf=}   to obtain
\begin{multline}\label{R equal}
 \left( \mu^2- \frac{1}{4}\right) \int_0^{R} \tanh^{\frac{3}{2}} r \, \phi_\mu(r) \, dr = - \phi'_{\mu}(R) \tanh^{\frac{3}{2}} R + \frac{3}{2}\phi_{\mu}(R)\frac{ \tanh^{\frac{1}{2}} R}{ \cosh^2 R}\\
  \quad  +\int_0^{R} \left( \frac{15}{4 \cosh^2 r} + V_{\la}(r) \right) \tanh^{\frac{3}{2}} r \, \phi_{\mu}(r) \, dr
\end{multline}
%
%
%
%
Since $\mu^2 \le \frac{1}{4}$ and  $\frac{3}{2}\phi_{\mu}(R)\frac{ \tanh^{\frac{1}{2}} R}{ \cosh^2 R} \ge0$  for all $R>0$, we can deduce that
\EQ{\label{pos<0}
\int_0^{R} \left( \frac{15}{4 \cosh^2 r} + V_{\la}(r) \right) \tanh^{\frac{3}{2}} r \, \phi_{\mu}(r) \, dr  \le \phi'_{\mu}(R) \tanh^{\frac{3}{2}} R
}
for all $R>0$. For $0< \la< \sqrt{15/8}$, we can plug in the definition \eqref{Vla def} to see that for such a $\la$ fixed,  we have
\EQ{
\frac{15}{4 \cosh^2 r} + V_{\la}(r) > 0, \quad \forall  r \in [0, \infty).
}
This means that the left-hand-side of~\eqref{pos<0} is strictly positive and increasing in $R$ and hence we can find $\de >0$ so that
\EQ{\label{contradiction}
0< \de \le \phi'_{\mu}(R) \tanh^{\frac{3}{2}} R
}
for all $R >0$.
However, we know that $ \phi'_{\mu}(R) \tanh^{\frac{3}{2}} R \to 0$ as $R \to \infty$, which means that~\eqref{contradiction} gives a contradiction for $R$ large enough. This completes the proof of Proposition~\ref{good spec}.
\end{proof}

\subsection{Spectrum of $H_{V_{\lmb}}$ for large $\lmb$: Beginning of the proof of Theorem \ref{e val}} \label{subsec:gapEoverview}
The rest of this section is devoted to the proof of Theorem \ref{e val} which asserts, in particular, existence of a unique simple gap eigenvalue $\gapE \in (0, 1/4)$ of $\calL_{V_{\lmb}}$ for large $\lmb$ and migration of $\gapE$ to $0$ as $\lmb$ tends to $\infty$. We remind the reader that $\calL_{V_{\lmb}}$ is $L^{2}$-equivalent to $H_{V_{\lmb}}$.

In this subsection, we begin our proof of Theorem \ref{e val} by establishing some elementary facts concerning the spectrum of $\calL_{V_{\lmb}}$ for all $\lmb \in [0, \infty)$. In particular, we prove that if an eigenvalue exists, then it must occur in the spectral gap $(0, 1/4)$. We also show that $\calL_{V_{\lmb}}$ has a threshold resonance when $\lmb = \lmb_{\sup}$ or $\lmb = \Lmb_{\inf}$, where $\lmb_{\sup}$ and $\Lmb_{\inf}$ were defined in Theorem \ref{e val}. Next, we briefly explain the idea of renormalization, which is key to the rest of our proof of Theorem \ref{e val}.  At the end of this subsection, we give an outline of the structure of the proof of Theorem \ref{e val}.

To begin, we state and prove some general facts about the spectrum of $\calL_{V_{\lmb}}$.
\begin{prop} \label{no neg spec}
The following statements concerning $\calL_{V_{\lmb}}$ hold.
\begin{enumerate}[(i)]
\item For every $\lmb \geq 0$, the spectrum of $\calL_{V_{\lmb}}$ does not contain any non-positive reals, i.e.,
\begin{equation*}
	\sgm(\calL_{V_{\lmb}}) \cap (-\infty, 0] = \emptyset.
\end{equation*}
\item There does not exist any eigenvalue in $[\frac{1}{4}, \infty)$.
\end{enumerate}
\end{prop}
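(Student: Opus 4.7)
My plan is to prove (i) by constructing a strictly positive distributional zero mode of $\calL_{V_{\lmb}}$ from the one-parameter family $Q_{\lmb}$ and then running the classical ground-state representation of the quadratic form, and to handle (ii) by asymptotic ODE analysis at $r=\infty$ combined with Lemma~\ref{LL ev 1}.

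For (i), I would first exhibit the zero mode. Differentiating the harmonic-map ODE for $Q_{\lmb}$ in $\lmb$ shows that $\p_{\lmb} Q_{\lmb}$ lies in the kernel of the $2d$ linearization $-\p_{rr} - \coth r\,\p_{r} + \cos(2 Q_{\lmb})/\sinh^{2} r$. A direct calculation (using $\coth^{2} r = 1 + \sinh^{-2} r$) transferring through the $\sinh^{3/2} r$ conjugation in \eqref{conj} then confirms that
\[
\fy_{0}(r) := \sinh^{1/2} r \cdot \p_{\lmb} Q_{\lmb}(r) = \sinh^{1/2} r \cdot \frac{2 \tanh(r/2)}{1 + \lmb^{2} \tanh^{2}(r/2)}
\]
satisfies $\calL_{V_{\lmb}} \fy_{0} = 0$, is strictly positive on $(0, \infty)$, behaves like $r^{3/2}$ at $r = 0$, and grows like $e^{r/2}$ at $r = \infty$. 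For any $\phi \in C_{c}^{\infty}((0, \infty))$, positivity of $\fy_{0}$ lets me write $\phi = \fy_{0} \chi$ with $\chi \in C_{c}^{\infty}((0, \infty))$, and integration by parts using $\calL_{V_{\lmb}} \fy_{0} = 0$ yields the ground-state representation
\[
\langle \phi, \calL_{V_{\lmb}} \phi \rangle_{L^{2}(0, \infty)} = \int_{0}^{\infty} \fy_{0}^{2}\, (\chi')^{2}\, \ud r \;\ge\; 0.
\]
Since $\calL_{V_{\lmb}}$ is essentially self-adjoint on $C_{c}^{\infty}((0, \infty))$ (limit point at $r = 0$ because the inverse-square coefficient equals the Weyl threshold $\tfrac{3}{4}$, and limit point at $r = \infty$ by bounded potential), this extends to $\sgm(\calL_{V_{\lmb}}) \subset [0, \infty)$. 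To exclude $0$ itself, note that $V_{\lmb}$ is a bounded exponentially decaying, hence relatively compact, perturbation of $\calL_{0}$, which via \eqref{conj} is unitarily equivalent to $(-\Dlt_{\bbH^{4}} - 2)$ on radial functions and thus has essential spectrum $[\tfrac{1}{4}, \infty)$. Weyl's theorem gives $\sgm_{\mathrm{ess}}(\calL_{V_{\lmb}}) = [\tfrac{1}{4}, \infty)$, so $0$ could only enter as an eigenvalue; but by Lemma~\ref{LL ev 0} any $L^{2}$ zero mode is proportional to $\fy_{0}$, which fails to be $L^{2}$ near $\infty$.

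For (ii), assume $\fy \in L^{2}(0, \infty)$ is an eigenfunction with eigenvalue $\mu^{2} \geq \tfrac{1}{4}$. Because $\tfrac{3}{4\sinh^{2} r} + V_{\lmb}(r)$ decays exponentially, the equation $\calL_{V_{\lmb}} \fy = \mu^{2} \fy$ is, for large $r$, a short-range perturbation of $-\fy'' = (\mu^{2} - \tfrac{1}{4}) \fy$. For $\mu^{2} > \tfrac{1}{4}$, standard variation-of-parameters asymptotics give that every nonzero solution is asymptotic to a nontrivial combination of $\cos(\ka r)$ and $\sin(\ka r)$ with $\ka = \sqrt{\mu^{2} - 1/4}$, so no such solution is $L^{2}$ near $\infty$. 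For $\mu^{2} = \tfrac{1}{4}$, Lemma~\ref{LL ev 1} provides the expansion $\fy(r) = a + br + O(re^{-2 r})$, and a standard Duhamel-plus-Wronskian argument shows that the map from the two-dimensional solution space to the asymptotic data $(a, b) \in \R^{2}$ is a linear bijection; hence the $L^{2}$ requirement $(a, b) = (0, 0)$ forces $\fy \equiv 0$. In either case the assumption of a nonzero $L^{2}$ eigenfunction is contradicted.

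The principal difficulty is in (i), specifically the identification of the strictly positive zero mode $\fy_{0}$ via the explicit $\lmb$-derivative of the harmonic-map family and the subsequent Riccati/ground-state argument; the exclusion of $0$ from the spectrum and the routine ODE analysis in (ii) then follow from standard facts about self-adjoint Schrödinger operators with exponentially decaying perturbations.
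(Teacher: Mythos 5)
Your proposal is correct, and part (ii) is essentially the paper's argument (compare with the Jost-type fundamental systems $\set{e^{\pm i\sqrt{\mu^2-1/4}\,r}}$ and $\set{1,r}$ at infinity). For part (i), however, you take a genuinely different route built on the same key object: the explicit positive zero-energy solution $\sinh^{1/2} r\, \p_\lmb Q_\lmb$, which is (up to a factor of $2$) the paper's $\ztz^{(\lmb)}$ in \eqref{eq:ztz}. The paper argues by contradiction: assuming an eigenvalue $\mu^2 \le 0$, it takes the (variationally guaranteed) positive ground-state eigenfunction $\phi_\mu$, integrates the Wronskian identity against $\ztz^{(\lmb)}$ on $[0,R]$ as in Proposition~\ref{good spec}, and derives a sign contradiction as $R \to \infty$; your approach instead uses the ground-state (Jacobi) factorization $\langle \fy_0\chi, \calL_{V_\lmb}(\fy_0\chi)\rangle = \int \fy_0^2(\chi')^2$ to get nonnegativity of the form on $C_c^\infty((0,\infty))$, upgrades this to $\sgm(\calL_{V_\lmb})\subset[0,\infty)$ via essential self-adjointness (limit point at both ends), and then excludes $0$ by Weyl's theorem ($\sgm_{\mathrm{ess}}=[1/4,\infty)$, so $0$ could only be an eigenvalue) together with the limit-point uniqueness at $r=0$ from Lemma~\ref{LL ev 0}, which forces any $L^2$ zero mode to be proportional to $\fy_0 \sim e^{r/2}$. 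What your route buys: it avoids invoking positivity of the lowest eigenfunction and the decay of $\phi_\mu, \phi_\mu'$ at infinity needed to control the boundary terms in the paper's identity, and it handles the whole half-line $(-\infty,0]$ in one stroke. What it costs: you need the functional-analytic inputs (essential self-adjointness in the limit-point case, relative compactness of $V_\lmb$ and Weyl's essential-spectrum theorem) which the paper's purely ODE comparison argument sidesteps; note also that both proofs ultimately rely on the fact that spectrum below $1/4$ is discrete, which you make explicit and the paper leaves implicit.
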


As a consequence of this proposition, any eigenvalue of the operator $\calL_{V_{\lmb}}$ must occur in the \emph{spectral gap} $(0, 1/4)$.

In our proof of the first statement of Proposition \ref{no neg spec}, we make use of the positive solution $\ztz^{(\lmb)}$ to the equation
\begin{equation} \label{eq:eq4ztz}
	\calL_{V_{\lmb}} \ztz^{(\lmb)} = 0,
\end{equation}
which is obtained by differentiating $Q_{\lmb}$ with respect to $\lambda$ and conjugating by $\sinh^{1/2} r$. It can be computed explicitly to be
\begin{equation} \label{eq:ztz}
	\ztz^{(\lmb)} (r) = \frac{\tanh(r/2)}{1 + \lmb^{2} \tanh^{2}(r/2)}  \sinh^{1/2} r.
\end{equation}

The explicit solution $\ztz^{(\lmb)}$ (more precisely, its conjugate $\ztinf^{(\lmb)}$) will make another entrance in our proof of migration of the gap eigenvalue in Section \ref{subsec:gapEmig}.

\begin{proof} [Proof of Proposition \ref{no neg spec}] The existence of the solution $ \z_0^\la$ rules out the possibility of  an eigenvlaue at $\mu =0$. 
Therefore, to prove the first statement, it suffices to rule out eigenvalues in $(-\infty, 0)$.
Suppose that such an eigenvalue exists. Then, as in the proof of Proposition \ref{good spec}, there exists $\mu \in \bbC$ with $\mu^{2} \leq 0$ and an $L^{2}$ solution $\phi_{\mu}$ to \eqref{phi m} which is strictly positive. Proceeding as in \eqref{R equal} with $\ztz^{(\lmb)}$ in place of $\tanh^{3/2} r$, for any $R > 0$ we obtain
\begin{equation*}
	\mu^{2} \int_{0}^{R} \ztz^{(\lmb)}(r) \phi_{\mu}(r) \, \ud r = - \phi'_{\mu}(R) \ztz^{(\lmb)}(R) + \phi_{\mu}(R) (\ztz^{(\lmb)})'(R).
\end{equation*}

Arguing as in Proposition \ref{good spec}, we see that the left-hand side is strictly negative and decreasing in $R$. On the other hand, the right-hand side is non-negative for sufficiently large $R$, which is a contradiction.

The second statement follows from the fact that if $\mu^{2} \geq 1/4$ then there does not exist any non-zero solution to $\calL_{V_{\lmb}} \phi = \mu^{2} \phi$ in $L^{2}([1, \infty))$. To prove this fact, observe that $\calL_{V_{\lmb}} - \mu^{2}$ is well-approximated by $-\rd_{rr} - (\mu^{2} - 1/4)$, near $r = \infty$. Moreover, note that a fundamental system for $-\rd_{rr} f - (\mu^{2} - 1/4) f = 0$ is $\set{e^{\pm i \sqrt{\mu^{2} - 1/4} \, r}}$ when $\mu^{2} > 1/4$ and $\set{1, r}$ when $\mu^{2} = 1/4$, all of which do not decay as $r \to \infty$. \qedhere
\end{proof}

Next we show, roughly speaking, that the transition from a $\lmb$-regime with no eigenvalue and threshold resonance to a $\lmb$-regime with a gap eigenvalue must be accompanied by a threshold resonance.
\begin{prop} \label{res exists}
As in Theorem \ref{e val}, define
\begin{align*}
&\lambda_{\sup}=\sup\{ \la \mid \calL_{V_{\ti \lambda}} \, \, \textrm{has no e-vals and no threshold resonance} \, \, \forall \, \ti \la < \la\}\\
&\Lambda_{\inf}=\inf\{ \la \mid \calL_{V_{\ti \lambda}} \, \, \textrm{has a gap e-val}\, \,  \mu^2_{\ti{\la}} \in (0, 1/4) \, \, \forall \,  \ti \la > \la\}
\end{align*}
Then both $\calL_{V_{\lmb_{\sup}}}$ and $\calL_{V_{\Lmb_{\inf}}}$ have threshold resonances.
\end{prop}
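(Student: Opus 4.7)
The plan is to characterize gap eigenvalues and threshold resonances of $\calL_{V_{\lmb}}$ by the vanishing of a Jost-type function $F(z, \lmb)$, and then to control the transitions at $\lmb_{\sup}$ and $\Lmb_{\inf}$ by combining continuity of $F$ with standard perturbation theory for isolated eigenvalues. For $\lmb \geq 0$ and $z \leq 1/4$, let $\phi_{\lmb}(r;z)$ denote the unique regular solution at $r=0$ of $\calL_{V_{\lmb}} \phi = z \phi$ normalized by $\phi_{\lmb}(r;z) = r^{3/2}(1+o(1))$ as $r \to 0$, which exists by Lemma~\ref{LL ev 0}, and let $f_{\lmb}(r;z)$ denote the Jost solution at $r=\infty$ with $f_{\lmb}(r;z) \sim e^{-\sqrt{1/4-z}\,r}$ as $r \to \infty$ (interpreted as $f_{\lmb}(r;1/4) \to 1$ at the threshold). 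Existence and joint continuity of $f_{\lmb}$ in $(z,\lmb)$ follow from a standard Volterra integral equation argument, using that both $V_{\lmb}$ and $\frac{3}{4\sinh^{2}r}$ decay exponentially at infinity. Set $F(z,\lmb) := W(\phi_{\lmb}(\cdot;z), f_{\lmb}(\cdot;z))$; this Wronskian is constant in $r$ and jointly continuous on $(-\infty, 1/4] \times [0,\infty)$. By construction, $F(z,\lmb) = 0$ with $z \in (0, 1/4)$ corresponds precisely to $z$ being a gap eigenvalue, and $F(1/4, \lmb) = 0$ corresponds to a threshold resonance (since Proposition~\ref{no neg spec}(ii) rules out $1/4$ as an eigenvalue).

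For $\lmb_{\sup}$, I first claim that $\calL_{V_{\lmb_{\sup}}}$ has no gap eigenvalue: by Kato's analytic perturbation theory for isolated eigenvalues of the smooth family $\lmb \mapsto \calL_{V_{\lmb}}$, any such eigenvalue would persist in an open neighborhood of $\lmb_{\sup}$, contradicting the definition of $\lmb_{\sup}$. Next, the definition of $\lmb_{\sup}$ produces a sequence $\lmb_{n} \downarrow \lmb_{\sup}$ with $\lmb_{n} > \lmb_{\sup}$ such that $F(\mu_{n}^{2}, \lmb_{n}) = 0$ for some $\mu_{n}^{2} \in (0, 1/4]$, and passing to a subsequence yields $\mu_{n}^{2} \to \mu_{\infty}^{2} \in [0, 1/4]$ with $F(\mu_{\infty}^{2}, \lmb_{\sup}) = 0$ by continuity. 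The case $\mu_{\infty}^{2} \in (0, 1/4)$ is ruled out by the first claim. The case $\mu_{\infty}^{2} = 0$ is ruled out using the explicit formula \eqref{eq:ztz}: a direct calculation gives $\ztz^{(\lmb)}(r) \sim \frac{1}{2} r^{3/2}$ as $r \to 0$ and $\ztz^{(\lmb)}(r) \sim \frac{1}{\sqrt{2}(1+\lmb^{2})} e^{r/2}$ as $r \to \infty$, so $\phi_{\lmb_{\sup}}(\cdot;0) = 2\,\ztz^{(\lmb_{\sup})}$ grows like $e^{r/2}$ and is therefore linearly independent from $f_{\lmb_{\sup}}(\cdot; 0) \sim e^{-r/2}$, giving $F(0, \lmb_{\sup}) \neq 0$. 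Hence $\mu_{\infty}^{2} = 1/4$, and we obtain the threshold resonance.

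The argument for $\Lmb_{\inf}$ is completely analogous. From the definition, every $\lmb > \Lmb_{\inf}$ carries a gap eigenvalue of $\calL_{V_{\lmb}}$, so I choose $\lmb_{n} \downarrow \Lmb_{\inf}$ with $\lmb_{n} > \Lmb_{\inf}$ and $\mu_{n}^{2} \in (0,1/4)$ with $F(\mu_{n}^{2}, \lmb_{n}) = 0$, and extract a subsequential limit $\mu_{\infty}^{2}$. The case $\mu_{\infty}^{2} = 0$ is excluded exactly as above via $F(0, \Lmb_{\inf}) \neq 0$. If $\mu_{\infty}^{2} \in (0,1/4)$, then $\calL_{V_{\Lmb_{\inf}}}$ has an isolated gap eigenvalue, which by perturbation theory persists throughout $(\Lmb_{\inf} - \e, \Lmb_{\inf} + \e)$; combined with the existence of a gap eigenvalue for every $\lmb > \Lmb_{\inf}$, this places $\Lmb_{\inf} - \e$ in the set whose infimum defines $\Lmb_{\inf}$, a contradiction. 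Thus $\mu_{\infty}^{2} = 1/4$ and $\calL_{V_{\Lmb_{\inf}}}$ has a threshold resonance. I expect the main technical burden to lie in making the continuity of $F$ up to the threshold $z = 1/4$ quantitative, since there the decaying Jost solution degenerates to a bounded (non-decaying) solution; however, this is standard for exponentially decaying potentials, and the explicit solution $\ztz^{(\lmb)}$ handles the more delicate lower endpoint $z = 0$ at once.
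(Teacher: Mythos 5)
Your argument is correct, but it takes a genuinely different route from the paper's. The paper works only at the fixed threshold energy $1/4$: by Sturm oscillation together with Proposition~\ref{no neg spec}, a gap eigenvalue exists if and only if the regular solution $\phi_0^{\lmb}$ of $\calL_{V_{\lmb}}\phi_0^{\lmb} = \frac14 \phi_0^{\lmb}$ changes sign, so $\lmb_{\sup}$ and $\Lmb_{\inf}$ are re-characterized in terms of $\phi_0^{\lmb}$ alone; since ``changes sign'' and ``does not change sign and is not a threshold resonance'' are both open conditions in $\lmb$ (the latter via continuity of the coefficient $b(\lmb)$ from Lemma~\ref{LL ev 1}), the only possibility remaining at the two transition values is a threshold resonance. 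You instead build the two-parameter Jost--Wronskian $F(z,\lmb)$ on $[0,1/4]\times[0,\infty)$, exclude gap eigenvalues at the transition values by Kato perturbation of isolated eigenvalues (legitimate here, since $\lmb \mapsto V_{\lmb}$ is $L^{\infty}$-continuous and the spectrum below $1/4$ is discrete), extract a limiting zero of $F$ by compactness, and rule out the limit $z=0$ with the explicit solution $\ztz^{(\lmb)}$ --- a nice touch that parallels its use in Proposition~\ref{no neg spec}. The paper's route buys economy: only continuity in $\lmb$ at the single energy $1/4$ is needed, with no eigenvalue perturbation theory and no Jost theory uniform up to the threshold; your route is heavier but standard and gives more (joint continuity of the eigenvalue/resonance condition in $(z,\lmb)$), the main technical cost being exactly the threshold-uniform Volterra construction you flag, which is indeed routine for the $O(e^{-2r})$ potentials here. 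One point worth making explicit: the definition of $\lmb_{\sup}$ only furnishes, for each $n$, a witness $\tilde{\lmb}_n \in [\lmb_{\sup}, \lmb_{\sup}+1/n)$ carrying an eigenvalue or resonance; if some witness equals $\lmb_{\sup}$ itself you are done immediately after your Kato step, and otherwise your sequence argument applies --- a presentational gap only, not a mathematical one.
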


In view of the result that we will prove Section \ref{subsec:gapEexists} (existence of a gap eigenvalue), we have $\lmb_{\sup} \leq \Lmb_{\inf} < \infty$.

\begin{proof}
To prove this proposition, we study the solution $\phi_{0}^{\lmb}$ to $\calL_{V_{\lmb}} \phi_{0}^{\lmb} = (1/4) \phi_{0}^{\lmb}$ such that $\phi_{0}^{\lmb}(r) = r^{3/2} + o(r^{3/2})$. By Sturm's oscillation theory and Proposition \ref{no neg spec} (which rules out negative spectrum), existence of an eigenvalue in $(0, 1/4)$ is equivalent to existence of a zero (i.e., a sign change) of $\phi_{0}^{\lmb}$. As a consequence, we have the following alternative characterization of $\lmb_{\sup}$ and $\Lmb_{\inf}$ in terms of $\phi_{0}^{\lmb}$:
\begin{align*}
&\lambda_{\sup}=\sup\{ \la \mid \phi_{0}^{\ti{\lmb}} \textrm{ does not change sign and is not a threshold resonance} \, \forall \, \ti \la < \la\}\\
&\Lambda_{\inf}=\inf\{ \la \mid \phi_{0}^{\ti{\lmb}} \textrm{ changes sign} \, \, \forall \,  \ti \la > \la\}
\end{align*}

Observe that $A)$ ``changing sign" and $B)$ ``not changing sign and not being a threshold resonance" are open conditions in $\lambda$ for $\phi_{0}^{\lmb}$. Indeed, that $A)$ is an open condition is an easy consequence of pointwise continuity of $\phi_{0}^{\lmb}(r)$ in $\lmb, r$. That $B)$ is an open condition follows from the fact that the coefficient $b = b(\lmb)$ from Lemma \ref{LL ev 1} is continuous in $\lmb$.  By the above characterization of $\lmb_{\sup}$ and $\Lmb_{\inf}$, the only remaining possibility is that $\phi_{0}^{\lmb}$ is a threshold resonance for $\lmb = \lmb_{\sup}$ or $\lmb = \Lmb_{\inf}$, which proves the proposition. \qedhere
\end{proof}

We now explain the idea of renormalization, which will play an important role in our arguments in the rest of this section. For each $\la > 0$, we define the rescaled Schr\"odinger operator $\rnL_{\lmb}$ by
\EQ{ \label{L def}
\rnL_{\lmb}  := -\rd_{\rho \rho} + \frac{3}{4} \frac{1}{ \la^2 \sinh^2 (\rho/\la)} + \frac{1}{4 \lmb^{2}} + \frac{1}{ \la^2}V_{\la}(\rho/ \la).
}
We will refer to $\rho = \lmb r$ as the \emph{renormalized coordinate}.
The operator $\rnL_{\lmb}$ is related to $\calL_{V_{{\lmb}}}$ as follows: Given a function $\phi(r)$ on $(0, \infty)$, define $\widetilde{\phi}(\rho) := \phi(\rho / \lmb)$. Then
\begin{equation*}
	\rnL_{\lmb} \widetilde{\phi}
		= \frac{1}{\lmb^{2}} (\calL_{V_{\lmb}} \phi)(\cdot / \lmb)
		= \frac{1}{\lmb^{2}} \widetilde{(\calL_{V_{\lmb}} \phi)}.
\end{equation*}

A simple but important observation is that in the limit $\lambda\rightarrow\infty,~\rnL_\lambda$ formally tends to the operator
\EQ{\label{re euc}
\LL_{\euc} \fy  :=& - \fy_{\rho \rho} + \frac{3}{4} \frac{1}{\rho^2}  \fy + V_{\euc}(\rho) \fy, \\
V_{\euc}(\rho):=& - \frac{2}{(1+ (\rho/2)^2)^2}.
}

The equation $\LL_{\euc} \varphi = 0$ possesses an explicit solution
\begin{equation} \label{eq:eucRes}
	\varphi_{0}(\rho) := \frac{\rho^{\frac{3}{2}}}{1 + (\rho/2)^{2}}.
\end{equation}

The Schr\"odinger operator $\LL_{\euc}$ arises in linearizing the co-rotational wave maps equation $\R^{2+1} \to \mathbb{S}^2$ around the ground state harmonic map $Q_{\euc}$. The explicit solution $\varphi_{0}$ is obtained from the scaling invariance of the problem, and is a resonance at zero of $\LL_{\euc}$. See \cite{KST} for more details.

The idea of renormalization is to exploit the formal resemblance of $\rnL_{\lmb}$ and $\calL_{\euc}$ by (essentially) working with a fundamental system for $\calL_{\euc} \varphi = 0$ consisting of the explicit solution $\varphi_{0}$ and its conjugate.
More precisely, given a solution $\psi$ to $\calL_{V_{\lmb}} \psi = \overline{\mu}^{2} \psi$, we define its \emph{renormalization} $g(\rho)$ by making a change of variable $\rho = \lmb r$ and dividing by $\varphi_{0}(\rho)$, i.e.,
\begin{equation}
	g(\rho) := \frac{\psi(\rho / \lmb)}{\varphi_{0}(\rho)}.
\end{equation}

Then $g(\rho)$ obeys the equation
\begin{equation*}
	(g' \varphi^{2}_{0})' = \varphi_{0}^{2} W_{\lmb, \overline{\mu}} g
\end{equation*}
where
\begin{equation*}
	W_{\lmb, \overline{\mu}}(\rho) := \frac{3}{4} \frac{1}{\lmb^{2} \sinh^{2}(\rho/\lmb)} - \frac{3}{4} \frac{1}{\rho^{2}} + \frac{1}{4 \lmb^{2}} - \frac{\overline{\mu}^{2}}{\lmb^{2}} + \frac{1}{\lmb^{2}} V_{\lmb}(\rho/\lmb) - V_{\euc}(\rho).
\end{equation*}

We call $W_{\lmb, \overline{\mu}}$ the \emph{renormalized potential}. By the same computation that shows $\rnL_{\lmb} \to \calL_{\euc}$, it follows that $W_{\lmb, \overline{\mu}}(\rho) \to 0$ for each $\rho>0$ as $\lmb \to \infty$. This simple fact already suggests that we have a good control on $g(\rho)$ for $0 \leq \rho \aleq 1$. This will be one of the main ideas of our proof of uniqueness of the gap eigenvalue.

Remarkably, the renormalization technique can be also made effective in a $\rho$-interval of the form $0 \leq \rho \aleq \lmb$, which can be made arbitrarily long, provided that an appropriate {\it a priori} estimate for $g$ holds. This observation is crucial in our proofs of existence and migration of the gap eigenvalue below, where we obtain an appropriate {\it a priori} estimate from a contradiction hypothesis.

We conclude this subsection with an outline of the structure of the proof of Theorem \ref{e val}. In Section \ref{subsec:gapEexists}, we establish existence of an eigenvalue of $\calL_{V_{\lmb}}$ in $(0, 1/4)$ for sufficiently large $\lmb$. Then in Section \ref{subsec:gapEunique}, we show that if an eigenvalue exists in $(0, 1/4)$, then it must be simple and unique if $\lmb$ is sufficiently large. We also rule out threshold resonance for large $\lmb$. Finally, in Section \ref{subsec:gapEmig}, we show that the gap eigenvalue $\gapE$ tends to $0$ as $\lmb \to \infty$. Combined with Propositions \ref{no neg spec} and \ref{res exists}, Theorem \ref{e val} then follows.

\subsection{Existence of gap eigenvalues for large $\la$} \label{subsec:gapEexists}
The goal of this subsection is to prove existence of an eigenvalue of $\calL_{V_{\lmb}}$ in the spectral gap $(0, 1/4)$ for sufficiently large $\lmb$. The main result of this subsection is the following proposition:
\begin{prop}\label{sign change} There exists $\Lambda_0>0$ with the following property: Let $\la \ge \Lambda_0$ and let $\phi_0$ be the solution to
\begin{equation}\label{orig L eq}
\calL_{V_{\lambda}} \phi_{0}= \frac{1}{4} \phi_{0}
\end{equation}
that satisfies $\phi_0(r) = r^{3/2} + o(r^{3/2}) \mas r \to 0.$ Then $\phi_0(r)$ changes sign on $[0, \infty)$ at least once.
\end{prop}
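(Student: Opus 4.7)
My plan is a proof by contradiction based on the renormalization framework of Section~\ref{subsec:gapEoverview}. Suppose toward contradiction that there is a sequence $\lmb_{n} \to \infty$ along which $\phi_{0}^{(\lmb_{n})}$ does not change sign on $[0, \infty)$, so $\phi_{0}^{(\lmb_{n})} \geq 0$. By Lemma~\ref{LL ev 1}, $\phi_{0}^{(\lmb_{n})}(r) = a_{\lmb_{n}} + b_{\lmb_{n}} r + O(r e^{-2r})$ as $r \to \infty$, and positivity forces $b_{\lmb_{n}} \geq 0$. The strategy is to derive a contradiction by showing $b_{\lmb_{n}} < 0$ for $n$ large.

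First I would renormalize: set $\bar\phi_{n}(\rho) := \lmb_{n}^{3/2} \phi_{0}^{(\lmb_{n})}(\rho / \lmb_{n})$ and $g_{n}(\rho) := \bar\phi_{n}(\rho)/\varphi_{0}(\rho)$, with $\varphi_{0}$ the Euclidean resonance from \eqref{eq:eucRes}. Then $g_{n} > 0$, $g_{n}(0^{+}) = 1$, and from Section~\ref{subsec:gapEoverview} one has $(\varphi_{0}^{2} g_{n}')'(\rho) = W_{\lmb_{n}, 1/2}(\rho) \, \varphi_{0}^{2}(\rho) \, g_{n}(\rho)$ with $\lim_{\rho \to 0^{+}} \varphi_{0}^{2} g_{n}' = 0$. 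A direct Taylor expansion of the terms defining $W_{\lmb_{n}, 1/2}$, using \eqref{Vla def} and the asymptotic behavior of $\sinh^{-2}$ near $0$, yields the quantitative bound $|W_{\lmb_{n}, 1/2}(\rho)| \lesssim \lmb_{n}^{-2}$ uniformly on $[0, \lmb_{n}]$. Using the positivity $g_{n} > 0$ as an \emph{a priori} bound, together with the explicit sign structure of $W_{\lmb_{n}, 1/2}$, a bootstrap/Gronwall argument yields $g_{n}(\rho) = 1 + o(1)$ uniformly on an expanding interval $[0, \rho_{n}^{*}]$ with $\rho_{n}^{*}$ comparable to $\lmb_{n}$. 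Using the asymptotics $\varphi_{0}(\rho) \sim 4 \rho^{-1/2}$ and $\varphi_{0}'(\rho) \sim -2 \rho^{-3/2}$ at infinity, this approximation evaluated at $\rho = \lmb_{n}$ translates back to the original coordinate $r = 1$:
\begin{equation*}
\phi_{0}^{(\lmb_{n})}(1) \approx 4 \lmb_{n}^{-2}, \qquad [\phi_{0}^{(\lmb_{n})}]'(1) \approx - 2 \lmb_{n}^{-2}.
\end{equation*}
Crucially, $[\phi_{0}^{(\lmb_{n})}]'(1)$ is \emph{strictly negative}, of order $\lmb_{n}^{-2}$.

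Finally I would extract the slope at infinity. On $[1, \infty)$ the equation becomes $\phi_{0}'' = L_{n}(r) \phi_{0}$ where $L_{n}(r) = 3/(4 \sinh^{2} r) + V_{\lmb_{n}}(r)$, and Lemma~\ref{Vla lem} combined with \eqref{Vla def} yields $|L_{n}(r)| \lesssim \lmb_{n}^{-2} e^{-2r}$ on this range. Integrating from $r = 1$ to infinity,
\begin{equation*}
b_{\lmb_{n}} = [\phi_{0}^{(\lmb_{n})}]'(1) + \int_{1}^{\infty} L_{n}(r) \, \phi_{0}^{(\lmb_{n})}(r) \, dr.
\end{equation*}
The bound $\phi_{0}^{(\lmb_{n})}(r) \leq 2 \ztz^{(\lmb_{n})}(r) \lesssim \lmb_{n}^{-2} e^{r/2}$ on $[1, \infty)$, which follows from positivity via a Wronskian computation analogous to the proof of Proposition~\ref{no neg spec} (with the explicit formula~\eqref{eq:ztz}), shows that the correction integral is $O(\lmb_{n}^{-4})$. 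Hence $b_{\lmb_{n}} = -2 \lmb_{n}^{-2} + O(\lmb_{n}^{-4}) < 0$ for $n$ sufficiently large, contradicting $b_{\lmb_{n}} \geq 0$. The hardest step will be the quantitative propagation of the renormalization approximation $g_{n} \approx 1$ from a fixed compact interval out to $\rho \sim \lmb_{n}$, as the naive Gronwall estimate degrades like $\rho^{2} \log \rho / \lmb_{n}^{2}$ and becomes useless near $\rho = \lmb_{n}$; closing this argument requires carefully exploiting the sign structure of $W_{\lmb_{n}, 1/2}$ together with the positivity hypothesis $g_{n} > 0$ to obtain a refined uniform bound.
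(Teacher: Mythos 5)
Your overall strategy---argue by contradiction, renormalize by the Euclidean resonance $\varphi_{0}$, and show that positivity would force a negative slope $b$ at spatial infinity---is the same as the paper's, but two of your steps have genuine gaps. First, the quantitative core of your argument, namely $g_{n} = 1 + o(1)$ uniformly up to $\rho_{n}^{*} \simeq \lmb_{n}$ together with the derivative asymptotics $[\phi_{0}^{(\lmb_{n})}]'(1) \approx -2\lmb_{n}^{-2}$, is asserted rather than proved, and you yourself concede that the naive Volterra/Gronwall bound degrades like $\rho^{2}\log\rho/\lmb_{n}^{2}$ and is useless near $\rho \sim \lmb_{n}$. Worse, even granting $g_{n} = 1 + o(1)$ on that whole range, the derivative claim at $r=1$ requires $\abs{g_{n}'(\lmb_{n})} \ll 1/\lmb_{n}$, whereas the representation $g'\varphi_{0}^{2}(\rho) = \int_{0}^{\rho}\varphi_{0}^{2}\,W_{\lmb,1/2}\,g\,d\sgm$ with $\abs{W_{\lmb,1/2}} \lesssim \lmb^{-2}$ only yields $\abs{g_{n}'(\lmb_{n})} \lesssim \log\lmb_{n}/\lmb_{n}$; that extra $\log\lmb_{n}$ is exactly the quantity the paper turns to its advantage, using the definite sign $\lmb^{2}W_{\lmb,1/2} < -b$ on $[\rho_{1},\lmb]$ from Lemma~\ref{W lemma} to prove the weaker but sufficient statement $g'(\lmb) < 0$ (Claim~\ref{euc inf comp}); no approximation of $g$ up to scale $\lmb$ is ever needed there, only $g \approx 1$ on a fixed compact interval plus a monotonicity/sign argument.

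Second, your tail step rests on the incorrect bound $\abs{L_{n}(r)} = \abs{3/(4\sinh^{2}r) + V_{\lmb_{n}}(r)} \lesssim \lmb_{n}^{-2}e^{-2r}$ on $[1,\infty)$: the centrifugal term $3/(4\sinh^{2}r)$ is independent of $\lmb$ and of size $e^{-2r}$, not $\lmb^{-2}e^{-2r}$. Your comparison bound $\phi_{0} \le 2\,\zeta_{0}^{(\lmb_{n})} \lesssim \lmb_{n}^{-2}e^{r/2}$ is fine, but then the correction $\int_{1}^{\infty} L_{n}\phi_{0}\,dr$ is nonnegative (since $L_{n} \ge 0$ for $r \ge 1$ and $\lmb$ large, cf.\ Lemma~\ref{positive potential}, and $\phi_{0} \ge 0$ by hypothesis) and of size $\lmb_{n}^{-2}$---the same order as your putative main term $-2\lmb_{n}^{-2}$ and of the opposite sign---so the conclusion $b_{\lmb_{n}} < 0$ does not follow. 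This is precisely the difficulty that Lemma~\ref{psi I} is designed to remove in the paper: instead of comparing with the straight line on the tail, one compares with the exact solution $\psi_{\infty}^{\lmb}$, which absorbs the tail potential, and only the sign of the renormalized potential for $\rho \ge \lmb$ (Lemma~\ref{W lemma}(ii)) is needed to relate its logarithmic derivative at $\rho = \lmb$ to that of $\varphi_{0}$; the constancy of the Wronskian $W[\psi_{0},\psi_{\infty}]$ then yields $b < 0$ directly. To close your argument you would essentially have to reproduce these two ingredients, so as it stands the proposal has real gaps at both the crossover scale and the tail.
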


Existence of an eigenvalue $\mu^{2} < \frac{1}{4}$ is then a direct consequence of Sturm's oscillation theory. By Proposition \ref{no neg spec}, it follows furthermore that $\mu^{2} \in (0, 1/4)$.

Henceforth, we will work with the rescaled solution $\psi^{\lmb}_{0}(\rho) := \lmb^{3/2} \phi_{0}(\rho / \lmb)$ with $\lmb > 1$, in anticipation of the application of the renormalization technique. Then $\psi^{\lmb}_{0}$ solves the equation
\begin{equation} \label{L eq}
	\rnL_{\lmb} \psi^{\lmb}_{0} = \frac{1}{4 \lmb^{2}} \psi^{\lmb}_{0}.
\end{equation}
Moreover, $\psi^{\lmb}_{0}(\rho) = \rho^{3/2} + o(\rho^{3/2})$ as $\rho \to 0$.

Proposition~\ref{sign change} will be a consequence of the following two lemmas.
\begin{lem}\label{psi I}
For every $ \la>1$, there exists a unique solution $\psi_{\infty}^{\la}$ to~\eqref{L eq} so that
\EQ{\label{psi I decay}
&\psi_{\infty}^{\la}(\rho) = 1 + O(e^{-2\rho/\la}) \mas \rho  \to \infty\\
&(\psi_{\infty}^{\la})'(\rho) = O(e^{-2\rho /\la}) \mas \rho \to \infty
}
Moreover, for sufficiently large $\lmb$, $\psi_{\infty}^{\lmb}(\rho)$ satisfies
\EQ{
\psi_{\infty}^{\la}(\rho)>0 \quad \forall \rho \in [\la, \infty)
}
\end{lem}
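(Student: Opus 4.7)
My plan is to rewrite \eqref{L eq} in the form $-\psi'' + W_{\la} \psi = 0$ by cancelling the $\tfrac{1}{4\la^{2}}$ term on both sides, which leaves the short-range effective potential
\begin{equation*}
  W_{\la}(\rho) := \frac{3}{4\la^{2} \sinh^{2}(\rho/\la)} + \frac{1}{\la^{2}} V_{\la}(\rho/\la).
\end{equation*}
The key quantitative input will be a decay estimate of the form $|W_{\la}(\rho)| \leq C_{0} \la^{-2} e^{-2\rho/\la}$ for $\rho \geq \la$, with $C_{0}$ independent of $\la \geq 1$. This can be checked from the elementary bound $\sinh(r) \geq \tfrac{1}{2}(1-e^{-2}) e^{r}$ for $r \geq 1$ together with the factorization $(1+\la^{2})\cosh r + (1-\la^{2}) = 2\cosh^{2}(r/2)[1+\la^{2}\tanh^{2}(r/2)]$ substituted into \eqref{Vla def}; the $V_{\la}$ piece of $W_{\la}$ in fact gains an additional factor $\la^{-2}$.

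I then construct $\psi_{\infty}^{\la}$ by writing $\psi_{\infty}^{\la} = 1 + h$ and realizing $h$ as the fixed point of the Volterra operator
\begin{equation*}
  T h(\rho) := \int_{\rho}^{\infty} (s - \rho) W_{\la}(s) \big(1 + h(s)\big) \, \ud s,
\end{equation*}
obtained by integrating $h'' = W_{\la}(1+h)$ twice from infinity against the boundary data $h, h' \to 0$. Using the identity $\int_{\rho}^{\infty}(s-\rho) e^{-2s/\la} \, \ud s = \tfrac{\la^{2}}{4} e^{-2\rho/\la}$, one sees that $T$ is a strict contraction on the weighted ball $\{h : \sup_{\rho \geq \la} e^{2\rho/\la}|h(\rho)| \leq M\}$ for appropriate $M$ of size comparable to $C_{0}$; here the $h$-dependent part of $Th$ gains an extra factor $e^{-2}$ on $[\la,\infty)$ from the second exponential $e^{-4s/\la}$. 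The unique fixed point then satisfies $|h(\rho)| \leq M e^{-2\rho/\la}$, the derivative bound $(\psi_{\infty}^{\la})'(\rho) = O(e^{-2\rho/\la})$ follows by differentiating the Volterra identity once, and the local solution extends globally to $(0, \infty)$ by standard ODE theory.

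Uniqueness of $\psi_{\infty}^{\la}$ within the stated asymptotic class follows from the two-dimensional structure of $-\psi'' + W_{\la}\psi = 0$ at infinity: as in Lemma \ref{LL ev 1}, its solution space admits a fundamental pair asymptotic to $\set{1, \rho}$, and the conditions $\psi_{\infty}^{\la} \to 1$, $(\psi_{\infty}^{\la})' \to 0$ select a unique normalized element. For positivity on $[\la, \infty)$, the quantitative bound gives
\begin{equation*}
  \psi_{\infty}^{\la}(\rho) = 1 + h(\rho) \geq 1 - M e^{-2} \qquad \text{for all } \rho \geq \la,
\end{equation*}
so it suffices to have $M < e^{2}$. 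I expect the main (and essentially only) technical obstacle to be tracking the constants sharply enough to guarantee this: one must avoid the crude bound $|V_{\la}| \leq 2\la^{2}$ of Lemma \ref{Vla lem} and instead use the factorization above, which is what brings $C_{0}$ down to a universal size at which the contraction radius $M$ comes out strictly below $e^{2}$. This is precisely where the hypothesis that $\la$ be sufficiently large enters the positivity conclusion.
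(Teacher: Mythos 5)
Your proposal is correct, and its construction of $\psi_{\infty}^{\la}$ coincides with the paper's: the paper also solves the same Volterra equation obtained by integrating twice from infinity, namely $\phi_{\infty}(r)=1+\int_{r}^{\infty}(s-r)N(s)\phi_{\infty}(s)\,ds$ with $N(r)=\tfrac{3}{4\sinh^{2}r}+V_{\la}(r)$ (the only cosmetic difference is that the paper undoes the rescaling and works in the original variable $r$, proving positivity for $r\geq 1$). Where you genuinely diverge is the positivity step. The paper never tracks constants: it invokes the sign condition $N(r)\geq 0$ for $r\geq 1$ once $\la$ is large (a consequence of Lemma~\ref{positive potential}, which is needed again later anyway), and then a one-line continuity/bootstrap argument in the Volterra representation yields the clean bound $\phi_{\infty}(r)\geq 1$ on $[1,\infty)$. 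You instead ignore the sign of the potential entirely and run a quantitative weighted contraction, using the exact integral $\int_{\rho}^{\infty}(s-\rho)e^{-2s/\la}\,ds=\tfrac{\la^{2}}{4}e^{-2\rho/\la}$ and the factorization $(1+\la^{2})\cosh r+(1-\la^{2})=2\cosh^{2}(r/2)\bigl[1+\la^{2}\tanh^{2}(r/2)\bigr]$ to see that the $V_{\la}$-contribution to the effective potential is $O(\la^{-4}e^{-2\rho/\la})$ on $\rho\geq\la$; this gives $\psi_{\infty}^{\la}\geq 1-Me^{-2}>0$ once the constant $M$ is checked to be below $e^{2}$ (your arithmetic here is right: for large $\la$ the dominant contribution is the $\tfrac{3}{4\la^{2}\sinh^{2}(\rho/\la)}$ term, giving $M$ of order one). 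The trade-off: the paper's sign argument avoids all constant-chasing and yields the stronger lower bound $\geq 1$, while your version is self-contained, robust to the sign of $V_{\la}$, and makes explicit that the harmonic-map potential is only an $O(\la^{-2})$ correction on $[\la,\infty)$ — indeed, with the sharper bound $\la^{-2}|V_{\la}(\rho/\la)|\leq 2\la^{-2}\sinh^{-2}(\rho/\la)$ it even gives positivity uniformly for all $\la\geq 1$, which is more than the lemma asks. The remaining points (derivative bound by differentiating the Volterra identity, uniqueness from the $\set{1,\rho}$ fundamental system at infinity, extension to $(0,\infty)$ by ODE theory) match the paper's implicit reasoning.
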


\begin{lem}\label{neg wr}
For sufficiently large $\lmb$, the following statement holds: Let $\psi_0^{\la}$ be the solution to~\eqref{L eq} that satisfies
\EQ{
\psi_0^{\la}(\rho) = \rho^{3/2} + o(\rho^{3/2}) \mas \rho \to 0.
}
Then, either $\psi_0^{\la}$ changes sign on $[0, \la]$, or
\EQ{\label{wronsk1}
\frac{(\psi_0^{\lmb})'(\la)}{ \psi_0^{\lmb}(\la)} < \frac{(\psi_{\infty}^{\lmb})'(\la)}{ \psi_{\infty}^{\lmb}(\la)}
}
where $\psi_{\infty}^{\la}$ is the function given by Lemma~\ref{psi I}.
\end{lem}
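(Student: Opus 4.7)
Assume $\psi_0^{\la} > 0$ on $(0, \la]$; otherwise the conclusion is trivial. Since $\psi_\infty^{\la}(\la) > 0$ by Lemma~\ref{psi I}, the desired inequality \eqref{wronsk1} is equivalent to positivity of the (constant) Wronskian $W := W(\psi_0^{\la}, \psi_\infty^{\la}) = \psi_0^{\la}(\psi_\infty^{\la})' - (\psi_0^{\la})'\psi_\infty^{\la}$. Evaluating $W$ in the limit $\rho \to \infty$ using Lemma~\ref{psi I} together with the asymptotic expansion $\psi_0^{\la}(\rho) = a_\infty + b_\infty \rho + o(1)$ (which follows by the same argument as Lemma~\ref{LL ev 1} applied to the renormalized equation), we find $W = -b_\infty$. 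Thus our goal reduces to showing $b_\infty < 0$ for $\la$ sufficiently large.

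To compute $b_\infty$, switch to the unrenormalized variable $r = \rho/\la$, setting $\phi_0(r) := \la^{-3/2}\psi_0^{\la}(\la r)$ so that $\calL_{V_{\la}}\phi_0 = \tfrac14 \phi_0$ and $\phi_0(r) \sim r^{3/2}$ as $r \to 0$. Pair this equation against the test function $f(r) := \tanh^{3/2}(r)$ from the proof of Proposition~\ref{good spec}, which satisfies $\KK f = (15/(4\cosh^2 r))f$. Computing $\phi_0 f'' - \phi_0'' f = -[15/(4\cosh^2 r) + V_{\la}(r)]\phi_0 f$, integrating on $[0,R]$, and sending $R \to \infty$ (all boundary terms at $r = 0$ vanish since $f(0) = f'(0) = \phi_0(0) = \phi_0'(0) = 0$; at $r = \infty$ the contribution collapses to $-b_\infty/\la^{1/2}$ via $f \to 1$, $f' \to 0$ exponentially, and $\phi_0 \sim a_\infty/\la^{3/2} + b_\infty r/\la^{1/2}$) yields the identity
\begin{equation*}
	b_\infty = \la^{1/2} \int_0^\infty \phi_0(r)\left[\frac{15}{4\cosh^2 r} + V_{\la}(r)\right]\tanh^{3/2}(r)\,dr.
\end{equation*}

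The proof now reduces to showing the integral above is negative for large $\la$. The attractive potential $V_{\la}$ is sharply concentrated near $r = 0$ with depth $V_{\la}(0) = -2\la^2$ and width $\sim 1/\la$, so the bracket $\tfrac{15}{4\cosh^2 r} + V_{\la}(r)$ is strongly negative on an interval $[0, r_\ast(\la)]$ with $r_\ast(\la) \sim \la^{-1/2}$ (defined by $15/(4\cosh^2 r_\ast) = -V_{\la}(r_\ast)$), and is positive and $O(1)$ for $r > r_\ast$. Under the no-sign-change hypothesis, $\phi_0 > 0$ on $(0, 1]$, and the renormalization $g_0 := \psi_0^{\la}/\varphi_0$ (with $\varphi_0$ the Euclidean resonance) furnishes the a priori bound $\phi_0(r) \lesssim r^{3/2}/(1+\la^2 r^2)$ on $(0,1]$. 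Passing to the renormalized variable $\rho = \la r$ and invoking the Euclidean approximation $V_{\la}(\rho/\la)/\la^2 \approx V_{\euc}(\rho)$, the dominant contribution from the concentration region $[0, r_\ast]$ is of size $-c/\la^2$ for an explicit $c > 0$ (tied to $\int_0^\infty \rho^3/(4+\rho^2)^3\,d\rho = 1/16$). One then shows that the remaining contributions from $[r_\ast, 1]$ and $[1, \infty)$ are of strictly smaller order, giving $b_\infty < 0$ for $\la$ large.

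The main obstacle is the estimate on the exterior piece $\int_1^\infty$, since there $\phi_0$ must be replaced by its asymptotic expansion $\phi_0(r) \sim a_\infty/\la^{3/2} + b_\infty r/\la^{1/2}$, making the identity implicit in $b_\infty$. Resolving this requires a bootstrap/matching argument using the interior bound together with the continuity condition $\phi_0(1) \approx 4/\la^2$ at the interior/exterior interface. A related subtlety is that the renormalized potential $W_{\la, 1/(2\la)}$ is not pointwise negative on $[0, \la]$, so the a priori control of $g_0$ must be obtained via an averaged (rather than Sturm-type pointwise) comparison between $\psi_0^{\la}$ and $\varphi_0$.
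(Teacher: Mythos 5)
Your reduction of \eqref{wronsk1} to the sign of the asymptotic slope $b_\infty$ (via constancy of the Wronskian and Lemma~\ref{psi I}) is fine, and your Wronskian identity $b_\infty = \la^{1/2}\int_0^\infty\big[\tfrac{15}{4\cosh^2 r}+V_\la\big]\phi_0\tanh^{3/2}r\,dr$ is correctly derived. But the argument that this integral is negative has genuine gaps. First, the a priori bound $\phi_0(r)\lesssim r^{3/2}/(1+\la^2r^2)$ on $(0,1]$, i.e.\ uniform boundedness of $g_0=\psi_0^\la/\varphi_0$ on all of $[0,\la]$, does not follow from the no-sign-change hypothesis alone; in the paper's argument one only gets $g\approx 1$ on a fixed interval $[0,\rho_1]$ (Volterra, using smallness of the renormalized potential there), while control further out is extracted from an \emph{extra} contradiction hypothesis $g'(\la)\ge 0$, and even then only as a lower bound $g\ge 1/2$. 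Second, the order counting is wrong: on $[r_*,1]$ the bracket $\tfrac{15}{4\cosh^2 r}+V_\la$ is positive of size $O(1)$ and $\phi_0\tanh^{3/2}r\sim r/\la^2$ there, so this region contributes $+\Theta(\la^{-2})$, the \emph{same} order as your negative contribution from the concentration region, not "strictly smaller order." (Taking $g_0\equiv 1$ as a test case, the two competing terms are roughly $-8/\la^2$ and $+6.5/\la^2$, so the sign is decided by constants and would require tight two-sided control of $g_0$ on different regions.) Third, the exterior piece $\int_1^\infty$ involves the unknown slope itself and the hypothesis gives no sign or size information for $\phi_0$ on $r\ge 1$; you flag this as needing a "bootstrap/matching argument," but that is precisely the step that is missing, so the proof is incomplete at its decisive point.

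The paper's proof is structured to avoid all three issues: it never looks at $\psi_0^\la$ beyond $\rho=\la$ and never computes $b_\infty$ inside this lemma. Instead it splits \eqref{wronsk1} into two comparisons at $\rho=\la$: (i) $\psi_\infty$ versus the Euclidean resonance $\varphi_0$ on $[\la,\infty)$, using that the renormalized potential $W_{\la,1/2}$ is negative there (Lemma~\ref{W lemma}(ii)); and (ii) $\psi_0$ versus $\varphi_0$ on $[0,\la]$, where, assuming $g'(\la)\ge 0$ for contradiction, monotonicity of $g'\varphi_0^2$ plus the uniform negativity $W_{\la,1/2}<-b/\la^2$ on $[\rho_1,\la]$ and $\int_{\rho_1}^\la\varphi_0^2\sim\log\la$ produce a $\log\la$ gain that beats the $O(\la^{-2})$ contribution from $[0,\rho_1]$ without any constant-chasing. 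If you want to salvage your route, you would need (a) a proof of two-sided bounds on $g_0$ up to $\rho=\la$ under the stated hypothesis, (b) a quantitative treatment of the $[r_*,1]$ term with explicit constants, and (c) a closed estimate for the $r\ge1$ tail in terms of $b_\infty$ itself; the paper's comparison against $\varphi_0$, with its logarithmic divergence, is exactly the mechanism that makes such delicate bookkeeping unnecessary.
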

Before proving Lemma~\ref{psi I} and Lemma~\ref{neg wr}, we assume their conclusions and establish Proposition~\ref{sign change}.
\begin{proof}[Proof of Propositon~\ref{sign change}] Let $\psi_0:= \psi_0^{\la}$ be as in Lemma~\ref{neg wr}, and let $\psi_{\infty}:= \psi_{\infty}^{\la}$ be as in Lemma~\ref{psi I}.

Assume, for the sake of contradiction that $\psi_0$ does not change signs on $[0, \infty)$. Without loss of generality, we assume that $\psi_0(\rho) \ge 0$ for all $r \in [0, \infty)$.

Since there is no first order term in $\rnL_{\lmb}$, the Wronskian of $\psi_{0}$ and $\psi_{\infty}$ is constant.
Hence we are free to evaluate it at any point $\rho \in[0, \infty)$. Since we are assuming that $\psi_0$ does not change its sign,~\eqref{wronsk1} gives
\EQ{\label{neg wronk}
W[\psi_0, \psi_{\infty}](\la) = \psi_0( \la) \psi_{\infty}'( \la) - \psi_0'(\la) \psi_{\infty}(\la) >0.
}
Due to the rapid decay as $\rho \to \infty$ of
\ant{
\frac{3}{4} \frac{1}{ \la^2 \sinh^2 (\rho/\la)} + \frac{1}{ \la^2}V_{\la}(\rho/ \la)
}
we can view any solution of~\eqref{L eq} as a perturbation of a solution to the free equation
\EQ{ \label{free}
-\rd_{\rho \rho} \psi = 0
}
at $\rho = \infty$. Since a fundamental system for~\eqref{free} is given by $\{1, \rho \}$, we can use a variation of parameters argument to find $a, b \in \R$ such that
\EQ{
&\psi_0(\rho) = a + b \rho + o(1) \mas \rho \to \infty,\\
&\psi_0'(\rho) \to b \mas \rho \to \infty.
}
Combining the above with~\eqref{psi I decay} we can deduce that
\EQ{
\lim_{\rho \to \infty}W[\psi_0,\psi_{\infty}](\rho) = -b.
}
By~\eqref{neg wronk} we can conclude that $b<0$. But this means that $\psi_{0}(\rho)<0$ for large $\rho$, which contradicts our assumption that $\psi_0(\rho)>0$ for all $\rho \in [0,\infty)$.
\end{proof}

Lemma \ref{psi I} follows from a standard Volterra iteration argument combined with an easy observation concerning the sign of $(3/4 \sinh^{2} r) + V_{\lmb}(r)$.
\begin{proof} [Proof of Lemma \ref{psi I}]
Note that if $\phi_\infty$ is a solution of $\LL_{V_\la}\phi_\infty=(1/4) \phi_{\infty},$ then $\psi^\lambda_\infty(\cdot):=\phi_\infty(\cdot/\lambda)$  is a solution of $\rnL_\lambda\psi_\infty^\lambda=(1/4\lmb^{2}) \psi_{\infty}^{\lmb}.$ Hence it suffices to prove the lemma for $\phi_{\infty}$, where the positivity statement is now $\phi_\infty(r)>0$ for $r\geq1.$

Existence and uniqueness of $\phi_{\infty}$ can be proved by a standard iteration argument applied to the Volterra equation
\begin{equation}\label{psi Volterra}
\phi_\infty(r)=1+\int_r^\infty(s-r)N(s)\phi_\infty(s)ds,
\end{equation}
where
\[N(r):=\frac{3}{4\sinh^2r}+V_\lambda(r).\]

From the definition \eqref{Vla def} of $V_{\lmb}(r)$, it is not difficult to see that if $\lmb$ is sufficiently large, then $N(r) \geq 0$ for $r \geq 1$ (for a proof of a stronger statement, see Lemma \ref{positive potential}). Now it follows from a simple continuity argument that $\phi_{\infty}(r) \geq 1$ for $r \geq 1$, which proves the desired positivity statement. \qedhere
\end{proof}
We have thus reduced matters  to proving~Lemma~\ref{neg wr}. In the proof we will make use of the following estimates for the renormalized potential $W_{\lmb, 1/2}$. 
\begin{lem}\label{W lemma}
Let $W(\rho,\lambda) := W_{\lmb, \frac{1}{2}}(\rho)$, i.e.,
\begin{align*}
W(\rho,\lambda) = \frac{3}{4} \frac{1}{\lambda^2\sinh^2(\rho/\lambda)} + \frac{1}{\lambda^2} V_\lambda(\rho/\lambda)
				-\frac{3}{4\rho^2} - V_{\euc}(\rho).
\end{align*}
Then there exists $\Lambda_1$ such that the following hold.
\begin{enumerate}[(i)]
\item $\abs{\lambda^2W(\rho,\lambda)}$ is uniformly bounded for $\lambda>\Lambda_1$ and $\rho\leq\lambda.$
Moreover we can find $\rho_1<\Lambda_1$ such that if $\lambda>\Lambda_1$ then $\lmb^{2} W(\rho,\lambda)<-b$ in the region $\rho>\rho_1$, where $b>0$ is a constant independent of $\rho$ and $\lambda.$
\item $W(\rho,\lambda)<0$ for $\rho\geq\lambda\geq\Lambda_1.$
\end{enumerate}
\end{lem}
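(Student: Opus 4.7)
The plan is to estimate $\lambda^2 W$ by splitting it into two pieces and handling each separately. Write $\lambda^2 W(\rho,\lambda) = \frac{3}{4} h(\rho/\lambda) + P(\rho,\lambda)$, where $h(x) := \sinh^{-2} x - x^{-2}$ and $P(\rho,\lambda) := V_{\lambda}(\rho/\lambda) - \lambda^2 V_{\euc}(\rho)$. A preliminary calculation shows that $h$ is a smooth, strictly increasing bijection of $(0,\infty)$ onto $(-1/3, 0)$, so the first piece always lies in $(-1/4, 0)$ and satisfies $\frac{3}{4} h(\rho/\lambda) \leq \frac{3}{4} h(1) =: -\gamma_{0} < 0$ whenever $\rho/\lambda \leq 1$. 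For the second piece, I would use the factorizations $V_{\lambda}(\rho/\lambda) = -8\lambda^{2}/\Phi^{2}$ with $\Phi := \lambda^{2}(\cosh(\rho/\lambda) - 1) + \cosh(\rho/\lambda) + 1$, and $V_{\euc}(\rho) = -8/D^{2}$ with $D := 2 + \rho^{2}/2$, to derive the algebraic identity $P = 8\lambda^{2} R(2D + R)/(D^{2}\Phi^{2})$, where $R := \Phi - D \geq 0$.

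For the uniform boundedness claim in (i), Taylor-expanding $\cosh(\rho/\lambda)$ around zero gives $\lambda^{2} R = \rho^{2}/2 + \rho^{4}/24 + O(1/\lambda^{2})$ on $\set{\rho \leq \lambda}$, so that $|\lambda^{2} R| \leq C(\rho^{2} + \rho^{4})$, $R \leq D$ (hence $D \leq \Phi \leq 2D$), and $2D + R \leq 3D$. Substituting into the identity yields $|P| \leq C(\rho^{2} + \rho^{4})/D^{3}$, uniformly bounded on $(0,\infty)$.

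For the negativity claim in (i), I would first compute the pointwise limit
\[
\lim_{\lambda \to \infty} \lambda^{2} W(\rho, \lambda) = -\frac{1}{4} + F(\rho^{2}), \qquad F(u) := \frac{16 u (u + 12)}{3 (u + 4)^{3}},
\]
and observe that $F(u) \to 0$ as $u \to \infty$. Analyzing $F$, I would select $\rho_{1}$ past the largest zero of $-1/4 + F(\cdot)$ so that $-1/4 + F(\rho^{2}) < -b_{0}$ for all $\rho > \rho_{1}$ and some $b_{0} > 0$. I would then partition $\set{\rho_{1} < \rho \leq \lambda}$ into a compact piece $[\rho_{1}, R_{0}]$ and a tail $[R_{0}, \lambda]$. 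On the compact piece, uniform control of the Taylor remainders gives uniform convergence of $\lambda^{2} W$ to its limit, so $\lambda^{2} W < -b_{0}/2$ for $\lambda$ large. On the tail, the boundedness estimate above refines (using $D \gec \rho^{2}$) to $|P| \leq C/\rho^{2}$, while $\frac{3}{4} h(\rho/\lambda) \leq -\gamma_{0}$ since $\rho/\lambda \leq 1$; thus $\lambda^{2} W \leq -\gamma_{0} + C/R_{0}^{2} < -\gamma_{0}/2$ for $R_{0}$ large. Taking $b := \min(b_{0}/2, \gamma_{0}/2)$ finishes (i).

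For part (ii), I would exploit the fact that $x \mapsto \sinh(x)/x$ is strictly increasing on $(0, \infty)$: for $\rho/\lambda \geq 1$, $\sinh^{2}(\rho/\lambda) \geq \sinh^{2}(1) (\rho/\lambda)^{2}$, hence
\[
A := \tfrac{3}{4} \bigl( (\lambda^{2} \sinh^{2}(\rho/\lambda))^{-1} - \rho^{-2} \bigr) \leq - c_{1}/\rho^{2}, \qquad c_{1} := \tfrac{3}{4} \bigl(1 - \sinh^{-2}(1)\bigr) > 0.
\]
Meanwhile, $\Phi \geq D \geq \rho^{2}/2$ for $\rho \geq 2$ yields $|B| \leq 32/\rho^{4}$, where $B := \lambda^{-2} V_{\lambda}(\rho/\lambda) - V_{\euc}(\rho)$. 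Therefore $W = A + B \leq -c_{1}/\rho^{2} + 32/\rho^{4}$, which is strictly negative once $\rho^{2} > 32/c_{1}$. Enlarging $\Lambda_{1}$ so that $\Lambda_{1} \geq \sqrt{32/c_{1}}$ (while also meeting the requirements from (i)) yields the conclusion for all $\rho \geq \lambda \geq \Lambda_{1}$. The hard part will be the negativity assertion in (i): isolating the explicit limit function $F$, locating the threshold $\rho_{1}$ past the largest root of $-1/4 + F$, and justifying uniform convergence on compact $\rho$-intervals together with matching tail estimates; the boundedness in (i) and all of (ii) are then direct computations with the expansions above.
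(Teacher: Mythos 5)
Your proof is correct, and it reaches the same two basic estimates as the paper but by a partly different route. The paper uses the identical splitting of $\lambda^{2}W$ into the kinetic difference $\tfrac{3}{4}\bigl(\sinh^{-2}(\rho/\lambda)-(\rho/\lambda)^{-2}\bigr)$ and the potential difference, and treats the kinetic piece exactly as you do; for the potential piece, however, it writes $V_{\euc}(\rho)-\lambda^{-2}V_{\lambda}(\rho/\lambda)=-\int_{0}^{1/\lambda}\partial_{\beta}\tilde{V}(\rho,\tau)\,d\tau$ for the rescaled potential $\tilde{V}(\rho,\beta)$, computes $\partial_{\beta}\tilde{V}$ explicitly, and bounds it by $C\beta/(1+\rho^{2})$ when $\rho\beta\le 1$ (resp.\ $C/\rho^{3}$ when $\rho\beta\ge 1$), which yields the same bound $\lesssim \lambda^{-2}(1+\rho^{2})^{-1}$ that you obtain from the algebraic factorization $1/D^{2}-1/\Phi^{2}=R(2D+R)/(D^{2}\Phi^{2})$ together with the Taylor expansion of $\cosh$. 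With that bound, the paper's negativity argument in (i) is precisely your ``tail'' estimate applied on all of $\{\rho_{1}<\rho\le\lambda\}$: the kinetic piece is $\le-\gamma_{0}$ uniformly for $\rho\le\lambda$, and $C/(1+\rho^{2})<\gamma_{0}/2$ once $\rho_{1}$ is large. Consequently the part you flag as hardest—the explicit limit $-\tfrac14+F(\rho^{2})$, locating its largest root, and the compact/tail splitting—is correct but redundant; it only buys a sharper (smaller) admissible $\rho_{1}$, which the lemma does not require. In (ii) your triangle-inequality bound $|B|\le|\tilde V|+|V_{\euc}|\lesssim\rho^{-4}$ is actually simpler than the paper's continued use of the $\partial_{\beta}\tilde V$ estimate. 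Two harmless slips: with $D\ge\rho^{2}/2$ the constant in (ii) is $64/\rho^{4}$ rather than $32/\rho^{4}$ (this only enlarges $\Lambda_{1}$), and the remainder in $\lambda^{2}R=\rho^{2}/2+\rho^{4}/24+\cdots$ is $O(\rho^{6}/\lambda^{2})$, not uniformly $O(1/\lambda^{2})$, though the bound you actually use, $\lambda^{2}R\le C(\rho^{2}+\rho^{4})$ on $\{\rho\le\lambda\}$, is correct.
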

\begin{proof}[Proof of Lemma \ref{W lemma}]
It is convenient to introduce the notation $\beta=\frac{1}{\lambda}.$ Then the region $\rho \leq\lambda$ corresponds to $\rho\beta\leq1.$ With this in mind we first note that
\begin{equation}\label{first potential comparison}
\frac{3}{4\lambda^2\sinh^2(\rho/\lambda)}-\frac{3}{4\rho^2}=\frac{3}{4}\left(\frac{(\rho\beta)^2-\sinh^2(\rho\beta)}{(\rho\beta)^2 \sinh^2(\rho \be)}\right)\beta^2\leq-\frac{\beta^2}{10}
\end{equation}
in the region $\rho\beta\leq1$. Next we write $\lambda^{-2}V_\lambda(\rho/\lambda)=\tilde{V}(\rho,\beta)$ where
\[\tilde{V}(\rho,\beta):=\frac{-8}{\left(\frac{\cosh(\rho\beta)-1}{\beta^2}+\cosh(\rho\beta)+1\right)^2}\]
and note that $\lim_{\beta\rightarrow0}\tilde{V}(\rho,\beta)=V_{\euc}(\rho)$, where $V_{\euc}$ was defined in \eqref{re euc}. This, of course, is just a restatement of the fact that in the limit $\lambda\rightarrow\infty$ the hyperbolic potential approaches its Euclidean counterpart. It follows that
\begin{equation}\label{V difference}
V_{\euc}(\rho)-\frac{V_\lambda(\rho/\lambda)}{\lambda^2}=V_{\euc}(\rho)-\tilde{V}(\rho,\beta)=-\int_0^\beta\partial_\beta\tilde{V}(\rho,\tau)d\tau.
\end{equation}
Now
\begin{equation}\label{V tilde derivative}
\partial_\beta\tilde{V}(\rho,\beta)=\frac{16\left(\left(\frac{(\rho\beta)\sinh(\rho\beta)-2(\cosh(\rho\beta)-1)}{(\rho\beta)^3}\right)\rho^3+\rho\sinh(\rho\beta)\right)}{\left(\left(\frac{\cosh(\rho\beta)-1}{(\rho\beta)^2}\right)\rho^2+\cosh(\rho\beta)+1\right)^3}.
\end{equation}
But in the region $\rho\beta\leq1$
\begin{align*}
&\left|\frac{(\rho\beta)\sinh(\rho\beta)-2(\cosh(\rho\beta)-1)}{(\rho\beta)^3}\right|+|\sinh(\rho\beta)|\lesssim \rho\beta,\\
&\left(\frac{\cosh(\rho\beta)-1}{(\rho\beta)^2}\right)\gtrsim 1,
\end{align*}
and therefore
\begin{align*}
|\partial_\beta\tilde{V}(\rho,\beta)|\lesssim(1+\rho^2)^{-3}(1+\rho^4)\beta\lesssim\frac{\beta}{(1+\rho^2)}.
\end{align*}
Inserting this into (\ref{V difference}) we get
\begin{equation}\label{second potential comparison}
\left|V_{\euc}(\rho)-\frac{V_\lambda(\rho/\lambda)}{\lambda^2}\right|\lesssim\frac{\bt^{2}}{(1+\rho^2)}.
\end{equation}
The conclusions of the first part of the lemma now follow from an inspection of (\ref{first potential comparison}) and (\ref{second potential comparison}).

The second part of the lemma concerns the region $\rho\beta \geq 1$. We first write
\begin{equation}\label{third potential comparison}
\frac{3}{4\lambda^2\sinh^2(\rho/\lambda)}-\frac{3}{4\rho^2}=\frac{3}{4\rho^2}\left(\frac{(\rho\beta)^2-\sinh^2(\rho\beta)}{\sinh^2(\rho\beta)}\right)\leq-\frac{c}{\rho^2},
\end{equation}
for some positive constant $c$ and for $\rho\beta\geq1.$ Next note that $x\sinh(x)>2(\cosh(x)-1)$ for all real $x$, and therefore by (\ref{V tilde derivative}) if $\rho\beta\geq1$ and $\lmb$ is sufficiently large,
\begin{align*}
|\partial_\beta\tilde{V}(\rho,\beta)|&\lesssim\frac{\left(\frac{\sinh(\rho\beta)}{(\rho\beta)^2}\right)\rho^3+\rho\sinh(\rho\beta)}{\left(\frac{\cosh(\rho\beta)-1}{(\rho\beta)^2}\right)^3 \rho^6 +(\cosh(\rho\beta)+1)^3}
\lesssim\frac{1}{\rho^{3}}.
\end{align*}
The second part of the lemma now follows from combining this estimate with (\ref{third potential comparison}) and (\ref{V difference}).
\end{proof}
\begin{proof}[Proof of Lemma \ref{neg wr}]
For simplicity, we will write $\psi_{0} := \psi_{0}^{\lmb}$, $\psi_{\infty} := \psi_{\infty}^{\lmb}$ and $W(\rho, \lmb) := W_{\lmb, 1/2}(\rho)$. We divide the proof into two steps.
\begin{flushleft}\textbf{Step $1$:} The first step in the proof consists of establishing the following claim, which compares $\psi_{\infty}$ with the renormalized Euclidean resonance, $\fy_0$ at $\rho= \la$.\end{flushleft}
\begin{claim} For sufficiently large $\lmb$, we have
\EQ{\label{fy psi I}
\frac{ \fy_0'(\la)}{ \fy_0( \la)} < \frac{ \psi_{\I}'(\la)}{ \psi_{\I}(\la)}
}
\end{claim}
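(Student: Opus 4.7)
My plan is to exploit the renormalization $g(\rho) := \psi_\infty^\lambda(\rho)/\varphi_0(\rho)$ introduced in Section~\ref{subsec:gapEoverview}. Since
\begin{equation*}
\frac{g'}{g} = \frac{(\psi_\infty^\lambda)'}{\psi_\infty^\lambda} - \frac{\varphi_0'}{\varphi_0},
\end{equation*}
and $g(\lambda) > 0$ by positivity of $\psi_\infty^\lambda$ on $[\lambda, \infty)$ (Lemma~\ref{psi I}) together with $\varphi_0 > 0$, the desired inequality~\eqref{fy psi I} is equivalent to the statement $g'(\lambda) > 0$.

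As noted in Section~\ref{subsec:gapEoverview}, $g$ obeys the renormalized equation $(g'\varphi_0^2)' = \varphi_0^2\, W_{\lambda, 1/2}\, g$. Integrating this identity from $\lambda$ to $\infty$ yields
\begin{equation*}
g'(\lambda)\,\varphi_0(\lambda)^2 = \lim_{\rho \to \infty} g'(\rho)\,\varphi_0(\rho)^2 \, - \, \int_\lambda^\infty \varphi_0^2\, W_{\lambda, 1/2}\, g \, d\rho.
\end{equation*}
The task then reduces to showing that the boundary term at infinity vanishes and that the integral on the right is convergent with a definite sign.

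For the boundary term, I rewrite $g'\varphi_0^2 = (\psi_\infty^\lambda)'\varphi_0 - \psi_\infty^\lambda\,\varphi_0'$ and combine the asymptotics $\psi_\infty^\lambda(\rho) \to 1$, $(\psi_\infty^\lambda)'(\rho) = O(e^{-2\rho/\lambda})$ from Lemma~\ref{psi I} with $\varphi_0(\rho) \sim 4 \rho^{-1/2}$ and $\varphi_0'(\rho) \sim -2 \rho^{-3/2}$ from~\eqref{eq:eucRes}; a short computation then gives $g'(\rho)\varphi_0(\rho)^2 = O(\rho^{-3/2}) \to 0$. The same asymptotics together with the $O(\rho^{-2})$ decay of $W_{\lambda, 1/2}$ at infinity make the integrand $O(\rho^{-5/2})$, hence absolutely integrable.

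To pin down the sign of the integral, I pair Lemma~\ref{W lemma}(ii) with Lemma~\ref{psi I} on the common range $[\lambda, \infty)$: the former gives $W_{\lambda, 1/2} < 0$ there for $\lambda \geq \Lambda_1$, while the latter (combined with $\varphi_0 > 0$) gives $g > 0$. Hence $\varphi_0^2\, W_{\lambda, 1/2}\, g < 0$ throughout the integration range, so $g'(\lambda) > 0$, as desired. The subtle point in this strategy is that both the positivity of $\psi_\infty^\lambda$ and the negativity of the renormalized potential are guaranteed only on the tail $[\lambda, \infty)$, since Lemma~\ref{W lemma}(i) yields only the size, and not the sign, of $W_{\lambda, 1/2}$ on $[0, \lambda]$. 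This is precisely why the comparison has to be made at the endpoint $\rho = \lambda$ with the representation obtained by integrating outward.
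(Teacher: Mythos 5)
Your argument is correct and is essentially the paper's own proof in disguise: since $g'\varphi_0^2 = (\psi_\infty^\lambda)'\varphi_0 - \psi_\infty^\lambda\varphi_0'$, integrating $(g'\varphi_0^2)' = \varphi_0^2 W_{\lambda,1/2}\, g$ over $[\lambda,\infty)$ is exactly the paper's Wronskian computation between $\psi_\infty$ and $\varphi_0$, with the same sign inputs from Lemma~\ref{W lemma}(ii) and Lemma~\ref{psi I}. Your explicit verification that the boundary term at infinity vanishes and that the integral converges is a point the paper leaves implicit, but the approach is the same.
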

\begin{proof}
The proof follows from another comparison argument. 
Using that $\psi_{\infty}$ solves~\eqref{L eq} and $\fy_0$ solves~\eqref{re euc} we have
\ant{
\psi_{\infty}'(\la)& \fy_0(\la) - \psi_{\infty}(\la) \fy_0'(\la)
	= \int_{\la}^{\infty} \frac{d}{d\rho} \left( \psi_{\infty}(\rho) \fy_0'(\rho) - \psi_{\infty}'(\rho) \fy_0(\rho)  \right) \, d\rho\\
	&= \int_{\la}^{\infty} \psi_{\infty}(\rho) \fy_0''(\rho) - \psi_{\infty}''(\rho) \fy_0(\rho)   \, d\rho\\
	&= \int_{\la}^{\infty}\left[\frac{3}{4}\left( \frac{1}{\rho^2}  -\frac{1}{ \la^2 \sinh^2 (\rho/\la)}\right)  - V_{\euc}(\rho) +\frac{1}{ \la^2}V_{\la}(\rho/ \la)		\right]  \psi_{\infty}(\rho) \fy_0(\rho)   \, d\rho\\
	&= -\int_{\la}^{\infty}W(\rho, \la)  \psi_{\infty}(\rho) \fy_0(\rho)   \, d\rho.
}
Therefore,
\ant{
 \frac{ \psi_{\I}'(\la)}{ \psi_{\I}(\la)}-\frac{ \fy_0'(\la)}{ \fy_0( \la)}
 	= \frac{-1}{  \psi_{\infty}(\la) \fy_0(\la)}\int_{\la}^{\infty} W(\rho, \la)  \psi_{\infty}(\rho) \fy_0(\rho)   \, d\rho
}
Note that by Lemma~\ref{psi I} we have that $\psi_{\I}(\rho)>0$ for all $\rho \in [\la, \infty)$. Also, $\fy_0(\rho) \ge 0$ for all $\rho \ge 0$. Therefore, the claim follows from the second part of Lemma \ref{W lemma}
\end{proof}
\begin{flushleft}\textbf{Step $2$:} In this second step we prove the following claim.
\end{flushleft}
\begin{claim}\label{euc inf comp} For sufficiently large $\lmb$, we have
\EQ{\label{fy psi 0}
\frac{ \psi_0'(\la)}{ \psi_0( \la)} < \frac{ \fy_{0}'(\la)}{ \fy_{0}(\la)}.
}
\end{claim}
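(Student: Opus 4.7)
The plan is to prove Claim~\ref{euc inf comp} via a Lagrange (Wronskian) identity between $\psi_0^{\lmb}$ and $\fy_0$. Since $\rnL_{\lmb} - (4\lmb^{2})^{-1} = \LL_{\euc} + W$ where $W := W_{\lmb, 1/2}$, the eigenvalue equation $\rnL_{\lmb} \psi_0 = (4\lmb^{2})^{-1} \psi_0$ rewrites as $(\LL_{\euc} + W) \psi_0 = 0$. Combined with $\LL_{\euc} \fy_0 = 0$, the usual Lagrange manipulation yields
\[
(\fy_0 \psi_0' - \fy_0' \psi_0)' = W \fy_0 \psi_0.
\]
Because $\psi_0(\rho), \fy_0(\rho) = \rho^{3/2} + O(\rho^{7/2})$ as $\rho \to 0$, the Wronskian vanishes at the lower endpoint. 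Integrating over $[0, \lmb]$ and dividing by $\fy_0(\lmb) \psi_0(\lmb) > 0$---legitimate because we may assume $\psi_0 > 0$ throughout $(0, \lmb]$, this being the nontrivial alternative in Lemma~\ref{neg wr}---gives
\[
\frac{\psi_0'(\lmb)}{\psi_0(\lmb)} - \frac{\fy_0'(\lmb)}{\fy_0(\lmb)} = \frac{1}{\fy_0(\lmb) \psi_0(\lmb)} \int_0^\lmb W(\rho, \lmb) \fy_0(\rho) \psi_0(\rho) \, d\rho.
\]
The claim thus reduces to showing this integral is strictly negative for all sufficiently large $\lmb$.

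To estimate the integral, I would split it at the point $\rho_1$ furnished by Lemma~\ref{W lemma}, which provides both $|\lmb^{2} W| \lesssim 1$ on $[0, \lmb]$ and $\lmb^{2} W \leq -b$ on $[\rho_1, \lmb]$ for some $b > 0$. Provided that the renormalized quantity $g := \psi_0/\fy_0$ is uniformly close to $1$ on any fixed interval $[0, R]$ as $\lmb \to \infty$ (see below), the $[0, \rho_1]$ part contributes at most $C_1 \lmb^{-2}$ in absolute value. For the tail, using $W \leq -b/\lmb^{2}$, $\fy_0 > 0$ and positivity of $\psi_0$,
\[
\int_{\rho_1}^\lmb W \fy_0 \psi_0 \, d\rho \leq -\frac{b}{\lmb^{2}} \int_{\rho_1}^\lmb \fy_0 \psi_0 \, d\rho \leq -\frac{b}{2 \lmb^{2}} \int_{\rho_1}^R \fy_0^{2} \, d\rho,
\]
where I used $\psi_0 \geq \tfrac{1}{2} \fy_0$ on $[\rho_1, R]$ (from uniform convergence) together with $\psi_0 \geq 0$ on $[R, \lmb]$. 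Since $\fy_0(\rho) \sim 4 \rho^{-1/2}$ at infinity, $\int_{\rho_1}^{\infty} \fy_0^{2} \, d\rho = +\infty$, so $R$ may be chosen large enough that $(b/2) \int_{\rho_1}^R \fy_0^{2} \, d\rho > 2 C_1$, forcing strict negativity of the full integral.

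The key auxiliary step is the uniform convergence $g \to 1$ on $[0, R]$ as $\lmb \to \infty$. From $(g' \fy_0^{2})' = \fy_0^{2} W g$ together with the boundary behavior at $\rho = 0$, I would derive the Volterra equation
\[
g(\rho) = 1 + \int_0^\rho \fy_0(\tau)^{-2} \int_0^\tau \fy_0(s)^{2} W(s,\lmb) g(s) \, ds \, d\tau.
\]
Although $\fy_0^{-2}(\tau) \sim \tau^{-3}$ at $0$, the inner integral vanishes like $\tau^{4}$, so the integrand behaves like $\tau$ and is harmless. Using $|\lmb^{2} W| \lesssim 1$ in a standard Picard iteration yields $\|g - 1\|_{L^{\infty}([0,R])} \lesssim R^{2} \log R \cdot \lmb^{-2}$, which tends to zero for each fixed $R$; the bounds $\psi_0 \leq 2 \fy_0$ on $[0, \rho_1]$ and $\psi_0 \geq \tfrac{1}{2} \fy_0$ on $[\rho_1, R]$ then follow.

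The main technical subtlety is the regular singular point at $\rho = 0$, where naive continuous-dependence arguments do not apply. This is resolved by working with the renormalization $g = \psi_0 / \fy_0$: the common $3/(4\rho^{2})$ singularity of $\rnL_{\lmb}$ and $\LL_{\euc}$ cancels in the equation for $g$, and the shared $\rho^{3/2}$ leading behavior is encoded in the initial datum $g(0) = 1$. Once Claim~\ref{euc inf comp} is established, combining it with Step~1 gives \eqref{wronsk1} and completes the proof of Lemma~\ref{neg wr}.
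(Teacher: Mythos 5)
Your argument is correct, and it reorganizes the paper's proof in a genuinely different way even though the core ingredients coincide. Your Lagrange identity $(\fy_0\psi_0'-\fy_0'\psi_0)'=W\fy_0\psi_0$ is literally the paper's relation $(g'\fy_0^2)'=\fy_0^2 W g$ for $g=\psi_0/\fy_0$, and both proofs rest on Lemma~\ref{W lemma} plus a Volterra estimate showing $g\approx 1$ near the origin. The difference is logical structure: the paper argues by contradiction, assuming $g'(\la)\ge 0$, using the negativity of $W$ on $[\rho_1,\la]$ to deduce $g'\ge 0$ there, hence $g\ge 1/2$ on all of $[0,\la]$, and then exploiting the divergence $\int_{\rho_1}^{\la}\fy_0^2\,d\sgm \gtrsim \log\la$ to force $g'(\la)<0$. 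You instead give a direct estimate: on $[R,\la]$ you use only $\psi_0\ge 0$ (the standing alternative of Lemma~\ref{neg wr}) and $W\le 0$ to discard that portion of the integral, and you harvest the needed negativity from a \emph{fixed} large window $[\rho_1,R]$, where $g\ge 1/2$ follows from the Volterra bound on $[0,R]$ (valid for fixed $R$ once $\la$ is large, with constants degrading only like $R^2\log R/\la^2$) and where $R$ is chosen using $\int^{\infty}\fy_0^2\,d\sgm=\infty$. This trades the paper's $\log\la$ gain and monotonicity bootstrap for a fixed-$R$ argument; the resulting bound on the Wronskian is weaker ($-c/\la^2$ rather than $-c\log\la/\la^2$), but strict negativity is all the claim requires, and your quantifier ordering ($\rho_1$ and $C_1$ absolute, then $R=R(b,C_1)$, then $\la$ large) is sound. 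Two harmless points: the precision $\psi_0(\rho)=\rho^{3/2}+O(\rho^{7/2})$ is more than the stated normalization $o(\rho^{3/2})$ provides, but the vanishing of the Wronskian at $\rho=0$ already follows from $\psi_0\sim\rho^{3/2}$, $\psi_0'\sim\tfrac32\rho^{1/2}$; and dividing by $\psi_0(\la)$ tacitly assumes $\psi_0(\la)\neq 0$, an implicit assumption the paper's proof shares (if $\psi_0(\la)=0$ one is in the sign-change alternative anyway).
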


\begin{proof}
We apply the renormalization technique introduced in Section \ref{subsec:gapEoverview}. For simplicity of notation we write $W(\rho) = W_{\lmb, 1/2}(\rho)$. We define $g$ by the relation $\psi_0(\rho)=g(\rho)\fy_0(\rho)$. Since by assumption $\psi_0$ does not change sign in $[0,\lambda]$ and since $\fy_0$ is positive there, we must have $g(\rho)>0$ for $\rho \in[0,\lambda].$ It follows that (\ref{fy psi 0}) is equivalent to
\begin{equation}{\label{g condition}}
g^\prime(\lambda)<0.
\end{equation}
Notice that $g$ satisfies the equation
\begin{align*}
\left(g^\prime\fy_0^2\right)^\prime=\fy_0^2Wg.
\end{align*}
Moreover, by our normalization $\psi_{0}(\rho) = \rho^{3/2} + o(\rho^{3/2})$, it follows that $(g, g')(0) = (1, 0)$. Therefore
\begin{align}
&g^\prime(\rho)=\frac{1}{\fy^2_0(\rho)}\int_0^\rho\fy_0^2(\sgm)W(\sgm)g(\sgm) \,d\sgm, \label{g prime rep}\\
&g(\rho)=1+\int_0^\rho \int_0^\tau \frac{\fy_0^2(\sgm)}{\fy_0^2(\tau)}W(\sgm)g(\sgm) \, d\sgm d\tau. \label{g rep}
\end{align}
Assume by contradiction that (\ref{g condition}) does not hold, or in other words
\begin{equation}\label{g contradiction}
g^\prime(\lambda)\geq0.
\end{equation}
Take $\lmb$ large enough so that Lemma \ref{W lemma} applies. Then in view of the representation \eqref{g prime rep} and the fact that $W(\rho)$ is negative for $\rho_1\leq \rho \leq\lambda$ we must have $g^\prime(\rho)\geq0$ for $\rho \in [\rho_1,\lambda]$.

To derive the desired contradiction we begin by showing that $g$ is bounded away from zero. According to the observation above, $g$ can decrease only on the interval $[0, \rho_1].$
From Lemma \ref{W lemma},  we see that $\sup_{\sgm \in [0, \rho_{1}]} \abs{W(\sgm)} \to 0$ as $\lmb \to \infty$. Recalling the definition of $\varphi_{0}$ in \eqref{eq:eucRes}, it follows (by carrying out an explicit integration) that
\begin{equation*}
	\int_{0}^{\rho_{1}} \bb(\int_{\sgm}^{\rho_{1}} \frac{1}{\varphi_{0}^{2}(\tau)} \, \ud \tau \bb) \varphi_{0}^{2}(\sgm) \abs{W(\sgm)} \, \ud \sgm
	\aleq_{\rho_{1}} \sup_{\sgm \in [0, \rho_{1}]} \abs{W(\sgm)} \to 0
\end{equation*}
as $\lmb \to \infty$. By a Volterra-type iteration argument, we conclude that
\begin{equation*}
	\sup_{\rho \in [0, \rho_{1}]} \abs{g(\rho) - 1} = o(1) \mas \lmb \to \infty.
\end{equation*}
%
%
Taking $\lambda$ larger if necessary we can guarantee that $g(\rho) \geq 1/2$ for $\rho \leq \rho_1.$ Since $g^\prime(\rho)\geq0$ for $\rho\in [\rho_{1},\lambda]$ we have a global bound $g(\rho)\geq1/2$ on $\rho\leq\lambda.$ It follows that
\begin{align*}
\int_{\rho_1}^\lambda\fy_0^2(\sgm)W(\sgm)g(\sgm) \, d\sgm
&\leq- \frac{b}{2 \lambda^2}\int_{\rho_1}^\lambda\fy_0^2(\sgm)d\sgm\\
&\leq-\frac{C_1 b \log\lambda}{\lambda^2}
\end{align*}
for some universal constant $C_1$ independent of $\lmb$. On the other hand
\begin{align*}
\int_0^{\rho_1}\fy_0^2(\sgm)|W(\sgm)|g(\sgm) \, d\sgm \leq \frac{C_2}{\lambda^2}
\end{align*}
for another universal constant $C_2$ also independent of $\lmb$. Inserting the last two estimates into the representation (\ref{g prime rep}) we conclude that if $\lambda$ is sufficiently large, $g^\prime(\lambda)<0$ contradicting (\ref{g contradiction}).
\end{proof}
Lemma~\ref{neg wr} now follows from combining the conclusions of Steps 1 and 2.
\end{proof}
\subsection{Uniqueness of gap eigenvalues for $H_{V_{\la}}$ for large $\la$} \label{subsec:gapEunique}
Our next goal is to prove that for large $\lambda$, the eigenvalue found in the previous section is simple and unique, and moreover that $\calL_{V_{\lmb}}$ does not have a threshold resonance at $1/4$. This will be accomplished by showing that eigenfunctions in the spectral gap and threshold resonances cannot change sign.

As before we need to treat the case of large and small $r$ separately. We begin with the following technical lemma.
\begin{lem}\label{positive potential}
For $\lambda$ sufficiently large, there is a constant $C$ independent of $\lambda$ such that for $r\geq \frac{C}{\lambda}$
\[\frac{3}{4\sinh^2 r}+V_\lambda(r)\geq0.\]
\end{lem}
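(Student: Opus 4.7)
The approach is a clean algebraic reduction via the substitution $t = \tanh(r/2)$, which makes the explicit formula \eqref{Vla def} for $V_\lambda$ transparent. Using the standard identities
\[
\sinh r = \frac{2t}{1-t^2}, \qquad (1+\lambda^2)\cosh r + (1-\lambda^2) = \frac{2(1+\lambda^2 t^2)}{1-t^2},
\]
the inequality $\tfrac{3}{4\sinh^2 r} + V_\lambda(r)\geq 0$ becomes, after factoring out the strictly positive common term $(1-t^2)^2$, the polynomial inequality
\[
3(1+\lambda^2 t^2)^2 \;\geq\; 32\,\lambda^2 t^2.
\]

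Setting $u := \lambda t = \lambda \tanh(r/2)$ further collapses this to $3u^4 - 26 u^2 + 3 \geq 0$, a simple quadratic in $u^2$ with roots
\[
u_\pm^2 = \frac{13 \pm 4\sqrt{10}}{3}.
\]
Thus the desired inequality is equivalent to $u^2 \leq u_-^2$ or $u^2 \geq u_+^2$. To realize the large-$u$ regime, note that for $r \geq C/\lambda$ one has
\[
u = \lambda \tanh(r/2) \;\geq\; \lambda \tanh\!\big(C/(2\lambda)\big) \;\longrightarrow\; C/2 \quad \text{as } \lambda \to \infty.
\]
Choosing any $C$ with $C^2/4 > u_+^2 = (13+4\sqrt{10})/3$ — for instance $C = 6$, since $u_+^2 < 9$ — one therefore has $u > u_+$ for all sufficiently large $\lambda$ and all $r \geq C/\lambda$, which gives the inequality.

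There is no analytic obstacle in this argument. The only insight required is the $\tanh(r/2)$ change of variables, which is natural given the form of both $Q_\lambda$ and $V_\lambda$; after that, everything is elementary algebra, and the constant $C$ is explicit and manifestly independent of $\lambda$.
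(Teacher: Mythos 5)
Your proof is correct: the identities $\sinh r = 2t/(1-t^{2})$ and $(1+\lambda^{2})\cosh r + (1-\lambda^{2}) = 2(1+\lambda^{2}t^{2})/(1-t^{2})$ with $t=\tanh(r/2)$ are right, the reduction to $3(1+\lambda^{2}t^{2})^{2}\geq 32\lambda^{2}t^{2}$, i.e.\ $3u^{4}-26u^{2}+3\geq 0$ with $u=\lambda\tanh(r/2)$, is exact, and since $u_{+}^{2}=(13+4\sqrt{10})/3<9$ while $\lambda\tanh(3/\lambda)\to 3$, the choice $C=6$ indeed forces $u>u_{+}$ on $r\geq C/\lambda$ for all large $\lambda$, monotonicity of $\tanh$ taking care of the whole range of $r$. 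The paper argues slightly differently: it first discards part of the denominator to get the crude bound $\abs{V_{\lambda}(r)}\leq 8/[\lambda^{2}(\cosh r-1)^{2}]$, then writes $\sinh^{2}r=(\cosh r-1)(\cosh r+1)$ to reduce the claim to $\bigl(\tfrac34-\tfrac{8}{\lambda^{2}}\bigr)\cosh r\geq \tfrac34+\tfrac{8}{\lambda^{2}}$, which follows from $\cosh r\geq 1+\tfrac12 r^{2}$ once $\lambda$ is large. So both arguments are elementary manipulations of the explicit formula \eqref{Vla def}; yours keeps the inequality exact and produces an explicit constant ($C=6$), and as a byproduct identifies the full positivity set $\set{u\leq u_{-}}\cup\set{u\geq u_{+}}$, while the paper's one-sided bound is a bit shorter and avoids solving the quartic. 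Either route establishes the lemma.
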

\begin{proof}
Note that since
\[|V_\lambda(r)|\leq \frac{8}{\lambda^2(\cosh r-1)^2}\]
it suffices to show that for $r\geq C\lambda$
\[\frac{3}{4\sinh^2r}-\frac{8}{\lambda^2(\cosh r-1)^2}\geq0.\]
Writing $\sinh^{2} r = \cosh^{2} r - 1$, we see that this is equivalent to
\begin{equation*}
	\bb( \frac{3}{4} - \frac{8}{\lmb^{2}} \bb) \cosh r \geq \frac{3}{4} + \frac{8}{\lambda^2}.
\end{equation*}
%
%

Assume that $\lmb$ is large enough so that $3/4 - 8/\lmb^{2} > 1/2$. Then from the elementary fact that $\cosh r \geq 1 + (1/2)r^{2}$, the preceding inequality holds for $r \geq C / \lambda$ with an absolute constant $C > 0$. \qedhere
\end{proof}
We can now carry out the analysis for large $r$.
\begin{lem}\label{eval uniqueness r-infty}
Let $\phi$ be either an eigenfunction (i.e., a nonzero $L^{2}$ solution) of $\LL_{V_\lambda} \phi =\mu^2 \phi$ with $\mu^{2} \in(0,\frac{1}{4})$ or a threshold resonance at $\mu^{2} = 1/4$. Let $C$ be as in Lemma \ref{positive potential}. Then for sufficiently large $\lmb$, $\phi$ cannot change sign in the region $r\in (\frac{C}{\lambda},\infty).$
\end{lem}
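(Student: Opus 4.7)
\textbf{Proof plan for Lemma \ref{eval uniqueness r-infty}.} The strategy is a convexity/Sturm-type argument driven entirely by the sign information of Lemma \ref{positive potential}. First, I rewrite the eigenvalue equation $\calL_{V_{\lambda}} \phi = \mu^{2} \phi$ as
\begin{equation*}
	\phi'' = W(r) \phi, \qquad W(r) := \tfrac{1}{4} - \mu^{2} + \tfrac{3}{4 \sinh^{2} r} + V_{\lambda}(r).
\end{equation*}
The hypothesis $\mu^{2} \in (0, 1/4]$ gives $\tfrac{1}{4} - \mu^{2} \geq 0$, and Lemma \ref{positive potential} (valid for $\lmb$ large enough) gives $\tfrac{3}{4 \sinh^{2} r} + V_{\lambda}(r) \geq 0$ on $(C/\lmb, \infty)$. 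Hence $W(r) \geq 0$ throughout that interval.

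Next I note the basic identity
\begin{equation*}
	(\phi^{2})''(r) = 2 \, (\phi'(r))^{2} + 2 W(r) \phi^{2}(r) \geq 0 \qquad \text{for } r \in (C/\lmb, \infty),
\end{equation*}
so $\phi^{2}$ is convex on $(C/\lmb, \infty)$. Suppose, for contradiction, that $\phi$ changes sign at some point $r_{0} \in (C/\lmb, \infty)$. Then $\phi(r_{0}) = 0$, and by uniqueness for the linear ODE $\phi'(r_{0}) \neq 0$ (otherwise $\phi \equiv 0$). Consequently $(\phi^{2})'(r_{0}) = 0$ while $(\phi^{2})''(r_{0}) = 2 \phi'(r_{0})^{2} > 0$, so $r_{0}$ is a strict local minimum of $\phi^{2}$. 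Taylor expansion then gives some $r_{1} > r_{0}$ with $(\phi^{2})'(r_{1}) > 0$, and convexity of $\phi^{2}$ implies that $(\phi^{2})'$ is non-decreasing, hence $(\phi^{2})'(r) \geq (\phi^{2})'(r_{1}) > 0$ for all $r \geq r_{1}$. Integrating, $\phi^{2}(r) \to \infty$ as $r \to \infty$ at least linearly.

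This contradicts both alternatives in the hypothesis. If $\phi$ is an $L^{2}$ eigenfunction, then near $r = \infty$ the equation $\calL_{V_{\lambda}} \phi = \mu^{2} \phi$ is well-approximated by $-\rd_{rr} \phi + (\tfrac{1}{4} - \mu^{2}) \phi = 0$ with $\tfrac{1}{4} - \mu^{2} > 0$, whose fundamental system is $\{ e^{\pm \sqrt{1/4 - \mu^{2}} \, r}\}$; the $L^{2}$ requirement selects the decaying branch and in particular forces $\phi(r) \to 0$. If instead $\phi$ is a threshold resonance ($\mu^{2} = 1/4$), it is bounded by Definition \ref{res def}. Either way $\phi^{2}$ is bounded on $[C/\lmb, \infty)$, ruling out linear growth.

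I do not foresee any serious obstacle. The only mildly delicate point is the threshold case $\mu^{2} = 1/4$, where $W$ is only non-negative rather than strictly positive; however, the convexity argument needs only $(\phi^{2})'' \geq 0$ combined with strict positivity of $(\phi^{2})''(r_{0})$ at the hypothetical zero, and the latter follows from $\phi'(r_{0}) \neq 0$. The restriction on $\lmb$ is exactly the one coming from Lemma \ref{positive potential} and requires no further strengthening.
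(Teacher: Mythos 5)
Your proof is correct, and it takes a genuinely different route from the paper. The paper argues by Sturm--Wronskian comparison: in the eigenvalue case it normalizes $\phi \sim e^{-mr}$ with $m^{2} = \tfrac14 - \mu^{2}$, compares with the explicit solution $f = e^{-mr}$ of $\rd_{rr} f = m^{2} f$, shows via Lemma~\ref{positive potential} that the Wronskian $W[\phi,f]$ is monotone as long as $\phi>0$, and derives a contradiction at the largest zero $R \geq C/\lmb$ from the sign of $W[\phi,f](R)$ and the blow-up of $\phi'/\phi$ there; the threshold case is handled separately by comparing with $g \equiv 1$. You instead absorb the shift $\tfrac14 - \mu^{2} \geq 0$ into the effective potential and observe that $(\phi^{2})'' = 2(\phi')^{2} + 2W\phi^{2} \geq 0$ on $(C/\lmb,\infty)$, so that a (necessarily simple) zero in that region forces at-least-linear growth of $\phi^{2}$, contradicting $L^{2}$ membership (for a gap eigenfunction) or boundedness (for a threshold resonance, by Definition~\ref{res def}). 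Your convexity argument treats both cases uniformly, requires no asymptotic normalization of $\phi$ and no Wronskian computation, and uses only boundedness/decay at infinity — indeed, in the eigenvalue case you could even skip the asymptotic discussion, since unbounded growth of $\phi^{2}$ already contradicts $\phi \in L^{2}(0,\infty)$ directly. What the paper's comparison-function technique buys is that it is the same template reused elsewhere in the paper (e.g.\ in Propositions~\ref{good spec}, \ref{no neg spec} and Step~1 of Proposition~\ref{prop:gapEmig:main}), but for this particular lemma your argument is simpler and complete; the restriction on $\lmb$ is, as you say, exactly the one inherited from Lemma~\ref{positive potential}.
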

\begin{proof}
We begin with the case of an eigenvalue $\mu^{2} \in (0, \frac{1}{4})$.
Define $m > 0$ by $m^{2} = \frac{1}{4} - \mu^{2}$. The idea is to compare $\phi$ with $f=e^{-m r}$, which up to scaling is the unique nonzero $L^2$ solution of $\partial_{rr} f=m^2 f$. As usual, after suitable renormalization we may assume that $\phi (r)=  e^{-m r} + o(e^{-m r})$ as $r\rightarrow\infty.$ Defining
\[W(r) : = W[\phi,f](r)=\phi(r) f^\prime(r)-\phi^\prime(r) f(r),\]
we have
\begin{equation*}
	W'(r)
	= - \bb(\frac{3}{4 \sinh^{2} r} + V_{\lmb}(r) \bb) \phi(r) f(r).
\end{equation*}
Therefore, in view of Lemma \ref{positive potential}, we see that $W^\prime(r) \leq 0$ for $r \geq C/\lambda$ and so long as $\phi$ is positive (note that $f > 0$ everywhere). Now let $R$ denote the largest zero of $\phi$ and for contradiction assume $R\geq C/\lambda$. Then $W^\prime(r)<0$ and $\phi \sim e^{-m r}$ as $r\rightarrow\infty$ imply that $W(R) \geq 0$. This means that
\[\lim_{r\rightarrow R^{+}}\frac{f^\prime(r)}{f(r)}\geq\lim_{r\rightarrow R^{+}}\frac{\phi^\prime(r)}{\phi(r)}=\infty,\]
and therefore we must have $f(R)=0$ which is impossible.

In the case of a threshold resonance, we compare $\phi$ with $g = 1$, which up to scaling is the unique nonzero bounded solution of $\rd_{rr} g = 0$. We omit the details, which are very similar to the previous case. \qedhere
\end{proof}
Our task now is to show that $\phi$ as in Lemma \ref{eval uniqueness r-infty} does not change sign in the interval $r\leq C / \lambda$.
For this purpose we use the technique of renormalization.
\begin{lem}\label{eval uniqueness r-zero}
Let $\phi$ be either an eigenfunction (i.e., a nonzero $L^{2}$ solution) of $\LL_{V_\lambda} \phi =\mu^2 \phi$ with $\mu^{2} \in(0,\frac{1}{4})$ or a threshold resonance at $\mu^{2} = 1/4$. Let $C$ be as in Lemma \ref{positive potential}. Then for sufficiently large $\lmb$, $\phi$ cannot change sign in the region $r\in(0, \frac{C}{\lambda}]$.
\end{lem}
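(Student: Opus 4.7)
The plan is to apply the renormalization technique from Section~\ref{subsec:gapEoverview} on the interval $r \in (0, C/\lambda]$, which after the change of variable $\rho = \lambda r$ corresponds to $\rho \in (0, C]$. First I would normalize $\phi$ using Lemma~\ref{LL ev 0} (and Definition~\ref{res def} in the resonance case) so that $\phi(r) = r^{3/2} + o(r^{3/2})$ as $r \to 0$, and set $\psi(\rho) := \lambda^{3/2}\phi(\rho/\lambda)$, which then obeys $\rnL_\lambda \psi = (\mu^2/\lambda^2)\psi$ and satisfies $\psi(\rho) = \rho^{3/2} + o(\rho^{3/2})$ near $\rho = 0$.

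Next I would introduce the renormalized unknown $g(\rho) := \psi(\rho)/\fy_0(\rho)$, where $\fy_0$ is the Euclidean resonance from \eqref{eq:eucRes}. A direct computation, parallel to Section~\ref{subsec:gapEoverview}, gives $(g'\fy_0^2)' = \fy_0^2 W_{\lambda,\mu}\, g$ with renormalized potential
\[
W_{\lambda,\mu}(\rho) = \frac{3}{4\lambda^2\sinh^2(\rho/\lambda)} - \frac{3}{4\rho^2} + \frac{1 - 4\mu^2}{4\lambda^2} + \frac{V_\lambda(\rho/\lambda)}{\lambda^2} - V_{\euc}(\rho).
\]
The chosen normalization of $\psi$ forces $g(0) = 1$ and $(g'\fy_0^2)(0^+) = 0$, yielding the Volterra representation
\[
g(\rho) = 1 + \int_0^\rho \frac{1}{\fy_0^2(\tau)}\int_0^\tau \fy_0^2(\sigma)\,W_{\lambda,\mu}(\sigma)\,g(\sigma)\,d\sigma\,d\tau.
\]

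The crux of the argument is that $\mu^2 \in (0, 1/4]$ is uniformly bounded in $\lambda$, so the Taylor expansion arguments in the proof of Lemma~\ref{W lemma} go through with essentially no change and give $\sup_{\rho \in [0, C]} |W_{\lambda,\mu}(\rho)| = O(1/\lambda^2)$ as $\lambda \to \infty$. In particular, the individually singular terms $3/(4\lambda^2\sinh^2(\rho/\lambda))$ and $-3/(4\rho^2)$ cancel to leading order near $\rho = 0$, as do $V_\lambda(\rho/\lambda)/\lambda^2$ and $V_{\euc}(\rho)$. Since $\fy_0^2(\sigma) \sim \sigma^3$ near $\sigma = 0$, the inner integral is of size $O(\tau^4/\lambda^2)$ and the full double integral is $O(\rho^2/\lambda^2)$ on $[0, C]$; a standard Volterra iteration then shows that $g$ is bounded on $[0, C]$ for large $\lambda$ and
\[
\sup_{\rho \in [0, C]}|g(\rho) - 1| = O(1/\lambda^2) \to 0 \mas \lambda \to \infty.
\]

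Consequently, for $\lambda$ sufficiently large $g(\rho) > 0$ on $[0, C]$, so $\psi$ has constant sign on $(0, C]$ and therefore $\phi$ has constant sign on $(0, C/\lambda]$, as claimed. I expect the main technical point to be verifying the cancellations that make $|W_{\lambda,\mu}|$ of size $1/\lambda^2$ uniformly on $[0, C]$, despite the individual terms being singular at $\rho = 0$; this is essentially already encoded in the Taylor expansions of $\sinh$ and of $V_\lambda$ carried out in Lemma~\ref{W lemma}, and the remaining Volterra estimate is routine once the compensating factor $\fy_0^2 \sim \rho^3$ is used to tame the singularity of $\fy_0^{-2}$.
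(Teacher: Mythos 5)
Your proposal is correct and follows essentially the same route as the paper: rescale to $\rnL_{\lmb}$, normalize so $\psi(\rho)=\rho^{3/2}+o(\rho^{3/2})$, renormalize by the Euclidean resonance $g=\psi/\fy_0$ with $(g,g')(0)=(1,0)$, bound $W_{\lmb,\mu}$ by $O(\lmb^{-2})$ on $[0,C]$ using the cancellations already established in Lemma~\ref{W lemma}, and run the Volterra iteration on the double-integral formula to get $\sup_{[0,C]}\abs{g-1}=o(1)$, hence no sign change. The only cosmetic difference is that the paper cites the Volterra argument from Claim~\ref{euc inf comp} rather than redoing the integral estimates, but the content is identical.
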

\begin{proof}
We work with the rescaled operator $\rnL_\lambda$. It suffices to show that if $\psi(\rho)$ is a solution of $\rnL_\lambda \psi=(\mu^{2}/ \lmb^{2}) \psi$ in $L^2((0, C])$, then $\psi(\rho)$ does not change sign in the region $0 \leq \rho \leq C$. Arguing as in Lemma \ref{LL ev 0}, we see that any $L^{2}((0, C])$ solution of $\rnL_{\lmb} \psi = (\mu^{2}/\lmb^{2}) \psi$, after suitable normalization, has the behavior $\psi(\rho) = \rho^{3/2} + o(\rho^{3/2})$ and $\psi'(\rho) = (3/2) \rho^{1/2} + o(\rho^{1/2})$ as $\rho \to 0$.

Define $g(\rho) := \psi(\rho) / \varphi_{0}(\rho)$, where $\varphi_{0}$ is the Euclidean resonance defined in \eqref{eq:eucRes}. Since $\fy_0$ is always positive, we need to show that $g$ is bounded away from zero in the region $0 \leq r \leq C$. Recall from Section \ref{subsec:gapEoverview} that $g$ satisfies the equation
\begin{equation*}
	(g' \varphi_{0}^{2})' = \varphi_{0}^{2} W_{\lmb, \mu} g.
\end{equation*}
Note furthermore that we have $(g, g')(0) = (1,0)$, thanks to our normalization of $\psi$. Therefore, we have the integral formula
\begin{align}
&g(\rho)=1+\int_0^\rho \int_0^\tau \frac{\fy_0^2(\sgm)}{\fy_0^2 (\tau)} W_{\lmb, \mu}(\sgm) g(\sgm) \, d\sgm d\tau.\label{new g eq}
\end{align}

By a Volterra-type iteration argument as in the proof of Lemma \ref{neg wr} (see the proof of Claim \ref{euc inf comp}), we see that
\begin{equation*}
	\sup_{\rho \in [0, C]} \abs{g(\rho) - 1} = o(1) \mas \lmb \to \infty,
\end{equation*}
from which the lemma follows. \qedhere
\end{proof}
\begin{prop}\label{eval uniqueness}
If $\lambda$ is sufficiently large, then $\calL_{V_{\lambda}}$ has a unique simple eigenvalue in $(0,\frac{1}{4})$, with no threshold resonance at $1/4$.
\end{prop}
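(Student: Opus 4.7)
The plan is to combine the sign–preservation results of Lemmas \ref{eval uniqueness r-infty} and \ref{eval uniqueness r-zero} with elementary self-adjointness and Wronskian arguments. Throughout, I take $\lmb$ large enough that both lemmas as well as Proposition \ref{sign change} apply. Proposition \ref{sign change} together with Sturm oscillation theory already guarantees existence of at least one eigenvalue $\mu^{2}\in(0,\tfrac{1}{4})$ of $\calL_{V_\lmb}$, so the three points remaining are simplicity, uniqueness, and absence of threshold resonance. By Lemmas \ref{eval uniqueness r-infty} and \ref{eval uniqueness r-zero} any $L^{2}$ eigenfunction for a gap eigenvalue, and also any threshold resonance, does not change sign on $(0,\infty)$; by ODE uniqueness (an interior zero of a nonnegative solution of a linear second order ODE is necessarily also a zero of the derivative, and hence forces the solution to vanish identically) such solutions are in fact strictly positive on $(0,\infty)$ after a choice of sign.

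Simplicity of each gap eigenvalue is immediate from Lemma \ref{LL ev 0}: any $L^{2}$ solution of $\calL_{V_\lmb}\phi=\mu^{2}\phi$ has the form $\phi(r)=ar^{3/2}+o(r^{3/2})$ near $r=0$ and is therefore determined up to scaling by the initial value problem. Uniqueness of $\mu^{2}$ in $(0,\tfrac{1}{4})$ then follows from self-adjointness of $\calL_{V_\lmb}$: if two distinct gap eigenvalues $\mu_{1}^{2}\neq\mu_{2}^{2}$ existed, with (strictly positive) eigenfunctions $\phi_{1},\phi_{2}$, then $\int_{0}^{\infty}\phi_{1}\phi_{2}\,\ud r=0$ by orthogonality, contradicting $\phi_{1}\phi_{2}>0$.

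To rule out a threshold resonance $\phi_{1/4}$ in the presence of a gap eigenfunction $\phi_{\mu}$, both taken strictly positive on $(0,\infty)$, I would compute the Wronskian $W(r):=\phi_{\mu}\phi_{1/4}'-\phi_{1/4}\phi_{\mu}'$. Using the eigenvalue equations $\calL_{V_\lmb}\phi_{\mu}=\mu^{2}\phi_{\mu}$ and $\calL_{V_\lmb}\phi_{1/4}=\tfrac{1}{4}\phi_{1/4}$ to eliminate the second derivatives gives
\begin{equation*}
	W'(r)=\bigl(\mu^{2}-\tfrac{1}{4}\bigr)\,\phi_{\mu}(r)\,\phi_{1/4}(r)<0\quad\text{on }(0,\infty).
\end{equation*}
On the other hand both boundary values of $W$ vanish: near $r=0$ both factors behave like $r^{3/2}$ (Lemma \ref{LL ev 0} and Definition \ref{res def}) so the leading order terms in $W$ cancel exactly; near $r=\infty$, a standard ODE asymptotic argument based on the fundamental system $\{e^{\pm mr}\}$ of the free problem $-\phi''+m^{2}\phi=0$ (with $m=\sqrt{1/4-\mu^{2}}>0$) combined with $\phi_{\mu}\in L^{2}$ forces $\phi_{\mu},\phi_{\mu}'\sim e^{-mr}$, while Definition \ref{res def} gives $\phi_{1/4}=b+O(re^{-2r})$ and thus $\phi_{1/4}'=O(e^{-2r})$. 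Integrating $W'$ from $0$ to $\infty$ then produces the contradiction $0=W(\infty)-W(0^{+})=\int_{0}^{\infty}W'\,\ud r<0$.

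The main analytic content of the proposition has already been absorbed into the previous two lemmas, so no serious new difficulty should appear at this stage; the only bookkeeping point is verifying the exponential decay of $\phi_{\mu}$ at infinity, which follows from a routine Volterra iteration on $[R,\infty)$ using the fundamental system $\{e^{\pm mr}\}$ and the rapid decay of the coefficients $\tfrac{3}{4\sinh^{2}r}+V_{\lmb}(r)$.
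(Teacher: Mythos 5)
Your proof is correct and follows essentially the same route as the paper: both arguments rest on Lemmas \ref{eval uniqueness r-infty} and \ref{eval uniqueness r-zero} (fixed sign of any gap eigenfunction or threshold resonance) together with the existence result of Proposition \ref{sign change}. The paper then simply invokes the variational principle (a constant-sign eigenfunction must be the simple ground state, so no second gap eigenvalue and no coexisting resonance), whereas you spell the same conclusions out via orthogonality of eigenfunctions, the limit-point behavior at $r=0$ for simplicity, and an explicit Wronskian integration against the gap eigenfunction to exclude the threshold resonance -- all of which are sound.
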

\begin{proof}
Existence was seen in the previous subsection, so it suffices to establish uniqueness and simpleness.
Let $\phi$ be an eigenfunction corresponding to any eigenvalue $\mu^{2} \in (0, \frac{1}{4})$, or a threshold resonance at $\mu^{2} = \frac{1}{4}$. Combining Lemmas \ref{eval uniqueness r-infty} and \ref{eval uniqueness r-zero}, it follows that $\phi$ cannot change sign on $(0, \infty)$. It follows by the variational principle that there is no eigenvalue below $\mu^{2}$, and when $\mu^{2} \in (0,\frac{1}{4})$ is an eigenvalue it is simple. The proposition follows. \qedhere
\end{proof}
\subsection{Migration of the gap eigenvalue} \label{subsec:gapEmig}
In this subsection, we conclude the proof of Theorem \ref{e val} by demonstrating that the gap eigenvalue $\gapE$ approaches $0$ as $\lmb \to \infty$.
By Sturm's oscillation theory, Proposition \ref{no neg spec} and the uniqueness of the gap eigenvalue, it suffices to establish the following proposition:
\begin{prop} \label{prop:gapEmig:main}
Let $\overline{\mu}^{2} \in (0, 1/4]$. Then for $\lmb$ sufficiently large (depending on $\overline{\mu}^{2}$), the solution $\phi_{0}$ to the ODE
\begin{equation*}
\left\{
\begin{aligned}
	\calL_{V_{\lmb}} \phi_{0} =& \overline{\mu}^{2} \phi_{0} \\
	\phi_{0} =& r^{3/2} + o(r^{3/2}) \hbox{ as } r \to 0
\end{aligned}
\right.
\end{equation*}
must change sign.
\end{prop}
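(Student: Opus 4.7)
The plan is to argue by contradiction, following the three-step renormalization-and-matching strategy of Proposition~\ref{sign change}. Suppose $\phi_{0}>0$ on $(0,\infty)$. First I would extend Lemma~\ref{psi I} to construct a positive Jost-type solution $\phi_{\infty}^{\lmb,\overline{\mu}^{2}}$ of $\calL_{V_{\lmb}}\phi=\overline{\mu}^{2}\phi$, normalized by $\phi_{\infty}\sim e^{-mr}$ as $r\to\infty$, where $m:=\sqrt{1/4-\overline{\mu}^{2}}\in[0,1/2)$. The construction proceeds from the Volterra equation
\[
\phi_{\infty}(r)=e^{-mr}+\int_{r}^{\infty}\frac{\sinh m(s-r)}{m}\biggl[\frac{3}{4\sinh^{2}s}+V_{\lmb}(s)\biggr]\phi_{\infty}(s)\,\ud s
\]
(with $\sinh(m(s-r))/m$ interpreted as $s-r$ at $m=0$), and the positivity $\phi_{\infty}^{\lmb,\overline{\mu}^{2}}\geq e^{-mr}$ on $[C/\lmb,\infty)$ follows from Lemma~\ref{positive potential} exactly as in the proof of Lemma~\ref{psi I}.

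Because both $\phi_{0}$ and $\phi_{\infty}^{\lmb,\overline{\mu}^{2}}$ solve the same equation and $\calL_{V_{\lmb}}$ has no first-order term, their Wronskian is a constant, so the argument derives incompatible signs at two different radii. At $r=\infty$, the contradiction assumption $\phi_{0}>0$ forces the generic asymptotic expansion $\phi_{0}\sim Ae^{mr}+Be^{-mr}$ to have $A\geq 0$, from which a short computation yields $W[\phi_{0},\phi_{\infty}^{\lmb,\overline{\mu}^{2}}](\infty)=-A(m+1/2)\leq 0$. At a matching radius $r_{*}$, corresponding to $\rho_{*}=\lmb r_{*}$ in the rescaled coordinate, I would establish $W[\phi_{0},\phi_{\infty}^{\lmb,\overline{\mu}^{2}}](r_{*})>0$ through a two-step logarithmic-derivative comparison against the Euclidean resonance $\varphi_{0}$, exactly in the spirit of the two claims in the proof of Lemma~\ref{neg wr}: first compare $\varphi_{0}$ with $\phi_{\infty}^{\lmb,\overline{\mu}^{2}}$ outside $\rho_{*}$ using the explicit $\rho$-decay of the renormalized potential, and then compare $\psi_{0}^{\lmb}=\lmb^{3/2}\phi_{0}(\cdot/\lmb)$ with $\varphi_{0}$ inside $\rho_{*}$ via the Volterra bootstrap on $g=\psi_{0}^{\lmb}/\varphi_{0}$.

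The principal obstacle lies in arranging the inner comparison uniformly for all $\overline{\mu}^{2}\in(0,1/4]$. The renormalized potential appropriate for the eigenvalue problem $\widetilde{\calL}_{\lmb}\psi=(\overline{\mu}/\lmb)^{2}\psi$ is $W_{\lmb,\overline{\mu}}(\rho)=W_{\lmb,1/2}(\rho)+(1/4-\overline{\mu}^{2})/\lmb^{2}$, which differs from its existence-proof counterpart by a positive constant shift. For $\overline{\mu}^{2}$ close to $1/4$ the shift is negligible and the argument from Section~\ref{subsec:gapEexists} applies almost verbatim with $\rho_{*}=\lmb$. However, for small $\overline{\mu}^{2}$ the constant shift can dominate the $O(-1/\lmb^{2})$ Poschl--Teller contribution to $W_{\lmb,1/2}$ on $[\rho_{1},\lmb]$, reversing the sign of the crucial integral $\int_{0}^{\rho_{*}}\varphi_{0}^{2}W_{\lmb,\overline{\mu}}g\,\ud\sigma$ and breaking the existence-proof argument. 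To recover, I would enlarge the matching radius to some $\rho_{*}$ inside the band $(\lmb,\,c\lmb/\sqrt{1-4\overline{\mu}^{2}})$, on which the asymptotic $-3/(4\rho^{2})$ tail of $W_{\lmb,1/2}$ still outweighs the constant shift. The technical heart of the argument is then to extend the Volterra estimate of Claim~\ref{euc inf comp}, bounding $g$ close to $1$, to this enlarged interval under the contradiction hypothesis that $\phi_{0}$ (and hence $g$) remains strictly positive; the explicit solution $\ztinf^{(\lmb)}$ from \eqref{eq:ztz}, viewed as the hyperbolic analog of $\varphi_{0}$ at zero eigenvalue, furnishes an exact reference that may be used to close the estimate cleanly.
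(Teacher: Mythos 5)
Your outer and inner comparisons both break at exactly the point where the paper has to introduce a new idea, so as written the proposal has a genuine gap. First, the outer step: for $\overline{\mu}^{2}<1/4$ the renormalized potential $W_{\lmb,\overline{\mu}}=W_{\lmb,1/2}+(1/4-\overline{\mu}^{2})/\lmb^{2}$ tends to $+(1/4-\overline{\mu}^{2})/\lmb^{2}>0$ as $\rho\to\infty$, so unlike part (ii) of Lemma~\ref{W lemma} it is \emph{not} negative on $[\rho_{*},\infty)$; the Wronskian integral $\int_{\rho_{*}}^{\infty}W_{\lmb,\overline{\mu}}\,\psi_{\infty}\varphi_{0}\,d\rho$ has no definite sign, and pushing $\rho_{*}$ into the band $(\lmb, c\lmb/\sqrt{1-4\overline{\mu}^{2}})$ both fails to kill the positive tail and takes you outside the regime $\rho\aleq\lmb$ where the Euclidean renormalization estimates (Lemma~\ref{W lemma}(i), \eqref{eq:gapEmig:est4ptnl1}--\eqref{eq:gapEmig:est4ptnl3}) are valid. (Incidentally, your stated motivation for enlarging $\rho_{*}$ is based on a miscalculation: by \eqref{eq:gapEmig:est4ptnl3} the shifted potential stays $\leq-\overline{\mu}^{2}/(4\lmb^{2})$ on $[\rho_{0},\eps_{2}\lmb]$, so no sign reversal occurs there and no enlargement is needed.) Second, and more fundamentally, the inner step: in Claim~\ref{euc inf comp} the bound $g\geq 1/2$ on $[0,\lmb]$ is \emph{not} a pure Volterra estimate — it comes from the contradiction hypothesis $g'(\lmb)\geq 0$ together with monotonicity of the flux $g'\varphi_{0}^{2}$. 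In the migration problem your only hypothesis is $\phi_{0}>0$, and positivity alone cannot close a Volterra bootstrap on an interval of renormalized length $\sim\lmb$: with $\abs{W_{\lmb,\overline{\mu}}}\sim\lmb^{-2}$ and $\varphi_{0}^{2}(\sgm)\sim\sgm^{-1}$, $\varphi_{0}^{-2}(\tau)\sim\tau$, the cumulative kernel $\int_{0}^{\eps\lmb}\int_{0}^{\tau}\varphi_{0}^{2}(\sgm)\varphi_{0}^{-2}(\tau)\abs{W}\,d\sgm\,d\tau\sim\eps^{2}\log\lmb$ diverges, so ``$g$ close to $1$'' is simply false on that interval without further input.

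The missing ingredient is precisely the quantitative lower bound $g(\rho)\geq C_{\rho_{0}}\,g(\rho_{0})$ on $[\rho_{0},\eps_{1}\lmb]$, which the paper extracts from the explicit \emph{zero-energy} solution $\ztinf^{(\lmb)}$ (the conjugate of $\ztz^{(\lmb)}$): one first proves the Wronskian inequality $W[\phi_{0},\ztinf]\leq 0$ (this itself needs an argument — an auxiliary zero-energy solution and a comparison that uses $\overline{\mu}^{2}>0$, cf.\ \eqref{eq:gapEmig:compareZeta}), then integrates the resulting logarithmic-derivative inequality and uses the two-sided bound of Lemma~\ref{lem:gapEmig:est4ztinf}, whose key feature is that $\widetilde{\ztinf}/\varphi_{0}\simeq\lmb^{2}+\rho^{-2}$ is essentially constant across the whole renormalized interval. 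With that lower bound in hand, the double-integration identity for $g$ plus \eqref{eq:gapEmig:est4ptnl3} produces a term $-C\overline{\mu}^{2}\log(2+\eps\lmb)\,g(\rho_{0})$ forcing $g(\eps\lmb)<0$ — no outer Jost-solution comparison is needed at all, which is how the paper sidesteps the sign problem described above. You mention $\ztinf^{(\lmb)}$ only as a closing aside (``may be used to close the estimate cleanly''); since it is the actual engine of the proof and its use requires the nontrivial Step-1 Wronskian argument, that omission is the gap. (A small further slip: with $\phi_{\infty}\sim e^{-mr}$ and $\phi_{0}\sim Ae^{mr}$ the limiting Wronskian is $-2mA$, not $-A(m+1/2)$; the sign conclusion survives, but at $\overline{\mu}^{2}=1/4$, $m=0$, this limit vanishes and that step degenerates.)
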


We will prove Proposition \ref{prop:gapEmig:main} by a contradiction argument, which is similar in spirit to the proof of Proposition \ref{sign change}.
The key additional idea is to use $\ztinf^{(\lmb)}$, which is the solution to the problem
\begin{equation} \label{eq:gapEmig:eq4ztinf}
\left\{
\begin{aligned}
	\calL_{V_{\lmb}} \zt_{\infty}^{(\lmb)} = &0,\\
	\zt_{\infty}^{(\lmb)} \aeq& c_{\lmb} e^{-r/2} \hbox{ as } r \to \infty,
\end{aligned}
\right.
\end{equation}
for an appropriate $c_{\lmb} > 0$. An interesting feature of $\ztinf^{(\lmb)}$ is that it is a conjugate solution to the explicit solution $\ztz^{(\lmb)}$ used in the proof of Proposition \ref{no neg spec}. By standard ODE theory, it follows that $\ztinf^{(\lmb)}$ can also be explicitly determined.

We now briefly explain why $\ztinf^{(\lmb)}$ is useful for proving Proposition \ref{prop:gapEmig:main}. By a comparison argument, the contradiction hypothesis $\phi_{0} > 0$ leads to a lower bound for $\phi_{0}$ in terms of $\ztinf^{(\lmb)}$, i.e.,
\begin{equation*}
	\phi_{0}(r) \geq \frac{\phi_{0}(r_{0})}{\zt_{\infty}(r_{0})} \zt_{\infty}(r) \qquad \hbox{ for } 0 < r_{0} \leq r.
\end{equation*}
(For details, see the proof of \eqref{eq:gapEmig:compareZeta} below.) Thanks to the explicit expression for $\ztinf^{(\lmb)}$, we are able to derive from this inequality a uniform lower bound for $\phi_{0}$ in an $r$-interval of length $\simeq 1$. In the renormalized coordinate, this lower bound holds on a $\rho$-interval of length $\simeq \lmb$, which can be made arbitrarily large. This gives enough `time' for the renormalized potential (which is \emph{negative} since $\overline{\mu}^{2} > 0$) to force a sign change of $\phi_{0}$, which is a contradiction.

\begin{rem}
In fact, our proof of Proposition \ref{prop:gapEmig:main} does not depend on Proposition \ref{sign change}, and therefore furnishes an alternative proof of existence of a gap eigenvalue. We have nevertheless elected to include both proofs in this paper, since the proof of Proposition \ref{sign change} presented in Section \ref{subsec:gapEexists} requires a weaker hypothesis (in particular, there is no need for the knowledge of the explicit solution $\ztinf^{(\lmb)}$) and therefore might be of independent interest.
\end{rem}

Some lemmas needed for proving Proposition \ref{prop:gapEmig:main} are in order. The first lemma consists of an upper and lower bound on the explicit solution $\ztinf^{(\lmb)}$.
\begin{lem} \label{lem:gapEmig:est4ztinf}
There exist $\eps_{1} > 0$ such that
\begin{equation} \label{eq:gapEmig:est4ztinf}
	\ztinf^{(\lmb)}(r) \simeq \lmb^{1/2} \bb( \lmb^{2} + \frac{1}{\lmb^{2} r^{2}} \bb) \frac{(\lmb r)^{3/2}}{1+\lmb^{2} r^{2}} \qquad
	\hbox{ for } 0 \leq r \leq \eps_{1}, \, \lmb > 0,
\end{equation}
where the implicit constants are independent of $r$ and $\lmb$.
\end{lem}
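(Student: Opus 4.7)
The plan is to construct $\ztinf^{(\lmb)}$ explicitly via reduction of order from the known positive solution $\ztz^{(\lmb)}$ in \eqref{eq:ztz}, and then derive the estimate \eqref{eq:gapEmig:est4ztinf} by an elementary asymptotic analysis of the resulting integral. Since $\calL_{V_\lmb}$ has no first-order term, the Wronskian of any two solutions of $\calL_{V_\lmb} \phi = 0$ is constant, so the function
\begin{equation*}
	\widehat{\ztinf}(r) := \ztz^{(\lmb)}(r) \int_r^\infty \frac{ds}{\ztz^{(\lmb)}(s)^2}
\end{equation*}
is a positive decaying solution of $\calL_{V_\lmb} \phi = 0$ (the integral converges since $\ztz^{(\lmb)}(s) \simeq (1+\lmb^2)^{-1} e^{s/2}$ as $s \to \infty$, which follows directly from \eqref{eq:ztz}). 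A short computation gives $\widehat{\ztinf}(r) \sim \sqrt{2}(1+\lmb^2) e^{-r/2}$ as $r \to \infty$, so $\widehat{\ztinf}$ coincides with $\ztinf^{(\lmb)}$ up to a positive factor absorbed into the normalization $c_\lmb$. The task thus reduces to proving the claimed two-sided bound for $\widehat{\ztinf}$.

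First, from \eqref{eq:ztz}, using $\sinh r = 2\sinh(r/2)\cosh(r/2)$ and the Taylor expansions of $\sinh,\cosh$ at $0$, one extracts the uniform two-sided bound $\ztz^{(\lmb)}(r) \simeq r^{3/2}/(1+\lmb^2 r^2)$ on $[0, \eps_1]$ for a sufficiently small absolute $\eps_1 > 0$. The integral $\int_r^\infty ds/\ztz^{(\lmb)}(s)^2$ is then split at $\eps_1$. The near piece expands via the bound above as
\begin{equation*}
	\int_r^{\eps_1} \frac{(1+\lmb^2 s^2)^2}{s^3}\, ds = \int_r^{\eps_1}\Bigl(\frac{1}{s^3} + \frac{2\lmb^2}{s} + \lmb^4 s\Bigr)\, ds \simeq \frac{1}{r^2} + \lmb^4,
\end{equation*}
once the intermediate term $\lmb^2 \log(\eps_1/r)$ is absorbed: it is dominated by $r^{-2}$ in the regime $\lmb r \lesssim 1$ and by $\lmb^4$ in the regime $\lmb r \gec 1$. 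The tail $\int_{\eps_1}^\infty ds/\ztz^{(\lmb)}(s)^2$ is an $r$-independent quantity of size $\simeq 1 + \lmb^4$, estimated directly from the explicit form of $\ztz^{(\lmb)}$ and its exponential behavior at infinity. Altogether, $\int_r^\infty ds/\ztz^{(\lmb)}(s)^2 \simeq r^{-2} + \lmb^4$ on $[0, \eps_1]$.

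Combining the two estimates,
\begin{equation*}
	\widehat{\ztinf}(r) \simeq \frac{r^{3/2}}{1+\lmb^2 r^2}\Bigl(\frac{1}{r^2} + \lmb^4\Bigr) = \frac{r^{-1/2}(1+\lmb^4 r^2)}{1+\lmb^2 r^2},
\end{equation*}
which after factoring $1+\lmb^4 r^2 = \lmb^2 r^2 \bigl(\lmb^2 + (\lmb^2 r^2)^{-1}\bigr)$ and redistributing powers of $\lmb$ and $r$ coincides with the right-hand side of \eqref{eq:gapEmig:est4ztinf}, as desired. The only slightly delicate point in the argument is the handling of the logarithmic contribution $\lmb^2 \log(\eps_1 / r)$ in the near-integral estimate, which is resolved by the two-regime argument indicated above; no other obstacle is anticipated, as the entire calculation rests on the explicit formula for $\ztz^{(\lmb)}$.
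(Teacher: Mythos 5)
Your proposal is correct and follows essentially the paper's own route: the paper likewise uses the constant Wronskian to write $\ztinf^{(\lmb)}(r) = \big(\int_r^\infty \ztz^{(\lmb)}(s)^{-2}\, ds\big)\, \ztz^{(\lmb)}(r)$ and then proves the two-sided bounds $\ztz^{(\lmb)}(r) \simeq r^{3/2}/(1+\lmb^2 r^2)$ and $\int_r^\infty \ztz^{(\lmb)}(s)^{-2} ds \simeq r^{-2}+\lmb^4$ on $[0,\eps_1]$, the only cosmetic difference being that the paper first evaluates the integral in closed form via $u=\tanh(s/2)$ rather than splitting at $\eps_1$. One small point to tighten: your absorption of the term $\lmb^2\log(\eps_1/r)$ with the split at $\lmb r \simeq 1$ loses a logarithm when $\lmb \simeq 1/r$ (there $\lmb^2\log(\eps_1/r)=r^{-2}\log(\eps_1/r)\not\lesssim r^{-2}$); split instead according to whether $\lmb^{2}\leq \log(\eps_1/r)$ or not, using $\log(\eps_1/r)\leq \eps_1/r$, and note also that the lower bound by $\lmb^4$ near $r=\eps_1$ comes from the tail integral (which you do include), not from the near piece alone.
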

\begin{proof}
We begin by computing $\ztinf^{(\lmb)}$ explicitly. For simplicity, we will omit writing the superscript $(\lmb)$.

Since $\ztz$ and $\ztinf$ solve the same equation (with no first order term), their Wronskian is constant. We choose $c_{\lmb}$ in the definition of $\ztinf$ so that
\begin{equation*}
W[\ztz, \ztinf]
	= \ztz \ztinf' - \ztz' \ztinf
	= - 1
\end{equation*}

Dividing by $\ztz^{2}$, we have
\begin{equation*}
	\bb( \frac{\ztinf}{\ztz} \bb)' = -\frac{1}{\ztz^{2}}.
\end{equation*}
Because of the vanishing condition as $r \to \infty$, it follows that
\begin{align*}
	\ztinf (r)
	=& \bb( \int_{r}^{\infty} \frac{1}{\ztz^{2}(s)} \, \ud s \bb) \ztz (r) \\
	=& \bb( \int_{r}^{\infty} \frac{1}{\ztz^{2}(s)} \, \ud s \bb) \frac{\tanh(r/2)}{1+\lmb^{2} \tanh^{2}(r/2)} \sinh^{1/2} r.
\end{align*}

We now compute the $s$-integral. Using the identity
\begin{equation*}
	\sinh s = 2 \cosh (s/2) \sinh (s/2) = \frac{ 2 \tanh(s/2)}{1- \tanh^{2}(s/2)},
\end{equation*}
we see that
\begin{equation*}
	\frac{1}{\ztz^{2}(s)} = \frac{(1 + \lmb^{2} \tanh^{2}(s/2))^{2} (1 - \tanh^{2}(s/2))}{2 \tanh^{3}(r/2)}
\end{equation*}

Therefore, making a change of variables $u = \tanh(s/2)$,
we have
\begin{align}
	& \int_{r}^{\infty} \frac{1}{\ztz^{2}(s)} \, \ud s \notag \\
	& \quad = \int_{\tanh (r/2)}^{1} \frac{(1 + \lmb^{2} u^{2})^{2}}{u^{3}} \, \ud u \label{eq:gapEmig:ztinf:sIntegral}\\
	& \quad = - \frac{1}{2} (1 - \tanh^{-2} (r/2) ) - 2 \lmb^{2} \log \tanh (r/2) + \frac{\lmb^{4}}{2} (1 - \tanh^{2}(r/2)). \notag
\end{align}


To prove \eqref{eq:gapEmig:est4ztinf}, it suffices to establish
\begin{align}
	\frac{\tanh(r/2)}{1+\lmb^{2} \tanh^{2}(r/2)} \sinh^{1/2} r
	\simeq & \frac{r^{3/2}}{1 + \lmb^{2} r^{2}}, 	\label{eq:gapEmig:est4ztinf:easy1}\\
	\int_{r}^{\infty} \frac{1}{\ztz^{2}(s)} \ud s
	\simeq & \lmb^{2} \bb( \lmb^{2} + \frac{1}{\lmb^{2}r^{2}} \bb),  \label{eq:gapEmig:est4ztinf:easy2}
\end{align}
for $0 \leq r \leq \eps_{1}$, where $\eps_{1} > 0$ is to be chosen below.

Estimate \eqref{eq:gapEmig:est4ztinf:easy1} is an immediate consequence of the easier estimates
\begin{equation*}
	\tanh (r/2) \simeq r, \quad
	\sinh r \simeq r, \quad
	1 + \lmb \tanh (r/2) \simeq  1+ \lmb r.
\end{equation*}
which holds when $\eps_{1} > 0$ is sufficiently small. On the other hand, \eqref{eq:gapEmig:est4ztinf:easy2} is easy to prove (taking $\eps_{1} > 0$ smaller if necessary) by directly estimating the integral \eqref{eq:gapEmig:ztinf:sIntegral}, whose integrand is positive. We leave the details to the reader. \qedhere
\end{proof}

In the following lemma, we collect useful facts for estimating the renormalized potential $W_{\lmb, \overline{\mu}}$.
\begin{lem} \label{lem:gapEmig:est4ptnl}
For $0 \leq \rho \leq  \lmb$ we have
\begin{gather}
	\bb\vert \frac{3}{4} \frac{1}{\lmb^{2} \sinh^{2} (\rho/\lmb)} - \frac{3}{4} \frac{1}{\rho^{2}}  + \frac{1}{4 \lmb^{2}} - \frac{\overline{\mu}^{2}}{\lmb^{2}} \bb\vert
	\aleq \frac{1}{\lmb^{2}}, \label{eq:gapEmig:est4ptnl1} \\
	\bb\vert \frac{1}{\lmb^{2}} V_{\lmb}(\rho / \lmb) - V_{\euc}(\rho) \bb\vert
	\aleq \frac{1}{\lmb^{2}} \frac{1}{(1+\rho^{2})}. \label{eq:gapEmig:est4ptnl2}
\end{gather}

Moreover, there exists $\eps_{2} = \eps_{2}(\overline{\mu}^{2}) > 0$, which are independent of $\rho$ and $\lmb$, such that for $0 \leq \rho \leq \eps_{2} \lmb$ we have
\begin{equation} \label{eq:gapEmig:est4ptnl3}
	\frac{3}{4} \frac{1}{\lmb^{2} \sinh^{2} (\rho/\lmb)} - \frac{3}{4} \frac{1}{\rho^{2}}  + \frac{1}{4 \lmb^{2}} - \frac{\overline{\mu}^{2}}{\lmb^{2}}
	\leq - \frac{\overline{\mu}^{2}}{2 \lmb^{2}}.
\end{equation}
\end{lem}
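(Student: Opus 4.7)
The plan is to reduce all three estimates to elementary Taylor expansions in the variable $x := \rho/\lmb \in [0, 1]$. The key algebraic identity is
\begin{equation*}
	\frac{3}{4 \lmb^{2} \sinh^{2}(\rho/\lmb)} - \frac{3}{4\rho^{2}}
		= - \frac{3}{4 \lmb^{2}} \cdot \frac{\sinh^{2} x - x^{2}}{x^{2} \sinh^{2} x},
\end{equation*}
and the power series expansion $\sinh^{2} x = x^{2} + \frac{x^{4}}{3} + O(x^{6})$ shows that the fraction on the right is continuous on $[0,1]$ with limit value $1/3$ at $x = 0$. Combining this with the trivial bound $|(1-4\overline{\mu}^{2})/(4\lmb^{2})| \leq 1/\lmb^{2}$ immediately yields \eqref{eq:gapEmig:est4ptnl1}. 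For \eqref{eq:gapEmig:est4ptnl2}, I would simply invoke estimate \eqref{second potential comparison} from the proof of Lemma \ref{W lemma}, which was obtained by writing $V_{\euc}(\rho) - \lmb^{-2} V_{\lmb}(\rho/\lmb)$ as $-\int_{0}^{1/\lmb} \rd_{\bt} \tilde{V}(\rho, \bt) \, \ud \bt$ and bounding the integrand pointwise by $C \bt / (1+\rho^{2})$.

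The only part requiring a new idea is \eqref{eq:gapEmig:est4ptnl3}. Using the same reduction with $x = \rho/\lmb$, the claim is equivalent to
\begin{equation*}
	3 \cdot \frac{\sinh^{2} x - x^{2}}{x^{2} \sinh^{2} x} \geq 1 - 2 \overline{\mu}^{2} \qquad \mfor x \in [0, \eps_{2}].
\end{equation*}
The left-hand side is a continuous function of $x$ on $[0,1]$ with limit value $1$ at $x = 0$; since the right-hand side is \emph{strictly} less than $1$ whenever $\overline{\mu}^{2} > 0$, continuity produces $\eps_{2} = \eps_{2}(\overline{\mu}^{2}) > 0$ for which the inequality holds on $[0, \eps_{2}]$. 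Note that $\eps_{2}$ depends only on $\overline{\mu}^{2}$ and not on $\rho$ or $\lmb$, as required.

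The one conceptual point worth flagging is that $\eps_{2}$ must degenerate as $\overline{\mu}^{2} \to 0$. This dependence is both natural and necessary: at $\overline{\mu}^{2} = 0$, there is the explicit positive zero-energy solution $\ztz^{(\lmb)}$ from \eqref{eq:ztz}, which precludes any pointwise negative bound of this form on the renormalized potential. Since \eqref{eq:gapEmig:est4ptnl3} will be applied in Proposition \ref{prop:gapEmig:main} with $\lmb$ taken large \emph{after} $\overline{\mu}^{2}$ is fixed, this $\overline{\mu}^{2}$-dependence is unproblematic in the intended application, and no step of the argument is expected to present a serious obstacle.
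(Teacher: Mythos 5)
Your proposal is correct and follows essentially the same route as the paper: \eqref{eq:gapEmig:est4ptnl2} is quoted from \eqref{second potential comparison}, \eqref{eq:gapEmig:est4ptnl1} comes from the boundedness of $(\sinh^{2}x - x^{2})/(x^{2}\sinh^{2}x)$ on $[0,1]$ (the identity underlying \eqref{first potential comparison}), and \eqref{eq:gapEmig:est4ptnl3} rests on the fact that this ratio tends to $1/3$ as $x = \rho/\lmb \to 0$, so that for $\rho \leq \eps_{2}\lmb$ the deviation is beaten by the fixed margin $\overline{\mu}^{2} > 0$. The only cosmetic difference is that the paper makes this last step quantitative via a Taylor expansion of $x^{2}/\sinh^{2}x$ with integral remainder (giving $\eps_{2} \sim \overline{\mu}^{2}$ explicitly), whereas you argue by soft continuity, which is equally valid.
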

\begin{proof}
Estimate \eqref{eq:gapEmig:est4ptnl1} follows from \eqref{first potential comparison} and estimate \eqref{eq:gapEmig:est4ptnl2} is exactly \eqref{second potential comparison} in the proof of Lemma \ref{W lemma}.
To prove \eqref{eq:gapEmig:est4ptnl3}, we begin by observing that the Taylor expansion of $r^{2} / \sinh^{2} r$ at $r = 0$ is given by
\begin{align*}
	\frac{r^{2}}{\sinh^{2} r} = 1 - \frac{1}{3} r^{2} + \frac{1}{2} \int_{0}^{r} (r - r')^{2} E(r') \, \ud r'
\end{align*}
where
\begin{equation*}
	E(r) = \frac{\ud^{3}}{\ud r^{3}} \bb( \frac{r^{2}}{\sinh^{2} r} \bb).
\end{equation*}

%
%

Note that $E(r)$ obviously enjoys the bound $\sup_{r \in [0,1]} \abs{E(r)} \leq C$ for some absolute constant $C > 0$. Therefore,
\begin{equation*}
	\bb\vert
	\frac{r^{2}}{\sinh^{2} r} - 1 + \frac{1}{3} r^{2} \bb\vert
	\leq C r^{3}
\end{equation*}
for $0 \leq r \leq 1$. Making a change of variable $r = \rho / \lmb$ and restricting to $0 \leq \rho \leq \eps_{2} \lmb$, it follows that
\begin{align*}
		\bb\vert
		 \frac{3}{4} \frac{1}{\lmb^{2} \sinh^{2}(\rho/\lmb)}
		 - \frac{3}{4} \frac{1}{\rho^{2}} + \frac{1}{4 \lmb^{2}} \bb\vert
		 \leq \eps_{2} \frac{C}{\lmb^{2}} .
\end{align*}

Choosing $\eps_{2} > 0$ sufficiently small compared to $\overline{\mu}^{2} > 0$, \eqref{eq:gapEmig:est4ptnl3} follows. \qedhere

%
%

\end{proof}

We are now ready to prove Proposition \ref{prop:gapEmig:main}.
\begin{proof} [Proof of Proposition \ref{prop:gapEmig:main}]
In what follows, we will omit the superscript $(\lmb)$ in $\ztinf^{(\lmb)}$ for simplicity.
For the sake of contradiction, suppose that $\phi_{0}$ does not change sign, i.e., $\phi_{0}$ is positive.

\vskip1em
\noindent\textbf{Step 1:}
We claim that
\begin{equation} \label{eq:gapEmig:compareZeta}
	W[\phi_{0}, \ztinf] = \phi_{0} (\ztinf)' - \phi'_{0} \ztinf \leq 0.
\end{equation}

Indeed, suppose the contrary. Then at some $R > 0$, we must have
\begin{equation*}
	W[\phi_{0}, \zt_{\infty}] (R) > 0 .
\end{equation*}

We introduce an auxiliary function $\zt$ which solves the equation
\begin{equation*}
\left\{
\begin{aligned}
	\calL_{V_{\lmb}} \zt =& 0, \\
	(\zt, \zt')(R) =& (\phi_{0}, \phi'_{0})(R).
\end{aligned}
\right.
\end{equation*}
Since $\zt$ and $\ztinf$ solve the same equation, their Wronskian is constant. Therefore,
\begin{equation*}
W[\zt, \ztinf] (r)
= W[\zt, \ztinf] (R)
= W[\phi_{0}, \ztinf] (R) > 0
\end{equation*}
for all $r \geq 0$. It follows that $\zt$ must tend to $-\infty$ as $r \to \infty$, thus changing sign. Then a comparison argument between $\phi_{0}$ and $\zt$ as in Propositions \ref{good spec} and \ref{no neg spec}, using crucially the fact that $\overline{\mu}^{2} > 0$, shows that $\phi_{0}$ must also change sign, which is a contradiction.

\vskip1em
\noindent\textbf{Step 2:}
As discussed earlier, the benefit of \eqref{eq:gapEmig:compareZeta} is that it gives a lower bound on $\phi_{0}$ in terms of $\ztinf$ on an arbitrarily long interval in the renormalized coordinate. Indeed, by \eqref{eq:gapEmig:compareZeta}, we have
\begin{equation} \label{eq:gapEmig:lb4dpsi0}
	\frac{\ud}{\ud r} \log \phi_{0}(r) \geq \frac{\ud}{\ud r} \log \ztinf(r).
\end{equation}
Thus, for any $r \geq r_{0} > 0$,
\begin{equation} \label{eq:gapEmig:lb4psi0}
	\phi_{0}(r) \geq \frac{\phi_{0}(r_{0})}{\zt_{\infty}(r_{0})} \zt_{\infty}(r).
\end{equation}

To translate this lower bound to the renormalized picture, we make the change of variable $\rho = \lmb r$ (thus $\rho_{0} = \lmb r_{0}$) and define
\begin{align*}
	\widetilde{\ztinf}(\rho) := & \lmb^{-1/2} \ztinf(\rho / \lmb), \\
	g(\rho) :=& \frac{\lmb^{3/2} \phi_{0}(\rho/ \lmb)}{\varphi_{0}(\rho)},
\end{align*}
where we remind the reader that
\begin{equation*}
	\varphi_{0}(\rho) = \frac{\rho^{3/2}}{1+(\rho/2)^{2}}.
\end{equation*}
Since $\varphi_{0} > 0$, our contradiction hypothesis $\phi_{0} > 0$ is equivalent to $g > 0$.
Moreover, \eqref{eq:gapEmig:lb4psi0} translates to
\begin{align}
	g(\rho)
	\geq & \, g(\rho_{0}) \bb( \frac{\widetilde{\ztinf}(\rho_{0})}{\varphi_{0}(\rho_{0})} \bb)^{-1}  \bb( \frac{\widetilde{\ztinf}(\rho)}{\varphi_{0}(\rho)} \bb).\label{eq:gapEmig:lb4g}
\end{align}

Applying Lemma \ref{lem:gapEmig:est4ztinf} and plugging in the definition of $\varphi_{0}$, we see that
\begin{equation}
	g(\rho) \geq C_{\rho_{0}} g(\rho_{0}), \qquad \hbox{ for } \rho_{0} \leq \rho \leq \eps_{1} \lmb, \quad \lmb \geq 1,
\end{equation}
where $C > 0$ is independent of $\rho$ and $\lmb$.

\vskip1em
\noindent\textbf{Step 3:}
Note that $g$ satisfies $(g, g')(0) = (1,0)$ and
\begin{equation} \label{eq:gapEmig:eq4g}
\left(g^\prime\fy_0^2\right)^\prime=\fy_0^2 W_{\lmb, \overline{\mu}} g
\end{equation}
where
\begin{equation*}
	W_{\lmb, \overline{\mu}}(\rho) = \frac{3}{4} \frac{1}{\lmb^{2} \sinh^{2}(\rho/\lmb)} - \frac{3}{4} \frac{1}{\rho^{2}} + \frac{1}{4 \lmb^{2}} - \frac{\overline{\mu}^{2}}{\lmb^{2}} + \frac{1}{\lmb^{2}} V_{\lmb}(\rho/\lmb) - V_{\euc}(\rho).
\end{equation*}

At this point, we fix a large enough $\rho_{0} > 0$ so that we have
\begin{equation} \label{eq:gapEmig:ub4W}
	W_{\lmb, \overline{\mu}} (\rho) \leq - \frac{\overline{\mu}^{2}}{4 \lmb^{2}} \qquad \hbox{ for } \rho_{0} \leq \rho \leq \eps_{2} \lmb.
\end{equation}
Indeed, \eqref{eq:gapEmig:ub4W} follows by combining \eqref{eq:gapEmig:est4ptnl2} and \eqref{eq:gapEmig:est4ptnl3}.
We now claim that the following bounds hold for $g(\rho)$ and $g'(\rho)$: For $0 \leq \rho \leq \rho_{0}$ and $\lmb \geq \rho_{0}$, we have
\begin{align}
	\abs{g(\rho) - 1} \leq & \frac{C_{\rho_{0}}}{\lmb^{2}}, 	\label{eq:gapEmig:bnd4grho0}\\
	\abs{g'(\rho)} \leq & \frac{C_{\rho_{0}}}{\lmb^{2}}.	\label{eq:gapEmig:bnd4dgrho0}
\end{align}

These bounds are proved in a similar fashion to the proof of uniqueness of $\gapE$, cf. Lemma \ref{eval uniqueness r-zero}.
From \eqref{eq:gapEmig:eq4g} it follows that
\begin{align*}
	g'(\rho)
		=& \frac{1}{\varphi_{0}^{2}(\rho)} \int_{0}^{\rho} \varphi_{0}^{2}(\sgm) W_{\lmb, \overline{\mu}}(\sgm) g(\sgm) \, \ud \sgm, \\
	g(\rho)
		=& 1 + \int_{0}^{\rho} \int_{0}^{\tau} \frac{\varphi_{0}^{2}(\sgm)}{\varphi_{0}^{2}(\tau)} W_{\lmb, \overline{\mu}}(\sgm) g(\sgm) \, \ud \sgm \ud \tau.
\end{align*}

Using \eqref{eq:gapEmig:est4ptnl1} and \eqref{eq:gapEmig:est4ptnl2} to estimate $W_{\lmb, \overline{\mu}}$, substituting $\varphi_{0}$ by its explicit definition and estimating the resulting integral, it follows that
\begin{gather*}
	\int_{0}^{\rho_{0}} \varphi_{0}^{2}(\sgm) \abs{W_{\lmb, \overline{\mu}}(\sgm)} \, \ud \sgm
		 \aleq_{\rho_{0}} \frac{1}{\lmb^{2}}, \\
	\int_{0}^{\rho_{0}} \bb( \int_{\sgm}^{\rho_{0}} \frac{1}{\varphi_{0}^{2}(\tau)} \, \ud \tau \bb) \varphi_{0}^{2}(\sgm) \abs{W_{\lmb, \overline{\mu}}(\sgm)} \, \ud \sgm
		 \aleq_{\rho_{0}} \frac{1}{\lmb^{2}}.
\end{gather*}
Then by a Volterra-type iteration, \eqref{eq:gapEmig:bnd4grho0} follows. Moreover, \eqref{eq:gapEmig:bnd4dgrho0} is an immediate  consequence of plugging in \eqref{eq:gapEmig:bnd4grho0} to the formula for $g'(\rho)$.

\vskip1em
\noindent\textbf{Step 4:}
We now derive a contradiction. Our starting point is the identity
\begin{equation*}
	g(\rho) = g(\rho_{0}) + g'(\rho_{0}) \int_{\rho_{0}}^{\rho} \frac{\varphi_{0}^{2}(\rho_{0})}{\varphi_{0}^{2}(\tau)} \, \ud \tau +  \int_{\rho_{0}}^{\rho} \int_{\rho_{0}}^{\tau} \frac{\varphi_{0}^{2}(\sgm)}{\varphi_{0}^{2}(\tau)} W_{\lmb, \overline{\mu}}(\sgm) g(\sgm) \, \ud \sgm \ud \tau,
\end{equation*}
for $\rho \geq \rho_{0}$, which is obtained by integrating \eqref{eq:gapEmig:eq4g} twice.
%
%

Fix $\eps > 0$ so that $\eps = \min \set{\eps_{1}, \eps_{2}}$. Taking $\lmb \geq \rho_{0} / \eps$, let $\rho = \eps \lmb$ in the preceding identity. Then since $\eps \leq \eps_{2}$, we can apply \eqref{eq:gapEmig:est4ptnl3} and conclude the following one-sided inequality:
\begin{equation} \label{eq:gapEmig:key4g}
	g(\eps \lmb) \leq g(\rho_{0}) \bb(1 - \frac{C \overline{\mu}^{2}}{\lmb^{2}} \int_{\rho_{0}}^{\eps \lmb} \int_{\rho_{0}}^{\tau} \frac{\varphi_{0}^{2}(\sgm)}{\varphi_{0}^{2}(\tau)} \, \ud \sgm \ud \tau \bb) + \abs{g'(\rho_{0})} \int_{\rho_{0}}^{\eps \lmb} \frac{\varphi_{0}^{2}(\rho_{0})}{\varphi_{0}^{2}(\tau)} \, \ud \tau.
\end{equation}

Recalling the definition of $\varphi_{0}$, we easily compute
\begin{align*}
	\int_{\rho_{0}}^{\eps \lmb} \frac{1}{\varphi_{0}^{2}(\tau)} \, \ud \tau \leq & C_{\rho_{0}} \eps^{2} \lmb^{2}, \\
	- \frac{1}{\lmb^{2}} \int_{\rho_{0}}^{\eps \lmb} \int_{\rho_{0}}^{\tau} \frac{\varphi_{0}^{2}(\sgm)}{\varphi_{0}^{2}(\tau)} \, \ud \sgm \ud \tau
	 \leq & - C_{\rho_{0}} \eps^{2} \log (2+\eps \lmb).
\end{align*}
Therefore, we obtain
\begin{equation*}
	g(\eps \lmb) \leq g(\rho_{0}) \big( 1 - C_{\rho_{0}, \eps} \, \overline{\mu}^{2} \log (2+\eps \lmb) \big) + C_{\rho_{0}} \eps^{2} (\lmb^{2} \abs{g'(\rho_{0})}).
\end{equation*}
We now recall the bounds \eqref{eq:gapEmig:bnd4grho0} and \eqref{eq:gapEmig:bnd4dgrho0} for $g(\rho_{0})$ and $\lmb^{2} \abs{g'(\rho_{0})}$.
Thanks to the term $\log (2+\eps \lmb)$,  we then see that the right-hand side is negative when $\lmb$ is sufficiently large. It follows that $\phi_{0}(\eps) = g(\eps \lmb) \varphi(\eps \lmb) < 0$, which contradicts our hypothesis that $\phi_{0} > 0$. \qedhere
\end{proof}


\section{Strichartz estimates for the linearized operators} \label{strich section}

The goal of this section is to prove Strichartz  estimates for a radial shifted~\footnote{In \cite{AP} and most of the related literature ``the shifted wave equation" refers to the equation $(\Box_\mathbf{g}-\frac{9}{4})u=F$ because the spectrum of the Laplacian $\Delta_\g$ on $\Hp^4$ is $[9/4,\infty)$.  Nevertheless, we have preferred to use the term ``a shifted wave equation"  here as well since there is little risk for confusion.} linear wave equation in $\R \times \Hp^4$, perturbed by a radial potential $V$, i.e.,
\EQ{ \label{lin wave V}
&u_{tt} - u_{rr} - 3\coth r \, u_r -2 u + V u  = F,\\
&\vec u(0) = (u_0, u_1).
}
We will make several assumptions about $V$, which are consistent with the potentials $V_{\la}, U_{\la}$, where $V_{\la}$ is as in Proposition~\ref{good spec}, and $U_{\la}$ is as in~\eqref{Ula}. First define $$H_V:= - \p_{rr}- 3 \coth r\, \p_r - 2 +V(r).$$  We will work under the assumptions that
\begin{itemize}
\item[($A$)] $V$ is real-valued, smooth, radial, and bounded on $\Hp^4$, and $H_V$ is self-adjoint on the domain $\calD = H^2(\Hp^4)$. Moreover $V(r)  \le C e^{-2r}$ as $r \to \infty$.
\item[($B$)] The operator $H_V$ defined above has purely absolutely continuous spectrum $$\sigma(H_V) = \left[1/4, \infty\right).$$
In particular,  $H_V$ has no negative spectrum and no eigenvalues in the gap $[0, 1/4)$. Moreover,  the threshold energy $\frac{1}{4}$ is neither an eigenvalue nor a resonance.
\end{itemize}

\begin{rem} We note that by Proposition~\ref{good spec}, $H_{V_{\la}}$ satisfies $(A)$ and $(B)$ above for $0 \le \la < \sqrt{15/8}$. On the other hand, $H_{U_\la}$ satisfies $(A)$ and $(B)$ for all $\la \in [0, 1),$ because $U_{\la} \ge 0$ is a repulsive potential. In fact, we have
\EQ{
U_{\la}(r) := \frac{ \cosh 2P_{\la} - 1}{ \sinh^2 r} = \frac{8 \lmb^{2} }{[\cosh r + 1 - \lmb^{2} (\cosh r - 1) ]^{2}} \ge 0.
}
\end{rem}

Strichartz estimates for the free equation, that is with $V \equiv 0$, were  proved by Anker-Pierfelice,~\cite{AP}, and we briefly recall their set-up and main result.  The corresponding free shifted linear wave equation on $\R \times \Hp^4$ is given by
\EQ{ \label{shift wave}
&(\Box_\g -2) v := v_{tt} - \Delta_\g v - 2 v = F,\\
&\vec v(0) = (v_0, v_1).
}
 A triple $(p, q, \sigma)$ is called hyperbolic-admissible if
\EQ{
p, q>2, \quad \frac{1}{p} + \frac{3}{2q}  \le \frac{3}{4}, \quad \frac{1}{p} + \frac{4}{q} = 2- \s.
}
\begin{prop}\cite[Corollary~$5.3$]{AP}\label{free strich}  Suppose $\vec v(t)$ is a solution to~\eqref{shift wave} with initial data $\vec v(0) = (v_0, v_1)$ and let $0 \in I \subset \R$ be any time interval. Let $(p, q, \s)$ and $(a, b, \ga)$ be any two hyperbolic-admissible triples. Then we have the estimates
\EQ{
\|  v\|_{L^p(I; W^{ 1-\s, q}(\Hp^4))}+   \|  \p_t v\|_{L^p(I; W^{ -\s, q}(\Hp^4))} \lesssim  \|\vec v(0) \|_{H^1 \times L^2(\Hp^4)} + \|F\|_{L^{a'}(I; W^{\ga, b'}(\Hp^4))}.
}
\end{prop}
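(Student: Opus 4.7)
The plan is to prove these Strichartz estimates by the standard spectral-theoretic approach on hyperbolic space, combining pointwise dispersive estimates for the wave propagator with the Kunze--Stein phenomenon; this is the program laid out by Anker and Pierfelice, and the input that is specific to $\bbH^4$ is the structure of the spherical functions and the Harish-Chandra $c$-function.

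First, I would diagonalize the shifted radial operator $H_0 = -\Delta_\g - 2$ on $\bbH^4$ via the Helgason (spherical) Fourier transform. Writing $H_0 = -\rd_{rr} - 3 \coth r \, \rd_r - 2$ on radial functions and noting that $\sigma(H_0) = [1/4, \infty)$, the solution admits the representation
\begin{equation*}
v(t, r) = \int_0^\infty \cos\!\big(t\sqrt{\lambda^2 + \tfrac{1}{4}}\big) \, \wht{v}_0(\lambda) \, \phi_\lambda(r) \, \abs{\mathbf{c}(\lambda)}^{-2} \, \ud \lambda,
\end{equation*}
and an analogous sine-propagator term for the data $v_1$, where $\phi_\lambda$ denotes the radial spherical function on $\bbH^4$ and $\mathbf{c}(\lambda)$ is the Harish-Chandra $c$-function.

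Second, the key analytic ingredient is a pointwise dispersive estimate for the half-wave propagator $e^{\pm it\sqrt{H_0}}$. Substituting the stationary-phase asymptotics of $\phi_\lambda$ (which behave in a Euclidean manner for $\lambda r \aleq 1$ and pick up the exponential volume factor $\sinh^{-3/2} r$ for $\lambda r \gec 1$) into the kernel representation yields the sharp small-time bound $|t|^{-3/2}$ together with enhanced large-time decay coming from the exponential volume growth of $\bbH^4$. Because we are in an odd ambient dimension after the substitution $r \mapsto \sinh^{3/2} r$, one could alternatively exploit the closed-form wave kernel on $\bbH^3$ via the method of descent to streamline the kernel analysis.

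Third, once the dispersive estimates are in hand, the homogeneous Strichartz estimates follow by the Keel--Tao $TT^\ast$ machinery. The wider range encoded by $\frac{1}{p} + \frac{3}{2q} \le \frac{3}{4}$, rather than a Euclidean-scaling equality, is furnished by the Kunze--Stein phenomenon on the semisimple Lie group $SO(1,4)$: convolution with bi-$K$-invariant $L^{q'}$ functions, $q' \in [1, 2)$, is bounded on $L^2$. The inhomogeneous bound then follows from Duhamel's formula, the dual Strichartz estimate, and the Christ--Kiselev lemma (where the condition $a, p > 2$ is crucial). The principal technical obstacle is the uniform stationary-phase analysis of $\phi_\lambda$ across the transitional region $\lambda r \aeq 1$, together with a careful treatment of the $c$-function asymptotics as $\lambda \to 0$ and $\lambda \to \infty$; these are precisely the estimates at the heart of \cite{AP}.
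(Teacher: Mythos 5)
The paper gives no internal proof of this proposition: it is quoted directly from Anker--Pierfelice \cite[Corollary~5.3]{AP}, and your outline (Helgason/spherical-function diagonalization of $H_0$, pointwise dispersive bounds for the propagator with symbol $\sqrt{\lambda^2+\tfrac14}$, the Kunze--Stein phenomenon to obtain the enlarged admissible range $\tfrac1p+\tfrac{3}{2q}\le\tfrac34$, then $TT^\ast$ and Christ--Kiselev for the inhomogeneous estimate with two different admissible triples) is exactly the strategy of that reference, so your proposal and the paper's ``proof'' coincide up to the fact that the paper outsources it. One caveat on an aside in your sketch: the claim that the substitution $r\mapsto\sinh^{3/2}r$ puts you in an ``odd ambient dimension'' where the explicit $\bbH^3$ wave kernel and descent apply is not accurate---that conjugation yields a half-line operator with potential $\tfrac{3}{4\sinh^2 r}$, not a lower-dimensional hyperbolic wave kernel, and the even-dimensional case $\bbH^4$ is handled in \cite{AP} via uniform asymptotics of the spherical functions and the Harish-Chandra $c$-function rather than by descent; note also that the shift here is $2$ rather than $\tfrac94$, so what is invoked is the Klein--Gordon-type case of \cite{AP}, consistent with the $\sqrt{\lambda^2+\tfrac14}$ in your propagator formula.
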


We will use Proposition~\ref{free strich} together with a perturbative argument to establish the corresponding estimates for~\eqref{lin wave V}.  In particular we prove the following result.
\begin{prop}\label{strich}
Suppose $\vec u(t)$ is a solution to~\eqref{lin wave V} with initial data $\vec u(0) = (u_0, u_1)$ and with $V$ satisfying assumptions (A) and (B)  above. Let $0 \in I \subset \R$ be any time interval. Let $(p, q, \s)$ and $(a, b, \ga)$ be any two hyperbolic-admissible triples. Then we have the estimates
\EQ{\label{strich est}
\|  u\|_{L^p(I; W^{ 1-\s, q}(\Hp^4))}+    \|  \p_t u\|_{L^p(I; W^{ - \s, q}(\Hp^4))}  \lesssim  \|\vec u(0) \|_{H^1 \times L^2(\Hp^4)} + \|F\|_{L^{a'}(I; W^{\ga, b'}(\Hp^4))}.
}
\end{prop}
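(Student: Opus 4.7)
The plan is to use the distorted Fourier transform strategy of Rodnianski--Schlag, as adapted to related problems in \cite{LS, SSS1, SSS2}. Conjugation by $\sinh^{3/2} r$ via \eqref{4 to 1}--\eqref{conj} reduces $H_V$ on the radial subspace of $L^2(\Hp^4)$ to $\calL_V$ on $L^2(0,\infty)$. For each $\lambda \geq 0$ I would construct the generalized eigenfunctions of $\calL_V$ at spectral value $\lambda^2 + 1/4$: the regular solution $\phi_V(\cdot,\lambda)$ normalized by $\phi_V \sim r^{3/2}$ as $r \to 0$, and a Jost-type solution $\psi_V(\cdot,\lambda)$ with controlled behavior at $r = \infty$. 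Both can be built by Volterra iteration, the exponential decay $V(r) \leq C e^{-2r}$ from hypothesis (A) guaranteeing convergence and giving direct comparison with the free analogues for $\calL_0$.

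Under hypothesis (B), the Jost Wronskian $W_V(\lambda) := W[\phi_V, \psi_V](\lambda)$ is non-vanishing on $[0,\infty)$, including crucially at the threshold $\lambda = 0$ (which is precisely the non-resonance statement). This produces a distorted Fourier transform $\F_V$ and the spectral representation
\begin{equation*}
u(r) = \int_{0}^{\infty} \phi_V(r, \lambda) (\F_V u)(\lambda) \, d\sigma_V(\lambda),
\end{equation*}
with purely absolutely continuous measure $d\sigma_V$. In these coordinates the solution formula for \eqref{lin wave V} is explicit: the propagators $\cos(t\sqrt{H_V})$ and $\sin(t\sqrt{H_V})/\sqrt{H_V}$ become multiplication by $\cos(t\sqrt{\lambda^2+1/4})$ and $\sin(t\sqrt{\lambda^2+1/4})/\sqrt{\lambda^2+1/4}$, which are bounded symbols since $H_V \geq 1/4$.

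The Strichartz bounds \eqref{strich est} are then obtained via a transference principle: I would show that the wave operator $\mathcal{W} := \F_0^{-1} \F_V$, which intertwines the two functional calculi, extends along with its inverse to a bounded map on the radial parts of each Sobolev space $W^{s,q}(\Hp^4)$ appearing in \eqref{strich est}. Granted this, the perturbed Strichartz estimate reduces immediately to the free one in Proposition~\ref{free strich} by sandwiching both sides of the evolution through $\mathcal{W}$. Boundedness of $\mathcal{W}$ is in turn a consequence of pointwise kernel bounds comparing $\phi_V$ to $\phi_0$ and $d\sigma_V$ to $d\sigma_0$, uniformly in $\lambda$.

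The hard part will be the threshold behavior at $\lambda = 0$, where small denominators arising from $W_V(\lambda)^{-1}$ could in principle destroy the kernel bounds. This is where hypothesis (B) is indispensable: non-vanishing of $W_V$ at $\lambda = 0$ together with the absence of an embedded eigenvalue is precisely what rules out these small denominators and keeps $d\sigma_V / d\lambda$ comparable to $d\sigma_0/d\lambda$ down to the edge of the continuous spectrum. The intermediate and high-frequency regimes ($\lambda \to \infty$) pose no essential difficulty; they follow from standard Volterra estimates together with the exponential decay of $V$ in hypothesis (A). Once $\mathcal{W}$ is controlled in this manner, transference delivers \eqref{strich est} directly from Proposition~\ref{free strich}.
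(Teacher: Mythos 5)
Your spectral-theoretic groundwork (conjugation by $\sinh^{3/2}r$, construction of the regular and Jost solutions by Volterra iteration, non-vanishing of the Wronskian at the threshold under hypothesis (B), and the resulting absolutely continuous spectral representation) coincides with what the paper does in Lemmas~\ref{dis ft L0} and~\ref{dis ft}. The divergence, and the genuine gap, is in how you pass from this spectral information to the Strichartz estimates. You propose to prove that the stationary wave operator $\mathcal{W}=\F_0^{-1}\F_V$ and its inverse are bounded on the radial parts of every space $W^{1-\s,q}$, $W^{-\s,q}$, $W^{\gmm,b'}$ occurring in \eqref{strich est}, and you assert that this is ``a consequence of pointwise kernel bounds comparing $\phi_V$ to $\phi_0$ and $d\sgm_V$ to $d\sgm_0$, uniformly in $\lmb$.'' That assertion is where the argument breaks. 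Uniform pointwise bounds on generalized eigenfunctions weighted by the spectral measure yield, via Plancherel or Schur-type arguments, $L^2$-based (weighted $L^2$, local energy) conclusions; they do not give boundedness of $\mathcal{W}$ on $L^q$ for $q\neq 2$, let alone on fractional Sobolev spaces over $\Hp^4$. Even in the Euclidean setting, $L^p$-boundedness of wave operators is a deep theorem requiring much more than eigenfunction asymptotics, and it can fail for operators with inverse-square-type behavior, which is exactly the structure of $\LL_0$ and $\LL_V$ near $r=0$. An additional obstruction you do not address: the unitary identification \eqref{4 to 1} between radial $L^2(\Hp^4)$ and $L^2(0,\infty)$ is an isometry only for $q=2$, so any half-line mapping property of $\mathcal{W}$ would still have to be converted into bounds on $W^{s,q}(\Hp^4)$ with the weight $\sinh^3 r$, a nontrivial step in its own right. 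As written, the ``transference'' step is therefore unsupported, and it is precisely the hardest part of your plan.

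The paper avoids this entirely by keeping the perturbation theory $L^2$-based: after a $TT^*$ reduction to $F=0$ and rewriting the flow in first-order form with $A=\sqrt{-\Delta_\g-2}$, it uses Duhamel together with the Christ--Kiselev lemma and a factorization $V=V_1V_2$ (each factor decaying like $e^{-r}$) to reduce Proposition~\ref{strich} to two local energy estimates, \eqref{free loc en} for the free evolution and \eqref{pert loc en} for the perturbed one. These are exactly the kind of statements your pointwise bounds on $\phi_0(r;\xi)$, $\phi(r;\xi)$ and on the spectral measures (i.e.\ \eqref{free spec} and \eqref{spec}, where the non-resonance hypothesis (B) controls the threshold $\xi\to 0$) do prove, via the Plancherel theorem for the distorted transform. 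If you replace your wave-operator transference step by this local-smoothing plus Christ--Kiselev scheme, the rest of your outline aligns with the paper's proof; without that replacement, or without an actual proof of $W^{s,q}$-boundedness of $\mathcal{W}$ for the full family of hyperbolic-admissible exponents, the proposal does not establish \eqref{strich est}.
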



\begin{proof}[Proof of Proposition~\ref{strich}]

The proof  roughly follows the approach in~\cite[Section~$5$]{LS}, which in turn is a variant of an argument in~\cite{RodS}. Note that by the standard $TT^*$ argument and Minkowski's inequality it suffices to consider the case $F=0$. 

The argument hinges on certain estimates related to the distorted Fourier transforms relative to the self-adjoint operators $H_0:=-\Delta_\g -2$ and $H_V=-\Delta_\g - 2+ V$ on the domain
$\calD: = H^2( \Hp^4)$, restricted to radial functions. First though, we can reduce the proof of Proposition~\eqref{strich} to a pair of local energy estimates, in particular~\eqref{free loc en} and~\eqref{pert loc en} below.

Indeed,   
define the operator
 \ant{
 A:= \sqrt{-\Delta_\g - 2},
 }
and note that
\EQ{ \label{A norm}
\| A f\|_{L^2(\Hp^4)}  \simeq \| f\|_{H^1(\Hp^4)}.
}
For any real valued $ \vec u = (u_0, u_1) \in H^1 \times L^2(\Hp^4)$ we set
\ant{
w:= A u_0 + i u_1
}
Then~\eqref{A norm} implies that $\| w\|_{L^2} \simeq \|\vec u\|_{H^1 \times L^2}$. Moreover, $\vec u(t)$ solves~\eqref{lin wave V} if and only if
\EQ{
i \p_t w = A w + V u,\\
w(0) = Au_0 + iu_1.
}
The Duhamel formula then gives us
\ant{
w(t) = e^{-itA} w(0) - i \int_0^t e^{-i(t-s)A} V u(s) \, ds
}
and by~\eqref{A norm} we need to show that
\ant{
\|P w\|_X \le C\|w(0)\|_{L^2}
}
where $P:= A^{-1} \re $, and  $X := L^{p}_tW_x^{1- \s, q}$ is any Strichartz norm, i.e., $(p, q, \s)$ is hyperbolic admissible. By Proposition~\ref{free strich}, we have
\ant{
\|Pe^{-itA} w(0)\|_X \le C\|w(0)\|_{L^2}.
}
By the Christ-Kislelev lemma, see~\cite{CK01, Sogge}, it then suffices to show
\EQ{\label{CK}
\left\| P \int_{- \infty}^{\infty} e^{-i(t-s)A} V u(s) \, ds\right\|_X \lesssim \| w(0)\|_{L^2} \simeq \| \vec u(0)\|_{H^1 \times L^2}.
}
To prove~\eqref{CK} we factor the potential $V(r) = V_1(r) V_2(r)$ so that each factor decays like $e^{-r}$ as $r \to \infty$. Then,
 \EQ{ \label{reduce}
 &\left\| P \int_{- \infty}^{\infty} e^{-i(t-s)A} V u(s) \, ds\right\|_X \le \|K\|_{L^{2}_{t, x} \to X}    \|V_2 u\|_{L^2_{t, x}} \\
 & (Kf)(t):= P\int_{- \infty}^{\infty} e^{-i(t-s)A} V_1 f(s) \, ds
}
Next, note that for each $f \in L^2_{t, x}(\R \times \Hp^4)$ we have
\ant{
\|Kf\|_{X}
\le    \|P e^{-i tA}\|_{L^{2}_{x}  \to X} 
		\left\| \int_{- \infty}^{\infty} e^{isA} V_1 f(s) \, ds\right\|_{L^2_x}
}
The first factor on the right-hand-side above is bounded by a fixed constant by Proposition~\ref{free strich}. We claim that the second factor is bounded by $C \|f\|_{L^2_{t, x}}$. By duality, this is equivalent to the localized energy bound:
\EQ{\label{free loc en}
\|V_1 e^{-itA} \fy\|_{L^2_{t, x}} \le C \| \fy\|_{L^2_x}, \quad \forall \fy\in L^2(\Hp^4)
}
Therefore,  by~\eqref{reduce} we have reduced the proof of Proposition~\ref{strich} to proving~\eqref{free loc en} in addition to a local energy estimate for the perturbed evolution, namely
\EQ{\label{pert loc en}
   \|V_2 u\|_{L^2_{t, x}}  \le C \| \vec u(0) \|_{ H^1 \times L^2} \simeq C \| w(0)\|_{L^2}
}
To prove~\eqref{free loc en} and ~\eqref{pert loc en} we will need to develop some machinery. First we pass to an equation on the half-line by conjugating  by $\sinh^{\frac{3}{2}} r$.  Indeed,  the map
\EQ{ \label{4d to 1d}
L^2(\Hp^4) \ni \fy\mapsto \sinh^{\frac{3}{2}} r\, \fy =: \phi \in L^2(0, \infty)
}
is an isomorphism of $L^2(\Hp^4)$, restricted to radial functions, onto $L^2([0, \infty))$. If we define $\LL_0, \LL_V$ by
\EQ{
&\LL_0 := - \p_{rr} + \frac{1}{4} + \frac{3}{4 \sinh^2 r},\\
&\LL_V := - \p_{rr} + \frac{1}{4} + \frac{3}{4 \sinh^2 r} + V(r),
}
we have
\EQ{ \label{conj eq}
&(H_0 \fy)(r) = \sinh^{-\frac{3}{2}}r (\LL_0  \phi)(r),\\
&(H_V \fy)(r) = \sinh^{-\frac{3}{2}} r(\LL_V \phi)(r).
}
We claim that we can associate with $\LL_0$ and $\LL_V$ distorted Fourier bases, $\phi_0(r; \xi)$, respectively $\phi(r; \xi)$, that satisfy
\begin{align}
&\LL_0 \phi_0(r; \xi) = (\frac{1}{4} + \xi^2) \phi_0(r; \xi), \quad \phi_0(r; \xi)  \in L^2([0,  b]), \, \, \forall\, \, 0<b< \infty \label{fb 0}\\
&\LL_V \phi(r; \xi) = (\frac{1}{4} + \xi^2) \phi(r; \xi), \quad \phi(r; \xi)  \in L^2([0,  b]), \, \, \forall \, \, 0<b< \infty \label{fb V}.
\end{align}

To prove~\eqref{free loc en} we will need the following additional information about $\phi_0(r; \xi)$.  For all $g \in L^2([0, \infty))$,  set
\EQ{\label{free ft}
\ti g(\xi) := \int_0^{\infty} \phi_0(r; \xi) g(r) \, dr.
}
We can find a positive measure $\rho_0(d \xi) = \om_0( \xi) \, d\xi$ so that
\begin{align}\label{free f inv}
&g(r) = \int_0^{\infty} \phi_0(r; \xi) \, \ti g(\xi) \, \rho_0(d \xi),\\
 & \|g\|_{L^2(0, \infty)}^2 = \int_0^{\infty} \abs{ \ti g( \xi)}^2 \,  \rho_0( d \xi) \label{free planch},\\
 & \sup_{r>0, \xi>0}   \frac{\abs{ \phi_0(r; \xi)}^2}{(r+1)^2}\frac{\ang{\xi}}{\xi} \om_0(\xi) \le C < \infty  \label{free spec},
 \end{align}
 where we are using  the notation $\ang{\xi} := \sqrt{\xi^2 + \frac{1}{4}}$.
We remark here that the distorted Fourier basis~\eqref{fb 0} is explicit and is obtained by  setting  $\phi_0(r, \xi):= \sinh^{\frac{3}{2}} r \Phi_{\xi}(r)$ where the $\Phi_{\xi}$ are the hyperbolic spherical functions in $\Hp^4$. The spectral measure $\rho_0(d \xi):=\abs{c(\xi)}^{-2} d\xi$  where $c(\xi)$ is the Harish-Chandra $c$-function. Then~\eqref{free f inv} and   ~\eqref{free planch} follow from the corresponding facts about the Helgason-Fourier transform on $L^2(\Hp^4)$. The estimates~\eqref{free spec} will follow  from well known estimates for $\Phi_{\xi}$ and $c(\xi)$ and we will sketch the proof in Lemma~\ref{dis ft L0} below.

With~\eqref{free f inv}--\eqref{free spec} we can easily prove~\eqref{free loc en}. First, note that after passing to the half-line formulation via conjugation by $\sinh^{\frac{3}{2}}r$,~\eqref{free loc en} is equivalent to proving
\EQ{
\|V_1 e^{-it \sqrt{\LL_0}} g\|_{L^2_{t, x}(\R \times (0, \infty))} \lesssim \| g\|_{L^2(0, \infty)},
}
for $g \in L^2(0, \infty)$. As $\LL_0$ is given by multiplication by $\ang{\xi}$ on the Fourier side, the above, using~\eqref{free f inv}, reduces to showing that
\EQ{\label{free loc en g}
\int_{-\infty}^{\infty} \left\| V_1 \int_0^{\infty} e^{-it \ang{\xi}} \phi_0(r; \xi) \ti g(\xi) \rho_0(d \xi) \right\|^2_{L^2(0, \infty)} \, dt \lesssim \| g\|_{L^2(0, \infty)}^2.
}
Expanding and carrying out the $t$-integration, the left-hand-side becomes
\EQ{
&\int_0^{\infty} V_1^2(r) \int_0^{\infty} \int_0^{\infty} \phi_0(r; \xi) \phi_0(r; \mu)  \ti g( \xi) \ba{\ti g(\mu)}   \de( \ang{\xi}- \ang{ \mu}) \om_0( \xi) d \xi\,  \om_0( \mu) d \mu \, dr\\
&=\int_0^{\infty} V_1^2(r) \int_0^{\infty}   \phi_0^2(r ;\xi)  \abs{ \ti g ( \xi)}^2 \, \om_0^2( \xi)  \frac{ \ang{\xi}}{ \xi} \, d \xi \, dr
}
Using the estimates~\eqref{free spec}, the exponential decay of $V_1(r)$, and~\eqref{free planch}, we can bound the above as follows:
\begin{multline*}
\int_0^{\infty} V_1^2(r) \int_0^{\infty}   \phi_0^2(r ;\xi)  \abs{ \ti g ( \xi)}^2 \, \om_0^2( \xi)  \frac{ \ang{\xi}}{ \xi} \, d \xi \, dr \lesssim\\ \int_0^\infty V_1(r) (r+1)^2 \, dr \int_0^{\infty} \abs{ \ti g ( \xi)}^2 \, \om_0( \xi)  \, d \xi
 \lesssim \int_0^{\infty} \abs{ \ti g ( \xi)}^2 \, \om_0( \xi)  \, d \xi  = \|g\|_{L^2(0, \infty)},
\end{multline*}
which proves~\eqref{free loc en g} and hence~\eqref{free loc en}.

The key point here  is that we can establish the analogs of~\eqref{free ft}--\eqref{free spec}  for the perturbed operator $\LL_V$ and $\phi( r; \xi)$ as in~\eqref{fb V}.  In particular, for $f \in L^2(0, \infty)$ we set
\EQ{\label{dist ft}
\hat f(\xi) := \int_0^{\infty} \phi(r; \xi) f(r) \, dr.
}
We can find a positive measure $\rho(d \xi) = \om( \xi) \, d\xi$ so that
\begin{align}\label{f inv}
&f(r) = \int_0^{\infty} \phi(r; \xi) \, \hat f(\xi) \, \rho(d \xi),\\
 & \|f\|_{L^2(0, \infty)}^2 = \int_0^{\infty} \abs{ \hat f( \xi)}^2 \,  \rho( d \xi) \label{planch},\\
 &
 \| \sqrt{\LL_V} f\|_{L^2( 0, \infty)}= \int_0^{\infty} \ang{\xi}^2 \abs{ \hat f( \xi)}^2 \,  \rho( d \xi)  \label{H1 dis},\\
 & \sup_{r>0, \xi>0}   \frac{\abs{ \phi(r; \xi)}^2}{(r+1)^3}\frac{\ang{\xi}}{\xi} \om(\xi) \le C < \infty  \label{spec}.
 \end{align}
The proof of~\eqref{spec} is where  the spectral information for $H_V$ plays a crucial role. In particular, the fact that the spectrum of $H_V$ is supported on $(1/4, \infty)$ and that the threshold energy $1/4$ is not a resonance allows us to establish the same rate of decay for $\om(\xi)$ as for the free spectral measure $\om_0(\xi)$ as $\xi \to 0+$.  We postpone the proof of~\eqref{fb V}, \eqref{dist ft}--\eqref{spec} to Lemma~\ref{dis ft} below, and first use these estimates to prove~\eqref{pert loc en}.

In light of~\eqref{fb V},  \eqref{f inv}, and~\eqref{A norm}, conjugating by $\sinh^{\frac{3}{2}} r$ reduces the local energy estimate~\eqref{pert loc en}  to showing that
\begin{multline}\label{pert reduce}
\int_{-\infty}^{\infty} \int_0^{\infty}  \abs{ V_2(r) \int_0^{\infty}  \phi(r; \xi)  \left( \cos(t \ang{\xi}) \hat f(\xi) + \frac{\sin(t \ang{\xi})}{ \ang{\xi}} \hat g( \xi) \right) \rho_0(d \xi)  }^2\, dt \\\lesssim \| (\sqrt{\LL_V}f, g)\|_{L^2 \times L^2(0, \infty)}^2.
\end{multline}
For simplicity, we first consider the case $g=0$ above. After expanding and carrying out the $t$-integration on the left-hand-side as before, one obtains
 \begin{multline*}
 \int_0^{\infty} V_2^2(r) \int_0^{\infty}  \phi^2(r; \xi) \abs{\hat f(\xi)}^2 \, \om^2(\xi) \, \frac{\ang{\xi}}{\xi} \,  d \xi \, dr  \lesssim\\
  \lesssim \sup_{r >0, \xi >0}   \left[\frac{\abs{ \phi(r; \xi)}^2}{(r+1)^3}\frac{1}{\xi \ang{\xi}} \om(\xi) \right] \int_0^{\infty} V_2^2(r) (r+1)^3 \,  dr \int_0^{\infty} \ang{\xi}^2 \, \abs{\hat f( \xi)}^2 \,\om( \xi) \, d\xi \\
  \lesssim \int_0^{\infty} \ang{\xi}^2 \, \abs{\hat f( \xi)}^2 \,\om( \xi) \, d\xi   =  \| \sqrt{\LL_V} f\|_{L^2( 0, \infty)} 
 \end{multline*}
  The case $f = 0$ is similar. This proves~\eqref{pert reduce} and hence~\eqref{pert loc en}. This finishes the proof of Proposition~\ref{strich} pending the proofs of the technical statements regarding the distorted Fourier transform in Lemma~\ref{dis ft L0} and Lemma~\ref{dis ft} below.
\end{proof}

\subsection{Distorted Fourier Transform for $\LL_0$ and $\LL_V$} In this subsection we establish the technical statements regarding the distorted Fourier transforms for $\LL_0$ and $\LL_V$.

We begin with the free case, $\LL_0$, which will follow from well known estimates for the Helgason-Fourier transform in $\Hp^4$ restricted to radial functions.

\begin{lem}\label{dis ft L0} The half-line operator $\LL_0$ admits a distorted Fourier basis satisfying~\eqref{fb 0},~\eqref{free ft}--\eqref{free spec},

\end{lem}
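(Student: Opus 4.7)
The plan is to identify $\phi_0(r;\xi)$, up to a multiplicative constant, with $\sinh^{3/2} r \, \Phi_\xi(r)$, where $\Phi_\xi$ is the standard radial spherical function on $\bbH^4$, normalized by $\Phi_\xi(0)=1$ and satisfying $(-\Delta_\g - \tfrac{9}{4})\Phi_\xi = \xi^2 \Phi_\xi$. The conjugation identity~\eqref{conj eq} then immediately produces~\eqref{fb 0}, and smoothness of $\Phi_\xi$ at the origin together with $\Phi_\xi(0)=1$ yields the behavior $\phi_0(r;\xi) \sim r^{3/2}$ as $r \to 0^+$, so that $\phi_0(\cdot;\xi) \in L^2([0,b))$ for every $b>0$. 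For the inversion~\eqref{free f inv} and Plancherel~\eqref{free planch} identities, I would invoke the classical Helgason--Fourier theory on rank-one noncompact symmetric spaces: restricted to radial functions on $\bbH^4$, the map $\varphi \mapsto \int_0^\infty \Phi_\xi(r)\varphi(r)\sinh^3 r\, dr$ is a unitary isomorphism from $L^2_{\mathrm{rad}}(\bbH^4)$ onto $L^2((0,\infty),\,|c(\xi)|^{-2}d\xi)$, where $c(\xi) = C_0\, \Gamma(i\xi)/\Gamma(i\xi + 3/2)$ is the Harish--Chandra $c$-function. Pulling this back through the conjugation~\eqref{4d to 1d} produces the half-line statements with spectral density $\omega_0(\xi) = |c(\xi)|^{-2}$.

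The remaining content is the pointwise bound~\eqref{free spec}. Via the classical identities $|\Gamma(i\xi)|^2 = \pi/(\xi \sinh\pi\xi)$ and $|\Gamma(i\xi + 3/2)|^2 = \pi(\xi^2 + \tfrac{1}{4})/\cosh \pi\xi$, one obtains the closed form
\[
 \omega_0(\xi) = C_1 \, \xi\bigl(\xi^2 + \tfrac{1}{4}\bigr) \tanh(\pi\xi),
\]
so $\omega_0(\xi) \sim \xi^2$ as $\xi \to 0^+$, $\omega_0(\xi) \sim \xi^3$ as $\xi \to \infty$, and correspondingly $\omega_0(\xi)\langle\xi\rangle/\xi \sim \xi$ near $0$ and $\sim \xi^3$ at infinity. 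The bounds on $\Phi_\xi$ would be obtained from two complementary sources: for $|\xi| \geq 1$ the Harish--Chandra estimate $|\Phi_\xi(r)| \lesssim |c(\xi)|(1+r)e^{-3r/2}$ combined with $|c(\xi)|^2 \lesssim \xi^{-3}$ gives $|\phi_0(r;\xi)|^2 \lesssim \xi^{-3}(1+r)^2$; and for $|\xi|\leq 1$, the leading-order Harish--Chandra expansion
\[
 \Phi_\xi(r) = c(\xi) e^{(i\xi - 3/2)r} + c(-\xi)e^{(-i\xi - 3/2)r} + O\bigl(e^{-(3/2+\veps)r}\bigr),
\]
combined with the pole $c(\xi) \sim C/(i\xi)$ at $\xi=0$ and the resulting cancellation of the two leading exponentials, yields $|\phi_0(r;\xi)|^2 \lesssim \min\bigl((1+r)^2,\, \xi^{-2}\bigr)$. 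A short case analysis splitting the $(r,\xi)$-plane into the regimes $\xi r \lesssim 1$ and $\xi r \gtrsim 1$ (and also $\xi \lesssim 1$ versus $\xi \gtrsim 1$) will then verify~\eqref{free spec} in each regime.

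The hard part will be the threshold regime $\xi \to 0^+$: the simple pole of $c(\xi)$ causes the naive ``plane-wave'' bound $|\phi_0(r;\xi)|^2 \lesssim |c(\xi)|^2$ to blow up, and the argument must exploit the cancellation at leading order in the Harish--Chandra expansion---equivalent, on the spectral side, to the absence of a threshold resonance for $\LL_0$---to recover the sharp $(1+r)^2$ bound valid for $\xi r \lesssim 1$. The vanishing $\omega_0(\xi) \sim \xi^2$ of the spectral measure at the threshold is precisely what is needed to compensate this remaining growth and produce~\eqref{free spec} uniformly.
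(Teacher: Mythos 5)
Your proposal follows the same overall strategy as the paper: identify $\phi_0(r;\xi)$ with $\sinh^{3/2}r\,\Phi_\xi(r)$, pull back the Helgason--Fourier inversion and Plancherel theorems through the conjugation \eqref{4d to 1d} to get \eqref{free f inv}--\eqref{free planch} with $\omega_0=|c(\xi)|^{-2}$, and then verify \eqref{free spec} by a case analysis combining spherical function estimates with $c$-function bounds; your explicit closed form $\omega_0(\xi)=C_1\xi(\xi^2+\tfrac14)\tanh(\pi\xi)$ is correct and in fact sharper than the one-sided bound \eqref{c est} that the paper uses. Two points of divergence are worth flagging. First, your quoted large-$\xi$ estimate $|\Phi_\xi(r)|\lesssim|c(\xi)|(1+r)e^{-3r/2}$ cannot hold uniformly down to $r=0$: there $\Phi_\xi(0)=1$ while $|c(\xi)|\sim\xi^{-3/2}\to0$, so in the regime $r\lesssim\xi^{-1}$ you must instead use the oscillatory bound $|\Phi_\xi(r)|\lesssim(1+\xi r)^{-3/2}$ (the paper's \eqref{small r}--\eqref{m2}), or even just $|\Phi_\xi|\le\Phi_0\le 1$ together with $\sinh^{3/2}r\lesssim\xi^{-3/2}$ there; with the $\xi r\lesssim1$ versus $\xi r\gtrsim1$ splitting you already plan, this is a trivial repair and your asserted conclusion $|\phi_0(r;\xi)|^2\lesssim\xi^{-3}(1+r)^2$ is correct. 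Second, your treatment of the threshold regime is more elaborate than needed: because \eqref{free spec} carries the weight $(r+1)^{-2}$ and $\omega_0(\xi)\simeq\xi^2$ as $\xi\to0$, the crude $\xi$-uniform bound $|\Phi_\xi(r)|\le\Phi_0(r)\lesssim e^{-3r/2}(1+r)$ (the paper's \eqref{Phi0}, giving \eqref{phi0 bound}) already closes the small-$\xi$ case with no cancellation at all, which is exactly what the paper does. Your route through the Harish--Chandra expansion and the pole of $c(\xi)$ does yield the sharper bound $|\phi_0|^2\lesssim\min((1+r)^2,\xi^{-2})$, but it obliges you to justify that the remainder in the expansion is uniform as $\xi\to0$ (the boundedness of $m_1$ in \eqref{big r}), a technical burden the simpler argument avoids.
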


\begin{proof}
As we remarked earlier, the distorted Fourier basis associated to $\LL_0$ is explicit and is given by
\begin{multline} \label{phi0 def}
\phi_0(r; \xi):= \sinh^{\frac{3}{2}}r\,\Phi_{\xi}(r)\\ = C \sinh^{\frac{3}{2}}r\int_0^{\pi}( \cosh r - \sinh r \cos  \te)^{-i \xi - \frac{3}{2}}  \sin^2 \te \, d \te,
\end{multline}
 where here $\Phi_{\xi}(r)$ are the elementary spherical functions and serve as the Helgason-Fourier basis for $\Delta_{\Hp^4}$. Indeed, the elementary spherical functions  $\Phi_{\xi}(r)$ in $\Hp^4$ satisfy
\ant{
H_0 \Phi_{\xi}(r) =  (\xi^2 + 1/4) \Phi_{\xi}(r).
}
For radial functions $G \in L^2(\Hp^4)$ we can define the Helgason-Fourier transform by
\ant{
\ti{G}( \xi)  =  \int_0^{\infty} \Phi_{\xi}(r)G(r) \sinh^3 r \, dr.
}
The associated inversion formula is
\EQ{ \label{Hf inv}
G(r)  = C \int_0^{\infty}\Phi_{\xi}(r) \, \ti{G}(\xi) \abs{c(\xi)}^{-2} \, d \xi,
}
where $C$ is a normalizing constant and  $c(\xi)$ is the Harish-Chandra $c$-function
\ant{
c(\xi) = \frac{ 4\Gamma(i \xi)}{\pi^{1/2}\Gamma( \frac{3}{2} + i \xi)}.
}
The Plancherel theorem also holds. In particular, the Helgason-Fourier transform extends to an isometry of radial functions $L^2_{\textrm{rad}}( \Hp^4) \to L^2( \R_+, \abs{c(\xi)}^{-2} \, d \xi)$ with
\EQ{ \label{Hf planch}
\int_0^{\infty} f_1(r) \ba{f_2(r)} \, \sinh^3 r \, dr  = C \int_0^{\infty}  \ti f_1( \xi) \ba{ \ti f_2( \xi)} \,  \abs{c( \xi)}^{-2} \, d \xi.
}
For the estimates, it follows from the definition that  $\abs{c( \xi)}^{-2}$ satisfies the bound
\EQ{\label{c est}
\abs{c(\xi)}^{-2} \lesssim  \abs{\xi}^2 (1+ \abs{ \xi}).
}
For the spherical functions $\Phi_{\xi}(r)$ we separate two cases and we refer the reader to \cite{Bray, IS, AP} for more details. We will follow the notation of \cite{IS}.  For $r \ge r_0 >0$ with $r_0<1$ fixed, we can write
\EQ{\label{big r}
\Phi_{\xi}(r) = e^{- \frac{3}{2} r}  ( e^{ir \xi} c( \xi) m_1(r, \xi) + e^{-ir \xi} c( - \xi) m_1(r, - \xi))
}
where the function $\abs{m_1(r, \xi)}$ is uniformly bounded in $r$ and $\xi$. For small $r$, say $r \le 1$,  we can write
\EQ{ \label{small r}
\Phi_\xi(r) = e^{i r \xi} m_2(r, \xi) + e^{-ir \xi} m_2(r, - \xi)
}
where $m_2(r, \xi)$ satisfies
\EQ{\label{m2}
\abs{m_2(r, \xi)} \lesssim (1+ r \abs{\xi})^{- \frac{3}{2}}
}
Finally, we recall the uniform estimate
\EQ{\label{Phi0}
\sup_{\xi \ge 0} \abs{\Phi_\xi(r)} \le  \Phi_0(r) \lesssim e^{-\frac{3}{2} r}(1+ r)
}
which can be proved directly from~\eqref{phi0 def}.

We can now transfer these results to $\phi_0(r; \xi)$ and define the distorted Fourier transform relative to $\LL_0$. First note that since
\ant{
\sinh^{\frac{3}{2}}r(H_0 \Phi_{\xi})(r) =  (\LL_0  \phi_0( \cdot; \xi))(r),
}
 we have
\ant{
  \LL_0 \phi_0(r;  \xi) = ( \xi^2 + 1/4) \phi_0(r; \xi)
  }
For $g \in L^2(0, \infty)$ we can then define
\ant{
 \ti g( \xi) = \int_0^{\infty}  \phi_0(r; \xi) g(r) \, dr
 }
It then follows from~\eqref{Hf inv} and the isometry $$L^2_{\textrm{rad}}( \Hp^4) \ni G \mapsto  \sinh^{\frac{3}{2}} r \,  G =:g \in L^2(0, \infty)$$ that the following inversion formula holds
\ant{
g(r) =  C \int_0^{\infty}  \phi_0(r; \xi)  \ti g( \xi)  \abs{c( \xi)}^{-2} \, d \xi
}
Hence we can define the spectral measure $\rho_0( d \xi)= \om_0( \xi) \, d \xi := C \abs{c( \xi)}^{-2} \, d \xi$, which proves~\eqref{free f inv}. Plancherel's theorem~\eqref{free planch} follows directly from~\eqref{Hf planch}.

The estimates~\eqref{free spec} now follow easily as well. Recall that  $$\phi_0(r; \xi) = \sinh^{3/2}r\Phi_\xi(r).$$ For $r \ge r_0 >0$, with $r_0<1$ fixed,~\eqref{big r} implies
\EQ{\label{phi0 big r}
\abs{\phi_0(r; \xi)}^2 \lesssim \abs{c( \xi)}^2, \mfor r \ge r_0 >0.
}
For $r \le 1$,~\eqref{small r} and~\eqref{m2} translate to
\EQ{\label{phi0 small r}
\abs{ \phi_0(r, \xi)}^2 \lesssim r^3(1+ r \abs{\xi})^{-3} \mfor r \le 1
}
We also note that~\eqref{Phi0} allows us to deduce that
\EQ{ \label{phi0 bound}
\sup_{\xi \ge 0} \abs{\phi_0(r, \xi)} \lesssim 1+r
}
Therefore, using~\eqref{phi0 big r},~\eqref{phi0 small r}, and~\eqref{c est} for $\xi \ge \xi_0>0$, we have
\ant{
\frac{\abs{ \phi_0(r; \xi)}^2}{(r+1)^2}\frac{\ang{\xi}}{\xi} \abs{c(\xi)}^{-2} \lesssim  \begin{cases}  \frac{1}{(1+r)^2}  \le C\mif r \ge r_0>0 \\  \frac{r^3}{(1+r)^2} \frac{\abs{\xi}^3}{(1+ r \abs{\xi})^3} \le C \mif r \le 1\end{cases}
}
For $\xi \le  \xi_0 \le 1$  the uniform estimate~\eqref{phi0 bound} as well as~\eqref{c est} imply
\ant{
\frac{\abs{ \phi_0(r; \xi)}^2}{(r+1)^2}\frac{\ang{\xi}}{\xi} \abs{c(\xi)}^{-2} \lesssim  \frac{\abs{ \phi_0(r; \xi)}^2}{(r+1)^2}\ang{\xi}^2 \abs{\xi} \lesssim \abs{\xi}  \lesssim 1
}
This completes the proof of Lemma~\ref{dis ft L0}.
\end{proof}

We now extend these results to $\LL_V$ via a perturbation argument. The key point that allows the analysis to go through is that the threshold $\frac{1}{4}$ is neither a resonance nor an eigenvalue for $\LL_V$.  The Weyl-Titchmarsh theory for half-line operators with a singular potential used below is standard and can be found in \cite[Section $3$]{GZ}.

\begin{lem}\label{dis ft} The half-line operator $\LL_V$ admits a distorted Fourier basis satisfying~\eqref{fb V}, as well as~\eqref{dist ft}--~\eqref{spec}.
\end{lem}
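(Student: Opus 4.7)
The plan is to adapt the Weyl--Titchmarsh framework as in \cite{LS, RodS, GZ} to the operator $\LL_V$, treating it as a short range perturbation of $\LL_0$.  The operator $\LL_V$ is in the limit point case at $r=0$ (because of the singular term $\frac{3}{4\sinh^2 r}\sim \frac{3}{4r^2}$ and the boundedness of $V$) and at $r=\infty$ (because $V$ and $\frac{3}{4\sinh^2 r}$ both decay like $e^{-2r}$).  First, by the argument in Lemma~\ref{LL ev 0}, for each $\xi\in\R$ there exists a unique (up to scalar) solution of $\LL_V\phi(\cdot;\xi)=(\tfrac14+\xi^2)\phi(\cdot;\xi)$ which is $L^2$ near $r=0$; we normalize it by $\phi(r;\xi)=r^{3/2}+o(r^{3/2})$ as $r\to 0^+$.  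This defines the distorted Fourier basis satisfying \eqref{fb V}.

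Next, at $r=\infty$ I would construct Jost solutions $\theta_\pm(r;\xi)$ of the same eigenvalue equation, satisfying $\theta_\pm(r;\xi)=e^{\pm ir\xi}+o(1)$ as $r\to\infty$, via a Volterra integral equation with kernel built from $\frac{3}{4\sinh^2 r}+V(r)$; exponential decay of this kernel and a standard iteration give existence, as well as uniform bounds on $\theta_\pm$ comparable to their free counterparts.  Because the Wronskian is constant, one can write
\begin{equation*}
\phi(r;\xi)=a(\xi)\,\theta_+(r;\xi)+\overline{a(\xi)}\,\theta_-(r;\xi),\qquad a(\xi)=-\frac{W[\phi(\cdot;\xi),\theta_-(\cdot;\xi)]}{2i\xi}.
\end{equation*}
The spectral measure is then defined by $\rho(d\xi)=\omega(\xi)\,d\xi$ with $\omega(\xi)=C\,\xi/|a(\xi)|^2$, and the inversion formula \eqref{f inv}, Plancherel \eqref{planch}, and the $H^1$ identity \eqref{H1 dis} follow from the general spectral theorem for self-adjoint half-line operators in the limit point case (\cite[Section 3]{GZ}), together with the fact (from hypothesis $(B)$) that $\sgm(\LL_V)=[\tfrac14,\infty)$ is purely absolutely continuous so no point masses appear in $\rho$.

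The main obstacle, and the step where hypothesis $(B)$ is essential, is the pointwise bound \eqref{spec}.  Away from $\xi=0$, the estimate reduces to two pieces: $\xi\to\infty$ is handled by observing that perturbation from $\LL_0$ is lower order, so $a(\xi)\simeq a_0(\xi)$ uniformly for $\xi$ large, yielding $\omega(\xi)\aeq\xi^3$ and matching \eqref{spec} by the same argument as in the free case; for $r$ in a fixed bounded interval the behavior of $\phi(r;\xi)\sim r^{3/2}$ as $r\to 0$ (with uniform remainder in $\xi$, proved by Volterra iteration against the free fundamental pair $\{r^{3/2},r^{-1/2}\}$) gives a pointwise $|\phi(r;\xi)|^2\lesssim r^{3}\langle r\xi\rangle^{-3}$ bound, exactly as in \eqref{phi0 small r}.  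The delicate low frequency behavior $\xi\to 0^+$ requires that $|a(\xi)|$ remain bounded below.  This is where the hypotheses ``no eigenvalue at $\tfrac14$'' and ``no threshold resonance at $\tfrac14$'' enter: at $\xi=0$ the Jost solutions degenerate into $\theta_+(r;0)\to 1$, $\theta_-(r;0)\to r$ (up to $O(re^{-2r})$ corrections, cf.\ Lemma~\ref{LL ev 1}), so $\phi(r;0)=a(0)+\overline{a(0)}\cdot r+\text{(lower order at infinity)}$; vanishing of the coefficient of $r$ would make $\phi(\cdot;0)$ a threshold resonance, while $a(0)=0$ would produce a decaying solution, hence an eigenfunction at $\tfrac14$.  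Both are excluded by $(B)$, so $|a(\xi)|\aeq 1$ near $\xi=0$ by continuity, giving $\omega(\xi)\aeq\xi$.  Combining with the uniform bounds on $\theta_\pm$ and $\phi$ then yields \eqref{spec}, where the slight relaxation from $(r+1)^2$ to $(r+1)^3$ accommodates quantitative losses in the Volterra estimates at the transition $r\sim 1$.
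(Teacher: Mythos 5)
Your overall architecture is the same as the paper's: regular solution $\phi(r;\xi)=r^{3/2}+o(r^{3/2})$ at $r=0$, Jost-type solutions at $r=\infty$ built by Volterra iteration, the Weyl--Titchmarsh/Wronskian formalism of \cite{GZ} for the spectral measure, hypothesis $(B)$ entering at the threshold, and a high-frequency comparison with the free problem. The genuine gap is in the low-frequency step, which is the heart of the lemma, and it is a gap in exactly the quantity the lemma is about: the power of $\xi$ in $\omega(\xi)$ near the threshold. With your normalization of $\phi$ ($\xi$-independent at $r=0$) and your definition $a(\xi)=-W[\phi,\theta_-]/(2i\xi)$, the spectral density is $\omega(\xi)=2\xi\,\Im m(\xi+i0)=2\xi^{2}/\abs{W[\theta_+(\cdot;\xi),\phi(\cdot;\xi)]}^{2}\simeq 1/\abs{a(\xi)}^{2}$, with no extra factor of $\xi$; your formula $\omega(\xi)=C\xi/\abs{a(\xi)}^{2}$ is off by a factor of $\xi$. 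More importantly, your claim that $(B)$ forces $\abs{a(\xi)}\aeq 1$ as $\xi\to 0$ is backwards: absence of a threshold eigenvalue or resonance means the zero-energy regular solution grows linearly at infinity ($b_1\neq 0$ in the notation of Claim~\ref{LLV 14}), so the Wronskian $W[\phi,\theta_\pm]$ tends to a \emph{nonzero} constant and your $a(\xi)$, which carries the $1/(2i\xi)$, blows up like $\xi^{-1}$; boundedness of your $a(\xi)$ at $\xi=0$ is what would happen in the resonant case you are trying to exclude. The two slips do not cancel: your stated conclusion $\omega(\xi)\aeq\xi$ is wrong (and is even inconsistent with your own formulas, which would give $\xi^{3}$), while the correct behavior, proved in \eqref{om small xi} and matching the free measure $\omega_0(\xi)=C\abs{c(\xi)}^{-2}\simeq\xi^{2}$, is $\omega(\xi)\simeq\xi^{2}$ as $\xi\to 0$.

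Because \eqref{spec} is precisely a quantitative balance between $\omega(\xi)\ang{\xi}/\xi$ and the growth of $\abs{\phi(r;\xi)}$ in $r$ (for small $\xi$ the amplitude of $\phi(r;\xi)$ is as large as $\xi^{-1}$ once $r\gtrsim \xi^{-1/3}$), this bookkeeping cannot be replaced by ``by continuity.'' To extract the Wronskian lower bound one needs uniform-in-$\xi$ control of $\phi(r;\xi)$ on a region that overlaps the regime where the Jost solution is an accurate approximation; the paper does this by solving a (non-Volterra) integral equation for $\phi(r;\xi)$ on $0\le r\le \eps\xi^{-1/3}$ by a contraction argument and then evaluating $W[\tilde\psi(\cdot;\xi),\phi(\cdot;\xi)]$ at the intermediate point $r=\xi^{-1/6}$, which is where $b_1\neq 0$ (i.e.\ hypothesis $(B)$) is used. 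With that matching step in place one gets $\Im m(\xi)\simeq\xi$, hence $\omega(\xi)\simeq\xi^{2}$, plus the bound $\sup_{r\ge\eps\xi^{-1/3}}\xi\abs{\phi(r;\xi)}=O(1)$, and only then does \eqref{spec} follow for small $\xi$. Your high-frequency sketch (comparison with the Euclidean Bessel problem, $\omega(\xi)\simeq\xi^{3}$, $\abs{\phi(r;\xi)}\lesssim\xi^{-3/2}$) is in line with the paper's Step 2, but the low-frequency analysis as written does not establish the lemma.
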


\begin{proof}
First, we remark that $\LL_V$ is in the limit-point case at both $r=0$ and $r= \infty$. Moreover, $\LL_V$ admits an entire Weyl-Titchmarsh solution $\phi(r; z)$ of
\EQ{
\LL_V \phi(r; z) = z^2  \phi(r; z), \quad z  \in \C, \, \, r \in (0, \infty),
}
which satisfies
\begin{itemize}
\item[(a)] For all $r \in (0, \infty)$, $\phi(r;  \cdot)$ is entire.
\item[(b)]  $\phi(r; z)$ is real-valued for $z \in \R$.
\item[(c)] $\phi( \cdot, z) \in L^2([0, b])$ for all $b \in (0, \infty)$ and $z\in \C$.
\end{itemize}
The above follows from~\cite[Lemma $3.12$]{GZ}. In addition, by \cite[Lemma $3.3$]{GZ} we can find $\te(r; z)$ so that
\ant{
\LL_V \te(r; z) = z^2 \te(r; z) \quad z  \in \C, \, \, r \in (0, \infty),
}
where $\te(r; z)$ is real-valued for $z \in \R$, entire in $z \in \C$ and so that
\EQ{\label{W phi te}
W(  \te( \cdot, z), \phi(\cdot; z)) = 1, \quad z \in \C
} 
We also introduce another solution, namely the {\em Weyl-Titchmarsh solution} at infinity,  $\psi(r; z)$, which, since $\LL_V$ is in the limit-point case at $r= \infty$, is uniquely characterized (up to constant multiples) by
\EQ{\label{WT inf}
&\LL_V\psi(r; z) = z^2 \psi(r; z)\\
&\psi(r; z) \in L^2[a, \infty) \quad \forall z \in \C \, \,  \textrm{with} \, \, \Im(z) >0 \mand \forall a>0
}
Since $\{ \phi( \cdot, z), \te( \cdot, z)\}$ form a fundamental system we can find a function $m(z)$ so that
\EQ{ \label{WT psi def}
\psi(r; z) =  \te(r; z) +  m(z) \phi(r; z).
}
This function $m(z)$ is analytic in $\Im(z)>0$ and is referred to as the {\em Weyl-Titchmarsh function}. We note that $m(z)$ determines the spectral measure. Indeed,
\EQ{
 \rho(d \xi) = 2 \xi \Im m( \xi + i 0) \, d \xi
 }
 If for  $f \in L^2(0, \infty)$ we define the Distorted Fourier transform
 \ant{
  \hat f( \xi) := \int_0^{\infty} \phi(r; \xi) \, f(r) \, dr
  }
 then~\eqref{f inv}--\eqref{H1 dis} hold.

We also record some useful relations. Note that for $\xi \in (0, \infty)$ we have
 \EQ{ \label{m wro}
 2i \Im m( \xi)  = W( \ba{\psi( \cdot; \xi)}, \psi( \cdot; \xi))
 } 
 and
 \EQ{ \label{phi psi/m}
  \phi(r; \xi) =  \frac{\Im \psi(r; \xi)}{ \Im m( \xi)}
  }

 Our goal now, is to approximate $\phi(r; \xi)$ and $ \om( \xi) := 2 \xi \Im m( \xi+i0)$  well enough  to prove the estimate~\eqref{spec}.  We will accomplish this in two steps. The first step addresses the case $0 < \xi \ll 1$ and the second provides estimates for $\xi \gg1$.

\begin{flushleft}\textbf{Step 1:} We begin by considering the case of small $\xi$, i.e., $0< \xi \ll 1$. We will construct $\phi(r; \xi)$ in this case by perturbing around a fundamental system at energy $1/4$. In particular, we first establish: \end{flushleft}
 \begin{claim}\label{LLV 14} There exists a fundamental system, $\{ \phi(r), \te(r)\}$ to $\LL_V f = \frac{1}{4} f$ with $W( \phi, \te) =1$. Moreover we have the asymptotic behavior
 \EQ{ \label{phi te at 0}
  &\phi(r)  = r^{\frac{3}{2}} +o(r^{\frac{3}{2}}) \mas r \to 0,\\
  &\te(r) =- \frac{1}{2}r^{-\frac{1}{2}} + o(r^{-\frac{1}{2}})\mas r \to 0,
  }
  as well as
  \EQ{ \label{phi te at inf}
  &\phi(r)   =  a_1 + b_1 r +o(1) \mas r \to  \infty, \, \,  \textrm{with}\, \,  b_1 \neq 0,\\
  &\te(r) =  a_2+b_2 r +o(1) \mas r \to \infty,
}
where $a_1 b_2- a_2 b_1 =1$. The fact that $\LL_V$ has no point spectrum allows us to ensure that $b_1 \neq 0$.
 \end{claim}
  To prove the claim, we begin by constructing solutions with the desired behavior at $r=0$. The behavior at $r= \infty$, in particular the fact that $b_1 \neq 0$,  will then follow from the fact that $\frac{1}{4}$ is neither an eigenvalue, nor a resonance for $\LL_V$.

  Since $V(r)$ is smooth, we can write
  \ant{
  V(r) = V(0) + O(r) \mas r \to 0
  }
  Similarly, we have
  \ant{
   \frac{3}{4 \sinh^2r} -\frac{3}{4r^2} =- \frac{1}{4}  +  O(r^2) \mas r \to 0
   }
  For small $r$ we can thus view solutions to $\LL_V$ as a perturbations of solutions to
  \EQ{\label{ti LL}
  \LL_E f = (1/4 - V(0)) f
  }
  where $\LL_E := - \p_{rr} + \frac{3}{4 r^2}$. A fundamental system for~\eqref{ti LL} is given by
  \EQ{\label{alt Euc fund sys}
  &f_0(r) = c_0 r^{\frac{1}{2}} J_1( a r)   =  r^{\frac{3}{2}} +o(r^{\frac{3}{2}})\mas r \to 0\\
   & f_1(r) = c_1 r^{\frac{1}{2}} Y_1( a r) = -\frac{1}{2}r^{-\frac{1}{2}} +o(r^{-\frac{1}{2}})\mas r \to 0\\
   & a:=\sqrt{\frac{1}{4} - V(0)}
}
where $J_1, Y_1$ are the order one Bessel functions and $c_0, c_1$ are chosen to obtain the desired behavior at $r= 0$ on the right above. Indeed, if we set
\EQ{\label{NN0 def}
\NN_0(r):= -\frac{3}{ 4\sinh^2 r}  + \frac{3}{4r^2}  - \frac{1}{4} -V(r) +V(0) =  O(r) \mas r \to 0
}
then using the variation of constants formula we can write,
\EQ{ \label{volt phi te}
\phi(r) = f_0(r) + \int_0^r G(s, r) \NN_0(s) \phi(s)\, ds\\
\te(r) = f_1(r) + \int_0^r G(s, r) \NN_0(s) \te(s)\, ds
}
where the Green's function $G(r, s)$ is given by
\ant{
	G(r, s) = - f_0(s) f_1(r) + f_0(r) f_1(s) 	
}
The Volterra iterations in~\eqref{volt phi te} converge for $r$ small enough and yield the desired behavior~\eqref{phi te at 0}. Since both $V(r)$ and $\frac{3}{4 \sinh^2 r}$ decay exponentially as $r \to \infty$, the behavior~\eqref{phi te at inf} is the only possible one. The fact that $b_1 \neq 0$ is the important consequence of the fact that $\LL_V$ is non-resonant at $\frac{1}{4}$. This completes the proof of Claim~\ref{LLV 14}.

We can now construct $\phi(r; \xi)$
for small $\xi$ and $r \lesssim \xi^{-1/3}$. In particular we claim that there exists $ \e>0$ and $\xi_0>0$ such that for all $\xi \le \xi_0$ we have the estimates
\EQ{\label{phi xi small}
&\phi(r; \xi) = \phi(r) + O(r \xi), \mfor 0 \le r \le \e \xi^{-\frac{1}{3}},  \\
& \phi'(r; \xi) = \phi'(r) + O( \xi) \mfor 0 \le r \le \e \xi^{-\frac{1}{3}}
}
Moreover we can require the fixed normalization
\EQ{\label{phi norm at 0}
\phi(r; \xi) = \phi(r) + o(r^{\frac{3}{2}}) \mas r \to 0.
} 
To prove~\eqref{phi xi small} we first define a function $u_0(r; \xi)$ by
\EQ{\label{phi int eq}
 u_0(r; \xi) =  \phi(r) - \xi^2\int_0^{r} \te (r) \phi(s) u_0(s; \xi) \, ds - \xi^2\int_r^{ \e \xi^{-1/3}}  \phi(r) \te(s)u_0(s; \xi) \, d s
 }
We will then obtain $\phi(r; \xi)$ from $u_0(r; \xi)$ by a renormalization so as to obtain the precise behavior~\eqref{phi norm at 0} in addition to~\eqref{phi xi small}.

Although~\eqref{phi int eq} is not a Volterra integral equation, it can still be solved by a contraction argument. To see this, define $u_1(r; \xi)$ by
\EQ{\label{u def}
u_0(r; \xi) =  \phi(r) + r \xi u_1(r; \xi)
}
Rewriting~\eqref{phi int eq} in terms of $u_1$ we can define a linear map $T_{\e, \xi}$ by
\EQ{ \label{u int eq}
u_1(r; \xi) &= -r^{-1} \xi \theta(r) \int_0^r \phi(s)[ \phi(s) + s \xi u_1(s; \xi)] \, ds \\
&\quad - r^{-1} \xi \phi(r) \int_r^{\e \xi^{-\frac{1}{3}}} \theta(s)[ \phi(s)+ s \xi u_1(s; \xi)] \, ds\\
&=: T_{\e, \xi} u_1 (r; \xi)
}
One can then check that there exists $\xi_0>0$ and $\e>0$ fixed so that for all $0 \le \xi \le \xi_0$, the map $T_{\e, \xi}$ is a contraction in a ball of fixed size in the space $C([0, \e \xi^{-\frac{1}{3}}])$. Hence, there is a unique solution $u_1(r; \xi)$ to~\eqref{u int eq} satisfying
\ant{
\abs{u_1(r; \xi)} \le C, \quad \forall \, 0 \le r \le \e \xi^{-1/3}
}
and all $0 <\xi \le \xi_0$. By plugging these estimates for $u_1$ into~\eqref{u def} we establish that
\ant{
u_0(r; \xi) = \phi(r) + O(r \xi), \mfor 0 \le r \le \e \xi^{-\frac{1}{3}},
}
Next, from~\eqref{phi int eq} we see that
\EQ{ \label{u0 at 0}
u_0(r, \xi)  =  r^{\frac{3}{2}}(1 + O( \e^3 \xi+ \xi^2)) + o(r^{\frac{3}{2}}) \mas r \to 0
}
Finally, we obtain  $\phi(r; \xi)$  from  $u_0(r, \xi)$  by multiplication by a constant $C= C(\e, \xi)$ determined by~\eqref{u0 at 0} to ensure that~\eqref{phi norm at 0} holds. A similar argument can be made for the derivative $\phi'(r; \xi)$ to prove the second line in~\eqref{phi xi small}. 

 To estimate $\phi(r, \xi)$ in the regime $r \ge  \e\xi^{-1/3}$ and to find the spectral measure $\om( \xi)$, we define the function $\ti\psi(r; \xi)$ by
 \EQ{
& \ti \psi(r ; \xi) =  e^{ir \xi} - \int_r^{\infty} \frac{\sin \xi (r-s)}{ \xi} \NN_2(s) \ti \psi(s;  \xi) \, ds\\
 & \NN_2(r):=- \frac{3}{4 \sinh^2r} - V(r)
 }
Because of the exponential decay of $\NN_2(r)$ as $r \to \infty$ the Volterra iteration converges for $r$ large enough and  $ \ti\psi(r; \xi)$ solves
\ant{
\LL_V \ti \psi( r; \xi) = (1/4 + \xi^2) \ti \psi(r; \xi)
}
In addition, we can  deduce that
\EQ{ \label{ti psi large r}
 &\ti \psi(r; \xi) = e^{ir \xi} + O(re^{-2r}) \mas r \to \infty\\
 & \ti \psi'(r; \xi) = i \xi e^{ir \xi} + O(re^{-2r}) \mas r \to \infty
}
This implies that
\EQ{ \label{wro ti psi}
 W( \ti \psi( \cdot; \xi), \ba{ \ti \psi( \cdot ; \xi)}) = W(e^{ir \xi}, e^{-i r \xi}) = -2 i \xi
 }
 Moreover, since the Weyl-Titchmarsh solution $\psi(r; z)$ is uniquely characterized up to constant multiples by $\psi(\cdot; z) \in L^2([c, \infty))$ for all $\Im( z)>0$ and $c>0$   we can find a smooth function $a( \xi)$ so that 
 \EQ{
a(\xi) \ti \psi(r; \xi) = \psi(r; \xi)
 }
 Observe that  in light of~\eqref{W phi te} and~\eqref{WT psi def},  $a( \xi)$ is given by
\EQ{\label{a=W}
 a( \xi)  = \frac{1}{ W(  \ti \psi( \cdot; \xi), \phi( \cdot; \xi))}
 }
 By~\eqref{m wro} and~\eqref{wro ti psi}, estimates for the function $a( \xi)$ can then be used to estimate the $m$-function.  Indeed we have the relation
 \EQ{\label{temporary equation}
 2i \Im m( \xi) = W(  \ba{ \psi(\cdot, \xi)}, \psi( \cdot; \xi) ) = \abs{a( \xi)}^2 W( \ba{\ti \psi( \cdot; \xi)}, \ti \psi( \cdot; \xi)) = 2i \xi \abs{a( \xi)}^2
 } 
 We therefore look to bound the right hand side of~\eqref{a=W}. We will show that $W( \ti \psi( \cdot; \xi), \phi( \cdot; \xi))$ is bounded away from $0$ for small $\xi$. To see this we evaluate the Wronskian at $r = \xi^{-1/6}$, which is large enough for small $\xi$ so that we can accurately approximate $\ti \psi(r; \xi)$, but also within the range in which we have good control of $\phi(r; \xi)$ by~\eqref{phi xi small}. The spectral information on $\LL_V$ enters crucially at this point as evaluating the Wronksian at $r = \xi^{-1/6}$ yields
 \EQ{
 \abs{W( \ti \psi( \cdot; \xi), \phi( \cdot; \xi)) \vert_{r= \xi^{-1/6}}}  \ge c \abs{b_1} >0
 }
 where we have used~\eqref{phi xi small} and~\eqref{ti psi large r} as well as~\eqref{phi te at inf} above. Hence $\abs{a( \xi)} \simeq 1$ and we can conclude that
 \EQ{
 \Im m( \xi) \simeq \abs{ \xi}  \mas  \xi  \to 0
 }
 For the spectral measure $\om( \xi) d \xi$ we then have
 \EQ{\label{om small xi}
 \om( \xi) \simeq  \abs{ \xi}^2 \mas \xi \to 0
 }
 Finally, we would like to estimate $\phi(r; \xi)$ for small $\xi$ and $r \ge \e \xi^{-1/3}$. Since the functions $\ti \psi(r; \xi)$ and $\ba{\ti \psi(r; \xi)}$ form a fundamental system and since $\phi(r; \xi) \in \R$ for $\xi \in \R$, we can find $b( \xi)$ so that
 \EQ{ \label{phi psi bar}
 \phi(r; \xi) = b( \xi)  \ti \psi(r; \xi)  + \ba{b( \xi)}  \ba{\ti \psi(r; \xi)}
 }
 We then have
 \EQ{
 b( \xi) = \frac{W( \phi( \cdot; \xi),   \ba{\ti \psi( \cdot; \xi)})}{W( \ti \psi( \cdot; \xi), \ba{ \ti \psi( \cdot; \xi)})}
 }
 We use~\eqref{wro ti psi} for the denominator whereas we evaluate the numerate again at $r= \xi^{-1/6}$ as above to prove that
 \ant{
 \abs{b(\xi)} = O(\xi^{-1}) \mas \xi \to 0
 }
 This gives us the estimate
 \EQ{ \label{xi phi large r}
\sup_{r \ge \e \xi^{-1/3}} \xi \abs{\phi(r; \xi)} =O(1) \mas \xi \to 0
}
We can now prove~\eqref{spec} for small $\xi \to 0$. Using~\eqref{om small xi} we can deduce that
\EQ{ \label{spec small xi}
 \frac{\abs{ \phi(r; \xi)}^2}{(r+1)^3}\frac{\ang{\xi}}{\xi} \om(\xi) \lesssim \frac{\abs{ \phi(r; \xi)}^2}{(r+1)^3}  \abs{\xi}  \quad \forall \xi  \le \xi_0
 }
 For $r \le  \e \xi^{-1/3}$ we can use the estimate~\eqref{phi xi small} together with the conclusions of Claim~\ref{LLV 14} to see that
 \ant{
 \abs{\phi(r; \xi)} \lesssim \min(1, r), \quad \forall \,  r \le \e \xi^{-1/3}
 }
 Hence the right hand side of~\eqref{spec small xi} is bounded by a constant in the regime $r \le \e \xi^{-1/3}$. If $r \ge \e \xi^{-1/3}$ then we use~\eqref{xi phi large r} to conclude that
 \ant{
  \frac{\abs{ \phi(r; \xi)}^2}{(r+1)^3}  \abs{\xi} \lesssim  \frac{1}{ r^3 \xi} \lesssim 1, \quad  \forall \, r \ge \e \xi^{-1/3}
  }
  This finishes the proof of~\eqref{spec} for $\xi  \ll 1$.
  \begin{flushleft} \textbf{Step 2:} We now consider the case  $\xi \ge  \Xi_0$  for $\Xi_0$ large. This is somewhat easier than the small $\xi$ case as we can perturb around explicit solutions to a well understood Euclidean problem. Define the half-line Schr\"odinger operator
  \end{flushleft}
  \ant{
  \LL_E f := - f_{rr} + \frac{3}{4r^2}f
  }
  Then  $\LL_E f =  z^2 f$ are Bessel equations and we define the Weyl-Titchmarsh solution at $r=0$, by
  \EQ{
  \phi_E(r; z) := 2 z^{-1} r^{\frac{1}{2}} J_1(zr).
  }
  Together with
  \EQ{\label{Euclidean theta}
  \te_E(r, z):=  \frac{\pi}{4} z r^{\frac{1}{2}}[ -Y_1(zr)+ \pi^{-1} \log(z^2) J_1(z r)],
  }
  we have a fundamental system with $W(  \te_E( \cdot; z), \phi_E( \cdot; z)) = 1$ for all $z \in \C$, see~\cite[Section~$4$]{GZ} for more discussion of this system. 
  The Weyl-Titchmarsh solution at $r= \infty$ is given by
  \EQ{\label{psiE}
  \psi_E(r; z) :=z r^{\frac{1}{2}} i H_1^{(1)}(z r) =  \te_E(r; z) + m_E(z) \phi_E(r; z),
  }
  where above $J_1, Y_1$ are the order one Bessel functions and $H_1^{(1)}$ is the order one Hankel function. The Weyl-Titchmarsh $m$-function is also explicit and is given by
  \EQ{
  m_E(z) = \frac{\pi}{4} z^2[i- \pi^{-1} \log(z^2)].
  }
  We now rewrite $\LL_V f = (\frac{1}{4}+ \xi^2) f$ as
  \ant{
  - f_{rr} + \frac{3}{4r^2} f -  \xi^2 f =  \frac{3}{4}\left( \frac{1}{r^2}- \frac{1}{ \sinh^2 r}\right)f - V(r) f.
  }
  The variation of parameters formula then gives the Volterra integral equation for the outgoing solutions
  \EQ{ \label{volt ti psi}
   \ti \psi_+(r; \xi) =  \psi_E(r; \xi) + \int_r^{\infty} G_E(r, s, \xi) \NN_3(s) \ti \psi_+(s; \xi) \, ds,
   }
   where
   \ant{
   G_E(r, s, \xi) := \frac{ \psi_E(r; \xi) \ba{ \psi_E(s; \xi)} -  \psi_E(s; \xi) \ba{\psi_E(r; \xi)}}{ W( \psi_E( \cdot; \xi), \ba{\psi_E( \cdot; \xi)})},
   }
   and
   \ant{
   \NN_3(r) := \frac{3}{4}\left( \frac{1}{r^2}- \frac{1}{ \sinh^2 r}\right) - V(r).
   }
    The Wronskian is given by
\EQ{\label{wGE}
W(\ba{\psi_E( \cdot; \xi)},  \psi_E( \cdot; \xi)) = 2i \Im m_E(\xi) =  \frac{\pi}{2} i \xi^2.
}
Let   $\e>0$ be a small number to be determined below. We will solve~\eqref{volt ti psi} on the region $r \in [\frac{1}{2}\epsilon \xi^{-1}, \infty)$.  Combining~\eqref{wGE} with~\eqref{psiE},  we note that the Green's function $G_E$ takes the form 
\EQ{
G_{E}(r, s, \xi) = \frac{2}{ i \pi} r^{1/2} s^{1/2}  2\,  \Im\Big( H_1^{(1)}(r \xi) \ba{ H_1^{(1)}(s \xi)} \Big)
}
Using the standard asymptotic expansions and estimates for the Hankel function,~$H_1^{(1)}(x)$ with $x >0$ large (see for example~\cite{ASbook, Ol}),  we can deduce the  bound
\EQ{
\abs{G_E(r, s, \xi)} \lesssim_\e  \abs{\xi}^{-1}, \quad \forall \, \, \, \frac{\e}{2\xi}  \le r, s
}
Moreover, we have the estimate
\EQ{
\NN_3(r) \lesssim \frac{1}{(1+r)^2}, \quad \forall\, r >0
}
Thus, if we set $K(r, s, \xi) := G_E(r, s, \xi) \NN_3(s)$ we have the estimate
\ant{
\abs{K(r, s, \xi)} \lesssim_\e  \xi^{-1} \ang{s}^{-2}, \quad \forall \,\, \frac{\e}{2\xi} \le r,  s
}
and thus
\ant{
\int_{\frac{\e}{2\xi}}^{\infty} \sup_{r>\frac{\e}{2\xi}} \abs{K(r, s, \xi)} \, ds \lesssim_\e \xi^{-1}
}
This means that the Volterra integral~\eqref{volt ti psi} has a unique solution $\ti \psi_+(r; \xi)$ defined for all $r \ge \frac{1}{2}\e \xi^{-1}$ and satisfying
\EQ{\label{psi psiE}
\ti \psi_+(r; \xi) =  \psi_E(r; \xi)(1+O( \xi^{-1} \ang{r}^{-1})), \quad \forall\, r  \ge  \frac{\e}{2\xi}
}
We can then deduce that
\EQ{
W( \ba{ \ti \psi_+(\cdot; \xi)},  \ti\psi_+( \cdot; \xi))=  W( \ba{ \psi_E(\cdot; \xi)},  \psi_E( \cdot; \xi)) =  \frac{\pi i}{2} \xi^2
} 
Since the Weyl-Titchmarsh solution at $r=\infty$,
\EQ{\label{psi te phi}
\psi(r; \xi) := \te( r; \xi) + m(\xi) \phi(r; \xi)
}
is uniquely determined up to constant multiples by~\eqref{WT inf}, we can find a smooth function $d(\xi)$ such that
 \EQ{
 \psi(r; \xi) =d(\xi)  \ti \psi_+(r; \xi)
 }
 Using~\eqref{psi te phi}  we see that
 \EQ{ \label{d def}
 d(\xi)  =   \frac{1}{ W(  \ti \psi_+( \cdot; \xi), \phi(\cdot; \xi)) } 
 }
We claim that 
 \EQ{ \label{d=1}
 \abs{d( \xi)} \simeq 1  \mas  \xi \to  \infty
 }
 To prove~\eqref{d=1}, we first note that for $\e>0$ small enough, we can find constants $c_1, C_2$, which are independent of $\xi$,  so that  
 \EQ{ \label{phi2exi}
 c_1 r^{\frac{3}{2}} \le \phi(r; \xi)  \le  C_2 r^{\frac{3}{2}},  \quad \forall   r \le 2 \e \xi^{-1} \\
 c_1 r^{\frac{1}{2}} \le \phi'(r; \xi) \le   C_2 r^{\frac{1}{2} }, \quad \forall   r \le 2 \e \xi^{-1}
 }
 By rewriting the equation for $\phi(r; \xi)$ as 
 \ant{
& -\phi'' + \frac{3}{4 r^2} \phi =  \NN_4(r, \xi) \phi \\
& \NN_4(r)  = \xi^2 + \frac{3}{4}( \frac{1}{r^2} - \frac{1}{\sinh^2 r}) + V(r)
}
we see that~\eqref{phi2exi} follows directly from an iteration argument  on the interval $[0, 2\e \xi^{-1}]$ for 
 \ant{
 &\phi(r; \xi)  = r^{\frac{3}{2}} - \frac{1}{2}  \int_0^r (r^{3/2}s^{-1/2} - r^{-1/2} s^{3/2}) \NN_4(s, \xi) \phi(s, \xi) \, ds 
 }
 as long as $\e>0$ is chosen small enough. Now~\eqref{d=1} follows by combining~\eqref{psi psiE} and~\eqref{phi2exi} to evaluate the right-hand-side of~\eqref{d def} at the point $r = \e \xi^{-1}$, and then taking $\xi \to \infty$. Here we remark that after closing the Volterra iteration for $\ti \psi_+(r, \xi)$ in~\eqref{volt ti psi},  the control of $\ti \psi_+'( r;  \xi)$ near the point $r = \e \xi^{-1}$ needed for~\eqref{d=1} follows by essentially differentiating~\eqref{volt ti psi}, plugging in the bound~\eqref{psi psiE} for $\ti \psi_+(r;  \xi)$ and using the asymptotics of the derivative of the Hankel function, $H^{(1)}_1$, which can be found for example in Abramowitz-Stegun~\cite[p. $364$, $9.2.13$]{ASbook}.

By~\eqref{m wro} and~\eqref{wro ti psi}, estimate~\eqref{d=1} for the function $d( \xi)$ can then be used to estimate the $m$-function.  Indeed we have the relation
 \EQ{
 2i \Im m( \xi) &= W( \ba{ \psi(\cdot, \xi)}, \psi( \cdot; \xi)) = \abs{d( \xi)}^2 W( \ba{ \ti\psi_+( \cdot; \xi)}, \ti \psi_+( \cdot; \xi)) \\&=   \frac{ \pi i}{2}\xi^2 + O(1) \mas \xi \to \infty
 } 
We therefore have proved that
 \EQ{
 \Im m( \xi) \simeq \abs{ \xi}^2  \mas  \xi  \to \infty.
 }
 For the spectral measure $ \rho(d \xi) = \om( \xi) d \xi$ we then have
 \EQ{\label{om big xi}
 \om( \xi) \simeq  \abs{ \xi}^3 \mas \xi \to \infty
 }
Finally we estimate $\phi(r; \xi)$ using the relation~\eqref{phi psi/m}.
\EQ{
\phi(r; \xi) &=  \frac{\Im \psi( r; \xi)}{ \Im m( \xi)} \simeq  \xi^{-2} \Im [ \psi_E(r; \xi)(1+O( \xi^{-1} \ang{r}^{-1}))]\\
& = \xi^{-1} ( r^{1/2}J_1(r \xi) (1+O(  \ang{r}^{-1})) \\
}
Using asymptotic formulas for $J_1(z)$ we obtain the estimate
\EQ{
\xi^{\frac{3}{2}} \abs{ \phi(r; \xi)} \lesssim 1 \mas \xi  \to \infty
}
for all $r>0$. Together with~\eqref{om big xi} this establishes~\eqref{spec} for all $\xi \ge \Xi_0$ for some $\Xi_0>0$ large. This completes the proof of Lemma~\ref{dis ft}. 
\end{proof}


\section{Proofs of Theorem~\ref{s2 stab} and Theorem~\ref{h2 stab}}\label{proofs}

In this final section we prove the two asymptotic stability results, Theorem~\ref{s2 stab} and Theorem~\ref{h2 stab}. We remark that due to Lemma~\ref{2d to 4d} it will suffice to work in the $4d$ setting and consider the Cauchy problems~\eqref{u eq} for the case of maps into $\Sp^2$ and~\eqref{u eq h} for the case of maps into~$\Hp^2$.

As we will be working with initial data $\vec u(0) = (u_0, u_1)$ that is small in the energy space $H^1 \times L^2(\Hp^4)$, we will also require only rough estimates on the size of the nonlinearities $\NN_{\Sp^2}$ and $\NN_{\Hp^2}$. In particular, we will make use of the following simple lemmas.

\begin{lem}\label{NN} Let $\vec \psi \in \E_{\la}$ and define $u$ by $\sinh r \, u(r) =  \psi(r)- Q_{\la}(r)$. Let $\NN_{\Sp^2}(r, u)$ be defined as in~\eqref{N S}, i.e.,
\ant{
\NN_{\Sp^2}(r, u)&:= \frac{\sin 2Q_{\la}}{ \sinh^3 r}\sin^2( 2 \sinh r\, u) + \cos 2Q_{\la} \frac{2 \sinh r\, u -  \sin (2 \sinh r \, u)}{2 \sinh^3 r}\\
&=: \F_{\Sp^2}(r, u) + \G_{\Sp^2}(r, u).
}
Then, there exist  constants $C_1 = C(\la)>0$ and $C_2>0$  so that
\EQ{\label{FG}
&\abs{\F_{\Sp^2}(r, u)} \le C_1 \ang{ \sinh r}^{-1} \abs{u}^2\\
& \abs{\G_{\Sp^2}(r, u} \le C_2 \abs{u}^3
}
\end{lem}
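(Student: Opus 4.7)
The plan is a direct computation using two elementary pointwise inequalities: $|\sin x|\le |x|$ for the quadratic term $\F_{\Sp^2}$, and the cubic Taylor-remainder estimate $|x-\sin x|\le |x|^3/6$ (valid for all real $x$, which one can verify by noting $f(x)=x-\sin x-x^3/6$ satisfies $f(0)=f'(0)=f''(0)=0$ and $f'''\le 0$ on $[0,\infty)$) for the cubic term $\G_{\Sp^2}$.

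For $\G_{\Sp^2}$, this is immediate and gives the stated $\la$-independent constant: applying the inequality with $x=2\sinh r\, u$ yields
\[
|2\sinh r\, u-\sin(2\sinh r\, u)|\le \tfrac{1}{6}|2\sinh r\, u|^{3}=\tfrac{4}{3}\sinh^{3}r\,|u|^{3},
\]
so that, together with $|\cos 2Q_{\la}|\le 1$,
\[
|\G_{\Sp^2}(r,u)|\le \frac{|\cos 2Q_{\la}|}{2\sinh^{3}r}\cdot\tfrac{4}{3}\sinh^{3}r\,|u|^{3}\le \tfrac{2}{3}|u|^{3},
\]
proving the second bound with $C_{2}=2/3$.

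For $\F_{\Sp^2}$, the inequality $|\sin(2\sinh r\, u)|^{2}\le 4\sinh^{2}r\, u^{2}$ gives
\[
|\F_{\Sp^2}(r,u)|\le \frac{4|\sin 2Q_{\la}(r)|}{\sinh r}\,u^{2},
\]
so the whole problem reduces to establishing the pointwise estimate
\[
\frac{|\sin 2Q_{\la}(r)|}{\sinh r}\le \frac{C(\la)}{\ang{\sinh r}}\quad \text{for all }r>0.
\]
For this I would use the explicit form $Q_{\la}(r)=2\arctan(\la\tanh(r/2))$ from Proposition~\ref{hm prop}, together with the double-angle identities, to write
\[
\sin 2Q_{\la}(r)=\frac{4\la\tanh(r/2)\bigl(1-\la^{2}\tanh^{2}(r/2)\bigr)}{\bigl(1+\la^{2}\tanh^{2}(r/2)\bigr)^{2}}, \qquad \sinh r=2\sinh(r/2)\cosh(r/2).
\]
Dividing yields
\[
\frac{\sin 2Q_{\la}(r)}{\sinh r}=\frac{2\la\bigl(1-\la^{2}\tanh^{2}(r/2)\bigr)}{\bigl(1+\la^{2}\tanh^{2}(r/2)\bigr)^{2}\cosh^{2}(r/2)},
\]
which is bounded by $C(\la)/\cosh^{2}(r/2)$. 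Since $\cosh^{2}(r/2)\simeq 1+\sinh r\simeq \ang{\sinh r}$ uniformly in $r>0$ (using $\cosh^{2}(r/2)=\tfrac{1}{2}(1+\cosh r)$ and $\cosh r\simeq 1+\sinh r$), the required bound follows.

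The argument is essentially routine; the only real observation required is the elementary cubic inequality for $\G_{\Sp^2}$ and the decay of $\sin 2Q_{\la}/\sinh r$ obtained from the closed-form expression of $Q_{\la}$. There is no genuine obstacle.
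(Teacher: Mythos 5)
Your proof is correct and follows essentially the same route as the paper: the same elementary inequalities $|\sin x|\le |x|$ and $|x-\sin x|\le |x|^{3}/6$ handle the $\sin^{2}$ factor in $\F_{\Sp^2}$ and the term $\G_{\Sp^2}$, and the bound on $\sin 2Q_{\la}/\sinh r$ rests on the same linear vanishing $\sin 2Q_{\la}\sim 4\la\tanh(r/2)$ at $r=0$ coming from the explicit form of $Q_{\la}$. The only (harmless) difference is that you work with the global closed-form expression and the factor $\cosh^{-2}(r/2)\simeq \ang{\sinh r}^{-1}$, whereas the paper splits into the regions $r\le 1$ (where $\ang{\sinh r}\simeq 1$) and $r\ge 1$ (where $|\sin 2Q_{\la}|\le 1$ already suffices).
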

\begin{proof} To prove the first estimate in~\eqref{FG} we consider two regions, $r \le 1$ and $r \ge 1$. When $r \ge 1$ we use the estimate,
\ant{
\abs{\F_{\Sp^2}(r, u)} \le C  \sinh^{-1}r \abs{u}^2
}
Next, recall that  $Q_{\la}(r) = 2 \arctan(\la \tanh(r/2))$.  Then
\ant{
\sin 2 Q_{\la} 
&= 4 \tan(Q_\la/2) \cos^2 (Q_\lambda/2 )\cos (Q_{\la})\\
& = 4 \la \tanh (r/2)\cos^2 (Q_\lambda/2 )\cos (Q_{\la})
}
Therefore, for $r \le 1$ we can conclude that
\ant{
\abs{\F_{\Sp^2}(r, u)} \le C  \la \tanh(r/2) \sinh^{-1}r \abs{u}^2 \le C(\la) \abs{u}^2
}
which proves the first estimate in~\eqref{FG}. The second estimate in~\eqref{FG} is clear.
\end{proof}

\begin{lem}\label{NN1} Let $(\psi, 0) \in \EE_{\la}$ and define $u$ by $\sinh r\,  u(r) = \psi(r) - P_{\lmb}(r)$. Suppose that
\EQ{\label{psi-P A}
\| (\psi, 0) - (P_{\la}, 0) \|_{\HH_0} \le A
}
Let $\NN_{\Hp^2}(r, u)$ be defined as in~\eqref{N H}, i.e.,
\ant{
\NN_{\Hp^2}(r, u)&:= -\frac{\sinh 2P_{\la}}{ \sinh^3 r}\sinh^2( 2 \sinh r\, u) + \cosh2P_{\la}\frac{2 \sinh r\, u -  \sinh (2 \sinh r \, u)}{2 \sinh^3 r}\\
&=: \F_{\Hp^2}(r, u) + \G_{\Hp^2}(r, u).
}
Then, there exists a constant $K = K(\la, A)>0$  so that
\EQ{\label{FG1}
&\abs{\F_{\Hp^2}(r, u)} \le K \ang{ \sinh r}^{-1} \abs{u}^2,\\
& \abs{\G_{\Hp^2}(r, u)} \le K \abs{u}^3.
}
\end{lem}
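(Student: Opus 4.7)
The principal new difficulty compared with Lemma~\ref{NN} is that $\sinh$ and $\cosh$ grow and are \emph{not} uniformly bounded like $\sin$ and $\cos$, so the argument $2\sinh r\, u = 2(\psi-P_{\lmb})$ of the nonlinearity must be controlled pointwise before any Taylor-type bound for $\sinh$ can be applied. My plan is therefore first to upgrade the $\HH_{0}$ bound \eqref{psi-P A} to an $L^{\infty}$ bound on $f := \psi-P_{\lmb}$, and then to reduce the two estimates in \eqref{FG1} to pointwise inequalities for $\sinh x$ and $\sinh x - x$ restricted to a bounded interval.

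For the $L^{\infty}$ step, I would exploit the fact that $(\psi,0),(P_{\lmb},0)\in\EE_{\lmb}$ share the endpoint values $0$ and $2\arctanh\lmb$ at $r=0$ and $r=\infty$, so $f$ satisfies $f(0)=f(\infty)=0$. Consequently $f^{2}(r)=2\int_{0}^{r} f f' \, ds$, and splitting $\abs{ff'}=(\abs{f}/\sinh^{1/2}s)(\abs{f'}\sinh^{1/2}s)$ and applying Cauchy--Schwarz gives
\[
 f^{2}(r)\le 2\Bigl(\int_{0}^{\infty}\frac{f^{2}}{\sinh s}\,ds\Bigr)^{1/2}\Bigl(\int_{0}^{\infty}f'^{\,2}\sinh s\,ds\Bigr)^{1/2}\le 2\|f\|_{\HH_{0}}^{2}\le 2A^{2},
\]
so $\|2\sinh r\,u\|_{L^{\infty}}=\|2f\|_{L^{\infty}}\le M$ for some $M=M(A)$.

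With the pointwise bound $\abs{2\sinh r\, u}\le M$ in hand, I would treat $\F_{\Hp^{2}}$ via the elementary monotonicity $\sinh x/x \le \sinh M/M$ on $\abs{x}\le M$, which yields $\sinh^{2}(2\sinh r\,u)\lesssim_{M}\sinh^{2}r \cdot u^{2}$ and therefore
\[
 \abs{\F_{\Hp^{2}}(r,u)}\lesssim_{M}\frac{\abs{\sinh 2P_{\lmb}}}{\sinh r}\,u^{2}.
\]
Since $P_{\lmb}(r)=2\arctanh(\lmb\tanh(r/2))$, one has $\sinh 2P_{\lmb}=O(\lmb r)$ as $r\to 0$ and $\abs{\sinh 2P_{\lmb}}\le \sinh(4\arctanh\lmb)$ globally; together these give $\abs{\sinh 2P_{\lmb}}/\sinh r \lesssim_{\lmb} \ang{\sinh r}^{-1}$, which delivers the first line of \eqref{FG1}. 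For $\G_{\Hp^{2}}$, Taylor's theorem supplies $\abs{\sinh x - x}\le (\cosh M/6)\abs{x}^{3}$ on $\abs{x}\le M$; applying this with $x=2\sinh r\, u$ and combining with the global bound $\abs{\cosh 2P_{\lmb}}\le \cosh(4\arctanh\lmb)$ immediately produces $\abs{\G_{\Hp^{2}}(r,u)}\le K(\lmb,A)\abs{u}^{3}$.

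The only genuine obstacle is the first step: without pointwise control on $f$, the unbounded hyperbolic functions in the nonlinearity cannot be linearized, and the proof of Lemma~\ref{NN} cannot be adapted. Once the $L^{\infty}$ bound is in place, the remainder is routine Taylor analysis of $\sinh$ at the origin, with the zero of $\sinh 2P_{\lmb}$ at $r=0$ supplying the extra vanishing needed to produce the $\ang{\sinh r}^{-1}$ weight in the $\F_{\Hp^{2}}$ estimate.
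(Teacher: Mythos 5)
Your proposal is correct and follows essentially the same route as the paper: both first upgrade the $\HH_{0}$ bound to a pointwise bound on $\psi-P_{\lmb}$ using $\psi(0)=P_{\lmb}(0)$ (the paper via the same fundamental-theorem-of-calculus plus Cauchy--Schwarz/AM--GM computation), then bound $\sinh$ and $\sinh x - x$ on the resulting bounded interval, and finally use the vanishing of $\sinh 2P_{\lmb}$ at $r=0$ together with its global boundedness (the paper via the explicit identity $\sinh 2P_{\lmb}\le C(\lmb)\tanh(r/2)$, you via patching the small-$r$ and large-$r$ regimes) to produce the $\ang{\sinh r}^{-1}$ weight. The differences are cosmetic.
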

\begin{proof} First we note that the estimate~\eqref{psi-P A} gives us an a priori estimate on the size of $\psi$. Indeed, since $\psi(0) - P_{\la}(0) = 0$ we have
\ant{
 (\psi(\rho) - P_{\la}(\rho))^2  &= \int_0^\rho \p_r(\psi(r) - P_{\la}(r))^2 \, dr  \\
 &=2 \int_0^\rho (\psi_r(r) - \p_rP_{\la}(r))( \psi(r)- P_{\la}(r)) \, dr \\
 & \le  \| (\psi, 0) - (P_{\la}, 0)\|_{\HH_0}^2
}
This means that
\EQ{\label{sup A}
 \sup_{r \in [0, \infty)}\abs{\sinh ru(r)}  =  \sup_{r \in [0, \infty)} \abs{ \psi(r) - P_{\lmb}(r)} \le A.  
}
Next, note that
\begin{align*}
	\sinh 4 \vartht
	=& 2 \sinh 2 \vartht \cosh 2 \vartht \\
	=& 4 \sinh \vartht \cosh \vartht (\cosh^{2} \vartht + \sinh^{2} \vartht) \\
	=& 4 \frac{\tanh \vartht (1+\tanh^{2} \vartht)}{(1-\tanh^{2} \vartht)^{2}}.
\end{align*}
Plugging in $\vartht = \arctanh ( \lmb \tanh(r/2))$, we see that
\EQ{\label{sin P}
\sinh (2 P_{\lmb}(r))
&=  4 \left( \frac{ \lmb \tanh (r/2) (1+\lmb^{2} \tanh^{2}(r/2)}{(1-\lmb^{2} \tanh^{2}(r/2))^{2}} ) \right)\\
&  \le 4 \la\tanh(r/2)\left(\frac{1+ \la^2}{1- \la^2}\right)\\
& \le C(\la)\la\tanh(r/2)
}
In light of~\eqref{sup A} and~\eqref{sin P} we can find $K= K(\la, A)$ so that
\ant{
\abs{\F_{\Hp^2}(r, u)}  = \abs{  \frac{\sinh 2P_{\la}}{ \sinh^3 r}\sinh^2( 2 \sinh r\, u) } \le  K(\la, A) \frac{\tanh(r/2)}{ \sinh^3 r} \sinh^2 r\abs{u}^2
}
and this proves the first estimate in~\eqref{FG1}. To prove the second estimate we define
\ant{
Z( \rho) := 4\frac{\rho -  \sinh (\rho)}{ \rho^3}
}
Then,
\ant{
\G_{\Hp^2}(r, u) = \cosh 2P_{\la} Z( \sinh r u) u^3
}
By~\eqref{sup A}, and the fact that $\cosh{2P_{\la}} \le \cosh(4 \arctanh{ \la})$,  we can find $K= K(\la, A)$ so that $\abs{\cosh 2P_{\la} Z( \sinh r\,  u(r))} \le K(\la, A)$, which completes the proof of the second estimate in~\eqref{FG1}.
\end{proof}

In light, of Lemma~\ref{NN} and Lemma~\ref{NN1} we can handle the small data theory for~\eqref{u eq} and~\eqref{u eq h} simultaneously. Indeed, consider the more general Cauchy problem
\EQ{\label{cp}
&u_{tt}-  u_{rr} - 3 \coth r \, u_r -2 u+ V u = \F(r, u) + G(r, u)\\
&\vec u(0) = (u_0, u_1) \in H^1 \times L^2( \Hp^4)
}
where $V = V(r)$ is a radial potential satisfying assumptions $(A)$ and $(B)$ as defined in the beginning of Section~\ref{strich section} so that Proposition~\ref{strich} applies. 
We assume that for $\vec u$ and $A > 0$ such that
\begin{equation} \label{eq:apriori4u}
	\nrm{\vec u}_{H^{1} \times L^{2}} \leq A,
\end{equation}
 the nonlinearities $\F, \G$ satisfy
\EQ{
&\abs{\F(r, u)} \lesssim_{A} \ang{\sinh r}^{-1} \abs{u}^2,	\\
& \abs{\G(r, u)} \lesssim_{A} \abs{u}^3.
}

We can now formulate the local well-posedness theory for~\eqref{cp}. For a time interval $0 \in I \subset \R$ , define the norms $S(I)$ and $N(I)$ by
\EQ{
&\|u\|_{S(I)} := \|u\|_{L^3_t(I; L^{6} (\Hp^4))}\\
&\|F\|_{N(I)} := \|F\|_{L^1_t(I; L^{2} (\Hp^4)) + L^{\frac{3}{2}}_t(I; L^{\frac{12}{7}}(\Hp^4))}
}
\begin{prop}\label{small data}
Let $\vec u(0) = (u_0, u_1) \in H^1 \times L^2 (\Hp^4)$ be radial. Then there is a unique  solution $\vec u(t) \in H^1 \times L^2(\Hp^4)$ to~\eqref{cp} defined on a maximal interval of existence $0 \in I_{\max}( \vec u) = (-T_-, T_+),$ and for any compact interval $J \subset I_{\max}$ we have

\ant{
\| u\|_{S(J)} + \nrm{\vec{u}}_{L^{\infty}_{t}(J; H^{1} \times L^{2})} < \infty.
}	
Moreover, a globally defined solution $\vec u(t)$ to ~\eqref{cp} for $t\in [0, \infty)$ scatters as $ t \to \infty$ to a free shifted wave, i.e., a solution $\vec u_L(t) \in H^1 \times L^2(\Hp^4)$ of
\EQ{\label{v free}
 v_{tt} - v_{rr} - 3 \coth r \, v_r -2v =0,
}
if and only if
\begin{equation*}
	\| u \|_{S([0, \infty))} + \nrm{\vec{u}}_{L^{\infty}_{t}([0, \infty); H^{1} \times L^{2})} < \infty.
\end{equation*}

Here scattering as $t \to \infty$ means that
\EQ{
\|\vec u(t) - \vec u_L(t) \|_{H^1 \times L^2(\Hp^4)} \to 0 \mas t \to \infty.
}
In particular, there exists a constant $\de>0$ so that
\EQ{ \label{global small}
 \| \vec u(0) \|_{H^1 \times L^2} < \de \Rightarrow
 	\| u\|_{S(\R)} + \nrm{\vec{u}}_{L^{\infty}_{t}(\bbR; H^{1} \times L^{2})} \lesssim \|\vec u(0) \|_{H^1\times L^2} \lesssim \de
 }	
and hence $\vec u(t)$  scatters to free waves as $t \to \pm \infty$.
\end{prop}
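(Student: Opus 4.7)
The plan is to execute a standard Picard iteration argument for the Duhamel form of~\eqref{cp}, using the Strichartz estimates of Proposition~\ref{strich} as the linear input. Let $S_{V}(t)$ denote the propagator of the linear equation $v_{tt} - \Delta_{\Hp^{4}} v - 2 v + V v = 0$, well defined on $H^{1} \times L^{2}(\Hp^{4})$ by assumption~$(A)$, and consider the Duhamel map
\[
\Phi(u)(t) := \big[S_{V}(t) \vec u(0)\big]_{1} + \int_{0}^{t} \big[S_{V}(t-s) (0, \F(\cdot, u(s)) + \G(\cdot, u(s)))\big]_{1} \, ds
\]
(the subscript $1$ picks out the first component). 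I would iterate $\Phi$ in the space
\[
Y(I) := \{\vec u : \|u\|_{S(I)} + \|\vec u\|_{L^{\infty}(I; H^{1} \times L^{2}(\Hp^{4}))} < \infty\}.
\]
Proposition~\ref{strich} applied to the hyperbolic-admissible triple $(3, 6, 1)$ (which yields $W^{0, 6} = L^{6}$ on the left), combined with the energy-type slot $L^{1}_{t} L^{2}_{x} \subset N(I)$, gives the linear bound $\|\vec u\|_{Y(I)} \lesssim \|\vec u(0)\|_{H^{1} \times L^{2}} + \|F\|_{N(I)}$ for solutions of the inhomogeneous problem.

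The nonlinear estimates reduce to the pointwise bounds $|\G(r, u)| \lesssim_{A} |u|^{3}$ and $|\F(r, u)| \lesssim_{A} \langle \sinh r\rangle^{-1} |u|^{2}$, valid once an a priori $H^{1}$-bound of size $A$ on $u$ is in hand. The cubic piece is handled via H\"older by $\|\G\|_{L^{1}_{t} L^{2}_{x}} \lesssim_{A} \|u\|_{S(I)}^{3}$. The quadratic piece benefits from the exponentially decaying weight: since $\int_{0}^{\infty} (1 + \sinh r)^{-4} \sinh^{3} r \, dr < \infty$, one has $\langle \sinh r\rangle^{-1} \in L^{4}(\Hp^{4})$, so another application of H\"older yields $\|\F\|_{L^{3/2}_{t} L^{12/7}_{x}} \lesssim_{A} \|u\|_{S(I)}^{2}$. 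Parallel Lipschitz bounds for the differences $\F(u_{1}) - \F(u_{2})$ and $\G(u_{1}) - \G(u_{2})$ follow in exactly the same way. Consequently $\Phi$ is a contraction on a small ball in $Y(I)$ provided $\|[S_{V}(\cdot) \vec u(0)]_{1}\|_{S(I)}$ is small, which is arranged either by shrinking $I$ (for arbitrary data, using continuity of the Strichartz norm in the interval) or by shrinking the $H^{1} \times L^{2}$ size of $\vec u(0)$, giving the global small-data statement~\eqref{global small}.

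A standard continuation argument then builds the maximal existence interval $I_{\max}(\vec u)$ with the stated blow-up alternative for $\|u\|_{S(J)} + \|\vec u\|_{L^{\infty}(J; H^{1} \times L^{2})}$ on compact $J \subset I_{\max}$. For the scattering equivalence, suppose $\|u\|_{S([0, \infty))} + \|\vec u\|_{L^{\infty}([0, \infty); H^{1} \times L^{2})} < \infty$; then the Duhamel tail $\int_{t}^{\infty} S_{V}(t - s) (0, \F + \G)(s) \, ds$ vanishes in $H^{1} \times L^{2}$ as $t \to \infty$ by the Strichartz bound and dominated convergence, so $\vec u(t)$ is asymptotic (in the $S_{V}$-evolution) to a fixed Cauchy data. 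To upgrade this to scattering to a solution of the \emph{free} shifted equation~\eqref{v free}, I would invoke the existence and completeness of the wave operators comparing $S_{V}$ and $S_{0}$, which follow from $(A)$ and $(B)$: purely absolutely continuous spectrum for $H_{V}$ matching $\sgm(H_{0})$ together with the short-range decay $V = O(e^{-2r})$. The converse direction is routine from Proposition~\ref{free strich} applied to $\vec u_{L}$ combined with the triangle inequality.

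The main technical nuisance I foresee is verifying that the $L^{3/2}_{t} L^{12/7}_{x}$ dual norm used above is genuinely controlled by the right-hand side of Proposition~\ref{strich}. The pair $(a, b, \gamma) = (3, 12/5, 0)$ fails the hyperbolic-admissibility condition $1/a + 3/(2b) \leq 3/4$, so one must instead choose an admissible triple with $\gamma > 0$ slightly positive and absorb the discrepancy via a Sobolev embedding $L^{12/7}_{x} \hookrightarrow W^{-\gamma, b'}_{x}$. Apart from this bookkeeping and the wave-operator invocation in the scattering step, every piece of the argument is entirely mechanical.
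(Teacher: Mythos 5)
Your contraction-mapping core coincides with the paper's argument: the same bootstrap in $\|u\|_{S(I)}+\|\vec u\|_{L^{\infty}_t(I;H^1\times L^2)}$, the same splitting of the forcing with $\G$ placed in $L^1_tL^2_x$ and $\F$ in $L^{3/2}_tL^{12/7}_x$, and the same H\"older step exploiting $\ang{\sinh r}^{-1}\in L^4(\Hp^4)$, followed by a continuity argument for \eqref{global small}. On your ``technical nuisance'': you are right that $(3,12/5,0)$ fails $\frac1a+\frac{3}{2b}\le\frac34$, but your proposed repair does not match Proposition~\ref{strich} as stated, whose forcing norm for an admissible triple is $W^{+\gamma,b'}$ with $\gamma\ge 0$; embedding $L^{12/7}_x\hookrightarrow W^{-\gamma,b'}_x$ targets a negative-order space that the proposition never offers, so as written this step does not close. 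The paper itself simply estimates $\|F\|_{N(I)}$ directly and does not perform any such embedding, so this is a bookkeeping point rather than the heart of the matter, but your fix would need to be reformulated (e.g.\ by invoking an inhomogeneous estimate valid for the non-admissible dual pair) rather than by a Sobolev embedding with the smoothness index on the wrong side.

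The substantive gap is in the scattering step. After removing the Duhamel tail you scatter to the \emph{perturbed} linear flow, exactly as the paper does with $\vec v_L(t)=S_{-2+V}(t)\vec v_L(0)$; but to pass to the free shifted wave \eqref{v free} you invoke ``existence and completeness of the wave operators comparing $S_V$ and $S_0$, which follow from $(A)$ and $(B)$.'' That is not an off-the-shelf consequence of $(A)$ and $(B)$: one would have to prove it, in the energy topology $H^1\times L^2$ and for the half-wave evolutions $e^{\pm it\sqrt{H_V}}$ versus $e^{\pm it\sqrt{H_0}}$, e.g.\ via a trace-class (Birman--Kato) argument plus the invariance principle, none of which you supply, and none of which the paper uses. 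The paper instead closes this step concretely: it defines the free scattering data by $\vec u_L(0)=\vec v_L(0)+\int_0^{\infty}S_{-2}(-s)(0,Vv_L(s))\,ds$ as in \eqref{scat data}, and the absolute convergence of this integral in $H^1\times L^2$ is exactly a Cook-type application of the localized energy estimates already established in the proof of Proposition~\ref{strich}: the dual form of \eqref{free loc en} combined with \eqref{pert loc en}, i.e.\ \eqref{CK} with $X=L^{\infty}_tH^1_x$. So the missing piece in your write-up can be filled with estimates you already have in hand, and no abstract scattering theory is needed. Finally, for the converse direction, the triangle inequality alone does not suffice: one uses that the free wave has finite Strichartz norm and then runs the small-data (perturbative) argument on a time interval near infinity, as the paper indicates.
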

\begin{rem}
In our applications of this proposition to the equivariant wave map equation, we note that an {\it a priori} $L^{\infty}_{t} (H^{1} \times L^{2})$ bound holds thanks to conservation of energy. In particular, the criterion for scattering as $t \to \infty$ is simply $$\nrm{u}_{S([0, \infty))} < \infty,$$ where $u$ is defined as in~\eqref{u S2 def} or~\eqref{u H2 def} depending on the target $M$ and the parameter~$\lmb$.
\end{rem}

\begin{proof}
The proof of Proposition~\ref{small data} follows from the usual contraction mapping argument based on the Strichartz estimates in Proposition~\ref{strich} and we give a brief sketch as the details are standard. Indeed,
suppose that the bootstrap assumption
\begin{equation} \label{eq:btstrp}
	\nrm{\vec{u}}_{L^{\infty}_{t}(I; H^{1} \times L^{2})} \leq A
\end{equation}
holds for some $A > 0$. Then applying Proposition~\ref{strich} 
 to any time interval $I$ we have
\begin{align*}
& \| u \|_{S(I)} + \| \vec u(t)\|_{L^{\infty}(I; H^{1} \times L^{2})} \\
& \quad \lesssim \|\vec u (0) \|_{H^1 \times L^2} + \| \F( \cdot, u) + \G(\cdot, u) \|_{N(I)}\nonumber\\
& \quad \lesssim \|\vec u (0) \|_{H^1 \times L^2} + C_{A} (\| \ang{\sinh r}^{-1} u^2\|_{L^{\frac{3}{2}}_t(I; L^{\frac{12}{7}}_x)} + \| u^3\|_{L^1_t(I; L^2_x)})	\\
& \quad \lesssim \|\vec u (0) \|_{H^1 \times L^2} + C_{A} (\|\ang{\sinh r}^{-1}\|_{L^{\infty}_t L^4_x}\|u^2\|_{L^{\frac{3}{2}}_t L^3_x} + \|u\|_{S(I)}^3)	\\
& \quad \lesssim \|\vec u (0) \|_{H^1 \times L^2} + C_{A} (\|u\|_{S(I)}^2 + \|u\|_{S(I)}^3).
\end{align*}		
By the usual continuity argument, (expanding $I$), this implies the a priori estimate~\eqref{global small} for small data. The scattering is also standard and based on a global Strichartz estimate. Indeed, if we denote by $S_{-2+V}(t)$ the propagator of the free shifted wave equation on $\Hp^4$ perturbed by the radial potential $V(r)$, i.e., the propagator for~\eqref{lin wave V} with $F=0$,  we seek initial data $\vec v_L(0) \in H^1 \times L^2$ so that
\ant{
\vec u(t) = S_{-2+V}(t) \vec v_L(0) + o_{H^1 \times L^2}(1) \mas t \to \infty
}
In view of the Duhamel representation for $\vec u(t)$ and using the group property and unitarity of $S_{-2+V}$ this is tantamount to
\ant{
\vec v_L(0) =  \vec u(0) + \int_0^{\infty} S_{-2+V}(-s)(0, (\F+\G)( \cdot, u)(s)) \, ds
}
The integral on the right-hand side above is absolutely convergent in $H^1 \times L^2$ as long as  $\|u\|_{S([0, \infty))} + \nrm{\vec{u}}_{L^{\infty}([0, \infty); H^{1} \times L^{2})}< \infty$. That the finiteness of these norms of $u$ is a necessary condition is due the fact that free shifted waves satisfy it, whence by the small data theory (applied to large times) it carries over to any nonlinear wave that scatters. Now that we have found the linear wave $\vec v_{L}(t)  = S_{-2 +V}(t) \vec v_L(0)$ which approaches $\vec u(t)$ in the energy space we can easily pass to a solution to~\eqref{v free} with the same property. 
Indeed we can define the scattering data
\EQ{\label{scat data}
\vec u_{L}(0) =  \vec v_L(0) + \int_0^\infty S_{-2}(-s)(0, V v_L)(s) \, ds
}
where $S_{-2}$ is the free propagator for~\eqref{v free}. The fact that~\eqref{scat data} is in $H^1 \times L^2$ follows from~\eqref{CK} with the space $X=L^{\infty}_tH^1_x$.
\end{proof}

Finally, we remark that in light of the reductions to the $4d$ equations,~\eqref{u eq} and~\eqref{u eq h}, as well as the estimates in Lemma~\ref{NN} and Lemma~\ref{NN1} we have also completed the proofs of Theorem~\ref{s2 stab} and Theorem~\ref{h2 stab}.

\bibliographystyle{plain}
\bibliography{researchbib}

\begin{thebibliography}{10}

\bibitem{ASbook}
M.~Abramowitz and I.~A. Stegun.
\newblock {\em Handbook of mathematical functions with formulas, graphs, and
  mathematical tables}, volume~55 of {\em National Bureau of Standards Applied
  Mathematics Series}.
\newblock For sale by the Superintendent of Documents, U.S. Government Printing
  Office, Washington, D.C., 1964.

\bibitem{AP09}
J.-P. Anker and V.~Pierfelice.
\newblock Nonlinear {S}chr{\"o}dinger equation on real hyperbolic spaces.
\newblock {\em Ann. Inst. H. Poincar{\'e} Anal. Non Lin{\'e}aire},
  26(5):1853--1869, 2009.

\bibitem{AP}
J.-P. Anker and V.~Pierfelice.
\newblock Wave and {K}lein-{G}ordon equations on hyperbolic spaces.
\newblock {\em Preprint}, 2011.

\bibitem{Ban07}
V.~Banica.
\newblock The nonlinear {S}chr{\"o}dinger equation on hyperbolic space.
\newblock {\em Comm. Partial Differential Equations}, 32(10-12):1643--1677,
  2007.

\bibitem{BCD}
V.~Banica, R.~Carles, and T.~Duyckaerts.
\newblock On scattering for {NLS}: from {E}uclidean to hyperbolic space.
\newblock {\em Discrete Contin. Dyn. Syst.}, 24(4):1113--1127, 2009.

\bibitem{BCS}
V.~Banica, R.~Carles, and G.~Staffilani.
\newblock Scattering theory for radial nonlinear {S}chr{\"o}dinger equations on
  hyperbolic space.
\newblock {\em Geom. Funct. Anal.}, 18(2):367--399, 2008.

\bibitem{Bray}
W.~O. Bray.
\newblock Aspects of harmonic analysis on real hyperbolic space.
\newblock In {\em Fourier analysis ({O}rono, {ME}, 1992)}, volume 157 of {\em
  Lecture Notes in Pure and Appl. Math.}, pages 77--102. Dekker, New York,
  1994.

\bibitem{CK01}
M.~Christ and A.~Kiselev.
\newblock Maximal functions associated to filtrations.
\newblock {\em J. Funct. Anal.}, 179(2):409--425, 2001.

\bibitem{CM}
H.~Christianson and J.~L. Marzuola.
\newblock Existence and stability of solitons for the nonlinear
  {S}chr{\"o}dinger equation on hyperbolic space.
\newblock {\em Nonlinearity}, 23(1):89--106, 2010.

\bibitem{Cote}
R.~C\^ote.
\newblock Instability of nonconstant harmonic maps for the
  {$(1+2)$}-dimensional equivariant wave map system.
\newblock {\em Int. Math. Res. Not.}, (57):3525--3549, 2005.

\bibitem{GZ}
F.~Gesztesy and M.~Zinchenko.
\newblock On spectral theory for {S}chr{\"o}dinger operators with strongly
  singular potentials.
\newblock {\em Math. Nachr.}, 279(9-10):1041--1082, 2006.

\bibitem{IPS}
A.~D. Ionescu, B.~Pausader, and G.~Staffilani.
\newblock On the global well-posedness of energy-critical {S}chr\"odinger
  equations in curved spaces.
\newblock {\em Analysis and PDE}, 5(4):705--746, 2012.

\bibitem{IS}
A.~D. Ionescu and G.~Staffilani.
\newblock Semilinear {S}chr{\"o}dinger flows on hyperbolic spaces: scattering
  {$H^1$}.
\newblock {\em Math. Ann.}, 345(1):133--158, 2009.

\bibitem{KLS}
C.~E. Kenig, A.~Lawrie, and W.~Schlag.
\newblock Relaxation of wave maps exterior to a ball to harmonic maps for all
  data.
\newblock {\em Geom. Funct. Anal.}, 24(2):610--647, 2014.

\bibitem{KST}
J.~Krieger, W.~Schlag, and D.~Tataru.
\newblock Renormalization and blow up for charge one equivariant wave critical
  wave maps.
\newblock {\em Invent. Math.}, 171(3):543--615, 2008.

\bibitem{LS}
A.~Lawrie and W.~Schlag.
\newblock Scattering for wave maps exterior to a ball.
\newblock {\em Advances in Mathematics}, 232(1):57--97, 2013.

\bibitem{ManSan}
G.~Mancini and K.~Sandeep.
\newblock On a semilinear elliptic equation in {$\Bbb H^n$}.
\newblock {\em Ann. Sc. Norm. Super. Pisa Cl. Sci. (5)}, 7(4):635--671, 2008.

\bibitem{MT11}
J.~Metcalfe and M.~Taylor.
\newblock Nonlinear waves on 3{D} hyperbolic space.
\newblock {\em Trans. Amer. Math. Soc.}, 363(7):3489--3529, 2011.

\bibitem{MTay12}
J.~Metcalfe and M.~Taylor.
\newblock Dispersive wave estimates on 3{D} hyperbolic space.
\newblock {\em Proc. Amer. Math. Soc.}, 140(11):3861--3866, 2012.

\bibitem{Ol}
F.~W.~J. Olver.
\newblock {\em Asymptotics and special functions}.
\newblock AKP Classics. A K Peters, Ltd., Wellesley, MA, 1997.
\newblock Reprint of the 1974 original [Academic Press, New York; MR0435697 (55
  \#8655)].

\bibitem{RR}
P.~Raphael and I.~Rodnianski.
\newblock Stable blow up dynamics for the critical corotational wave map and
  equivariant {Y}ang-{M}ills problems.
\newblock {\em Publi. I.H.E.S., in press}, 2009.

\bibitem{RodS}
I.~Rodnianski and W.~Schlag.
\newblock Time decay for solutions of {S}chr{\"o}dinger equations with rough
  and time-dependent potentials.
\newblock {\em Invent. Math.}, 155(3):451--513, 2004.

\bibitem{RS}
I.~Rodnianski and J.~Sterbenz.
\newblock On the formation of singularities in the critical ${O}(3)$
  $\sigma$-model.
\newblock {\em Ann. of Math.}, 172:187--242, 2010.

\bibitem{SSS1}
W.~Schlag, A.~Soffer, and W.~Staubach.
\newblock Decay for the wave and {S}chr{\"o}dinger evolutions on manifolds with
  conical ends. {I}.
\newblock {\em Trans. Amer. Math. Soc.}, 362(1):19--52, 2010.

\bibitem{SSS2}
W.~Schlag, A.~Soffer, and W.~Staubach.
\newblock Decay for the wave and {S}chr{\"o}dinger evolutions on manifolds with
  conical ends. {II}.
\newblock {\em Trans. Amer. Math. Soc.}, 362(1):289--318, 2010.

\bibitem{SSbook}
J.~Shatah and M.~Struwe.
\newblock {\em Geometric wave equations}.
\newblock Courant Lecture notes in Mathematics, New York University, Courant
  Institute of Mathematical Sciences, New York. American Mathematical Society,
  Providence RI, 1998.

\bibitem{SW99}
A.~Soffer and M.~I. Weinstein.
\newblock Resonances, radiation damping and instability in {H}amiltonian
  nonlinear wave equations.
\newblock {\em Invent. Math.}, 136(1):9--74, 1999.

\bibitem{Sogge}
C.~D. Sogge.
\newblock {\em Lectures on non-linear wave equations}.
\newblock International Press, Boston, MA, second edition, 2008.

\bibitem{Struwe}
M.~Struwe.
\newblock Equivariant wave maps in two space dimensions.
\newblock {\em Comm. Pure Appl. Math.}, 56(7):815--823, 2003.

\end{thebibliography}

 \bigskip

\centerline{\scshape Andrew Lawrie, Sung-Jin Oh}
\smallskip
{\footnotesize
 \centerline{Department of Mathematics, The University of California, Berkeley}
\centerline{970 Evans Hall \#3840, Berkeley, CA 94720, U.S.A.}
\centerline{\email{ alawrie@math.berkeley.edu, sjoh@math.berkeley.edu}}
} 

 \medskip

\centerline{\scshape Sohrab Shahshahani}
\medskip
{\footnotesize
 \centerline{Department of Mathematics, The University of Michigan}
\centerline{2074 East Hall, 530 Church Street
Ann Arbor, MI  48109-1043, U.S.A.}
\centerline{\email{shahshah@umich.edu}}
} 

\end{document}